\numberwithin{equation}{section}
\newtheorem{thm}{Theorem}[section]
\newtheorem{cor}[thm]{Corollary}
\newtheorem{lem}[thm]{Lemma}
\newtheorem{prop}[thm]{Proposition}
\newtheorem{conj}[thm]{Conjecture}
{ \theoremstyle{definition}
\newtheorem{defn}[thm]{Definition}
\newtheorem{exmp}[thm]{Example}
\newtheorem{conv}[thm]{Convention}
\newtheorem{rmk}[thm]{Remark} }
\newcommand{\conf}{\operatorname{Conf}}
\newcommand{\Gr}{\operatorname{Gr}}
\newcommand{\GL}{\operatorname{GL}}
\newcommand{\PGL}{\operatorname{PGL}}
\newcommand{\SL}{\operatorname{SL}}
\newcommand{\mat}{\operatorname{Mat}}
\newcommand{\Hom}{\operatorname{Hom}}
\newcommand{\diag}{\operatorname{diag}}
\newcommand{\dGr}{\mathscr{G}{\rm r}}
\newcommand{\dconf}{\mathscr{C}\!{\rm onf}}
\newcommand{\uf}{\operatorname{uf}}
\newcommand{\Tr}{\operatorname{Tr}}
\newcommand{\up}{\mathbf{up}}
\newcommand{\wt}{\operatorname{wt}}
\newcommand{\spec}{\operatorname{Spec}}
\newcommand{\Fr}{\operatorname{Fr}}
\newcommand{\codim}{\operatorname{codim}}
\newcommand{\inprod}[2]{\left\langle#1,#2\right\rangle}
\begin{document}
\allowdisplaybreaks

\newcommand{\arXivNumber}{1803.06901}

\renewcommand{\thefootnote}{}

\renewcommand{\PaperNumber}{067}

\FirstPageHeading

\ShortArticleName{Cyclic Sieving and Cluster Duality of Grassmannian}

\ArticleName{Cyclic Sieving and Cluster Duality of Grassmannian\footnote{This paper is a~contribution to the Special Issue on Cluster Algebras. The full collection is available at \href{https://www.emis.de/journals/SIGMA/cluster-algebras.html}{https://www.emis.de/journals/SIGMA/cluster-algebras.html}}}

\Author{Linhui SHEN and Daping WENG}
\AuthorNameForHeading{L.~Shen and D.~Weng}
\Address{Department of Mathematics, Michigan State University,\\ 619 Red Cedar Road, East Lansing, MI 48824, USA}
\Email{\href{mailto:linhui@math.msu.edu}{linhui@math.msu.edu}, \href{mailto:wengdap1@msu.edu}{wengdap1@msu.edu}}
\URLaddress{\url{https://sites.google.com/site/linhuishenmath/},\newline
\hspace*{10.5mm}\url{https://users.math.msu.edu/users/wengdap1/}}

\ArticleDates{Received January 07, 2020, in final form July 14, 2020; Published online July 25, 2020}

\Abstract{We introduce a decorated configuration space $\mathscr{C}\!{\rm onf}_n^\times(a)$ with a potential function~$\mathcal{W}$. We prove the cluster duality conjecture of Fock--Goncharov for Grassmannians, that is, the tropicalization of $\big(\mathscr{C}\!{\rm onf}_n^\times(a), \mathcal{W}\big)$ canonically parametrizes a linear basis of the homogeneous coordinate ring of the Grassmannian $\operatorname{Gr}_a(n)$ with respect to the Pl\"{u}cker embedding. We prove that $\big(\mathscr{C}\!{\rm onf}_n^\times(a), \mathcal{W}\big)$ is equivalent to the mirror Landau--Ginzburg model of the Grassmannian considered by Eguchi--Hori--Xiong, Marsh--Rietsch and Rietsch--Williams. As an application, we show a cyclic sieving phenomenon involving plane partitions under a~sequence of piecewise-linear toggles.}

\Keywords{cluster algebra; cluster duality; mirror symmetry; Grassmannian; cyclic sieving phenomenon}

\Classification{05E10; 13F60; 14J33; 14M15; 14N35; 14T05}

\renewcommand{\thefootnote}{\arabic{footnote}}
\setcounter{footnote}{0}

\section{Introduction}

Throughout we let $a$, $b$ be positive integers and let $c$ be a non-negative integer. We set $n:=a+b$.

\subsection{Cluster duality of Grassmannians}
{\it Cluster algebras} are a class of commutative algebras introduced by Fomin and Zelevinsky \cite{FZI}.
Their geometric counterparts form a family of log Calabi--Yau varieties called cluster varieties.

A {\it cluster ensemble}\footnote{See Appendix~\ref{appendixA} for a brief review on cluster ensemble.} is a pair $(\mathscr{A}, \mathscr{X})$ of cluster varieties associated to an equivalence class of skew-symmetrizable matrices introduced by Fock and Goncharov~\cite{FGensemble}. The variety $\mathscr{A}$ is equipped with an exceptional class $\{\alpha\}$ of coordinate charts called $K_2$ clusters. A~rational function of~$\mathscr{A}$ is called a {\it universal Laurent polynomial} if it can be expressed as a Laurent polynomial in every~$\alpha$. The ring ${\bf up}(\mathscr{A})$ of universal Laurent polynomials of $\mathscr{A}$ coincides with the {\it upper cluster algebra} of~\cite{BFZ}. The variety $\mathscr{X}$ is equipped with an exceptional class $\{\chi\}$ of coordinate charts called Poisson clusters. Let ${\bf up}(\mathscr{X})$ be the ring of universal Laurent polynomials in every~$\chi$. The {\it cluster modular group} $\mathcal{G}$ is a discrete group acting on $\mathscr{A}$ and $\mathscr{X}$ that respects the cluster structures.

 The Fock--Goncharov cluster duality conjecture \cite{FGensemble} asserts that the ring ${\bf up}(\mathscr{A})$ admits a~natural basis $\mathcal{G}$-equivariantly paramatrized by the $\mathbb{Z}$-tropical points of $\mathscr{X}$, and vice versa. Cluster duality can be viewed as a manifestation of mirror symmetry between $\mathscr{A}$ and $\mathscr{X}$ \cite{GHK}. For example, the cluster duality for moduli spaces of local systems has been investigated in \cite{GS1, GS2}.

The present paper focuses on the cluster duality for Grassmannians.

In details, we introduce a pair of spaces, i.e., the {\it decorated Grassmannian} $\dGr_a(n)$ and the {\it decorated configuration space} $\dconf_n(a)$, both of which are variants of the Grassmanian $\Gr_a(n)$.

The decorated Grassmannian $\dGr_a(n)$ is essentially an affine cone over $\Gr_a(n)$ (see Section~\ref{sec 2.1} for its definition). In particular, the coordinate ring of $\dGr_a(n)$ coincides with the homogeneous coordinate ring of $\Gr_a(n)$ in its Pl\"ucker embedding. The decorated Grassmannian $\dGr_a(n)$ admits a particular divisor, whose complement is an affine variety denoted by $\dGr_a^\times(n)$. A~result of Scott~\cite{Sco} implied that the coordinate ring $\mathcal{O}\big(\dGr_a^\times(n)\big)$ coincides with an upper cluster algebra~${\bf up}(\mathscr{A})$. In this sense, $\dGr_a^\times(n)$ is naturally equipped with a cluster $K_2$ structure.

The decorated configuration space $\dconf_n(a)$ parametrizes $\PGL(V)$-orbits of $n$-many lines in an $a$-dimensional vector space $V$ together with a linear isomorphism between every pair of cyclic neighboring lines. After imposing a consecutive general position condition, we obtain a smooth subvariety $\dconf_n^\times(a)$. We prove that

\begin{thm}[Theorem \ref{2018.3.2.15.06}] The variety $\dconf_n^\times(a)$ is an affine variety and is equipped with a cluster Poisson structure. Its coordinate ring $\mathcal{O}\big(\dconf_n^\times(a)\big)$ coincides with the algebra ${\bf up}(\mathscr{X})$ of universal Laurent polynomials and therefore $\dconf_n^\times(a)\cong \mathrm{Spec}\left(\up \left(\mathscr{X}\right)\right)$.
\end{thm}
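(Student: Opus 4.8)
The plan is to bootstrap from Scott's theorem $\mathcal{O}(\dGr_a^\times(n))=\up(\mathscr{A})$ and realize $\dconf_n^\times(a)$ as the cluster Poisson partner of $\dGr_a^\times(n)$. Recall that $\dGr_a^\times(n)$ is affine (it is the complement of a divisor in the affine Pl\"ucker cone). There is a natural morphism $\pi\colon\dGr_a^\times(n)\to\dconf_n^\times(a)$ sending a configuration of vectors $(v_1,\dots,v_n)$ to the lines $\ell_i=\langle v_i\rangle$ equipped with the cyclic isomorphisms $v_i\mapsto v_{i+1}$. The first step is to pin down the geometry of $\pi$: it realizes the divisor $\{\text{monodromy}=1\}\subset\dconf_n^\times(a)$ as the quotient of $\dGr_a^\times(n)$ by the one-dimensional torus that rescales all the $v_i$ simultaneously, and the whole of $\dconf_n^\times(a)$ is recovered from this quotient together with a residual $\mathbb{G}_m$-factor recording the monodromy around the $n$-cycle. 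Since the rescaling action has all orbits closed inside the affine variety $\dGr_a^\times(n)$, its quotient is an affine geometric quotient, so $\dconf_n^\times(a)$ is affine; this settles the first assertion.

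Next I would transport Scott's cluster data across $\pi$. Starting from Scott's seed on $\dGr_a^\times(n)$, with Pl\"ucker cluster variables $\Delta_S$ and exchange matrix $\varepsilon$, form the Laurent monomials $X_v=\prod_w\Delta_{S_w}^{\varepsilon_{vw}}$. These are invariant under simultaneous rescaling of the $v_i$, hence descend along $\pi$ to regular functions on $\dconf_n^\times(a)$; adjoining the monodromy and the remaining frozen cross-ratio coordinates produces a cluster Poisson seed, whose torus chart is an open embedding of an algebraic torus into $\dconf_n^\times(a)$ — checked by a dimension count, surjectivity of the torus quotient, and an explicit inverse rational map written in ratios of Pl\"ucker coordinates of the $v_i$. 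Each cluster Poisson mutation is then realized on $\dconf_n^\times(a)$ by transporting the corresponding $\mathscr{A}$-mutation of Scott's seed, which by \cite{Sco} is a biregular map between the relevant cluster charts of $\dGr_a^\times(n)$; this yields a family of cluster Poisson charts on $\dconf_n^\times(a)$ indexed by the mutation class and compatible with the cyclic $\mathbb{Z}/n\mathbb{Z}$-symmetry, with the frozen Poisson coordinates matched correctly to the monodromy and boundary cross-ratios.

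To conclude $\mathcal{O}(\dconf_n^\times(a))=\up(\mathscr{X})$: since $\dconf_n^\times(a)$ is smooth, hence normal, it suffices to check that the cluster Poisson charts above cover $\dconf_n^\times(a)$ away from a closed subset of codimension $\ge 2$. Granting this, a regular function on $\dconf_n^\times(a)$ is exactly a function regular on the union of the charts, i.e.\ a Laurent polynomial in every Poisson cluster $\chi$, which is by definition an element of $\up(\mathscr{X})$; the reverse inclusion is immediate from the constructions. Combining with the first step yields the displayed isomorphism $\dconf_n^\times(a)\cong\mathrm{Spec}\left(\up\left(\mathscr{X}\right)\right)$.

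I expect the codimension-$2$ covering statement to be the main obstacle. One must show that every boundary divisor of the initial Poisson cluster chart — the locus where some $X_v$ acquires a zero or a pole — is generically contained in a chart obtained by a single mutation, so that the union of all charts omits only a subset of codimension $\ge 2$. This is precisely where the \emph{consecutive general position} condition does its work: it guarantees that the cross-ratio type coordinates $X_v$ degenerate only along the expected configuration-degeneration divisors and that each of these is resolved by a single mutation, and it is also what makes $\dconf_n^\times(a)$ smooth. A secondary technical point is that, because Scott's exchange matrix is not of full rank and carries more frozen variables than the Poisson side, the identification of the two seeds — and hence of $\up(\mathscr{X})$ with the descended ring of invariants — must be verified by this explicit chart-by-chart comparison rather than quoted from a general statement about cluster ensembles.
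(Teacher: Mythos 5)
Your proposal takes a genuinely different route from the paper, and it has real gaps. The paper proves affineness directly: lifting a configuration to vectors $v_i$ produces companion-type matrices $\Phi_i$, and $\dconf_n^\times(a)$ is realized as the closed subvariety of $\mathbb{A}^{na}_\Phi\times\mathbb{A}^1_P\times\mathbb{A}^1_t$ cut out by $\Phi_1\cdots\Phi_n=(-1)^{a-1}P\,\mathrm{Id}$ and $Pt=1$ (Proposition~\ref{affine}). For the coordinate ring, it introduces the auxiliary space $\widetilde{\dconf_n^\times(a)}\cong\dGr_a^\times(n)\times\mathrm{H}$ with the extended quiver $\widetilde{Q_{a,n}}$, identifies $\mathcal{O}\left(\widetilde{\dconf_n^\times(a)}\right)$ with $\up\left(\widetilde{\mathscr{A}_{a,n}}\right)$ via Scott's theorem (Corollary~\ref{A isomorphism}), and then transports regularity down the two vertical surjections of the commuting square $\widetilde{\dconf_n^\times(a)}\twoheadrightarrow\dconf_n^\times(a)$, $\widetilde{\mathscr{A}_{a,n}}\to\mathscr{X}_{a,n}$: a rational function downstairs is regular iff its pull-back upstairs is, and upstairs is handled by Scott. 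This bypasses entirely the codimension-two covering estimate and the normality bookkeeping on which your plan rests.

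Those are precisely the points where your proposal is incomplete. The codimension-two covering is not a formality: you would need to locate the degeneration divisors of each unfrozen $X_f$ inside $\dconf_n^\times(a)$, show each is absorbed into a once-mutated chart, verify that the frozen $X_i$ of \eqref{frozen X} never degenerate on $\dconf_n^\times(a)$, and also prove smoothness, which you invoke for normality but do not establish. Your closing technical remark is also off: $\mathscr{X}_{a,n}$ is built from the \emph{same} quiver $Q_{a,n}$ as Scott's $\mathscr{A}_{a,n}$ and therefore carries the same $n$ frozen vertices --- you appear to be conflating $\dconf_n^\times(a)$ with $\conf_n^\times(a)$ (whose quiver $Q^\uf_{a,n}$ has no frozen vertices) or with $\widetilde{Q_{a,n}}$ (which has $2n$). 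The actual mismatch, which the paper resolves by adding $n$ extra frozen vertices $i'$ carrying the scaling factors $\lambda_i$ as in \eqref{1.tt}, runs in the opposite direction from the one you describe. Finally, the factorization $\dconf_n^\times(a)\cong\Gr_a^\times(n)\times\mathbb{G}_m$ underlying your affineness sketch is correct and is made explicit in the Rietsch--Williams subsection of the appendix, but as written your sketch leaves both the trivialization and the affineness of the $\mathbb{G}_m$-quotient as unproved assertions; the paper's companion-matrix parametrization settles affineness with less machinery.
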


The cluster Poisson structure on $\dconf_n^\times(a)$ is related to the Poisson structure on the space of $a\times n$ matrices studied by Gekhtman and Yakimov~\cite{GY}. We also notice similarities between our proof of the affine-ness of $\dconf_n^\times(a)$ and techniques used by Morier-Genoud, Ovsienko, and Tabachnikov~\cite{MOT} in their work on the relation between Grassmannian $\Gr_{3}(n)$ and the moduli space of 2-frieze patterns. Further studies should be done on the connection between frieze patterns and Grassmannians in general.

Combining Theorem \ref{2018.3.2.15.06}, results from \cite{GS2} and \cite{Weng}, and the work of Gross, Hacking, Keel, and Kontsevich \cite{GHKK}, we prove the following theorem in Section~\ref{donj.con.10.32}.

\begin{thm}\label{main.3.1.152} The pair $\big(\dGr_a^\times(n), \dconf_n^\times(a)\big)$ admits a natural cluster ensemble structure. The duality conjecture of Fock--Goncharov holds in this case, that is, the coordinate ring $\mathcal{O}\big(\dGr_a^\times(n)\big)$ admits a cluster modular group equivariant basis parametrized by the $\mathbb{Z}$-tropical set of $\dconf_n^\times(a)$, and vice versa.
\end{thm}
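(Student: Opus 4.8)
The plan is to assemble a cluster ensemble structure on the pair $\left(\dGr_a^\times(n), \dconf_n^\times(a)\right)$ and then invoke the machinery of \cite{GHKK} to deduce the duality. First I would recall that a result of Scott \cite{Sco} endows $\dGr_a^\times(n)$ with a cluster $K_2$ structure, so that $\mathcal{O}\left(\dGr_a^\times(n)\right) = \up(\mathscr{A})$ for the associated seed data, while Theorem \ref{2018.3.2.15.06} furnishes the companion identification $\dconf_n^\times(a) \cong \mathrm{Spec}\left(\up(\mathscr{X})\right)$. The first real step is therefore to check that these two cluster structures arise from \emph{the same} mutation-equivalence class of exchange matrices: both are built from the same Postnikov-type quiver attached to $\Gr_a(n)$, and the $\mathscr{X}$-side is the Langlands-dual/Poisson companion of the $\mathscr{A}$-side in the sense of Fock--Goncharov, so the ensemble map $p\colon \mathscr{A}\to\mathscr{X}$ is the canonical one. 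Here I would lean on \cite{GS2} and \cite{Weng}, where the relevant configuration spaces and their cluster ensemble structures (including the treatment of frozen variables and the ensemble lattice map) are worked out; the point is to match our $\dconf_n^\times(a)$ with the $\mathscr{X}$-variety in that framework.

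With the ensemble structure in hand, the second step is to verify the hypotheses needed to apply the Gross--Hacking--Keel--Kontsevich construction of theta functions \cite{GHKK}. Concretely one wants: (i) the upper cluster algebra equals the ordinary cluster algebra, or at least that $\up(\mathscr{A})$ admits a theta basis — this is where the \emph{full Fock--Goncharov conjecture} for the seed must be checked, typically via the existence of a maximal green sequence or an explicit enough description of the scattering diagram; and (ii) the positivity/largeness condition guaranteeing that the theta functions are indexed by the integer tropical points $\mathscr{X}^T(\mathbb{Z})$ (respectively $\mathscr{A}^T(\mathbb{Z})$). For Grassmannians the needed combinatorial input is available — reachability of the seed and the fact that $\mathcal{O}(\dGr_a^\times(n))$ is a finitely generated cluster algebra equal to its upper cluster algebra follow from Scott's description together with standard results — so I would cite these and then quote the GHKK theorem to obtain a canonical basis of $\mathcal{O}\left(\dGr_a^\times(n)\right)=\up(\mathscr{A})$ parametrized by $\dconf_n^\times(a){}^{T}(\mathbb{Z})$, and symmetrically a basis of $\mathcal{O}\left(\dconf_n^\times(a)\right)=\up(\mathscr{X})$ parametrized by $\dGr_a^\times(n){}^{T}(\mathbb{Z})$.

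The third step is equivariance under the cluster modular group $\mathcal{G}$. Since the theta basis of \cite{GHKK} is constructed intrinsically from the scattering diagram, which is canonically attached to the mutation class, any automorphism of the cluster structure permutes theta functions according to its action on tropical points; thus the bases are automatically $\mathcal{G}$-equivariant, and I would only need to spell out that the geometric symmetries of $\dGr_a^\times(n)$ and $\dconf_n^\times(a)$ (notably the cyclic rotation of the $n$ lines, relevant for the later cyclic sieving application) are realized inside $\mathcal{G}$.

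I expect the main obstacle to be step two: confirming that the seed attached to $\Gr_a(n)$ satisfies the precise hypotheses of \cite{GHKK} under which theta functions form a basis of the full upper cluster algebra and are enumerated exactly by the integral tropical set — in general this requires either the equality $\mathcal{A}=\up(\mathscr{A})$ together with a ``large'' scattering diagram, or an explicit argument controlling the broken lines. Fortunately the Grassmannian case is rigid enough — finitely many frozen variables, an explicit Plücker-coordinate description of $\mathcal{O}(\dGr_a^\times(n))$, and known reachability — that this can be settled by combining Scott's theorem with the relevant results of \cite{GHKK}; the remaining work is bookkeeping to align conventions (seed orientations, frozen-variable treatment, and the $\mathscr{A}$ versus $\mathscr{X}$ tropicalizations) between \cite{Sco}, \cite{GS2}, \cite{Weng}, and \cite{GHKK}.
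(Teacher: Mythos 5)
Your proposal follows the paper's overall strategy: establish the identifications $\mathcal{O}\left(\dGr_a^\times(n)\right) \cong \up\left(\mathscr{A}_{a,n}\right)$ (Theorem \ref{Scott}) and $\mathcal{O}\left(\dconf_n^\times(a)\right) \cong \up\left(\mathscr{X}_{a,n}\right)$ (Theorem \ref{2018.3.2.15.06}), note these come from the same mutation class of quivers so that one has a cluster ensemble, then invoke GHKK. Also, your concern in step three about $\mathcal{G}$-equivariance is moot — it is built into the statement of GHKK's duality (Conjecture \ref{dual.conj} as quoted in the paper) and needs no separate argument.

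However, your step two misstates both what the GHKK hypotheses are and how they are verified, and this is a genuine gap. The paper uses Theorem \ref{okarghkk1124}, whose two hypotheses are (a) the unfrozen subquiver admits a maximal green or reddening sequence, and (b) the restricted exchange matrix $\varepsilon|_{I^\uf\times I}$ has full rank. You correctly flag a maximal green sequence as relevant to (a), but you attribute the verification to ``Scott's description together with standard results,'' which is wrong: Scott's theorem gives the cluster-algebra presentation of $\mathcal{O}(\dGr_a^\times(n))$, not a maximal green sequence. The maximal green sequence for $Q_{a,n}$ is a separate theorem of Marsh and Scott \cite{MStw}, and the existence of a reddening sequence (equivalently, a cluster Donaldson--Thomas transformation) is due to Weng \cite{Weng}. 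Meanwhile, you never state condition (b); the items you list in its place — ``reachability of the seed'' and ``$\mathcal{A}=\up(\mathscr{A})$'' — are not GHKK hypotheses, and the latter is not even asserted in the paper, which works entirely with upper cluster algebras. The full-rank condition is a nontrivial lemma (Lemma \ref{full.rank.lem}, proved by showing the $p$-map $\mathscr{A}_{a,n}\to\mathscr{X}_{a,n}^\uf$ is surjective, using the geometric realizations of both sides and the commutative square \eqref{commu.gra}). Without that lemma and the correct references for the maximal green and reddening sequences, your step two does not go through as written.
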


In Section \ref{prelim}, we introduce several natural functions and maps on $\dGr_a^\times(n)$ and $\dconf_n^\times(a)$, which are summarized as follows
\begin{gather*}
\left\{\begin{matrix}\text{decorated Grassmannian $\dGr_a^\times(n)$},\\
\text{free rescaling $\mathbb{G}_m$ action},\\
\text{boundary divisor $D=\bigcup_i D_i$},\\
\text{action by a maximal torus $T\subset \GL_n$},\\
\text{twisted cyclic rotation $C_a$}
\end{matrix}
\right\}
\!\leftrightarrow\!
\left\{
\begin{matrix} \text{decorated configuration space $\dconf_n^\times(a)$},\\
\text{twisted monodromy $P$},\\
\text{potential function $\mathcal{W}=\sum_i \vartheta_i$},\\
\text{weight map $M\colon\dconf_n^\times(a)\rightarrow T^\vee$},\\
\text{cyclic rotation $R$}
\end{matrix}\right\}.
\end{gather*}

We investigate the natural cluster correspondence between the ingredients in the above dictionary. In particular, the potential $\mathcal{W}$ exhibits an explicit cyclic symmetry. It is essentially equivalent to the potential of the Grassmannian considered in \cite{EHX,MR,RW}. We identify $\mathcal{W}$ with the sum of theta functions associated to frozen vertices under the framework of \cite{GHKK}. As a~consequence, we prove the following.

\begin{thm}[Corollaries \ref{basis} and~\ref{weight decomposition}] \label{main2} Within the basis of $\mathcal{O}\big(\dGr_a^\times(n)\big)$ stated in Theorem~{\rm \ref{main.3.1.152}}, the subset $\big\{\theta_q\, | \, \mathcal{W}^t(q)\geq 0\big\}$ is a basis of $\mathcal{O} (\dGr_a(n) )$. Moreover, this basis of $\mathcal{O}\left(\dGr_a(n)\right)$ is compatible with both the decomposition $\mathcal{O} (\dGr_a(n) )\cong \bigoplus_{c\geq 0} V_{c\omega_a}$ into irreducible $\GL_n$-representations as well as the weight space decomposition with respect to a~maximal torus $T\subset \GL_a$.
\end{thm}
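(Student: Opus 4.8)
The plan is to realize $\dGr_a(n)$ as a partial compactification of the cluster $\mathscr{A}$-variety $\dGr_a^\times(n)$ and then invoke the partial-compactification formalism of \cite{GHKK}, using that the full Fock--Goncharov picture is available for our ensemble by Theorem~\ref{main.3.1.152}. First I would record that $\dGr_a^\times(n)=\dGr_a(n)\setminus D$ with $D=\bigcup_i D_i$, where $D_i$ is the vanishing locus of the $i$-th consecutive Pl\"ucker coordinate, which is exactly one of the frozen cluster variables of Scott's $\mathscr{A}$-structure \cite{Sco}. Because $\dGr_a(n)$ is the affine cone over a projectively normal Grassmannian it is normal, hence
\[
\mathcal{O}(\dGr_a(n))=\bigl\{\,f\in\mathcal{O}(\dGr_a^\times(n))=\up(\mathscr{A})\ \bigm|\ \ord_{D_i}(f)\ge 0\ \text{for all }i\,\bigr\},
\]
which presents $\dGr_a(n)$ as the partial compactification of $\dGr_a^\times(n)$ obtained by adding back the divisors attached to \emph{all} frozen vertices. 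Here one must check that $D$ exhausts $\dGr_a(n)\setminus\dGr_a^\times(n)$ in codimension one and that each $D_i$ is the reduced vanishing divisor of a single frozen variable; this follows from the description of $\dGr_a^\times(n)$ as the locus of consecutive general position, i.e. the open set where precisely the $n$ frozen Pl\"ucker coordinates are invertible.

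Next, by Theorem~\ref{main.3.1.152} the theta functions $\{\theta_q\}$, indexed by the $\mathbb{Z}$-tropical points $q$ of $\dconf_n^\times(a)$, form a basis of $\mathcal{O}(\dGr_a^\times(n))$, and the hypotheses required to run the partial-compactification theorem of \cite{GHKK} hold. That theorem identifies the theta functions that extend over the partial compactification of the previous step as exactly those $\theta_q$ with $\vartheta_i^t(q)\ge 0$ for every frozen vertex $i$, where $\vartheta_i$ is the theta function attached to $i$ on $\dconf_n^\times(a)$. Since $\mathcal{W}=\sum_i\vartheta_i$ (Section~\ref{prelim}) and tropicalization is compatible with addition, $\mathcal{W}^t(q)\ge 0$ is equivalent to $\vartheta_i^t(q)\ge 0$ for all $i$; combined with the displayed description of $\mathcal{O}(\dGr_a(n))$, this shows $\{\theta_q\mid \mathcal{W}^t(q)\ge 0\}$ is a basis of $\mathcal{O}(\dGr_a(n))$.

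For the compatibility statements I would use that the maximal torus $T$ of the dictionary acts on $\dGr_a^\times(n)$ preserving the $\mathscr{A}$-cluster structure, so $\up(\mathscr{A})$ carries a grading by $X^*(T)$ compatible with all exchange relations — concretely the standard multigrading of the Pl\"ucker ring with $\deg\Delta_S=\sum_{j\in S}e_j$. Theta functions are homogeneous for any compatible grading, since every broken-line monomial contributing to $\theta_q$ differs from the initial monomial $z^q$ by a product of degree-$0$ wall-crossing factors; thus $\theta_q$ is a $T$-weight vector of weight $M^t(q)$, where $M$ is the weight map of the dictionary. Hence the basis $\{\theta_q\mid\mathcal{W}^t(q)\ge 0\}$ refines to a basis of each $T$-weight space of $\mathcal{O}(\dGr_a(n))$; and since the Pl\"ucker degree $c$ is determined by the $T$-weight (it is the central-$\mathbb{G}_m$ component), with $\mathcal{O}(\dGr_a(n))_{(c)}\cong V_{c\omega_a}$ as $\GL_n$-modules, the same basis is simultaneously compatible with $\mathcal{O}(\dGr_a(n))\cong\bigoplus_{c\ge 0}V_{c\omega_a}$ and with the $T$-weight decomposition of each $V_{c\omega_a}$.

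The main obstacle, as I see it, is not the soft homogeneity argument but the two comparisons feeding the first two steps: showing that $\dGr_a(n)$ is \emph{literally} the \cite{GHKK} partial compactification attached to the full set of frozen vertices (matching coordinate rings, via normality and the consecutive-general-position description), and the identification $\mathcal{W}=\sum_i\vartheta_i$ with the correct frozen theta functions, which is the work carried out in Section~\ref{prelim}. Once these are in place the partial-compactification theorem and the tropicalization bookkeeping apply essentially verbatim.
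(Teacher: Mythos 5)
Your proposal follows essentially the same route as the paper's Section 4 (GHKK partial compactification, $\mathcal{W}$ identified with the sum of frozen theta functions, homogeneity of theta functions), and the normality/Hartogs justification for the presentation of $\mathcal{O}(\dGr_a(n))$ is a fine substitute for citing \cite[Theorem 3]{Sco} directly. Two points that you pass over do need to be made explicit, however. First, the GHKK results you invoke for the partial compactification (the paper's Propositions \ref{general.dual.part1} and \ref{general.dual.part2}) require that \emph{every} frozen vertex admit an optimized quiver; this is a separate combinatorial hypothesis not entailed by the reddening-sequence and full-rank conditions already checked in Theorem \ref{main.3.1.152}, and the paper verifies it by observing that $Q_{a,n}$ is optimized for the vertices $(0,0)$ and $(a,b)$ and then using the mutation sequence $\rho$ (the one realizing the cyclic rotation) to move each remaining frozen vertex into an optimized position. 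Second, the equivalence $\mathcal{W}^t(q)\ge 0\Leftrightarrow \vartheta_i^t(q)\ge 0$ for all $i$ is only useful once one knows that the geometrically defined $\vartheta_i$ of \eqref{theta.f} coincide with the GHKK theta functions $\vartheta_{p_i}$ attached to the frozen vertices; that identification is not in Section \ref{prelim} as you suggest, but is Proposition \ref{vartheta_i}, which relies on the explicit cluster expansion of $\vartheta_i$ from Section \ref{3.3} and again on the rotation $\rho$. Finally, your broken-line argument for torus homogeneity is a cleaner abstract phrasing of the paper's explicit computation with \eqref{theta.express} in Proposition \ref{weightprop1220}, and it does suffice for the compatibility claimed in the theorem; note though that your additional assertion that the $T$-weight of $\theta_q$ equals $M^t(q)$ requires the extra bookkeeping of Lemma \ref{1241}, which the homogeneity argument by itself does not supply.
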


Marsh and Rietsch constructed a $B$-model for the Grassmannian in \cite{MR}, which is of the form $\big(\Gr_a^\times(n)\times \mathbb{G}_m,\mathcal{W}_q\big)$. Rietsch and Williams proved in \cite{RW} that this $B$-model is an example of a cluster dual space of the Grassmannian. In contrast to their approach, our approach is more geometric and is purely motivated by the associated cluster structures. We include a section in the appendix describing the connection between our version of cluster duality and the version considered by Rietsch and Williams.

\subsection{Cyclic sieving phenomenon of plane partitions} \label{intro csp}
Let $S$ be a finite set and let $g$ be a permutation of~$S$ of order~$n$. We are interested in the size of
the fixed point set $S^{g^d}$ of $g^d$ for $d\geq 0$. Let $F(q)$ be a polynomial of positive integral coefficients and let $\zeta= {\rm e}^{2\pi \sqrt{-1}/n}$. Following Reiner--Stanton--White~\cite{RSW}, we make the following definition.
\begin{defn} We say that the triple $(S, g, F(q))$ exhibits the \emph{cyclic sieving phenomenon $($CSP$)$} if the fixed point set cardinality $\# S^{g^d}$ is equal to the polynomial evaluation~$F\big(\zeta^d\big)$ for all~$d\geq 0$.
\end{defn}

Many combinatorial models have been found to exhibit the cyclic sieving phenomenon. Interestingly, the proofs often involve deep results in representation theory. For example, Rhoades~\cite{Rh} proved the CSP for rectangular Young tableaux under the action of promotion, by using Kazhdan--Lusztig theory and a representation of the Hecke algebra.
In~\cite{FKam}, Fontaine and Kamnizter studied the CSP for minuscule Littelmann paths under rotation using the geometric Satake correspondence and intersection homology of quiver varieties.
For more examples, we refer the interested reader to a survey on this topic by Sagan~\cite{Sa}.

As an application of our result on cluster duality for Grassmannian, we prove the CSP for plane partitions under a sequence of piecewise-linear toggles.

\begin{defn}\label{P(a,b,c)} A size $a\times b$ plane partition is an $a\times b$ matrix $\pi=\left(\pi_{i,j}\right)$ of non-negative integer entries $\pi_{i,j}$ that is weakly decreasing in rows and columns, and for which we define
\[|\pi|:=\sum_{i,j} \pi_{i,j}.\]
Denote by $P(a,b,c)$ the set of size $a\times b$ plane partitions with largest entry $\pi_{1,1}\leq c$.
\end{defn}

Let $[m]_q$ denote the quantum integer $\frac{1-q^m}{1-q}.$
MacMahon's formula asserts that
\[
M_{a,b,c}(q):= \sum_{\pi \in P(a,b,c)}q^{|\pi|}=\prod_{i=1}^a\prod_{j=1}^b\prod_{k=1}^c \frac{\left[i+j+k-1\right]_q}{\left[i+j+k-2\right]_q}.
\]

Following \cite[Section~4]{Roby}, we consider {\it piecewise-linear toggles} on $P(a,b,c)$.
Let $\pi \in P(a,b,c)$. Let $1\leq i\leq a$ and let $1\leq j\leq b$. We make the convention that $\pi_{0,j}=\pi_{i,0}=c$ and $\pi_{a+1,j}=\pi_{i,b+1}=0$. The piecewise-linear {toggle} $\tau_{i,j}$ at the $(i,j)$-th entry is an involution that sends $\pi$ to a new plane partition $\tau_{i,j}\pi$ such that
\begin{equation}\label{toggle}
 (\tau_{i,j}\pi )_{k,l}:= \begin{cases} \pi_{k,l} & \text{if $(k,l)\neq (i,j)$},\\
\max\left\{\pi_{i,j+1},\pi_{i+1,j}\right\}+\min\left\{\pi_{i-1,j},\pi_{i,j-1}\right\}-\pi_{i,j} & \text{if $(k,l)=(i,j)$}.
\end{cases}
\end{equation}
A birational version of $\tau_{i,j}$ has been constructed by Einstein and Propp in \cite{EP} and by Musiker and Roby in \cite{MuRo}. See also \cite[Section~9.3]{GS2} for the same construction of the birational Sch\"utzenberger involution of Gelfand--Tsetlin patterns.

Let $\eta$ be the sequence of piecewise-linear toggles that hits each entry exactly once in the order from bottom to top and from left to right, that is,
\[
\eta=\nu_b\circ \nu_{b-1}\circ \cdots \circ \nu_1, \qquad\mbox{where} \quad \nu_j=\tau_{1,j}\circ \tau_{2,j}\circ \cdots\circ \tau_{a,j}.
\]
For example, applying $\eta$ to $\left(\begin{smallmatrix}3 & 2 & 2 \\ 3 & 1 & 0
\end{smallmatrix}\right) \in P(2,3,6)$
yields
\[
\tikz{
\node (1) at (0,3) []
{$\begin{tikzpicture}[element/.style={minimum width=0.7cm,minimum height=0.7cm}]
\matrix (m) [matrix of nodes,nodes={element},column sep=-\pgflinewidth, row sep=-\pgflinewidth]{
 |[draw]|$3$ & |[draw]|$2$ & |[draw]|$2$ \\
 |[draw]|$3$ & |[draw]| $1$ & |[draw]|$0$ \\};
\end{tikzpicture}$};
\node (2) at (4,3) [] {$\begin{tikzpicture}[element/.style={minimum width=0.7cm,minimum height=0.7cm}]
\matrix (m) [matrix of nodes,nodes={element},column sep=-\pgflinewidth, row sep=-\pgflinewidth]{
 |[draw]|$3$ & |[draw]|$2$ & |[draw]|$2$ \\
 |[draw,fill=red!20]|$1$ & |[draw]| $1$ & |[draw]|$0$ \\};
\end{tikzpicture}$};
\node (3) at (8,3) [] {$\begin{tikzpicture}[element/.style={minimum width=0.7cm,minimum height=0.7cm}]
\matrix (m) [matrix of nodes,nodes={element},column sep=-\pgflinewidth, row sep=-\pgflinewidth]{
 |[draw,fill=red!20]|$5$ & |[draw]|$2$ & |[draw]|$2$ \\
 |[draw,fill=red!20]|$1$ & |[draw]| $1$ & |[draw]|$0$ \\};
\end{tikzpicture}$};
\node (4) at (12,3) [] {$\begin{tikzpicture}[element/.style={minimum width=0.7cm,minimum height=0.7cm}]
\matrix (m) [matrix of nodes,nodes={element},column sep=-\pgflinewidth, row sep=-\pgflinewidth]{
 |[draw,fill=red!20]|$5$ & |[draw]|$2$ & |[draw]|$2$ \\
 |[draw,fill=red!20]|$1$ & |[draw,fill=red!20]| $0$ & |[draw]|$0$ \\};
\end{tikzpicture}$};
\node (5) at (0,0) [] {$\begin{tikzpicture}[element/.style={minimum width=0.7cm,minimum height=0.7cm}]
\matrix (m) [matrix of nodes,nodes={element},column sep=-\pgflinewidth, row sep=-\pgflinewidth]{
 |[draw,fill=red!20]| $5$ & |[draw,fill=red!20]|$5$ & |[draw]|$2$ \\
 |[draw,fill=red!20]|$1$ & |[draw,fill=red!20]| $0$ & |[draw]|$0$ \\};
\end{tikzpicture}$};
\node (6) at (4,0) [] {$\begin{tikzpicture}[element/.style={minimum width=0.7cm,minimum height=0.7cm}]
\matrix (m) [matrix of nodes,nodes={element},column sep=-\pgflinewidth, row sep=-\pgflinewidth]{
 |[draw,fill=red!20]|$5$ & |[draw,fill=red!20]|$5$ & |[draw]|$2$ \\
 |[draw,fill=red!20]| $1$ & |[draw,fill=red!20]| $0$ & |[draw,fill=red!20]| $0$ \\};
\end{tikzpicture}$};
\node (7) at (8,0) [] {$\begin{tikzpicture}[element/.style={minimum width=0.7cm,minimum height=0.7cm}]
\matrix (m) [matrix of nodes,nodes={element},column sep=-\pgflinewidth, row sep=-\pgflinewidth]{
 |[draw,fill=red!20]| $5$ & |[draw,fill=red!20]| $5$ & |[draw,fill=red!20]| $3$ \\
 |[draw,fill=red!20]| $1$ & |[draw,fill=red!20]| $0$ & |[draw,fill=red!20]| $0$ \\};
\end{tikzpicture}$};
\draw [->] (1) -- node [above] {$\tau_{2,1}$} (2);
\draw [->] (2) -- node [above] {$\tau_{1,1}$} (3);
\draw [->] (3) -- node [above] {$\tau_{2,2}$} (4);
\draw [->] (4) -- node [above] {$\tau_{1,2}$} (5);
\draw [->] (5) -- node [below] {$\tau_{2,3}$} (6);
\draw [->] (6) -- node [below] {$\tau_{1,3}$} (7);
}
\]

Applying the cluster duality of Grassmannians, we obtain the following result.

\begin{thm} \label{main1} The action $\eta$ on $P(a,b,c)$ is of order $n=a+b$. The triple $ (P(a,b,c), \eta, M_{a,b,c}(q) )$ exhibits a cyclic sieving phenomenon.
\end{thm}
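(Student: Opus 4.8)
The plan is to deduce the cyclic sieving phenomenon from the cluster duality of $\Gr_a(n)$ established above (Theorems \ref{main.3.1.152} and \ref{main2}): I would realize $\big(P(a,b,c),\eta\big)$ as a level set of the tropicalized cyclic rotation on $\dconf_n^\times(a)$, transport everything through the $\theta$-basis of $\mathcal{O}\big(\dGr_a(n)\big)$, and apply the sieving criterion of Reiner--Stanton--White \cite{RSW}. First I realize $P(a,b,c)$ by tropical points. By Theorem \ref{main2}, $\{\theta_q\mid \mathcal{W}^t(q)\ge 0\}$ is a basis of $\mathcal{O}\big(\dGr_a(n)\big)\cong\bigoplus_{c\ge 0}V_{c\omega_a}$ compatible with the $\GL_n$-grading and with the torus-weight grading there. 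Fixing the degree $c$, I would write the tropical potential $\mathcal{W}^t=\sum_i\vartheta_i^t$ together with the degree and weight constraints in a convenient (``rectangular'') Poisson cluster of $\dconf_n^\times(a)$, and check that in those coordinates the conditions $\vartheta_i^t(q)\ge 0$ become exactly the defining inequalities of Definition \ref{P(a,b,c)} --- weak decrease of $\pi$ along rows and columns together with the boundary convention $\pi_{0,j}=\pi_{i,0}=c$, $\pi_{a+1,j}=\pi_{i,b+1}=0$. This gives a bijection $\Phi$ from $P(a,b,c)$ onto the set of tropical points $q$ with $\mathcal{W}^t(q)\ge 0$ that index the basis of the degree-$c$ summand $V_{c\omega_a}$; as a consistency check, $\#P(a,b,c)=\dim V_{c\omega_a}$ by the classical correspondence between plane partitions in an $a\times b\times c$ box and semistandard tableaux of shape $(c^a)$ with entries in $\{1,\dots,n\}$, a refinement of MacMahon's formula.

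Next I would identify $\eta$ with the tropicalized rotation. The birational lifts of the toggles $\tau_{i,j}$ are cluster mutations (this is precisely what the Musiker--Toby construction and the birational Sch\"utzenberger involution cited after \eqref{toggle} provide), and I would verify that, in the rectangular cluster above, the composite $\eta=\nu_b\circ\cdots\circ\nu_1$, taken from the bottom row upward and from the leftmost column rightward, is the mutation sequence whose cluster transformation is one step of the cyclic rotation $R$ of $\dconf_n^\times(a)$. Equivalently, under the dictionary $R\leftrightarrow C_a$ and the cluster-modular-group equivariance of the $\theta$-basis (so $C_a\theta_q=\theta_{R^t(q)}$), $\Phi$ conjugates $\eta$ to the permutation of the degree-$c$ basis of $V_{c\omega_a}$ induced by $C_a$. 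Since $R$ rotates $n$ cyclically ordered lines, $R^n=\mathrm{id}$, hence $\eta^n=\mathrm{id}$; exhibiting one $\eta$-orbit of size exactly $n$ then shows $\eta$ has order $n$.

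Finally I run the sieving criterion. As $\eta$ permutes $P(a,b,c)$, $\#P(a,b,c)^{\eta^d}=\mathrm{tr}\!\left(\eta^d\mid \mathbb{C}\,P(a,b,c)\right)$, and by the previous step this $\langle\eta\rangle$-module is isomorphic to $V_{c\omega_a}$ with $\eta$ acting through the order-$n$ cyclic group $\langle C_a\rangle$. It remains to compute $\mathrm{tr}\!\left(C_a^d\mid V_{c\omega_a}\right)$ and match it with $M_{a,b,c}(\zeta^d)$. On $V_{c\omega_a}$, which is the degree-$c$ part of the Pl\"ucker coordinate ring $\mathcal{O}\big(\dGr_a(n)\big)$, the rotation $C_a$ acts with the character of a twisted $n$-cycle in $\GL_n$ with eigenvalues $\mu\zeta^0,\dots,\mu\zeta^{n-1}$ ($\mu^n=(-1)^{a-1}$), so by the Weyl character formula $\mathrm{tr}\!\left(C_a^d\mid V_{c\omega_a}\right)=s_{(c^a)}\!\left(\mu^d\zeta^0,\dots,\mu^d\zeta^{(n-1)d}\right)$. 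Combining the principal-specialization (hook--content) identity for $s_{(c^a)}(1,q,\dots,q^{n-1})$ with the telescoped MacMahon product $M_{a,b,c}(q)=\prod_{i=1}^{a}\prod_{j=1}^{b}\frac{[\,i+j+c-1\,]_q}{[\,i+j-1\,]_q}$ and the symmetry $M_{a,b,c}=M_{a,c,b}$ gives $s_{(c^a)}(1,\zeta^d,\dots,\zeta^{(n-1)d})=\zeta^{\,dc\binom{a}{2}}M_{a,b,c}(\zeta^d)$; the homogeneity factor $\mu^{dca}$ then cancels the monomial prefactor exactly when $\mu^{ca}\zeta^{c\binom{a}{2}}=1$, i.e.\ when $\mu=\zeta^{-(a-1)/2}$ --- the normalization dictated by the cluster construction. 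Hence $\#P(a,b,c)^{\eta^d}=M_{a,b,c}(\zeta^d)$ for every $d\ge 0$, which is the desired CSP.

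The main obstacle I expect is the middle step: identifying the right (rectangular) seed and proving that the prescribed bottom-to-top, left-to-right order of piecewise-linear toggles is a mutation sequence realizing one full cyclic rotation of $\dconf_n^\times(a)$. The final character computation should be routine, the only delicate point being the scalar bookkeeping that pins down $\mu=\zeta^{-(a-1)/2}$, so that the trace equals $M_{a,b,c}(\zeta^d)$ on the nose rather than a root-of-unity multiple of it.
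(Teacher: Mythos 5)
Your proposal follows essentially the same route as the paper: (i) realize the theta-basis of $V_{c\omega_a}$ via the tropicalized potential, using the Gelfand--Zetlin-type change of coordinates to identify the superpotential cone with plane partitions; (ii) show the toggling sequence $\eta$ computes the tropicalized rotation $R$, so that the bijection is $\langle C_a\rangle$-equivariant; (iii) deduce the CSP by computing $\mathrm{Tr}(C_a^d\mid V_{c\omega_a})$. Steps (i) and (ii) coincide with the paper's Propositions \ref{PQ bijection}, \ref{prof1500}, Lemmas \ref{mutation L}, \ref{lemma.tog.cyc}, and Theorem \ref{main9}.

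The one genuine variation is in step (iii). You diagonalize $C_a$, invoke the Weyl character formula to get $s_{(c^a)}$ at the eigenvalues, and then appeal to the principal-specialization / hook-content identity and MacMahon symmetry to convert that Schur specialization into $M_{a,b,c}(\zeta^d)$. The paper instead exploits the refined weight-space compatibility it has already established (Corollary \ref{weight decomposition} and Proposition \ref{prof1500}, via the tropical weight map $M^t$), so the trace of $C_a^d$ is read off directly from $\sum_\pi q^{|\pi|}$ plus a single monomial correction $\zeta^{-(a-1)ac/2}$. Both are correct and yield the same $\mu=\zeta^{-(a-1)/2}$ normalization; the paper's route avoids the separate classical identity at the cost of proving the extra weight-compatibility statement, which it needs anyway for Theorem \ref{main2}. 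One small imprecision worth noting: the individual piecewise-linear toggles $\tau_{i,j}$ are \emph{not} single cluster mutations --- the mutation sequence $\rho$ has $(a-1)(b-1)$ mutations while $\eta$ has $ab$ toggles; the correspondence holds only after the monomial change of variables to the $L_{i,j}$, and it is the whole sequence $\eta$ that matches the cluster transformation of $R$, not each toggle. This is exactly what the paper's Lemma \ref{mutation L} and Lemma \ref{lemma.tog.cyc} establish, and it is the technical step you correctly identify as the crux.
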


The proof of Theorem \ref{main1} involves representations of ${\rm GL}_n$. Recall that irreducible finite dimensional representations of ${\rm GL}_n$ are parametrized by weakly decreasing sequences, $\mu_1\geq\dots\geq\mu_n$, with all $\mu_i\in \mathbb{Z}$. The $a$th fundamental weight $\omega_a:=(\mu_1, \dots, \mu_n)$ takes the form $\mu_1=\dots=\mu_a=1$ and $\mu_{a+1}=\dots= \mu_n=0$. Let $V_{c\omega_a}$ be the irreducible representation of $\GL_n$ with $c\omega_a$ as its highest weight. We define the \emph{twisted cyclic rotation} on $V_{c\omega_a}$ to be the action of
\begin{equation}\label{cyclic.twist.ration}
C_a:=\begin{pmatrix} 0 & (-1)^{a-1} \\ \text{Id}_{n-1} & 0\end{pmatrix}\in \GL_n.
\end{equation}
Theorem \ref{main1} is then an easy consequence of the following result.

\begin{thm}\label{main9} The basis of $V_{c\omega_a}$ obtained in Theorem~{\rm \ref{main2}} is in natural bijection with $P(a,b,c)$. The bijection is equivariant with respect to the twisted cyclic rotation $C_a$ on $V_{c\omega_a}$ and the sequence of toggles $\eta$.
\end{thm}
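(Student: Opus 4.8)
The plan is to identify both sides with the same combinatorial gadget, namely lattice points of the cone cut out by the tropicalized potential $\mathcal{W}^t \geq 0$ in the tropical set of $\dconf_n^\times(a)$, and then match the two cyclic actions through the cluster–theoretic incarnation of each. By Theorem \ref{main2}, the basis of $V_{c\omega_a} \subset \mathcal{O}(\dGr_a(n))$ is indexed precisely by the tropical points $q$ of $\dconf_n^\times(a)$ with $\mathcal{W}^t(q) \geq 0$ and weight $M^t(q) = c\omega_a$ (the weight grading coming from the torus $T \subset \GL_a$ together with the overall $\GL_n$-degree $c$). So the first step is to choose a convenient $K_2$-cluster (equivalently, a Poisson cluster on the dual side) — the standard one coming from a rectangular $\Gr_a(n)$ seed — and write out the tropicalized Plücker coordinates and the tropicalized potential explicitly in those coordinates. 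In that chart the conditions $\mathcal{W}^t(q)\geq 0$ together with $M^t(q)=c\omega_a$ become a system of linear inequalities on an integer vector, and I would check by a direct change of variables that this polytope of lattice points is affinely isomorphic to $P(a,b,c)$: the $a\times b$ array of cluster $\mathscr{X}$-coordinates gets sent to the $a\times b$ plane-partition entries $\pi_{i,j}$, the monotonicity (weakly decreasing along rows and columns) coming from the frozen theta-function terms $\vartheta_i$ making up $\mathcal{W}$, and the bound $\pi_{1,1}\leq c$ coming from the $\GL_n$-degree constraint. This is essentially the Rietsch–Williams Newton–Okounkov polytope description of $\Gr_a(n)$, repackaged through our $\dconf_n^\times(a)$, and I would cite the equivalence with \cite{RW} established earlier in the paper rather than reprove it.

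The second step is to track the cyclic rotation. On the cluster Poisson side the relevant symmetry is the rotation map $R$ on $\dconf_n^\times(a)$, which is an element of the cluster modular group $\mathcal{G}$; on the dual side $R$ corresponds (under the dictionary in the introduction) to the twisted cyclic rotation $C_a$ acting on $\dGr_a^\times(n)$, hence on $\mathcal{O}(\dGr_a(n))$ and on each $V_{c\omega_a}$. Because the basis of Theorem \ref{main.3.1.152} is $\mathcal{G}$-equivariant, the action of $C_a$ on the basis of $V_{c\omega_a}$ is carried, bijectively, to the action of the tropicalization $R^t$ on the set $\{q : \mathcal{W}^t(q)\geq 0,\ M^t(q)=c\omega_a\}$. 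So it remains to compute $R^t$ in the chosen chart and verify that, under the affine isomorphism of Step 1 with $P(a,b,c)$, it becomes exactly the composite toggle $\eta = \nu_b\circ\cdots\circ\nu_1$. The natural way to do this is to factor $R$ as a sequence of mutations (the standard path of flips realizing a one-step cyclic rotation of the $n$ marked lines), tropicalize each mutation — each becomes a piecewise-linear max-plus transformation — and identify each elementary tropical mutation with a single toggle $\tau_{i,j}$; the bottom-to-top, left-to-right order of the mutation sequence should reproduce the prescribed order $\nu_j = \tau_{1,j}\circ\cdots\circ\tau_{a,j}$. The convention $\pi_{0,j}=\pi_{i,0}=c$, $\pi_{a+1,j}=\pi_{i,b+1}=0$ in \eqref{toggle} will correspond precisely to the frozen-variable boundary conditions in the cluster seed.

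Finally, once $R^t = \eta$ under the identification, the order statement in Theorem \ref{main1} — that $\eta$ has order $n$ on $P(a,b,c)$ — follows because $C_a^n$ acts on $V_{c\omega_a}$ as a scalar (indeed $C_a^n = (-1)^{(a-1)b}\,\mathrm{Id}$, so $C_a^n$ is a scalar and hence trivial on the projectivized basis / on the index set), so $R^t$ has order dividing $n$; equality of the order with $n$ is seen by exhibiting a single plane partition not fixed by any proper power, or equivalently from the fact that $C_a$ already has order $n$ in $\PGL_n$. I expect the main obstacle to be Step 2: pinning down the exact mutation sequence that realizes $R$ and checking, toggle by toggle, that its tropicalization matches \eqref{toggle} on the nose — in particular getting the signs, the $\max/\min$ pairing of the four neighbors, and the boundary conventions to line up — rather than merely up to an affine symmetry of the polytope. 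Steps 1 and 3 are, by contrast, bookkeeping on top of results already in hand (Theorems \ref{main.3.1.152} and \ref{main2}, the equivalence with \cite{RW}, and MacMahon's formula together with the $q$-analogue of Theorem \ref{main2}'s weight decomposition, which upgrades the bijection of Theorem \ref{main9} to the CSP polynomial identity of Theorem \ref{main1}).
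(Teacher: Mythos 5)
Your Steps~1 and~3 line up with what the paper does: the basis of $V_{c\omega_a}$ is indexed by $Q(a,b,c)=\{q:\mathcal W^t(q)\ge 0,\ P^t(q)=c\}$, a change of variables identifies this with $P(a,b,c)$, and the order statement falls out at the end. But you should be aware that the change of variables is not ``tropicalized cluster $\mathscr X$-coordinates $\mapsto$ plane-partition entries''; the $x_{i,j}$ are \emph{not} the $\pi_{i,j}$. The paper first passes from the cluster Poisson coordinates $X_{i,j}$ to the \emph{Gelfand--Zetlin coordinates} $L_{i,j}=\prod_{k\ge i,\,l\ge j}X_{k,l}$ (a monomial transformation), and only the tropical $l_{i,j}$ are the plane-partition entries (Proposition~\ref{PQ bijection}). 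This is more than bookkeeping; it is what makes the potential inequalities become the row/column monotonicity on the nose.

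Step~2 is where your plan genuinely breaks. You propose to factor $R$ into a sequence of cluster mutations, tropicalize mutation by mutation, and match each one with a single toggle $\tau_{i,j}$. That cannot work as stated, for two reasons. First, the counts don't match: the rotation $\rho$ mutates only at unfrozen vertices, i.e.\ $(a-1)(b-1)$ mutations, whereas $\eta$ is a composite of $ab$ toggles, one at every entry. Second, and more fundamentally, the mutations act on the $X$-coordinates while the toggles act on the GZ coordinates $L_{i,j}$, and the monomial transformation between them is not local: a single $X$-mutation at $(k,l)$ changes every $L_{i,j}$ with $i\le k$, $j\le l$. So ``one mutation $=$ one toggle'' is false before tropicalization, and the mismatch does not go away after tropicalizing. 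What the paper actually does (Lemmas~\ref{L}, \ref{mutation L}, \ref{lemma.tog.cyc}) is express $p^*(L_{i,j})$ in Pl\"ucker coordinates, use Pl\"ucker relations to prove the \emph{birational} toggle recursion
\[
L'_{i,j}=\frac{(L'_{i,j-1}+L_{i-1,j})\,L'_{i+1,j}L_{i,j+1}}{L_{i,j}(L'_{i+1,j}+L_{i,j+1})}
\]
directly (with $L'=R^*L$), observe that the bottom-to-top, left-to-right ordering in $\eta$ is precisely what makes this a well-defined recursion, and then tropicalize once at the end to recover the piecewise-linear toggle \eqref{toggle}. To repair your outline you would need to replace the mutation-by-mutation matching with some version of this GZ-coordinate/Pl\"ucker-relation argument, or otherwise explain why the composite of $(a-1)(b-1)$ tropical $X$-mutations, pushed through the monomial change of variables, reassembles into the $ab$-step toggle sequence $\eta$ --- which is exactly the content the paper proves and which is not evident from the mutation formula alone.
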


After the first version of this paper was posted on arXiv, Hopkins \cite{Ho} proved that our Theorem \ref{main1} is equivalent to a result of Rhoades \cite[Theorem~1.4]{Rh}. In detail, this paper investigates the model of plane partitions under toggles, while Rhoades studied the model of semistandard tableaux under promotion. In Appendix~A of~\cite{Ho}, these two models are shown to be equivariantly equivalent to each other. However, we would like to emphasize that our approach uses Fock--Goncharov's cluster duality of Grassmannian, which is new and is significantly different from Rhoades's approach of using Kazhdan--Lusztig theory. In particular, we present a geometric interpretation of the plane partitions under toggles, i.e., they are the tropicalization of decorated configurations under rotation. It follows immediately from the latter that the order of the toggle action is~$n$. In general, the framework of cluster duality can potentially be applied to proving CSP involving other types of cluster varieties.

\section{Main definitions}\label{prelim}
\subsection{Decorated Grassmannian} \label{sec 2.1}
Let $V$ be an $n$-dimensional vector space and let $V^*$ be its dual.

\begin{defn} The \emph{decorated Grassmannian} $\dGr_a (V^* )$ is a space which parametrizes the pairs $ (W^*,f^*)$, where $W^*$ is an $a$-dimensional subspace of $V^*$ and $f^*\in \bigwedge^a W^*$ is a non-zero $a$-form.
\end{defn}

These non-zero $a$-forms $f^*$ are also known as the \emph{decomposable} elements of the exterior power $\bigwedge^aV^*$ because they can be written as a single exterior product. Therefore decorated Grassmannian $\dGr_a(V^*)$ naturally sits inside the exterior power $\bigwedge^aV^*$ as a quasi-affine variety.

There is a free right $\mathbb{G}_m$-action on $\dGr_a (V^* )$ defined via rescaling the $a$-forms
\begin{equation}\label{torus action1044}
 (W^*,f^* ) . t := (W^*,tf^* ).
\end{equation}
This action coincides with the rescaling action on $\bigwedge^aV^*$. If we projectivize the inclusion of the decorated Grassmannian $\dGr_a(V^*)\hookrightarrow \bigwedge^aV^*$ with respect to this action, we recover the Pl\"{u}cker embedding of the ordinary Grassmannian $\Gr_a (V^* )\hookrightarrow \mathbb{P}\big(\bigwedge^aV^*\big)$.
Recall that the affine cone of the ordinary Grassmannian $\Gr_a(V^*)$ in
its Pl\"{u}cker embedding is the affine subvariety of decomposable $a$-forms in $\bigwedge^a V^*$.
Therefore $\dGr_a(V^*)$ is isomorphic to the affine cone deleting the point $0\in \bigwedge^aV^*$.
This is why, throughout the paper, we sometimes refer to the ring of regular functions $\mathcal{O} (\dGr_a(V^*) )$ as a homogeneous coordinate ring of the ordinary Grassmannian~$\Gr_a(V^*)$.

The right $\mathbb{G}_m$-action on $\dGr_a(V^*)$ induces a left $\mathbb{G}_m$-action on $\mathcal{O} (\dGr_a(V^*) )$. Irreducible representations of $\mathbb{G}_m$ are 1-dimensional and are classified by integers. Denote by $\mathcal{O} (\dGr_a(V^*) )_c$ the eigenspace in $\mathcal{O} (\dGr_a(V^*) )$ that is of weight $c$ with respect to the $\mathbb{G}_m$-action.

By the natural pairing between $\bigwedge^aV^*$ and $\bigwedge^aV$, every $g\in \bigwedge^aV$ gives rise to a regular function
\[
\Delta_g\colon \ \dGr_a(V^*)\longrightarrow \mathbb{A}^1, \qquad
\Delta_g (W^*,f^* ):= \inprod{f^*}{g}.
\]
The ring $\mathcal{O} (\dGr_a(V^*) )$ is generated by $\Delta_g$ modulo certain homogeneous relations called \emph{Pl\"{u}cker relations}~(\eqref{Plucker relation}; see also, e.g., \cite[p.~211]{GH} for more details). Note that under the $\mathbb{G}_m$-action, we have
\begin{equation*}
 (t.\Delta_g ) (W^*,f^* )=\Delta_g (W^*,tf^* )=\Delta_{tg} (W^*,f^* )= t \Delta_g (W^*,f^* ).
\end{equation*}
Since the Pl\"{u}cker relations are homogeneous, the weight space decomposition of $\mathcal{O} (\dGr_a(V^*) )$ with respect to the $\mathbb{G}_m$-action coincides with the decomposition into subspaces of homogeneous degrees of the generators~$\Delta_g$. Therefore we conclude the following statement.

\begin{prop} \label{18.3.12.3.35}
The map $g\mapsto \Delta_g$ is an isomorphism between $\bigwedge^aV$ and $\mathcal{O}\left(\dGr_a(V^*)\right)_1$, and
\begin{equation}\label{grad.gr}
\mathcal{O} (\dGr_a(V^*) )=\bigoplus_{c\geq 0}\mathcal{O} (\dGr_a(V^*) )_c.
\end{equation}
\end{prop}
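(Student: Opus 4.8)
The plan is to prove the two assertions separately, since the grading statement \eqref{grad.gr} follows formally once we know $\mathcal{O}(\dGr_a(V^*))$ is generated by the degree-one functions $\Delta_g$.

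\emph{Step 1: $g\mapsto\Delta_g$ is an isomorphism onto $\mathcal{O}(\dGr_a(V^*))_1$.} The map is linear in $g$ by bilinearity of the pairing $\langle\ ,\ \rangle\colon\bigwedge^aV^*\times\bigwedge^aV\to\mathbb{A}^1$, and each $\Delta_g$ is visibly regular since it is the restriction to $\dGr_a(V^*)$ of a linear functional on the ambient space $\bigwedge^aV^*$. The computation \eqref{torus action} shows $\Delta_g$ has $\mathbb{G}_m$-weight $1$, so the map lands in the eigenspace $\mathcal{O}(\dGr_a(V^*))_1$. For injectivity: if $\Delta_g\equiv 0$ on $\dGr_a(V^*)$, then $\langle f^*,g\rangle=0$ for every decomposable $f^*$; since decomposable $a$-forms span $\bigwedge^aV^*$ and the pairing is perfect, $g=0$. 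Surjectivity onto $\mathcal{O}(\dGr_a(V^*))_1$ is the one point requiring a little care, and I would deduce it from Step 2 together with the weight bookkeeping: a weight-one regular function is in particular regular on the affine cone $\widehat{\Gr}_a(V^*)=\dGr_a(V^*)\cup\{0\}$ — a function on $\dGr_a(V^*)$ extends over the origin automatically because the complement of $\dGr_a(V^*)$ in the affine cone is the single point $0$, which has codimension $\geq 2$ in the cone as soon as $\dim\bigwedge^aV^*\geq 2$, hence by normality (the cone over the Grassmannian is normal, being defined by the Plücker ideal which is prime and the Grassmannian being smooth) the function extends, and a weight-one function on the affine cone is exactly a linear form, i.e. an element of $(\bigwedge^aV^*)^*\cong\bigwedge^aV$.

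\emph{Step 2: the grading \eqref{grad.gr}.} Here I would argue that $\mathcal{O}(\dGr_a(V^*))$ is generated as a ring by $\{\Delta_g\}_{g\in\bigwedge^aV}$, which is asserted in the paragraph preceding the proposition: indeed $\dGr_a(V^*)$ is the affine cone over $\Gr_a(V^*)$ with the vertex removed, its coordinate ring equals that of the full affine cone (again by the codimension-$2$/normality argument above), and the coordinate ring of the affine cone is the homogeneous coordinate ring of $\Gr_a(V^*)$ in the Plücker embedding, which by definition is generated by the Plücker coordinates — these are precisely the $\Delta_g$ for $g$ ranging over a basis of decomposable vectors, or all of $\bigwedge^aV$ by linearity. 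Since each generator $\Delta_g$ is homogeneous of weight $1$ for the $\mathbb{G}_m$-action, the whole ring is $\mathbb{Z}_{\geq 0}$-graded with $\mathcal{O}(\dGr_a(V^*))_c$ spanned by degree-$c$ monomials in the $\Delta_g$; in particular only non-negative weights occur and the direct sum decomposition \eqref{grad.gr} follows.

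\emph{Main obstacle.} The only non-formal input is the normality/extension argument identifying $\mathcal{O}(\dGr_a(V^*))$ with the homogeneous coordinate ring of $\Gr_a(V^*)$ — equivalently, that deleting the cone point does not change global functions and that weight-one functions are linear. This is standard (Grassmannians are projectively normal, arithmetically Cohen–Macaulay), and the excerpt has effectively already granted it by stating that $\dGr_a(V^*)$ "is isomorphic to the affine cone deleting the point $0$" and that $\mathcal{O}(\dGr_a(V^*))$ "is generated by $\Delta_g$." Given that, the proposition is a short formal consequence: Step 1 is a perfect-pairing argument, Step 2 is homogeneity of the generators.
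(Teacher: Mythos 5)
Your proposal is correct and takes the same route the paper intends: the paper gives no explicit proof of this proposition, but simply asserts in the sentences preceding it that $\mathcal{O}\left(\dGr_a\left(V^*\right)\right)$ is generated by the $\Delta_g$ and then observes via the weight computation \eqref{torus action} that each generator has weight~$1$, so your normality/codimension and perfect-pairing arguments fill in exactly the justification the paper takes for granted. One small imprecision worth correcting: for the Hartogs-type extension over the cone point, the relevant hypothesis is that the origin have codimension at least~$2$ in the cone itself, i.e.\ $\dim\dGr_a\left(V^*\right)=a(n-a)+1\geq 2$ (which holds since $1\leq a\leq n-1$), rather than $\dim\bigwedge^aV^*\geq 2$; the two happen to coincide in this range, but it is the dimension of the cone, not of the ambient space, that governs whether deleting the vertex changes the ring of global regular functions.
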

The group $\GL(V)$ acts on $V$ as well as its dual space $V^*$, and hence on the decorated Grassmannian~$\dGr_a(V^*)$ and on the ring $\mathcal{O} (\dGr_a(V^*) )$. In particular, the irreducible representation~$V_{\omega_a}$ of~${\rm GL}(V)$ can be constructed as $\bigwedge^a V$ (see, e.g., \cite[Lecture~15]{FH}) and hence $V_{\omega_a}$ is isomorphic to~$\mathcal{O} (\dGr_a(V^*) )_1$ by Proposition~\ref{18.3.12.3.35}.
 In general, by the Borel--Weil theorem, equation~\eqref{grad.gr} is the decomposition of $\mathcal{O} (\dGr_a(V^*) )$ into irreducible representations of $\GL(V)$ with
\begin{equation*}
\mathcal{O} (\dGr_a(V^*) )_c\cong V_{c\omega_a}.
\end{equation*}

From now on, we fix a basis $ \{e_1,\dots, e_n \}$ of $V$ and identify $V$ with the vector space $\mathds{k}^n$, where~$\mathds{k}$ is the base field. We abbreviate $\Gr_a(V^*)$ to $\Gr_a(n)$ and $\dGr_a(V^*)$ to $\dGr_a(n)$. Every $a$-element subset $I= \{i_1,\dots, i_a \}\in \binom{[n]}{a}$ gives rise to a regular function $\Delta_I:=\Delta_{e_I}$ on $\dGr_a(n)$, where~$e_I$ denotes the wedge product of vectors $e_{i_1},\dots, e_{i_a}$ taken in ascending order (e.g., $\Delta_{\{5,7,4\}}:=\Delta_{e_4\wedge e_5\wedge e_7}$). The functions $\Delta_I$ are also known as the \emph{Pl\"{u}cker coordinates}.

The Pl\"{u}cker coordinates of $\dGr_a(n)$ satisfy a set of homogeneous relations called \emph{Pl\"{u}cker relations}, and they are generated by 3-term homogeneous quadratic equations of the following form: for any $(a-2)$-element subset $J\subset \{1,\dots, n\}$ and any four distinct elements $i,j,k,l\in \{1,\dots, n\}\setminus J$ with $i<j<k<l$, the corresponding Pl\"{u}cker relation is
\begin{equation}\label{Plucker relation}
\Delta_{J\cup\{i,j\}}\Delta_{J\cup\{k,l\}}+\Delta_{J\cup \{i,l\}}\Delta_{J\cup \{j,k\}} = \Delta_{J\cup \{i,k\}}\Delta_{J\cup \{j,l\}}.
\end{equation}

Let $D_i$ be the vanishing locus of the Pl\"{u}cker coordinate $\Delta_{\{i,i+1,\dots, {i+a-1}\}}$ (indices taken modulo~$n$). Let $\dGr_a^\times (n)$ denote the complement of $D:=\cup_{i=1}^nD_i$.
Since $\dGr_a^\times (n)\subset \dGr_a(n)$, we have $\mathcal{O}\left(\dGr_a(n)\right)\subset \mathcal{O}\big(\dGr_a^\times (n)\big)$.
The image of $D$ under the projection from $\dGr_a(n)$ to $\Gr_a(n)$ is an anticanonical divisor of $\Gr_a(n)$ \cite[equation~(19.3)]{MR}. We denote the complement of~$D$ by~$\Gr_a^\times(n)$.

 Let $\mat_{a,n}^\times$ be the space of $a\times n$ matrices with column vectors $v_i$ such that every collection $\{v_i,v_{i+1},\dots, v_{i+a-1}\}$ of $a$-many cyclically consecutive column vectors is linearly independent. The group ${\rm SL}_a$ acts freely on $\mat_{a,n}^\times$ by matrix multiplication on the left.

 \begin{lem} \label{gr isomorphic to conf of column vectors} The space $\dGr_a^\times(n)$ is canonically isomorphic to the quotient space $\SL_a  \backslash \mat_{a,n}^\times $ as algebraic varieties.
 \end{lem}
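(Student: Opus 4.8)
The plan is to construct an explicit map in each direction and check that they are mutually inverse. Recall that a point of $\dGr_a^\times(n)$ is a pair $(W^*, f^*)$ with $W^*\subset V^*$ an $a$-dimensional subspace, $f^*\in\bigwedge^a W^*$ nonzero, and all cyclically consecutive Plücker coordinates $\Delta_{\{i,\dots,i+a-1\}}(W^*,f^*)$ nonvanishing. A matrix $g\in\mat_{a,n}^\times$ has rows that span an $a$-dimensional subspace of $(\mathds{k}^n)^*=V^*$ (the consecutive-minors condition in particular forces the rows to be linearly independent), so rows of $g$ determine a subspace $W^*\subset V^*$; and the ordered rows of $g$ determine a generator $f^*$ of $\bigwedge^a W^*$, namely the wedge of the rows in order. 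This assignment $g\mapsto (W^*, f^*)$ is manifestly $\SL_a$-invariant, since left multiplication by $h\in\SL_a$ replaces $f^*$ by $(\det h)\,f^* = f^*$ and leaves $W^*$ unchanged; it therefore descends to a map $\Phi:\SL_a\backslash\mat_{a,n}^\times\to \dGr_a(n)$. One checks the image lands in $\dGr_a^\times(n)$: the Plücker coordinate $\Delta_I(W^*,f^*)$ is exactly the $a\times a$ minor of $g$ on columns $I$, which is nonzero for $I$ a cyclic interval precisely by the definition of $\mat_{a,n}^\times$.

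For the reverse direction, given $(W^*,f^*)\in\dGr_a^\times(n)$, choose any basis $w_1,\dots,w_a$ of $W^*$ with $w_1\wedge\cdots\wedge w_a = f^*$; writing each $w_k$ in the coordinates dual to $\{e_1,\dots,e_n\}$ gives an $a\times n$ matrix $g$. Two such choices of ordered basis differ by an element of $\SL_a$ (the condition $w_1\wedge\cdots\wedge w_a=f^*$ pins down the determinant of the change of basis to be $1$), so the class $[g]\in\SL_a\backslash\mat_{a,n}^\times$ is well-defined; that $g\in\mat_{a,n}^\times$ again follows because the consecutive maximal minors of $g$ equal the nonvanishing consecutive Plücker coordinates. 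This gives $\Psi:\dGr_a^\times(n)\to\SL_a\backslash\mat_{a,n}^\times$, and $\Phi\circ\Psi$, $\Psi\circ\Phi$ are both the identity by construction. Finally one notes $\Phi$ and $\Psi$ are morphisms of varieties: $\Phi$ is induced by the polynomial map sending $g$ to its tuple of maximal minors (the Plücker coordinates of the image), and $\Psi$ is a morphism because the quotient $\SL_a\backslash\mat_{a,n}^\times$ carries its natural structure and locally, on the chart where a fixed cyclic-interval minor is invertible, one can normalize $g$ so that columns $\{i,\dots,i+a-1\}$ form the identity block, expressing the class of $g$ as a regular function of the Plücker coordinates.

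The only genuine subtlety — and the step I would write out most carefully — is the determinant bookkeeping that makes the $\SL_a$-quotient (rather than a $\GL_a$-quotient) the correct object: it is precisely the decoration $f^*$, i.e. the choice of volume form on $W^*$, that rigidifies the ambiguity in choosing a spanning matrix from "$\GL_a$ worth" down to "$\SL_a$ worth." Everything else is a routine identification of Plücker coordinates with maximal minors and a check that the constructed maps are algebraic; the freeness of the $\SL_a$-action (already asserted in the statement) guarantees the quotient is a variety of the expected dimension $a(n-a)+\,$... ($\dim\mat_{a,n}^\times - \dim\SL_a = an - (a^2-1)$), matching $\dim\dGr_a^\times(n) = \dim\dGr_a(n) = a(n-a)+1$.
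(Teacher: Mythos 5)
Your argument is correct and is essentially the paper's proof: both constructions use the dual surjection $V\to W=(W^*)^*$ induced by $W^*\subset V^*$ to produce an $a\times n$ matrix (your rows $w_k$ are a volume-normalized basis of $W^*$; the paper's columns $v_i=\pi(e_i)$ are the same matrix read columnwise), both observe that the decoration $f^*$ rigidifies the $\GL_a$-ambiguity to an $\SL_a$-ambiguity, and both match the consecutive Pl\"ucker coordinates with consecutive maximal minors. You simply supply a bit more detail on regularity of the two maps, which the paper leaves as ``easy to see.''
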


 \begin{proof} Let $(W^*, f^*)\in \dGr_a^\times(n)$ and let $W$ be the dual space of $W^*$. Every subspace \mbox{$W^*\subset (\mathds{k}^n)^\ast$} naturally induces a surjection $\pi\colon \mathds{k}^n\rightarrow W$. Let $v_i:= \pi(e_i)$ be the image of the basis element~$e_i$ under $\pi$. The coordinate $\Delta_{i+1,\dots, i+a}\neq 0$ is equivalent to the linear independence of $\{v_{i+1}, \dots, v_{i+a}\}$. Up to the action of ${\rm SL}_a$ on $\mathds{k}^a$, there is a unique choice of linear isomorphisms from $W$ to $\mathds{k}^a$ whose induced pull-back map maps the standard $a$-form on $\mathds{k}^a$ (which is an element of $\bigwedge^a(\mathds{k}^a)^*$) to the $a$-form $f^*$ on $W^*$. Hence we get a configuration in ${\rm SL}_a \backslash \mat_{a,n}^\times$. It is easy to see that such a map is bijective.
 \end{proof}

 \begin{rmk} Under the above isomorphism, the coordinates $\Delta_I$ are identified with the minors of $I$-columns in an $a\times n$ matrix.
 \end{rmk}

Let $T=(\mathbb{G}_m)^n$ be the maximal torus of $\GL_n$ consisting of invertible diagonal matrices. It acts on the right of $\mat_{a,n}^\times$ by rescaling the column vectors $v_1, \dots, v_n$. Since $\dGr_a^\times(n)\cong \SL_a \backslash \mat_{a,n}^\times$, the $T$-action on $\mat_{a,n}^\times$ descends to a $T$-action on the decorated Grassmannian $\dGr_a^\times(n)$.

Define the linear transformation $C_a$ on $V$ such that
 \[
 C_a(e_i):= \begin{cases}e_{i-1} & \text{if $i\neq 1$},\\
(-1)^{a-1}e_n & \text{if $i=1$}.
\end{cases}
 \]
 It induces a twisted cyclic rotation on $\dGr_a(n)$ still denoted by $C_a$.

 To summarize, we obtain the following data
\begin{equation} \begin{cases}\text{the decorated grassmannian $\dGr_a^\times(n)$},\\
\text{the free $\mathbb{G}_m$-action on $\dGr_a^\times(n)$ by rescaling the $a$-form},\\
\text{the boundary divisor $D=\bigcup_i D_i$},\\
\text{the $T$-action on $\dGr_a^\times(n)$},\\
\text{the twisted cyclic rotation $C_a$ on $\dGr_a^\times(n)$}.
\end{cases}
\end{equation}

\subsection{Decorated configuration space}

Let $W$ be a vector space of dimension $a$.

\begin{defn} The \emph{configuration space} $\conf_n(W)$ parametrizes the $\PGL(W)$-orbits of $n$ many (not necessarily distinct) lines in $W$, i.e.,
\[
\conf_n(W):=\PGL(W)\left\backslash\left(\prod_n \mathbb{P}W\right)\right..
\]
As \looseness=-1 in Fig.~\ref{2017.5.9.6.24h}, a \emph{decorated configuration}\footnote{The definition of decorated configuration is motivated by an idea of A.B.~Goncharov on pinnings; see also \cite[Section~2.2]{GS3} for more details.} is a $\PGL(W)$-orbit of
$n$ lines in~$W$ together with linear isomorphisms $\phi_i\colon l_i\rightarrow l_{i-1}$ for each pair of neighboring lines. The \emph{decorated configuration space} is
\[
\dconf_n(W):=\PGL(W)\left\backslash\left\{
\begin{array}{@{}l@{}} \text{1-dimensional subspaces $l_1,\dots, l_n\subset W$}\\
\text{and linear isomorphisms $\phi_i\colon l_i\rightarrow l_{i-1}$}\end{array}\right\}.\right.
\]
We denote a decorated configuration as $[\phi_1, l_1, \dots, \phi_n, l_n]$.
We frequently omit the subscript of~$\phi_i$.
\begin{figure}[h!]\centering
\begin{tikzpicture}[scale=0.8, baseline=0ex]
\foreach \i in {1,...,6}
 {
 \node (\i) at (120-\i*60:1.5) [] {$\bullet$};
 \node at (120-\i*60:1.9) [] {$l_\i$};
 }
\foreach \i in {1,...,5}
 {
 \pgfmathtruncatemacro{\k}{\i+1};
 \draw [->] (\k) -- (\i);
 \node at (150-60*\i:1.6) [] {$\phi_\i$};
 }
\draw [->] (1) -- (6);
\node at (150:1.6)[] {$\phi_6$};
 \end{tikzpicture}
 \caption{A decorated configuration in $\dconf_6(W)$.} \label{2017.5.9.6.24h}
\end{figure}
 \end{defn}

Two vector spaces of the same dimension are isomorphic up to choices of bases. Since the action of $\PGL(W)$ has been quotiented out, the configuration spaces of $n$ lines in vector spaces of the same dimension are canonically isomorphic to each other. Therefore we may abbreviate $\conf_n(W)$ to $\conf_n(a)$. For the same reason, we may abbreviate $\dconf_n(W)$ to $\dconf_n(a)$.

Let $\conf^\times_n(a)$ be the subspace of $\conf_n(a)$ consisting of configurations $[l_1, \dots, l_n]$ such that every collection $\{l_{i+1},\dots, l_{i+a}\}$ of $a$-many cyclically consecutive lines is a linearly independent set of lines. The subspace $\dconf^\times_n(a)$ of $\dconf_n(a)$ is defined in the same way.

Let $[\phi_1, l_1,\dots, \phi_n, l_n]\in \dconf_n^\times(a)$.
Let us compose $\phi$ in anti-clockwise order as in Fig.~\ref{2017.5.9.6.24h}. Let $(-1)^{a-1}P$ be the $\mathbb{G}_m$-valued rescaling factor of the automorphism $\phi_{i+1}\circ \cdots \circ \phi_n\circ \phi_1\circ \cdots \circ \phi_i$ on~$l_i$. Note that $P$ is independent of the initial index $i$ chosen. We get a projection map called \emph{twisted monodromy}
\begin{equation}\label{def of P}
 P\colon \  \dconf_n^\times(a)\longrightarrow \mathbb{G}_m.
\end{equation}

\begin{prop}\label{affine} The space $\dconf^\times_n(a)$ is an affine variety of dimension $a(n-a)+1$.
\end{prop}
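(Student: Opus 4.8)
The plan is to realize $\dconf_n^\times(a)$ as an explicit affine variety by pinning down a canonical representative of each decorated configuration, in the spirit of Lemma \ref{gr isomorphic to conf of column vectors}. First I would lift the lines $l_i\subset W$ to nonzero vectors $w_i\in l_i$. The decoration data $\phi_i\colon l_i\to l_{i-1}$ then amounts to scalars $c_i$ with $\phi_i(w_i)=c_i w_{i-1}$, but of course the $w_i$ are only defined up to independent rescaling, which rescales the $c_i$ accordingly; what is invariant is the twisted monodromy $P$ (the product $\prod c_i$ up to the sign $(-1)^{a-1}$). The key normalization step: using the consecutive general position hypothesis, the first $a$ vectors $w_1,\dots,w_a$ form a basis of $W$, so after acting by $\PGL(W)$ we may assume $w_i=e_i$ for $1\le i\le a$; the residual stabilizer is the torus of diagonal matrices modulo scalars, which we can use to further normalize, say, $c_2=c_3=\dots=c_a=1$ and one more of the remaining scalars. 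What remains is genuinely an affine coordinate system: the entries of the vectors $w_{a+1},\dots,w_n$ expressed in the basis $e_1,\dots,e_a$, subject to the open conditions that consecutive $a$-subsets stay independent, together with the monodromy scalar $P\in\mathbb{G}_m$.

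The cleaner way to package this, which I would actually carry out, is to use the isomorphism $\dGr_a^\times(n)\cong \SL_a\backslash\mat_{a,n}^\times$ from Lemma \ref{gr isomorphic to conf of column vectors} together with the $T=(\mathbb{G}_m)^n$-action by column rescaling. A decorated configuration is the same data as a point of $\mat_{a,n}^\times$ up to $\SL_a$ on the left and the subtorus of $T$ that rescales all columns so that the cyclic product of decoration ratios is preserved — concretely $\dconf_n^\times(a)$ fibers over $\conf_n^\times(a)=\PGL_a\backslash\mat_{a,n}^\times/T$ with one-dimensional fibers recording $P$. So I would show
\[
\dconf_n^\times(a)\;\cong\;\SL_a\big\backslash\mat_{a,n}^\times\big/T_0,
\]
where $T_0\subset T$ is the codimension-one subtorus acting trivially on $P$ (i.e. the kernel of the product-of-scalars character), and then observe this is exactly $\dGr_a^\times(n)/T_0$. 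Since $\dGr_a^\times(n)$ is affine (it is $\mathrm{Spec}$ of the upper cluster algebra, being the complement of the divisor $D$ where all frozen Plücker coordinates are inverted — equivalently $\mathcal{O}(\dGr_a^\times(n))=\mathcal{O}(\dGr_a(n))[\Delta_{\{i,\dots,i+a-1\}}^{-1}]$, a localization of a finitely generated domain) and $T_0$ is a torus acting on it, the quotient is affine provided the action is free with the right geometric behavior: here freeness of the $T_0$-action follows because on $\dGr_a^\times(n)$ all the frozen Plücker coordinates are nonvanishing, so a diagonal matrix fixing the configuration must be scalar, and being in $T_0$ forces it to be trivial. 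The dimension count is then $\dim\dGr_a^\times(n)-\dim T_0 = (a(n-a)+1) - (n-1) + \dots$ — wait, I need to be careful: $\dim\mat_{a,n}^\times = an$, minus $\dim\SL_a = a^2-1$, minus $\dim T_0 = n-1$ gives $an - a^2 + 1 - n + 1 = a(n-a) - n + 2$, which is wrong, so the correct bookkeeping is that $\dGr_a^\times(n)$ has dimension $a(n-a)+1$ (cone over $\Gr_a(n)$, which has dimension $a(n-a)$) and then the $T$-action on the projectivized Grassmannian accounts for an $n-1$ dimensional orbit but here we quotient by only $T_0$ of dimension $n-1$... Let me instead directly parametrize: the normalized representative has $w_1,\dots,w_a$ fixed, $w_{a+1},\dots,w_n\in\mathds{k}^a$ free modulo the residual diagonal torus (dimension $a-1$ after modding $\SL_a$-stabilizer), giving $a(n-a) - (a-1)$ plus the monodromy parameter, plus the $a-1$ decoration scalars not yet normalized; reconciling these to land on $a(n-a)+1$ is the routine bookkeeping I would spell out.

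The main obstacle, and the only real content beyond bookkeeping, is \textbf{affineness} — quotients of affine varieties by torus actions are not affine in general (e.g. $\mathbb{A}^2\setminus 0$ by scaling). The saving feature must be exploited carefully: on $\dconf_n^\times(a)$ every "frozen" coordinate is invertible, so the action is not just free but the quotient map is a trivial (or at least locally trivial) torus bundle after inverting these functions, making the ring of invariants finitely generated. Concretely I expect to exhibit $\mathcal{O}(\dconf_n^\times(a))$ as the degree-zero part (with respect to the $T_0$-grading, or equivalently the subring of $T_0$-invariants) of $\mathcal{O}(\dGr_a^\times(n))$, and to prove finite generation either by producing an explicit finite set of invariant generators (ratios $\Delta_I/\Delta_J$ of Plücker coordinates of the same $T_0$-weight, together with $P$ and $P^{-1}$) or by invoking that $\dGr_a^\times(n)\to \dconf_n^\times(a)$ is a principal $T_0$-bundle in the Zariski topology — which holds precisely because the frozen coordinates trivialize it — so the total space being affine forces the base to be affine. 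This is also exactly where the analogy with Morier-Genoud–Ovsienko–Tabachnikov \cite{MOT} enters. Once affineness and the parametrization are in hand, the dimension $a(n-a)+1$ drops out of the coordinate count above, noting the extra $+1$ over $\conf_n^\times(a)$ is precisely the monodromy $\mathbb{G}_m$-factor.
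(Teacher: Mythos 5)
There is a genuine gap here, and you actually detect part of it yourself mid-proof but never fix it. Your ``cleaner way,'' identifying $\dconf_n^\times(a)$ with $\SL_a\backslash\mat_{a,n}^\times/T_0 = \dGr_a^\times(n)/T_0$ for a codimension-one subtorus $T_0\subset T$, cannot be right: $\dGr_a^\times(n)$ already has dimension $a(n-a)+1$ (equal to the target), so quotienting by any positive-dimensional torus drops the dimension. Your own bookkeeping ($an-(a^2-1)-(n-1)=a(n-a)-n+2$) confirms this, and the in-text correction trails off without producing a valid parametrization. The relationship you were reaching for is actually a product, not a quotient: the paper shows in Appendix C that $\dconf_n^\times(a)\cong\Gr_a^\times(n)\times\mathbb{G}_m$, and in Corollary~\ref{A isomorphism} that $\widetilde{\dconf_n^\times(a)}\cong\dGr_a^\times(n)\times(\mathbb{G}_m)^n$. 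Also, the closing remark that $\dconf_n^\times(a)$ exceeds $\conf_n^\times(a)$ by a single $\mathbb{G}_m$ for the monodromy is off: $\dim\conf_n^\times(a)=a(n-a)-(n-1)$, so the gap is $n$ (the $n$ decorations $\phi_i$), not $1$.

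Beyond the faulty identification, your affineness argument is incomplete even in outline. You correctly flag that torus quotients of affine varieties need not be affine, and you gesture at ``frozen coordinates trivialize the bundle,'' but you never produce the finitely generated ring of invariants or the Zariski-local trivialization; this is the whole content of the proposition, not a detail to defer. The paper sidesteps all quotient subtleties by a different mechanism: the decorations \emph{rigidify} the configuration. Fixing any nonzero $v_n\in l_n$ and setting $v_{i-1}:=\phi_i(v_i)$ recursively produces a full frame; the consecutive general position condition makes each $(v_i,\dots,v_{i-a+1})$ a basis, and the resulting companion matrices $\Phi_i$ are manifestly independent of both the choice of $v_n$ and the $\PGL_a$-representative. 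The single matrix relation $\Phi_1\cdots\Phi_n=(-1)^{a-1}P\cdot\mathrm{Id}$ then cuts out $\dconf_n^\times(a)$ as a closed subvariety of $\mathbb{A}^{na}_\Phi\times\mathbb{A}^1_P\times\mathbb{A}^1_t$ (inverting $P$ via $Pt=1$), giving affineness directly. Your first normalization idea (gauge-fixing $w_1,\dots,w_a$ and some scalars) is closer in spirit to a workable argument, but as written it still leaves a residual torus action unresolved and never arrives at a closed embedding.
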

\begin{proof} Pick a non-zero vector $v_n\in l_n$. Using the maps $\phi_i$ recursively, we get $v_{i-1}:=\phi_i\left(v_i\right)\in l_{i-1}$ for $i=n, n-1, \dots, 2$. From the consecutive general position condition we know that for any $1\leq i\leq n$, $(v_i,v_{i-1},\dots, v_{i-a+1})$ is a basis of $\mathds{k}^a$. Define $\Phi_i$ to be the $a\times a$ matrix such that
\begin{equation}\label{def Phi}
\begin{pmatrix} v_{i-1} \\ \vdots \\ v_{i-a}\end{pmatrix}=\Phi_i\begin{pmatrix} v_i \\ \vdots \\ v_{i-a+1}\end{pmatrix}.
\end{equation}
Then $\Phi_i$ must take the form
\[
\Phi_i=\begin{pmatrix} 0 & 1 & \cdots & 0 \\
\vdots & \vdots & \ddots & \vdots \\
0 & 0 & \cdots & 1\\
* & * & \cdots & *
\end{pmatrix}.
\]
Note that $\Phi_i$ does not depend on the choice of $v_n\in l_n$. Moreover, since the monodromy $\phi^n=(-1)^{a-1}P$, the product $\Phi_1 \Phi_2\cdots \Phi_n$ should be the identity matrix multiplied by $(-1)^{a-1}P$. Therefore the configuration space $\dconf_n^\times(a)$ satisfies the equation
\begin{equation}\label{Phis}
\Phi_1\Phi_2\cdots \Phi_n=\left((-1)^{a-1}P\right)\mathrm{Id}_{a\times a}
\end{equation}
with $a$ variables in each matrix $\Phi_i$ and $P$ a non-zero parameter. Conversely, one can construct a~configuration in $\dconf_n^\times(a)$ from any solution to equation~\eqref{Phis} using the matrices $\Phi_i$. Therefore we conclude that $\dconf_n^\times(a)$ is the intersection of the vanishing loci of $Pt-1$ and the functions that are entries of $\Phi_1\Phi_2\cdots \Phi_n-\big((-1)^{a-1}P\big)\mathrm{Id}_{a\times a}$ in the affine space $\mathbb{A}^{na}_\Phi\times \mathbb{A}^1_P\times \mathbb{A}^1_t$. In particular, this shows that the map $P$ is regular on $\dconf_n^\times(a)$.

The dimension of $\dconf^\times_n(a)$ is
\begin{align*}\dim \dconf^\times_n(a)&= n\dim \big(\mathbb{P}^{a-1}\big)+\dim \{\text{isomorphisms $ \phi_i$}\}-\dim \PGL_a \\
&=n(a-1)+n-\big(a^2-1\big)  =a(n-a)+1. \tag*{\qed}
\end{align*}
\renewcommand{\qed}{}
\end{proof}

Below we introduce three natural maps from $\dconf_n^\times(a)$. Let $[\phi_1, l_1,\dots, \phi_n, l_n]\in \dconf_n^\times(a)$. Pick a non-zero vector $v_i\in l_i$ for $i=1,\dots, n$.

Because of the cyclically consecutive general position condition, the vector space quotient $W/\mathrm{Span} \{l_{i-a+2},\dots, l_i\}$ is 1-dimensional and is spanned by $\overline{v_{i-a+1}}$ (the image of $v_{i-a+1}$ under this quotient). Let $\vartheta_i$ be the scalar such that in the quotient space $W/\mathrm{Span} \{l_{i-a+2},\dots, l_i\}$,
\begin{equation}\label{theta.f}
\overline{\phi(v_{i-a+1})} = \vartheta_i \overline{v_{i-a+1}} .
\end{equation}
Note that $\vartheta_i$ is independent of the choices of $v_i$.

\looseness=-1 Note that for $i\neq a$, $\vartheta_i$ is the $(a,a)$-entry of the matrix $\Phi_i$ defined in equation~\eqref{def Phi}; for $i=a$, $\vartheta_a$ is the product of the $(a,a)$-entry of $\Phi_a$ and $(-1)^{a-1}P$. Since the function $P$ and the entries of~$\Phi_i$ are all regular functions on $\dconf_n^\times(a)$, the functions $\vartheta_i$ are also regular on $\dconf_n^\times(a)$ for all~$i$.

The \emph{potential function} on $\dconf^\times_n(a)$ is defined to be the regular function
\begin{equation}\label{potential.W}
\mathcal{W}=\sum_{i=1}^n \vartheta_i\colon \  \dconf_n^\times(a)\longrightarrow \mathbb{A}^1.
\end{equation}

Since the top exterior power $\bigwedge^aW$ is 1-dimensional, under the cyclically consecutive general position condition we may define
\begin{equation}\label{M}
M_k:= \frac{\phi\left(v_{k-a+1}\right)\wedge \dots \wedge \phi\left(v_k\right)}{v_{k-a+1}\wedge \dots \wedge v_k}.
\end{equation}
Note that the value $M_k$ is nonzero and does not depend on the choices of $v_i$. Therefore we obtain a {\it weight} map
\begin{align}
 M\colon \ \dconf_n^\times(a)&\longrightarrow T^\vee,\nonumber\\
 [\phi_1,l_1,\dots, \phi_n,l_n ] & \longmapsto  (M_1,\dots, M_n ),\label{weightmap1142}
 \end{align}
where $T^\vee\cong (\mathbb{G}_m )^n$ is the dual torus of the maximal torus $T\subset \GL_n$.

Lastly, there is an order $n$ biregular map
\begin{align*}
R\colon \ \dconf_n^\times(a) &\longrightarrow \dconf_n^\times(a),\\
 [\phi_1,l_1,\phi_2,l_2, \dots, \phi_n, l_n ] & \longmapsto  [\phi_n,l_n,\phi_1,l_1,\dots, \phi_{n-1},l_{n-1} ].
\end{align*}

To summarize, we get the following data
\begin{equation*}
\begin{cases} \text{the decorated configuration space $\dconf_n^\times(a)$},\\
\text{the twisted monodromy $P\colon \dconf_a^\times(n)\rightarrow \mathbb{G}_m$},\\
\text{the potential function $\mathcal{W}=\sum_i \vartheta_i$},\\
\text{the weight map $M\colon \dconf_n^\times(a)\rightarrow T^\vee$},\\
\text{the cyclic rotation $R$ on $\dconf_n^\times(n)$}.
\end{cases}
\end{equation*}

\subsection{Maps among the decorated spaces}

Recall the $n$-dimensional vector space $V$ with a basis $ \{e_1,\dots, e_n \}$. Let $\hat{l}_i$ be the line spanned by~$e_i$ and let $\hat{\phi}_i\colon \hat{l}_i\rightarrow \hat{l}_{i-1}$ be the linear isomorphism such that
\begin{equation}
\label{19.9.15.113}
\hat{\phi}_i\left(e_i\right):= \begin{cases}e_{i-1} & \text{if $i\neq 1$},\\
(-1)^{a-1}e_n & \text{if $i=1$}.
\end{cases}
\end{equation}

As mentioned in Section~\ref{sec 2.1}, the cyclically consecutive Pl\"{u}cker coordinates $\Delta_{\{i,\dots, i+a-1\}}$, with indices taken modulo $n$, cut out an anti-canonical divisor on $\Gr_a(V^*)$, whose complement is denoted by
$\Gr_a^\times(V^*)$. Every $W^* \in \Gr_a^\times(V^*)$ induces a projection $\pi$ from~$V$ to the dual space~$W$ of $W^*$, and since $\Delta_{\{i,\dots, i+a-1\}} (W^* )\neq 0$ for all $i$, the image of every $e_i$ under $\pi$ is a~non-zero vector $v_i$, and the lines $l_i:=\pi(\hat{l}_i)$ in $W$ automatically satisfy the consecutive general position condition. Furthermore, isomorphisms $\hat{\phi}_i$ descend to isomorphisms $\phi_i\colon l_i\rightarrow l_{i-1}$. Thus we obtain $[\phi_1, l_1, \dots, \phi_n, l_n]\in \dconf_n^\times (W)\cong\dconf_n^\times (a)$. This defines a natural map
\begin{align}
 \Gr_a^\times(n) &\longrightarrow \dconf_n^\times(a),\nonumber\\
 W^* & \longmapsto  [\phi_1,l_1,\dots, \phi_n,l_n ].\label{p map}
 \end{align}

\begin{prop}\label{2.20} The map~\eqref{p map} is injective. The image of~\eqref{p map} consists of decorated configurations of twisted monodromy $P=1$.
\end{prop}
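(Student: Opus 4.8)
The plan is to analyze the map \eqref{p map} by unwinding the definitions of the twisted monodromy $P$ and of the construction $W^* \mapsto [\phi_1, l_1, \dots, \phi_n, l_n]$ in terms of the projection $\pi \colon V \to W$. First I would prove the statement about the monodromy: starting from a non-zero $v_n \in l_n$, the recursively defined vectors $v_{i-1} = \phi_i(v_i)$ are, by \eqref{19.9.15.113} and the fact that $\phi_i$ is the descent of $\hat\phi_i$, exactly the images $\pi(e_i)$ up to the sign bookkeeping built into $\hat\phi_1$. Composing all the way around, $\phi_{i+1}\circ\cdots\circ\phi_n\circ\phi_1\circ\cdots\circ\phi_i$ equals the descent of $\hat\phi_{i+1}\circ\cdots\circ\hat\phi_1\circ\cdots\circ\hat\phi_i$, and the latter composite of the $\hat\phi$'s is multiplication by $(-1)^{a-1}$ on $\hat l_i$ (the only sign comes from $\hat\phi_1$). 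Hence the rescaling factor of this self-isomorphism of $l_i$ is $(-1)^{a-1}$, which by the definition of $P$ means $P = 1$ on the image of \eqref{p map}. So every decorated configuration in the image has $P=1$.

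Next I would show injectivity. Suppose $W_1^*, W_2^* \in \Gr_a^\times(n)$ map to the same decorated configuration $[\phi_1,l_1,\dots,\phi_n,l_n] \in \dconf_n^\times(a)$. Via Lemma \ref{gr isomorphic to conf of column vectors} it is cleanest to pass to matrix representatives: the projection $\pi_k \colon V \to W_k$ sends the fixed basis $\{e_i\}$ to column vectors $v_i^{(k)}$, and the linear isomorphisms $\phi_i$ determine these column vectors up to a single overall scalar (one picks $v_n$ once, then everything downstream is forced). Two $a \times n$ matrices whose columns agree up to a global scalar and which both have the cyclic-consecutive nondegeneracy property represent the same point of $\SL_a \backslash \mat_{a,n}^\times$ only after we also account for the $\SL_a$-action; but the data of a decorated configuration already records enough to pin down the matrix up to $\GL_a$ (the lines fix the columns projectively, the $\phi_i$'s fix their ratios), and rescaling all columns by the same scalar is absorbed by a scalar matrix in $\GL_a$, which acts trivially on $\Gr_a^\times(n)$ since a subspace $W^* \subset V^*$ is recovered from $\pi$ as $\ker^\perp$, independent of such rescaling. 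Therefore $W_1^* = W_2^*$ and the map is injective.

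Finally, for the converse direction of the image statement — that a decorated configuration with $P = 1$ lies in the image — I would reverse the construction: given $[\phi_1,l_1,\dots,\phi_n,l_n]$ with $P=1$, choose $v_n \in l_n$, propagate to get $v_i \in l_i$, and use the cyclic-consecutive independence together with $P=1$ (so that the propagation is genuinely periodic, not merely periodic up to $(-1)^{a-1}$, in the sense dictated by the twist in \eqref{19.9.15.113}) to build a surjection $\pi \colon V \to W$ with $\pi(e_i) \in l_i$ compatible with the $\phi_i$; its transpose realizes $W^*$ as an element of $\Gr_a^\times(n)$ mapping to the given configuration. The main obstacle, and the step deserving the most care, is the sign bookkeeping: one must check that the $(-1)^{a-1}$ appearing in the definition of the twisted monodromy $P$, the $(-1)^{a-1}$ in $C_a$, and the $(-1)^{a-1}$ in the twist \eqref{19.9.15.113} of $\hat\phi_1$ are mutually consistent, so that "$P=1$" is exactly the condition that makes the propagated vectors close up under $\hat\phi$ rather than under $-\hat\phi$ (up to sign $a-1$). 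Everything else is a routine unwinding of the equivalence in Lemma \ref{gr isomorphic to conf of column vectors}.
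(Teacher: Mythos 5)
Your proposal is correct and follows essentially the same route as the paper: you observe that the image consists of configurations with $P=1$ by unwinding \eqref{19.9.15.113}, and you recover a unique $W^*$ from a configuration with $P=1$ by propagating a chosen $v_n\in l_n$ via the $\phi_i$, checking that the resulting $\pi$ (and hence the image of $\pi^*$) is independent of the choice of representative and of $v_n$. The paper merely packages your injectivity and image-characterization steps into a single "existence and uniqueness of the preimage" argument, but the content is the same.
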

\begin{proof} By \eqref{19.9.15.113}, every point in the image of \eqref{p map} is of twisted monodromy $P=1$.

Let $[\phi_1,l_1,\dots, \phi_n,l_n]$ be a configuration in $\dconf^\times_n(a)$ with $P=1$.
It remains to show that there is a unique $W^\ast \in {\Gr}_a^\times(V^\ast)$ whose image under the map \eqref{p map} is $[\phi_1,l_1,\dots, \phi_n,l_n]$.
Let $(\phi_1,l_1,\dots, \phi_n,l_n)$ be a representative of the configuration $[\phi_1,l_1,\dots, \phi_n,l_n]$.
Pick a non-zero vector $v_n\in l_n$ and then use the isomorphisms $\phi_n,\dots, \phi_2$ to get non-zero vectors $v_i\in l_i$. It gives rise to a linear projection $\pi\colon V\rightarrow \mathds{k}^a$ which sends $e_i\mapsto v_i$. The dual of this map determines an embedding $\pi^*\colon \mathds{k}^a\hookrightarrow V^*$, whose image is denoted as $W^*$. Note that changing the choice of representative and the choice of $v_n$ corresponds to postcomposing the projection $\pi\colon V\rightarrow \mathds{k}^a$ with an element of~$\GL_a$. It corresponds to precomposing the dual map $\pi^\ast\colon \mathds{k}^a\hookrightarrow V^*$ with an element of~$\GL_a$, which does not effect the image~$W^*$ of~$\pi^\ast$ as a subspace of~$V^*$.
\end{proof}

\begin{defn} \label{twisted mono}
 Denote by $\widetilde{\dconf_n^{\times}(a)}$ the space of ${\rm SL}_a$-orbits of
\[
(\phi_1, v_1, \phi_2, v_2,\dots, \phi_n, v_n),
\]
where $v_i$ are vectors in $\mathds{k}^a$ satisfying consecutive general position condition, and each $\phi_i$ is a linear isomorphism from the line spanning $v_i$ to the line spanning $v_{i-1}$, and $\SL_a$ acts on $(\phi_1,v_1,\dots, \phi_n,v_n)$ by
\[
 g. (\phi_1,v_1,\dots, \phi_n,v_n ):=\big(g\phi_1g^{-1}, gv_1,\dots, g\phi_ng^{-1},gv_n\big).
\]
\end{defn}

By Lemma \ref{gr isomorphic to conf of column vectors}, we can think of the space $\dGr_a^\times(n)$ as the moduli space of configurations of vectors $[v_1, \dots, v_n]$ satisfying the cyclic general position condition. Define the following map
\begin{align*}
\dGr_a^\times(n) &\longrightarrow \widetilde{\dconf_n^\times(a)},\\
[v_1,\dots, v_n]& \longmapsto [\phi_1,v_1,\dots, \phi_n,v_n],
\end{align*}
with the isomorphisms $\phi$ defined by $\phi_1(v_1):=(-1)^{a-1}v_n$ and $\phi_i(v_i):= v_{i-1}$ for the other $i$'s. It is not hard to see that this map is injective.

There is a surjective map $\widetilde{\dconf_n^\times(a)}\rightarrow \dconf_n^\times(a)$ by replacing each~$v_i$ by its spanning line~$l_i$.

There is a surjective map $\dconf_n^\times(a)\rightarrow \conf_n^\times(a)$ defined by forgetting the isomorphisms $\phi_i$.

There is a surjective map $\dGr_{a}^\times(n)\rightarrow\Gr_a^\times(n)$ defined by forgetting the $a$-form.

\begin{prop} Putting all the aforementioned maps together, we obtain commutative diagram
\begin{gather}\label{big diagram}
\vcenter{\vbox{\xymatrix@R-1pc{&\widetilde{\dconf_n^\times(a)} \ar@{->>}[dr] & & \\
\dGr_a^\times(n) \ar@{^(->}[ur] \ar@{->>}[dr] \ar@{->>}@/^5ex/[rrr] & & \dconf_n^\times(a). \ar@{->>}[r]
& \conf_n^\times(a) \\
 & \Gr_a^\times(n) \ar@{^(->}[ur] & &
}}}
\end{gather}
\end{prop}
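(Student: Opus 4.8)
The plan is to reduce the whole of \eqref{big diagram} to the single statement that the two routes from $\dGr_a^\times(n)$ to $\dconf_n^\times(a)$ agree, namely
\[
\Bigl(\dGr_a^\times(n)\hookrightarrow\widetilde{\dconf_n^\times(a)}\twoheadrightarrow\dconf_n^\times(a)\Bigr)
\;=\;
\Bigl(\dGr_a^\times(n)\twoheadrightarrow\Gr_a^\times(n)\hookrightarrow\dconf_n^\times(a)\Bigr),
\]
the second leg of the right-hand composite being the map \eqref{p map}. Everything else in the diagram is then formal: the edge $\dconf_n^\times(a)\twoheadrightarrow\conf_n^\times(a)$ only forgets the $\phi_i$, so post-composing both (now equal) routes with it records only the lines $l_i$; and the curved edge $\dGr_a^\times(n)\twoheadrightarrow\conf_n^\times(a)$ is, by construction, the map sending a configuration of vectors to the configuration of the lines they span. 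Hence the two triangles with apex $\conf_n^\times(a)$ commute as soon as the displayed identity holds, and that identity is the entire content to be verified.

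To verify it I would simply unwind the definitions. Using Lemma \ref{gr isomorphic to conf of column vectors}, represent a point of $\dGr_a^\times(n)$ as the $\SL_a$-orbit of a tuple $(v_1,\dots,v_n)$ of vectors in $\mathds{k}^a$ in cyclic general position. The top route sends it first to the $\SL_a$-orbit of $[\phi_1,v_1,\dots,\phi_n,v_n]$ with $\phi_1(v_1)=(-1)^{a-1}v_n$ and $\phi_i(v_i)=v_{i-1}$ for $i\neq 1$, and then to $[\phi_1,l_1,\dots,\phi_n,l_n]\in\dconf_n^\times(a)$ with $l_i=\mathds{k}v_i$. The bottom route first forgets the decorating $a$-form, producing $W^\ast\in\Gr_a^\times(V^\ast)$, and then applies \eqref{p map}: this picks the induced projection $\pi\colon V\to W$, sets $l_i=\pi(\hat l_i)$, and takes $\phi_i$ to be the descent of $\hat\phi_i$. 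By the defining formula \eqref{19.9.15.113}, the descended maps satisfy $\phi_i(\pi(e_i))=\pi(e_{i-1})$ for $i\neq 1$ and $\phi_1(\pi(e_1))=(-1)^{a-1}\pi(e_n)$ — exactly the recipe of the top route once one chooses the projection $\pi$ with $\pi(e_i)=v_i$ (a legitimate choice, since that $\pi$ has precisely $W^\ast$ as the image of its dual). Since, by Proposition \ref{2.20}, the point of $\dconf_n^\times(a)$ produced by \eqref{p map} is independent of the choice of $\pi$, the two routes land on the same class, which is what we want.

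I do not expect a genuine obstacle here: the only thing requiring care is bookkeeping the successive quotients ($\SL_a$ at both $\dGr_a^\times(n)$ and $\widetilde{\dconf_n^\times(a)}$, the full $\PGL(W)$ at $\dconf_n^\times(a)$, and the extra $\mathbb{G}_m$ that is dropped at $\Gr_a^\times(n)$), together with checking that each composite is insensitive to the intermediate choices — the $\SL_a$-representative, the base vector $v_n\in l_n$, and the projection $\pi$. Each of these independences has already been secured in Lemma \ref{gr isomorphic to conf of column vectors} and Proposition \ref{2.20}, so the proof amounts to the diagram chase above with those results invoked at the two places where choices are made.
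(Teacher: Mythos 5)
Your verification of the rhombus is correct and is essentially the paper's argument: you unwind both composites in the $\SL_a\backslash\mat_{a,n}^\times$ picture of $\dGr_a^\times(n)$ from Lemma \ref{gr isomorphic to conf of column vectors}, observe that the $v_i$ arising there are precisely the $\pi(e_i)$ used in the construction of \eqref{p map}, and invoke Proposition \ref{2.20} to see that the result is independent of the intermediate choices. That part is fine.

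You do, however, skip a genuine piece of the statement. The double-headed arrow on the curved edge $\dGr_a^\times(n)\twoheadrightarrow\conf_n^\times(a)$ asserts surjectivity, and the paper's proof treats this as its second step ("It remains to prove the commutativity of the rhombus \emph{and} the surjectivity..."). You file it under "everything else is formal," which it is not: it is a claim to be proved. Your own observation that the curved composite sends $[v_1,\ldots,v_n]$ to $[\mathds{k}v_1,\ldots,\mathds{k}v_n]$ actually yields a one-line proof — given $[l_1,\ldots,l_n]\in\conf_n^\times(a)$, choose nonzero $v_i\in l_i$ and take the $\SL_a$-class of $(v_1,\ldots,v_n)$ in $\mat_{a,n}^\times$ — but you should state it rather than gesture at it. For what it is worth, the paper argues differently: by Proposition \ref{2.20} the image of $\Gr_a^\times(n)\hookrightarrow\dconf_n^\times(a)$ is exactly $\{P=1\}$, and every line configuration lifts to a decorated one with $P=1$ since the $n$ free scalings of the $\phi_i$ can always be chosen to meet that single constraint. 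Either route works; make one explicit.
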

\begin{proof} It remains to prove the commutativity of the rhombus and the surjectivity of the map $\dGr_a^\times(n)\twoheadrightarrow \conf^\times_n(a)$.

First, by Lemma \ref{gr isomorphic to conf of column vectors}, we can again view $\dGr_a^\times(n)$ as moduli space of configurations of vectors $[v_1,\dots, v_n]$ in $\mathds{k}^a$ satisfying cyclic general position condition. But these $v_i$ are precisely the vectors that span the lines $l_i$ in the construction of the map $\Gr_a^\times(n)\hookrightarrow \dconf^\times_n(a)$. Furthermore, in both definitions of the maps $\dGr_a^\times(n)\hookrightarrow \widetilde{\dconf_n^\times(a)}$ and $\Gr_a^\times(n)\hookrightarrow \dconf^\times_n(a)$ we have set $\phi_i (v_i )=v_{i-1}$ for all $i\neq 1$ and $\phi_1 (v_1 )=(-1)^{a-1}v_n$. Therefore the image in $\dconf^\times_n(a)$ from $\dGr_a^\times(n)$ by going down either side of the rhombus is the same.

As for the surjectivity of the map $\dGr_a^\times(n)\twoheadrightarrow \conf_n^\times(a)$, by Proposition~\ref{2.20} it suffices to show that any element of $\conf_n^\times(a)$ is the image of an element in $\dconf_n^\times(a)$ with $P=1$. But this is clear since we have $n$ degrees of freedom to choose the isomorphisms $\phi_i$ to put between the lines.
\end{proof}

\subsection{Cluster structures}\label{clustersec1712}

The pair $ \big(\dGr_a^\times (n), \dconf_n^\times(a)\big)$ admits a natural structure of cluster ensemble associated to Postnikov's reduced plabic graphs\footnote{A rapid review on reduced plabic graphs has been included in the appendix. See also Postnikov's article \cite{Pos} for more details.} of rank $a$ on a disk with $n$ marked points on the boundary. In this section we focus on one particular reduced plabic graph $\Gamma_{a,n}$ for each pair of parameters~$(a,n)$, as depicted in the following picture:
\begin{figure}[h!]\centering
\begin{tikzpicture}[scale=0.6]
\foreach \i in {0,1,2,4,5}
 {
 \draw (\i+0.5,3.5) -- (3,4.5);
 }
\draw (3,4.5) -- (6,4.5);
\draw [fill=white] (3,4.5) circle [radius=0.2];
\foreach \i in {0,1,2,4,5}
 {
 \draw (\i+1,3) -- (\i+0.5,3.5) -- (\i,3) -- (\i+0.5,2.5);
 \draw [fill=white] (\i,3) circle [radius=0.2];
 \draw [fill] (\i+0.5,3.5) circle [radius=0.2];
 }
\node at (3.5,3.25) [] {$\cdots$};
\foreach \i in {0,1,2,4,5}
 {
 \node at (\i+0.5,2.25) [] {$\vdots$};
 }
\foreach \i in {0,1,2,4,5}
 {
 \draw (\i+1,1) -- (\i+0.5,1.5) -- (\i,1) -- (\i+0.5,0.5);
 \draw [fill=white] (\i,1) circle [radius=0.2];
 \draw [fill] (\i+0.5,1.5) circle [radius=0.2];
 }
\node at (3.5,1.25) [] {$\cdots$};
\foreach \i in {0,1,2,4,5}
 {
 \draw (\i+1,0) -- (\i+0.5,0.5) -- (\i,0) -- (\i+0.5,-0.5);
 \draw [fill=white] (\i,0) circle [radius=0.2];
 \draw [fill] (\i+0.5,0.5) circle [radius=0.2];
 }
\node at (3.5,0.25) [] {$\cdots$};
\draw (-0.5,-0.5) rectangle (6,5);
\node at (6,4.5) [] {$\bullet$};
\node [scale=0.5] at (6.3,4.5) [] {$1$};
\node at (6,3) [] {$\bullet$};
\node [scale=0.5] at (6.3,3) [] {$2$};
\node at (6,1) [] {$\bullet$};
\node [scale=0.5] at (6.7,1) [] {$a-1$};
\node at (6,0) [] {$\bullet$};
\node [scale=0.5] at (6.3,0) [] {$a$};
\node at (5.5,-0.5) [] {$\bullet$};
\node [scale=0.5] at (5.5,-0.8) [] {$a+1$};
\node at (4.5,-0.5) [] {$\bullet$};
\node [scale=0.5] at (4.5,-0.8) [] {$a+2$};
\node at (2.5,-0.5) [] {$\bullet$};
\node [scale=0.5] at (2.5,-0.8) [] {$n-2$};
\node at (1.5,-0.5) [] {$\bullet$};
\node [scale=0.5] at (1.5,-0.8) [] {$n-1$};
\node at (0.5,-0.5) [] {$\bullet$};
\node [scale=0.5] at (0.5,-0.8) [] {$n$};
\end{tikzpicture}
\end{figure}

From a reduced plabic graph $\Gamma$ we obtain a quiver $Q_\Gamma$ by the following standard four-step procedure. This quiver (or its opposite) is used to study the cluster structure on Grassmannians by many others, including (but not restricted to) Postnikov~\cite{Pos}, Gekhtman, Shapiro, and Vainshtein~\cite{GSVcp}, and Rietsch and Williams~\cite{RW}.
\begin{itemize}\setlength\itemsep{0pt}
\item Assign a vertex to each face of $\Gamma$.
\item For each black vertex of $\Gamma$, draw a clockwise cycle of arrows as follows:
\[
\begin{tikzpicture}[scale=0.6]
\foreach \i in {0,...,4}
 {
 \draw (0,0) -- +(90-72*\i:2);
 \node at +(54-72*\i:1.5) [] {$\bullet$};
 \draw [->] +(48-72*\i:1.4) to +(-12-72*\i:1.4);
 }
\draw [fill] (0,0) circle [radius=0.2];
\end{tikzpicture}
\]
\item Remove a maximal subset of 2-cycles.
\item Freeze the vertices corresponding to boundary faces of $\Gamma$.
\end{itemize}
For example, the reduced plabic graph $\Gamma_{a,n}$ gives rise to a quiver $Q_{a, n}$ as follows (please keep in mind that we have the convention $n=a+b$). The unfrozen part of this quiver is also known as the triangle product $\mathrm{A}_{a-1}\boxtimes \mathrm{A}_{b-1}$ (see, e.g., \cite[p.~8]{Kelperiod}, \cite[p.~2]{Weng}):
\[
\begin{tikzpicture}
\node (0-0) [gray] at (0,5) [] {$(0,0)$};
\foreach \i in {1,2}
{
\foreach \j in {1,2,3}
{
\node (\i-\j) at (2*\j,5-\i) [] {$(\i,\j)$};
}
\node (\i-4) at (8,5-\i) [] {$\cdots$};
\node (\i-5) at (10,5-\i) [] {$(\i,b-1)$};
\node (\i-6) [gray] at (13,5-\i) [] {$(\i,b)$};
}
\foreach \j in {1,2,3}
{
\node (3-\j) at (2*\j, 2) [] {$\vdots$};
}
\node (3-4) at (8,2) [] {$\ddots$};
\node (3-5) at (10,2) [] {$\vdots$};
\node (3-6) at (13,2) [] {$\vdots$};
\foreach \j in {1,2,3}
{
\node (4-\j) at (2*\j,1) [] {$(a-1,\j)$};
}
\node (4-4) at (8,1) [] {$\cdots$};
\node (4-5) at (10,1) [] {$(a-1,b-1)$};
\node (4-6) [gray] at (13,1) [] {$(a-1,b)$};
\foreach \j in {1,2,3}
{
\node (5-\j) [gray] at (2*\j,0) [] {$(a,\j)$};
}
\node (5-4) [gray] at (8,0) [] {$\cdots$};
\node (5-5) [gray] at (10,0) [] {$(a,b-1)$};
\node (5-6) [gray] at (13,0) [] {$(a,b)$};
\foreach \i in {1,2,4}
{
\foreach \j in {2,...,6}
 {
 \pgfmathtruncatemacro{\k}{\j-1};
 \draw [<-] (\i-\j) -- (\i-\k);
 }
}
\foreach \j in {1,2,3,5,6}
{
\draw [<-] (2-\j) -- (1-\j);
\draw [<-] (3-\j) -- (2-\j);
\draw [<-] (4-\j) -- (3-\j);
\draw [<-] (5-\j) -- (4-\j);
}
\foreach \i in {1,...,4}
 {
 \foreach \j in {1,...,5}
 {
 \pgfmathtruncatemacro{\k}{\i+1};
 \pgfmathtruncatemacro{\l}{\j+1};
 \draw [<-] (\i-\j) -- (\k-\l);
 }
 }
\draw [<-] (1-1) -- (0-0);
\draw [<-] (0-0) to [bend right] (5-1);
\end{tikzpicture}
\]
Here the vertex assigned to the top-left face is indexed by $(0,0)$. The other faces of $\Gamma_{a,n}$ form an $a\times b$ grid. Their corresponding vertices of $Q_{a,n}$ are indexed in the same way as matrix entries. The gray vertices are frozen. Denote by $I$ the set of vertices of $Q_{a,n}$ and by $I^{\uf}$ the set of unfrozen vertices of $Q_{a,n}$. The exchange matrix $\varepsilon$ of $Q_{a,n}$ is defined to be an $I\times I$ matrix with entries
\[
\varepsilon_{fg}=\#\{g\rightarrow f\}-\#\{f\rightarrow g\}.
\]

\begin{rmk} The definition of the exchange matrix $\epsilon_{fg}$ differs from the usual convention by an extra minus sign. The reason we include this extra minus sign is to simplify later computations.
\end{rmk}

For simplicity, we will also use an integer $i\in\{1,\dots, n\}$ to denote the frozen vertex corresponding to the boundary face lying between $i$ and $i+1$. In other words,
\[
\text{frozen vertex $i$}= \begin{cases}
 (i,b) & \text{if $1\leq i\leq a$}, \\
 (a,n-i) & \text{if $a\leq i<n$},\\
 (0,0) & \text{if $i=n$}.
\end{cases}
\]

Let $(\mathscr{A}_{a,n}, \mathscr{X}_{a,n})$ be the cluster ensemble associated to $Q_{a,n}$. See \eqref{cluster.ensemble.def.7.19} for its rigorous definition. Let $\{A_{i,j}\}_{(i,j)\in I}$ be the $K_2$ cluster of $\mathscr{A}_{a,n}$ associated to the quiver $Q_{a,n}$ and let $\{X_{i,j}\}_{(i,j)\in I}$ be the Poisson cluster of $\mathscr{X}_{a,n}$ associated to $Q_{a,n}$.
Abusing notation, we will frequently write $f$ instead of $(i,j)\in I$ with $f$ being the face of $\Gamma_{a,n}$ corresponding to the vertex $(i,j)$ of $Q_{a,n}$.

{\bf Cluster $\boldsymbol{K_2}$ structure on $\boldsymbol{\dGr_{a}^\times(n)}$.}
We associate to each vertex $(i,j)$ (including the vertex $(i,j) = (0,0)$) of $Q_{a,n}$ an $a$-element set\footnote{The set is determined by the zig-zag strands of the reduced plabic graph $\Gamma_{a,n}$. See Appendix~\ref{appendixB} for more details.}
\begin{equation}\label{I}
I(i,j):=\{\underbrace{b-j+1,\dots, b-j+i}_\text{$i$ indices}, \underbrace{b+i+1, \dots, n}_\text{$a-i$ indices}\}.
\end{equation}
Recall the Pl\"ucker coordinates $\Delta_I$ of $\dGr_a^\times(n)$. By defining
\begin{equation}\label{aij}
A_{i,j}:=\Delta_{I(i,j)},
\end{equation}
 we get a rational map
 \begin{equation} \label{he.9.25}
 \psi\colon \ \dGr_a^\times(n)\dashrightarrow \mathscr{A}_{a,n}.
 \end{equation}
 Scott \cite[Theorem~3]{Sco} showed that the pull-back map $\psi^*$ gives an algebra isomorphism between $\mathcal{O} (\dGr_a(n))$ and the ordinary cluster algebra defined by the quiver $Q_{a,n}$; by allowing ourselves to invert the frozen variables we generalize her result to the following theorem.

\begin{thm}\label{Scott} The pull-back map $\psi^*$ is an algebra isomorphism between the upper cluster algebra $\mathbf{up} (\mathscr{A}_{a,n} ):=\mathcal{O} (\mathscr{A}_{a,n} )$ and $\mathcal{O}\big(\dGr_{a}^\times(n)\big)$.
\end{thm}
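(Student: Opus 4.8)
The plan is to leverage Scott's original theorem, which already gives an isomorphism $\psi^*:\mathcal{O}\left(\mathscr{A}_{a,n}^{\mathrm{ord}}\right)\xrightarrow{\sim}\mathcal{O}\left(\dGr_a(n)\right)$ between the \emph{ordinary} cluster algebra (where frozen variables are not inverted) and the homogeneous coordinate ring, and to bootstrap from it by formally inverting the frozen Pl\"ucker coordinates $\Delta_{\{i,\dots,i+a-1\}}$, $i=1,\dots,n$, on both sides. On the geometric side, inverting these functions is exactly the passage from $\mathcal{O}(\dGr_a(n))$ to $\mathcal{O}(\dGr_a^\times(n))$, since $\dGr_a^\times(n)=\dGr_a(n)\setminus D$ with $D=\bigcup_i D_i$ the union of the vanishing loci of the frozen Pl\"uckers; concretely $\mathcal{O}(\dGr_a^\times(n))=\mathcal{O}(\dGr_a(n))\left[\prod_i\Delta_{\{i,\dots,i+a-1\}}^{-1}\right]$. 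So it suffices to identify the localization of the ordinary cluster algebra at the frozen variables with the upper cluster algebra $\up(\mathscr{A}_{a,n})=\mathcal{O}(\mathscr{A}_{a,n})$, the ring of universal Laurent polynomials in which frozen variables are already units.

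First I would verify the simple ring-theoretic point that $\psi^*$ carries the frozen cluster variable $A_{i,b}$ (and its relabelled companions) to the frozen Pl\"ucker coordinate $\Delta_{I(i,b)}=\Delta_{\{i,\dots,i+a-1\}}$; this is immediate from the definitions \eqref{I} and \eqref{aij} by checking that $I(i,j)$ at a frozen vertex gives a cyclic interval of length $a$. Hence $\psi^*$ extends uniquely to an isomorphism of the localizations, $\mathcal{O}\left(\mathscr{A}_{a,n}^{\mathrm{ord}}\right)\left[A_{\mathrm{fr}}^{-1}\right]\xrightarrow{\sim}\mathcal{O}\left(\dGr_a^\times(n)\right)$. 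Second, and this is the crux, I would argue that localizing the ordinary cluster algebra at all frozen variables yields precisely the upper cluster algebra of $\mathscr{A}_{a,n}$. One inclusion is formal: every ordinary cluster monomial is a Laurent monomial in every seed, and the frozen variables are already Laurent-invertible in every seed, so the localization sits inside $\up(\mathscr{A}_{a,n})$. For the reverse inclusion one invokes the Grassmannian's finite type / polytopal behaviour in low rank together with the general fact — or, more cleanly, the \emph{starfish/Laurent phenomenon} input of \cite{BFZ} — that for a locally acyclic or, in this Grassmannian case, explicitly known situation, the upper cluster algebra equals the cluster algebra after inverting frozens. Since Scott's theorem already pins down $\mathcal{O}(\dGr_a(n))$ as the ordinary cluster algebra, and since $\mathscr{A}_{a,n}$ is by construction the $K_2$ cluster variety whose ring of global functions is $\up(\mathscr{A}_{a,n})$ by Fock--Goncharov, the chain of isomorphisms closes up.

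Concretely the sequence of steps is: (1) record $\mathcal{O}(\dGr_a^\times(n))=\mathcal{O}(\dGr_a(n))[\Delta_{\mathrm{fr}}^{-1}]$ as a localization; (2) apply Scott's Theorem 3 to rewrite the right-hand side as $A^{\mathrm{ord}}(Q_{a,n})[A_{\mathrm{fr}}^{-1}]$, matching frozen variables to frozen Pl\"uckers via \eqref{I}–\eqref{aij}; (3) show $A^{\mathrm{ord}}(Q_{a,n})[A_{\mathrm{fr}}^{-1}]=\up(\mathscr{A}_{a,n})$, with the nontrivial inclusion coming from a Laurent-phenomenon/ambient-ring argument (or citing that the Grassmannian upper and lower cluster algebras agree up to inverting frozens); and (4) conclude that $\psi^*$ induces the desired isomorphism $\up(\mathscr{A}_{a,n})\xrightarrow{\sim}\mathcal{O}(\dGr_a^\times(n))$, and that $\psi$ is therefore a biregular isomorphism onto its image identified with $\mathscr{A}_{a,n}$. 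I expect step (3) to be the main obstacle: one must be careful that ``upper cluster algebra = localized cluster algebra'' is not automatic in general, and the cleanest route is probably to observe that all cluster variables of $\mathscr{A}_{a,n}$ (the Pl\"ucker and non-Pl\"ucker cluster coordinates on the Grassmannian) are already regular on $\dGr_a(n)$, so the ordinary cluster algebra is all of $\mathcal{O}(\dGr_a(n))$ by Scott, and then the equality of rings after inverting frozens follows by comparing with the intrinsic description of $\mathcal{O}(\mathscr{A}_{a,n})$ as universal Laurent polynomials — which in turn uses that every exchange relation is already an identity of regular functions on the affine cone.
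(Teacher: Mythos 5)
Your strategy matches the paper's exactly: the paper's entire justification is the single sentence preceding the theorem, citing Scott's Theorem~3 for $\mathcal{O}\left(\dGr_a(n)\right)$ and asserting that inverting the frozen Pl\"ucker coordinates gives the statement. Your steps (1) and (2) — the identification $\mathcal{O}\left(\dGr_a^\times(n)\right) = \mathcal{O}\left(\dGr_a(n)\right)\left[\Delta_{\{i,\dots,i+a-1\}}^{-1}\right]$ (correct, since $\dGr_a^\times(n)$ is the principal open in the affine cone) and the matching of frozen variables with the cyclic-interval Pl\"uckers via \eqref{I}–\eqref{aij} — are precisely what the paper has in mind, and you are right to flag step (3), the equality $\mathcal{A}^{\mathrm{ord}}\left(Q_{a,n}\right)\left[A_{\mathrm{fr}}^{-1}\right] = \up\left(\mathscr{A}_{a,n}\right)$, as the substantive content that the paper passes over silently.

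Two aspects of your sketch for closing that gap do not hold up. First, the appeal to ``finite type / polytopal behaviour in low rank'' is misplaced: $Q_{a,n}$ has infinite mutation type outside a handful of small $(a,n)$, so no finite-type argument applies. Second, your closing paragraph is circular: ``all cluster variables are regular on $\dGr_a(n)$'' only re-derives the easy inclusion $\mathcal{A}^{\mathrm{ord}}\left[A_{\mathrm{fr}}^{-1}\right]\subseteq\up\left(\mathscr{A}_{a,n}\right)$ (Laurent phenomenon plus frozens being units), whereas the direction doing all the work is $\up\left(\mathscr{A}_{a,n}\right)\subseteq\mathcal{O}\left(\dGr_a^\times(n)\right)$ — that a universal Laurent polynomial has no poles. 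The standard route is a coprimality/starfish argument: a universal Laurent polynomial $f$ is regular on the initial seed torus and on each one-step-mutated torus; if $f$ had a pole along an irreducible divisor $D\subset\dGr_a^\times(n)$, regularity on the initial torus forces $D\subseteq\{A_k=0\}$ for some unfrozen $k$, and regularity on the torus obtained by mutating at $k$ then forces $D$ into the zero locus of some \emph{other} variable of that seed, which is excluded because the $A_j$ together with the mutated $A_k'$ are pairwise-coprime irreducibles on the normal variety $\dGr_a^\times(n)$. This is the Berenstein--Fomin--Zelevinsky starfish argument (or, equivalently, Muller's local acyclicity of the Grassmannian cluster structure), and it is the ingredient to cite precisely; with it in place, the rest of your proposal is sound and is the paper's argument.
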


Both $\mathcal{G}r_a^\times(n)$ and $\mathscr{A}_{a,n}$ are rational varieties. The map $\psi$ induces an isomorphism between their function fields. Therefore the map $\psi\colon \dGr_a^\times(n)\dashrightarrow \mathscr{A}_{a,n}$ is birational.

{\bf Cluster Poisson structure on $\boldsymbol{\conf_n^\times(a)}$.}
Let $Q^{\uf}_{a,n}$ denote the full subquiver of $Q_{a,n}$ spanned by vertices in $I^{\uf}$. Let $\big(\mathscr{A}^{\uf}_{a,n}, \mathscr{X}^{\uf}_{a,n}\big)$ be the cluster ensemble associated to $Q^{\uf}_{a,n}$.

 There is a canonical regular map $p\colon \mathscr{A}_{a,n}\rightarrow \mathscr{X}_{a,n}^{\uf} $ defined on the cluster coordinate charts associated to $Q_{a,n}$ such that
\begin{equation}\label{in.p.9.43}
p^*(X_g)=\prod_{f\in I} A_f^{\varepsilon_{fg}}, \qquad \forall\, g\in I^{\uf}.
\end{equation}
Recall the surjective map $\dGr_a^\times(n)\rightarrow \conf_n^\times(a)$ in \eqref{big diagram}. We define a rational map
\begin{equation}\label{X.map.9.26}
\psi\colon \ \conf_n^\times(a)\dashrightarrow \mathscr{X}_{a,n}^{\uf}
\end{equation}
by first taking a lift from $\conf_n^\times(a)$ to $\dGr_a^\times(n)$, mapping over to $\mathscr{A}_{a,n}$ via the birational equivalence~\eqref{he.9.25}, and then mapping it down to $\mathscr{X}_{a,n}^{\uf}$ by the canonical $p$ map. The map~\eqref{X.map.9.26} is well-defined and does not depend on the lift, because $\psi^*(X_g)$ for $g\in I^{\uf}$ is a ratio of Pl\"{u}cker coordintes with the same collection of indices (counted with multiplicity) in the numerator and in the denominator. It is known that~\eqref{X.map.9.26} is a~birational equivalence (see for example~\cite{Weng}).

\begin{lem}\label{full.rank.lem}
The restricted exchange matrix $\varepsilon|_{I\times I^{\uf}}$ of $Q_{a,n}$ is of full-rank.
\end{lem}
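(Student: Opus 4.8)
The plan is to establish full-rank of $\varepsilon|_{I\times I^\uf}$ by exhibiting, for each unfrozen column, a distinguished nonzero entry that lets one run a triangularity argument. First I would set up coordinates: index the unfrozen vertices $(i,j)$ with $1\le i\le a-1$, $1\le j\le b-1$, and recall from the description of $Q_{a,n}$ that the arrows incident to $(i,j)$ go to/from its grid neighbors $(i\pm 1,j)$, $(i,j\pm 1)$ and the diagonal neighbors $(i-1,j-1)$, $(i+1,j+1)$. Hence the relevant column $\varepsilon_{\bullet,(i,j)}$ of the matrix records the signed incidences with these (at most) six vertices, some of which may be frozen (when an index hits $0$, $a$, or $b$). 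The key observation is that the \emph{diagonal} neighbor $(i+1,j+1)$ and the \emph{anti-diagonal} neighbor $(i-1,j-1)$ never collide with the other neighbors, and in particular the corner-most unfrozen vertex $(a-1,b-1)$ has $(a,b)$ as a frozen neighbor: this gives a way to peel off vertices one at a time.

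The main step is a triangular ordering argument. I would order the unfrozen vertices $(i,j)$ by, say, decreasing value of $i+j$, breaking ties by decreasing $i$; equivalently, process the anti-diagonals of the $(a-1)\times(b-1)$ grid from the far corner inward. For a fixed unfrozen column $(i,j)$, I claim that among the rows indexed by unfrozen vertices that come \emph{later} in this order, together with suitable frozen rows, the submatrix is triangular with nonzero diagonal: the arrow between $(i,j)$ and its neighbor $(i+1,j+1)$ (or, near the boundary, between $(i,j)$ and the frozen vertex it is forced to point at) supplies the pivot, and all earlier-processed columns contribute zero in that pivot row by construction of the ordering. Concretely, one shows the square matrix obtained by restricting $\varepsilon|_{I\times I^\uf}$ to an appropriately chosen set of $|I^\uf|$ rows — a mix of the "diagonal" unfrozen rows and the frozen boundary rows adjacent to the last anti-diagonal — is triangular (after permuting rows/columns) with all diagonal entries equal to $\pm 1$, hence nonzero determinant. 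This forces $\rank\bigl(\varepsilon|_{I\times I^\uf}\bigr)=|I^\uf|$, which is full rank.

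Alternatively — and this may be cleaner to write — I would invoke the known identification $\psi\colon\conf_n^\times(a)\dashrightarrow\mathscr{X}^\uf_{a,n}$ from \eqref{X.map.9.26} together with the map $p$ of \eqref{in.p.9.43}: the composite $p^*$ sends $X_g\mapsto\prod_{f\in I}A_f^{\varepsilon_{fg}}$, and full-rank of $\varepsilon|_{I\times I^\uf}$ is precisely the statement that these monomials in the $A_f$ are multiplicatively independent, i.e.\ that $p$ is dominant onto $\mathscr{X}^\uf_{a,n}$. Since $\psi$ is a birational equivalence $\conf_n^\times(a)\dashrightarrow\mathscr{X}^\uf_{a,n}$ and factors through $p$ after lifting, $p$ is dominant; comparing dimensions ($\dim\mathscr{A}_{a,n}=|I|$, $\dim\mathscr{X}^\uf_{a,n}=|I^\uf|$, and $\dim\conf_n^\times(a)=a(n-a)=|I^\uf|$ by Proposition \ref{affine} minus the monodromy direction) then yields exactly $\rank\bigl(\varepsilon|_{I\times I^\uf}\bigr)=|I^\uf|$.

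The hard part will be making the combinatorial triangularity argument airtight near the boundary of the grid, where several of the six potential neighbors of $(i,j)$ simultaneously become frozen or disappear and the naive pivot choice can fail; this requires a careful case analysis of vertices on the edges and corners of the $(a-1)\times(b-1)$ grid to confirm that a nonzero pivot in a not-yet-used row is always available. For this reason I expect the geometric route via dominance of $p$ to be the more robust one to commit to, with the combinatorial argument as a self-contained backup.
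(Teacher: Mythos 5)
Your second (geometric) route is essentially the paper's proof: you correctly reduce full rank of $\varepsilon|_{I\times I^\uf}$ to dominance of the monomial map $p:\mathscr{A}_{a,n}\to\mathscr{X}_{a,n}^\uf$, and establish that dominance by observing that the birational map $\psi:\conf_n^\times(a)\dashrightarrow\mathscr{X}_{a,n}^\uf$ factors through $p$ after lifting along the surjection $\dGr_a^\times(n)\twoheadrightarrow\conf_n^\times(a)$. The combinatorial triangularity sketch is left too vague near the grid boundary to assess on its own, but since you explicitly commit to the geometric argument — which is the paper's — that is the one that matters, and it is correct.
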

\begin{proof} From~\eqref{in.p.9.43} we see that the restriction of the map $p\colon \mathscr{A}_{a,n}\rightarrow \mathscr{X}_{a,n}^{\uf}$ to a cluster coordinate chart is a map between algebraic tori defined by the matrix $\varepsilon|_{I\times I^{\uf}}$ with integer entries. Therefore to show that $\varepsilon|_{I\times I^{\uf}}$ is full-ranked, it suffices to show that $p\colon \mathscr{A}_{a,n}\rightarrow \mathscr{X}_{a,n}^{\uf}$ is surjective.

By definition, we have the following commutative diagram
\begin{equation*}
\vcenter{\vbox{\xymatrix{ \dGr_a^\times(n) \ar@{-->}[r]^\psi \ar[d]_p & \mathscr{A}_{a,n} \ar[d]^p \\
\conf_n^\times(a) \ar@{-->}[r]_\psi & \mathscr{X}_{a,n}^{\uf}.}}}
\end{equation*}
with the map $p$ on the left as defined in~\eqref{big diagram}. Since both of the horizontal maps $\psi$ are birational and the map~$p$ on the left is surjective, we know that the map $p$ on the right is dominant. Note that the restriction of the map~$p$ on the right to each seed torus is a dominant morphism induced by a linear map between their character lattices, which forces it to be surjective. Therefore the map~$p$ on the right is surjective.
\end{proof}

{\bf Cluster $\boldsymbol{K_2}$ structure on $\boldsymbol{\widetilde{\dconf_n^\times(a)}}$.}
The quiver $\widetilde{Q_{a,n}}$ is obtained from $Q_{a,n}$ by adding, for each frozen vertex $i$, a new frozen vertex $i'$ and a new arrow from $i$ to $i'$, so that the number of frozen vertices increases from $n$ to $2n$. Denote the set of vertices of $\widetilde{Q_{a,n}}$ by $\tilde{I}$ and the exchange matrix of $\widetilde{Q_{a,n}}$ by $\tilde{\varepsilon}$. For instance, the quiver $\widetilde{{Q}_{3,6}}$ is as follows:
\begin{equation}\label{expand quiver example}
\begin{tikzpicture}[scale=0.7, baseline=10ex]
\node (-1-0) at (-1,5) [blue,label=left:{$6'$}] {$\square$};
\node (0-0) at (0,5) [red] {$\circ$};
\foreach \i in {1,2}
{
\foreach \j in {1,2}
{
\node (\i-\j) at (\j,5-\i) [] {$\bullet$};
}
\node (3-\i) at (\i, 2) [red] {$\circ$};
\node (\i-3) at (3,5-\i) [red] {$\circ$};
\node (\i-4) at (4,5-\i) [blue, label=right:{$\i'$}] {$\square$};
}
\node (4-1) at (1, 1) [blue, label=below:{$5'$}] {$\square$};
\node (4-2) at (2, 1) [blue, label=below:{$4'$}] {$\square$};
\node (3-3) at (3,2) [red] {$\circ$};
\node (4-4) at (3,1) [blue, label=below:{$3'$}] {$\square$};
\draw[<-] (4-4) -- (3-3);
\foreach \i in {1,2}
{
\foreach \j in {2,3}
 {
 \pgfmathtruncatemacro{\k}{\j-1};
 \draw [<-] (\i-\j) -- (\i-\k);
 }
 \draw [<-] (4-\i) -- (3-\i);
 \draw [->] (\i-3) -- (\i-4);
}
\foreach \j in {1,2,3}
{
\draw [<-] (2-\j) -- (1-\j);
\draw [<-] (3-\j) -- (2-\j);
}
\foreach \i in {1,2}
 {
 \foreach \j in {1,2}
 {
 \pgfmathtruncatemacro{\k}{\i+1};
 \pgfmathtruncatemacro{\l}{\j+1};
 \draw [<-] (\i-\j) -- (\k-\l);
 }
 }
\draw [<-] (1-1) -- (0-0);
\draw [<-] (-1-0) -- (0-0);
\draw [<-] (0-0) to [bend right] (3-1);
\end{tikzpicture}
\end{equation}
Let $\widetilde{\mathscr{A}_{a,n}}$ be the cluster $K_2$ variety associated to the quiver $\widetilde{Q_{a,n}}$.

Let $[\phi_1, v_1, \dots, \phi_n, v_n]\in \widetilde{\dconf_n^\times(a)}$. For each $1\leq i\leq n$ we define a \emph{scaling factor} $\lambda_i$ by
\begin{equation}
\label{rescalefa}
\phi(v_{i+1})=  \begin{cases}
 \lambda_i v_{i} & \mbox{if } i\neq n, \\
 (-1)^{a-1}\lambda_n v_n & \mbox{if } i=n.
\end{cases}
\end{equation}
We define a rational map
\[
\tilde{\psi}\colon \ \widetilde{\dconf_n^\times(a)}\dashrightarrow \widetilde{\mathscr{A}_{a,n}}
\]
by setting\footnote{Throughout the paper we adopt the convention that the indices of $\lambda_i$ are taken modulo $n$.}
\begin{equation}\label{1.tt}
\tilde{\psi}^* \left(A_{f}\right):= \begin{cases}
\Delta_{I(i,j)} & \mbox{if $f$ is a vertex $(i,j)$ of $Q_{a,n}$}, \vspace{1mm}\\
\lambda_{i-a}\displaystyle\frac{\Delta_{\{i-a,i-a+1,\dots, i-1\}}}{\Delta_{\{i-a+1,i-a+2,\dots, i\}}}& \mbox{if $f$ is a newly added vertex $i'$}.
\end{cases}
\end{equation}

\begin{cor}\label{A isomorphism} The map $\tilde{\psi}\colon \widetilde{\dconf_n^\times(a)}\dashrightarrow \widetilde{\mathscr{A}_{a,n}}$ is birational.
Its pull-back map $\tilde{\psi}^*$ is an algebra isomorphism between the upper cluster algebra $\mathbf{up}\big(\widetilde{\mathscr{A}_{a,n}}\big):=\mathcal{O}\big(\widetilde{\mathscr{A}_{a,n}}\big)$ and $\mathcal{O}\big(\widetilde{\dconf_n^\times(a)}\big)$.
\end{cor}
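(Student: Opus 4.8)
## Proof proposal for Corollary \ref{A isomorphism}

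The plan is to deduce this statement from Theorem \ref{Scott} (the analogous result for $\dGr_a^\times(n)$ and $\widetilde{\mathscr{A}_{a,n}}$) by building an explicit isomorphism that relates the two situations in a cluster-structure-compatible way. The key observation is that $\widetilde{\dconf_n^\times(a)}$ differs from $\dGr_a^\times(n)$ precisely by the extra data of the $n$ scaling factors $\lambda_1,\dots,\lambda_n$, which are exactly the ingredients appearing in the new cluster variables $A_{i'}$; correspondingly, $\widetilde{Q_{a,n}}$ differs from $Q_{a,n}$ by the $n$ new frozen vertices $i'$ with a single arrow $i\to i'$ each. So the strategy is to exhibit both spaces as total spaces of ``twists'' of $\dGr_a^\times(n)$ resp.\ $\mathscr{A}_{a,n}$ by a torus $(\mathbb{G}_m)^n$, and check that $\tilde\psi$ intertwines these torus actions with the weight given by the new arrows.

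First I would make the birationality of $\tilde\psi$ explicit. Recall from the map $\dGr_a^\times(n)\hookrightarrow\widetilde{\dconf_n^\times(a)}$ in \eqref{big diagram} that a point of $\widetilde{\dconf_n^\times(a)}$ is an $\mathrm{SL}_a$-orbit of a tuple $[\phi_1,v_1,\dots,\phi_n,v_n]$; forgetting the $\phi_i$ recovers the underlying configuration $[v_1,\dots,v_n]\in\dGr_a^\times(n)$, and the remaining freedom in the $\phi_i$ is captured by $(\lambda_1,\dots,\lambda_n)\in(\mathbb{G}_m)^n$ via \eqref{rescalefa}. Thus $\widetilde{\dconf_n^\times(a)}\cong\dGr_a^\times(n)\times(\mathbb{G}_m)^n$ as varieties, and under this identification the functions $A_{i,j}$ in \eqref{1.tt} are pulled back from $\dGr_a^\times(n)$ (via $\psi$ of \eqref{he.9.25}) while $A_{i'}=\lambda_{i-a}\,\Delta_{\{i-a,\dots,i-1\}}/\Delta_{\{i-a+1,\dots,i\}}$, which together with the $A_{i,j}$ visibly form a transcendence basis; hence $\tilde\psi$ is birational.

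Next I would upgrade birationality to the algebra isomorphism on global functions. On the cluster side, $\widetilde{\mathscr{A}_{a,n}}$ is likewise a product: because the new frozen vertices $i'$ are never mutated and each receives exactly one arrow, the cluster variable $A_{i'}$ is invertible and its exchange relations are trivial, so $\mathcal{O}(\widetilde{\mathscr{A}_{a,n}})\cong\mathbf{up}(\mathscr{A}_{a,n})\otimes\mathbb{k}[A_{1'}^{\pm1},\dots,A_{n'}^{\pm1}]$ — indeed adding a ``tagged'' frozen vertex with a single outgoing arrow tensors the upper cluster algebra with a Laurent polynomial ring in the new variable. Under $\tilde\psi^*$ the first tensor factor maps isomorphically onto $\mathcal{O}(\dGr_a^\times(n))$ by Theorem \ref{Scott} (composed with the projection $\widetilde{\dconf_n^\times(a)}\to\dGr_a^\times(n)$), and the second maps isomorphically onto $\mathbb{k}[\mu_1^{\pm1},\dots,\mu_n^{\pm1}]$ where $\mu_i$ is the image of $A_{i'}$, a coordinate on the $(\mathbb{G}_m)^n$ factor. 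Since $\mathcal{O}(\widetilde{\dconf_n^\times(a)})\cong\mathcal{O}(\dGr_a^\times(n))\otimes\mathbb{k}[\mu_1^{\pm1},\dots,\mu_n^{\pm1}]$, we conclude $\tilde\psi^*$ is an isomorphism.

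The main obstacle I anticipate is making the ``product decomposition of the upper cluster algebra under adjoining a single-arrow frozen vertex'' fully rigorous: one must check that universal Laurent polynomials on $\widetilde{\mathscr{A}_{a,n}}$ are exactly Laurent-in-$A_{i'}$ combinations of universal Laurent polynomials on $\mathscr{A}_{a,n}$, which requires tracking how $A_{i'}$ appears in every mutated cluster (it is a frozen variable, so it appears unchanged, but one should verify no hidden relations are introduced) and confirming that the relevant mutation sequences for $\widetilde{Q_{a,n}}$ are precisely those for $Q_{a,n}$ acting on the unfrozen part. Once this structural lemma is in place, everything else is the bookkeeping of identifying the two product decompositions, which is routine given Lemma \ref{gr isomorphic to conf of column vectors} and the explicit formula \eqref{1.tt}.
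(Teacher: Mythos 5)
Your proof takes essentially the same approach as the paper's: identify $\widetilde{\dconf_n^\times(a)}\cong\dGr_a^\times(n)\times(\mathbb{G}_m)^n$ and $\widetilde{\mathscr{A}_{a,n}}\cong\mathscr{A}_{a,n}\times(\mathbb{G}_m)^n$ (with $A_{i'}$ as torus coordinates), so that $\tilde\psi=\psi\times\mathrm{Id}$, and conclude via Theorem \ref{Scott}. The structural point you flag as the main obstacle is handled in the paper by observing that the new frozen vertices $i'$ have no arrows to any unfrozen vertex, so each cluster torus of $\widetilde{\mathscr{A}_{a,n}}$ is literally the product of the corresponding torus of $\mathscr{A}_{a,n}$ with $(\mathbb{G}_m)^n$, with all gluing maps acting as the identity on the second factor.
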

\begin{proof} Let ${\rm H}= (\mathbb{G}_m )^n$ be the split algebraic torus with coordinates $ (A_{1'}, \dots, A_{n'} )$. Note that there is no arrow between the vertices $i'$ and the unfrozen vertices in $Q_{a,n}$. Therefore the variables $A_{i'}$ do not affect the cluster mutations. Hence we get
\[
\widetilde{\mathscr{A}_{a,n}}\cong \mathscr{A}_{a,n}\times {\rm H}.
\]

On the other hand, Let $\mathrm{T}= (\mathbb{G}_m^n )$ be the split algebraic torus with coordinates $ (\lambda_1,\dots, \lambda_n )$. Note that the $\SL_a$-orbit of the $n$-tuple of vectors $ (v_1,\dots, v_n )$ satisfying the cyclic general position condition is captured by a point in $\dGr_a^\times(n)$ (Lemma~\ref{gr isomorphic to conf of column vectors}), and the $\SL_a$-orbit of the $n$-tuple of linear isomorphisms $ (\phi_1,\dots, \phi_n )$ is captured by a point in~$\mathrm{T}$. Hence we get
\[
\widetilde{\dconf_n^\times(a)} \cong \dGr_a^\times(n)\times \mathrm{T}.
\]

Now we have the following commutative diagram, where the vertical maps are a trivial $\mathrm{T}$-fiber bundle and a trivial $\mathrm{H}$-fiber bundle respectively
\begin{equation}\label{ext a to a}\begin{split}&
\xymatrix{ \widetilde{\dconf_n^\times(a)} \ar@{-->}[r]^{\tilde{\psi}} \ar[d] & \widetilde{\mathscr{A}_{a,n}} \ar[d] \\
\dGr^\times_a(n) \ar@{-->}[r]_{\psi} & \mathscr{A}_{a,n}.}\end{split}
\end{equation}
Recall from Theorem~\ref{Scott} that the bottom map $\psi$ is a birational map which induces an isomorphism between algebras of regular functions. Moreover, since the Pl\"{u}cker coordinates in the second line of \eqref{1.tt} are both invertible, $\tilde{\psi}$ restricts to an isomorphism between the algebraic torus fibers over any pair of corresponding points on the bases. Therefore we can conclude that~$\tilde{\psi}$ is birational and~$\tilde{\psi}^*$ is an algebra isomorphism between $\mathbf{up}\big(\widetilde{\mathscr{A}_{a,n}}\big)$ and $\mathcal{O}\big(\widetilde{\dconf_n^\times(a)}\big)$.
\end{proof}

{\bf Cluster Poisson structure on $\boldsymbol{\dconf_n^\times(a)}$.}
Analogous to the map \eqref{in.p.9.43}, there is a canonical map $p\colon \widetilde{\mathscr{A}_{a,n}}\rightarrow \mathscr{X}_{a,n}$ defined on the cluster coordinate charts associated to $Q_{a,n}$ by
\begin{equation}\label{def poisson cluster}
p^* (X_g ) :=\prod_{f\in \tilde{I}} A_f^{\tilde{\varepsilon}_{fg}}, \qquad \forall\, g\in I.
\end{equation}

Let $[\phi_1, l_1,\dots, \phi_n, l_n] \in {\dconf_n^\times(a)}$. Let us lift it against $p\colon \widetilde{\dconf_n^\times(a)}\twoheadrightarrow\dconf_n^\times(a)$ by picking a nonzero vector $v_i\in l_i$ for each~$i$. Composing with the map
$
p\circ \tilde{\psi}\colon  \widetilde{\dconf_n^\times(a)}{\dashrightarrow} \widetilde{\mathscr{A}_{a,n}}{\longrightarrow} \mathscr{X}_{a,n},
$
we get a rational map
\[
\psi\colon \ \dconf_n^\times(a)\dashrightarrow \mathscr{X}_{a,n}.
\]

The map $\psi$ does not depend on the choices of $v_i\in l_i$. Indeed, if the vertex $(i,j)$ is unfrozen, then $X_{i,j}$ coincides with the cluster Poisson coordinates $X_{i,j}$ on $\mathscr{X}_{a,n}^{\uf}$. For a frozen vertex $i$, one gets
\begin{equation}\label{frozen X}
\big(p\circ \tilde{\psi}\big)^* (X_i )= \begin{cases}\lambda_{1-a}\dfrac{\Delta_{\{3-a,\dots, 2\}}\Delta_{\{1-a,\dots, n\}}}{\Delta_{\{2-a,\dots, 1\}}\Delta_{\{2,2-a,\dots, n\}}} & \text{if $i=1$},\vspace{1mm}\\
\lambda_{i-a}\dfrac{\Delta_{\{i-a+2,\dots, i+1\}}\Delta_{\{2,\dots, i, i-a,\dots, n\}}}{\Delta_{\{i-a+1,\dots, i\}}\Delta_{\{2, \dots, i+1, i-a+1,\dots, n\}}} & \text{if $1< i<a$},\vspace{1mm}\\
\lambda_{i-a}\dfrac{\Delta_{\{i-a,\dots, i-1\}}\Delta_{\{i-a+2,\dots, i, n\}}}{\Delta_{\{i-a+1,\dots, i\}}\Delta_{\{i-a+1,\dots, i-1,n\}}} & \text{if $a\leq i\leq n-1$},\vspace{1mm}\\
\lambda_b \dfrac{\Delta_{\{b,b+2,\dots, n\}}}{\Delta_{\{b+1,\dots, n\}}} & \text{if $i=n$},
\end{cases}
\end{equation}
from which one can verify that $X_i$ are independent of the choices of $v_i\in l_i$.

\begin{prop} The map $\psi\colon \dconf_n^\times(a)\dashrightarrow \mathscr{X}_{a,n}$ is a birational equivalence.
\end{prop}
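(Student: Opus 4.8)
The plan is to leverage the infrastructure already assembled in this section rather than to perform a direct computation with cluster mutations. I would deduce the claim from the commutative diagram relating $\dconf_n^\times(a)$ to the spaces $\widetilde{\dconf_n^\times(a)}$, $\conf_n^\times(a)$, $\dGr_a^\times(n)$, and $\widetilde{\mathscr{A}_{a,n}}$. Concretely, first I would set up the commutative diagram
\[
\vcenter{\vbox{\xymatrix{
\widetilde{\dconf_n^\times(a)} \ar@{-->}[r]^{\tilde{\psi}} \ar@{->>}[d] & \widetilde{\mathscr{A}_{a,n}} \ar[d]^p \\
\dconf_n^\times(a) \ar@{-->}[r]_{\psi} & \mathscr{X}_{a,n}
}}}
\]
where the left vertical map is the surjection obtained by replacing each $v_i$ with its spanning line $l_i$, and $\psi$ is the rational map just constructed. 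By Corollary \ref{A isomorphism} the top horizontal arrow $\tilde{\psi}$ is birational, and under the identifications $\widetilde{\dconf_n^\times(a)}\cong \dGr_a^\times(n)\times \mathrm{H}$ and $\widetilde{\mathscr{A}_{a,n}}\cong \mathscr{A}_{a,n}\times \mathrm{H}$ established there, it is simply $\psi\times\mathrm{Id}_{\mathrm{H}}$.

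The key point is a dimension and fiber comparison. The right-hand map $p:\widetilde{\mathscr{A}_{a,n}}\to\mathscr{X}_{a,n}$ is the cluster ensemble map defined by \eqref{def poisson cluster} using the full exchange matrix $\tilde\varepsilon$ of $\widetilde{Q_{a,n}}$; since every unfrozen column of $\tilde\varepsilon$ already has full rank by Lemma \ref{full.rank.lem}, and the $n$ extra arrows $i\to i'$ make the full matrix $\tilde\varepsilon$ itself of full rank $n$, the map $p$ is a surjective morphism on each seed torus induced by a surjective lattice map, hence has connected fibers of dimension $\dim\widetilde{\mathscr{A}_{a,n}}-\dim\mathscr{X}_{a,n}$. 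Computing dimensions: $\dim\mathscr{A}_{a,n}=\#I=a(n-a)+n+1$ (the $a\times b$ grid plus the $(0,0)$ vertex), so $\dim\widetilde{\mathscr{A}_{a,n}}=a(n-a)+2n+1$ and $\dim\mathscr{X}_{a,n}=\#I=a(n-a)+n+1$; the fibers of $p$ therefore have dimension $n$. On the other hand the surjection $\widetilde{\dconf_n^\times(a)}\twoheadrightarrow\dconf_n^\times(a)$ has fibers of dimension $n$ as well (one rescaling parameter per line $l_i$), and $\dim\dconf_n^\times(a)=a(n-a)+1$ by Proposition \ref{affine} matches $\dim\mathscr{X}_{a,n}-n$. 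So I would argue that the composite $\widetilde{\dconf_n^\times(a)}\dashrightarrow\mathscr{X}_{a,n}$ is constant along the fibers of the left vertical surjection — this is exactly the independence-of-$v_i$ statement verified just above via the explicit formulas \eqref{frozen X} for the frozen coordinates together with the matching statement for unfrozen coordinates through $\mathscr{X}_{a,n}^\uf$ — and descends to $\psi$; conversely, since both vertical maps have $n$-dimensional fibers and $p\circ\tilde\psi$ is dominant (being a composite of a birational map with a dominant one), $\psi$ is dominant, and a generic-fiber count shows it is generically injective, hence birational.

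An alternative, cleaner route, which I would probably present as the main argument, is to observe that $\psi$ restricted to the seed torus of $Q_{a,n}$ inside $\dconf_n^\times(a)$ is, after choosing a section $v_i\in l_i$, the composition of the birational torus chart $\dGr_a^\times(n)\dashrightarrow\mathscr{A}_{a,n}$ (Theorem \ref{Scott}) crossed with the $\lambda_i$-torus, followed by the monomial map $p$ of \eqref{def poisson cluster}; since the character-lattice map underlying $p$ is surjective with an $n$-dimensional kernel spanned precisely by the characters dual to the section ambiguities $v_i\mapsto t_iv_i$ (this is what the frozen computations in \eqref{frozen X} encode), the induced map on the quotient by that ambiguity is an isomorphism of tori onto the seed torus of $\mathscr{X}_{a,n}$. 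Hence $\psi$ is a birational equivalence. The main obstacle is the bookkeeping in this last step: one must confirm that the $n$-dimensional kernel of the lattice map induced by $\tilde\varepsilon$ is exactly the span of the vectors recording the rescalings $v_i\mapsto t_iv_i$, i.e. that the diagonal torus $T\cong(\mathbb{G}_m)^n$ acting on sections maps isomorphically onto $\ker(p)$; this follows from Lemma \ref{full.rank.lem} (giving full rank on the unfrozen part) combined with the explicit shape of the $n$ new arrows $i\to i'$ in $\widetilde{Q_{a,n}}$, but writing it out carefully is where the real work lies.
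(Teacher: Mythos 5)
Your proof is correct and takes a genuinely different route from the paper's. The paper argues constructively: it restricts to the open locus where all Pl\"ucker coordinates are nonzero, and shows a generic point of the seed torus has a unique preimage by first recovering the line configuration from the unfrozen coordinates (citing the birationality of $\conf_n^\times(a)\dashrightarrow\mathscr{X}^\uf_{a,n}$), then solving for the scalings $\lambda_i$ from the frozen $X_i$ via the explicit formulas \eqref{frozen X}. Your ``cleaner route'' instead identifies $\ker p$ (on seed tori) with the $T$-ambiguity $v_i\mapsto t_iv_i$: the containment of the rescaling torus in $\ker p$ is precisely the independence-of-$v_i$ check already performed via \eqref{frozen X}, and equality follows because both are connected $n$-dimensional subgroups, provided one checks that $T$ acts with finite kernel (it does --- the stabilizer is the finite group of scalars $t$ with $t^a=1$, so the image is still $n$-dimensional). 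This buys you the minor advantage of not needing the external citation for the $\mathscr{X}^\uf$ birationality. Two caveats worth flagging. First, your intermediate dimension counts are wrong: $\dim\mathscr{A}_{a,n}=\#I=ab+1=a(n-a)+1$, not $a(n-a)+n+1$, and $\dim\widetilde{\mathscr{A}_{a,n}}=a(n-a)+1+n$; the errors you made happen to cancel, so the fiber dimension $n$ comes out right, but the numbers as written are off by $n$. Second, in your first route, ``a generic-fiber count shows it is generically injective'' is not by itself enough: a dominant map between varieties of the same dimension can still have degree greater than $1$. You need the additional observation that the fibers of $p$ over the seed torus are cosets of a connected subtorus and hence irreducible; then containment of the $n$-dimensional fibers of the left vertical map in the $n$-dimensional irreducible fibers of $p\circ\tilde\psi$ forces equality, which is what actually gives generic injectivity.
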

\begin{proof} Let $U$ be an open subset of $\dconf_n^\times(a)$ consisting of $[\phi_1, l_1, \dots, \phi_n, l_n]$ such that
\begin{equation}\label{bullet condition}
 \text{every collection $\{l_{i_1}, \dots, l_{i_a}\}$ of $a$-many lines is linearly independent.}
\end{equation}
Note that any Pl\"ucker coordinate on the lift of $U$ to $\widetilde{\dconf^\times_n(a)} \cong \dGr_a^\times(n)\times \mathrm{H}$ is nonzero.

Let $\mathcal{T}_{Q_{a,n}}\subset \mathscr{X}_{a,n}$ be the algebraic torus corresponding to the cluster chart associated to $Q_{a,n}$. By definition, the map $\psi$ restricted to $U$ is a regular map to $\mathcal{T}_{Q_{a,n}}$.
Let $ (X_f)_{f\in I}$ be a generic point in $\mathcal{T}_{Q_{a,n}}$. It suffices to show that it has a unique pre-image in $U$.

Recall that the map $\psi\colon \conf_n^\times(a)\dashrightarrow \mathscr{X}_{a,n}^{\uf}$ is birational. After imposing the genericity condition, by using the unfrozen part $ (X_f )_{f\in I^{\uf}}$, one can uniquely reconstruct a configuration of lines $[l_1,\dots, l_n]$ satisfying the condition \eqref{bullet condition}. Take a representative $v_i\in l_i$ for each $i$. One can use the frozen part $(X_i)_{i=1}^n$ to uniquely reconstruct the isomorphisms $\phi_i\colon l_i\rightarrow l_{i-1}$ by deducing the scaling factors $\lambda_i$ from the frozen variables $X_i$ and the non-zero Pl\"{u}cker coordinates using~\eqref{frozen X}. It is easy to see that the resulting isomorphisms~$\phi_i$ are independent of~$v_i$ chosen. Therefore we obtain a unique configuration in~$U$.
\end{proof}

\begin{thm} \label{X isomorphism}\label{2018.3.2.15.06}
The birational equivalence $\psi\colon \dconf_n^\times(a)\dashrightarrow \mathscr{X}_{a,n}$ induces an algebra isomorphism between $\up (\mathscr{X}_{a,n} ):=\mathcal{O} (\mathscr{X}_{a,n} )$ and $\mathcal{O}\big(\dconf_n^\times(a)\big)$. Combining this result with Proposition~{\rm \ref{affine}} we deduce that $\dconf^\times_n(a)\cong \mathrm{Spec} (\up (\mathscr{X}_{a,n} ) )$.
\end{thm}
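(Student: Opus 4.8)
The plan is to leverage the birational equivalence $\psi:\dconf_n^\times(a)\dashrightarrow \mathscr{X}_{a,n}$ together with the structural results already assembled in this section, in close analogy with the proof of Proposition \ref{Xuf isomorphism}. Since $\psi$ is birational, its pullback $\psi^*$ is an isomorphism of the fields of rational functions, so the only thing to check is that $\psi^*$ restricts to an isomorphism on the level of regular functions, i.e.\ that a rational function $F$ on $\mathscr{X}_{a,n}$ is regular on $\mathscr{X}_{a,n}$ if and only if $\psi^*(F)$ is regular on $\dconf_n^\times(a)$. The upper cluster algebra $\up(\mathscr{X}_{a,n})=\mathcal{O}(\mathscr{X}_{a,n})$ is by definition the ring of universal Laurent polynomials, so regularity of $F$ on $\mathscr{X}_{a,n}$ means $F$ is a Laurent polynomial in every Poisson cluster chart.

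The key diagram to exploit is the commuting square built from the canonical map $p:\widetilde{\mathscr{A}_{a,n}}\rightarrow \mathscr{X}_{a,n}$ of \eqref{def poisson cluster} and the quotient $p:\widetilde{\dconf_n^\times(a)}\twoheadrightarrow \dconf_n^\times(a)$, namely
\begin{equation*}
\vcenter{\vbox{\xymatrix{ \widetilde{\dconf_n^\times(a)} \ar@{-->}[r]^{\tilde\psi} \ar@{->>}[d]_p & \widetilde{\mathscr{A}_{a,n}} \ar[d]^p \\
\dconf_n^\times(a) \ar@{-->}[r]_\psi & \mathscr{X}_{a,n}}}}
\end{equation*}
Here $\tilde\psi$ is the birational equivalence of Corollary \ref{A isomorphism}. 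First I would argue, exactly as in Proposition \ref{Xuf isomorphism}, that the left vertical map $p$ is surjective (it is the map forgetting the chosen representatives $v_i\in l_i$, a $\mathbb{G}_m^n$-torsor projection), so a rational function on $\dconf_n^\times(a)$ is regular there if and only if its pullback to $\widetilde{\dconf_n^\times(a)}$ is regular. By Corollary \ref{A isomorphism}, $\tilde\psi^*$ identifies $\mathcal{O}(\widetilde{\mathscr{A}_{a,n}})=\up(\widetilde{\mathscr{A}_{a,n}})$ with $\mathcal{O}(\widetilde{\dconf_n^\times(a)})$, so $\psi^*(F)$ is regular on $\dconf_n^\times(a)$ if and only if $p^*(F)$ is regular on $\widetilde{\mathscr{A}_{a,n}}$, i.e.\ $p^*(F)\in\up(\widetilde{\mathscr{A}_{a,n}})$. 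Thus the theorem reduces to the purely cluster-theoretic statement: $F\in\up(\mathscr{X}_{a,n})$ if and only if $p^*(F)\in\up(\widetilde{\mathscr{A}_{a,n}})$, where $p$ is the canonical $\mathscr{A}\to\mathscr{X}$ ensemble map for the quiver $\widetilde{Q_{a,n}}$.

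For this last reduction, the forward direction is automatic since $p$ is a morphism (pulling back a universal Laurent polynomial along $p$ gives a universal Laurent polynomial). The converse is where the real content sits: I would show that $p:\widetilde{\mathscr{A}_{a,n}}\rightarrow\mathscr{X}_{a,n}$ is surjective — or more precisely that on each seed torus it is a dominant monomial map induced by a surjection of cocharacter lattices — so that $p^*$ is injective with image closed under the relevant Laurent conditions chart-by-chart. The decisive input is that the exchange matrix of $\widetilde{Q_{a,n}}$, when restricted to columns indexed by $I$, is of full rank. This is essentially Lemma \ref{full.rank.lem} for $Q_{a,n}$, enhanced by the extra frozen vertices $i'$: each $X_g$ with $g\in I$ unfrozen involves only the original frozen and unfrozen vertices, while for $g$ a frozen vertex of $Q_{a,n}$ the added arrow $i\to i'$ contributes the variable $A_{i'}$, and since the $A_{i'}$ are algebraically independent and each appears in exactly one $X_g$, the columns of $\tilde\varepsilon|_{\tilde I\times I}$ remain linearly independent. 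Hence $p^*$ is injective and $p$ is dominant; surjectivity on seed tori then follows because a dominant monomial map of tori coming from an injective lattice map is surjective. The main obstacle I anticipate is making the "$p^*(F)$ regular $\Rightarrow$ $F$ regular" step fully rigorous: one must verify that if $p^*(F)$ is a Laurent polynomial in the $\widetilde{\mathscr{A}_{a,n}}$-cluster coordinates of a given seed, then $F$ is a Laurent polynomial in the corresponding $\mathscr{X}_{a,n}$-cluster coordinates. This amounts to showing that the sublattice of monomials pulled back from $\mathscr{X}_{a,n}$ is a direct summand (saturated) in the monomial lattice of $\widetilde{\mathscr{A}_{a,n}}$ after splitting off the $A_{i'}$-directions, which is exactly what full rank of $\tilde\varepsilon|_{\tilde I\times I}$ over $\mathbb{Z}$ — not merely over $\mathbb{Q}$ — buys us; I would either cite the corresponding lemma from \cite{GS2} or \cite{Weng} on the behaviour of $\up$ under the ensemble map $p$, or check the saturation directly from the explicit shape of $\widetilde{Q_{a,n}}$. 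Finally, combining the resulting isomorphism $\psi^*:\up(\mathscr{X}_{a,n})\xrightarrow{\ \sim\ }\mathcal{O}(\dconf_n^\times(a))$ with Proposition \ref{affine}, which guarantees $\dconf_n^\times(a)$ is affine, yields $\dconf_n^\times(a)\cong\mathrm{Spec}\big(\up(\mathscr{X}_{a,n})\big)$.
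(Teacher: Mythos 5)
Your proof follows the same commuting-square strategy as the paper's (which itself just says ``run the same argument as in Proposition~\ref{Xuf isomorphism}''): pull back along the surjective vertical $p$-maps to the top row and invoke Corollary~\ref{A isomorphism} as the analogue of Theorem~\ref{Scott}. The only place where you flag a ``main obstacle'' — whether the converse direction ``$p^*(F)$ regular $\Rightarrow$ $F$ regular'' requires the pulled-back character sublattice to be \emph{saturated} (``full rank over $\mathbb{Z}$, not merely over $\mathbb{Q}$'') — is a red herring. Saturation is not needed. Once the right vertical $p:\widetilde{\mathscr{A}_{a,n}}\to\mathscr{X}_{a,n}$ is surjective on each seed torus, equivalently once the induced map of character lattices $M_{\mathscr{X}}\hookrightarrow M_{\widetilde{\mathscr{A}}}$ is injective, the extension of group algebras $\mathds{k}\left[M_{\mathscr{X}}\right]\hookrightarrow\mathds{k}\left[M_{\widetilde{\mathscr{A}}}\right]$ is automatically free, hence faithfully flat, and therefore $\mathrm{Frac}\left(\mathds{k}\left[M_{\mathscr{X}}\right]\right)\cap\mathds{k}\left[M_{\widetilde{\mathscr{A}}}\right]=\mathds{k}\left[M_{\mathscr{X}}\right]$ inside the bigger fraction field, regardless of whether the image lattice is a direct summand (the familiar $t\mapsto t^2$ example already illustrates this: the image lattice has index $2$, yet a rational $F(y)$ with $F(t^2)$ a Laurent polynomial is forced to be a Laurent polynomial in $y$). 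This is exactly the content of Lemma~\ref{tech.lem.545} in the appendix, whose proof just notes that $k^*$ is injective and carries Laurent monomials to Laurent monomials; and it is the same observation deployed geometrically in Proposition~\ref{Xuf isomorphism} (``a rational function on a space downstairs is regular iff its pull-back is a regular function on the corresponding space upstairs'' for surjective $p$), which the proof of Theorem~\ref{X isomorphism} reuses verbatim. So your proposal is correct and essentially identical to the paper's argument — the final step just needs the surjectivity you already established, not the extra saturation machinery.
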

\begin{proof}
Note that we have the following commutative diagram
\begin{equation}\label{diagram tilde}
\vcenter{\vbox{\xymatrix{\widetilde{ \dconf_n^\times(a)} \ar@{-->}[r]^{\tilde{\psi}} \ar[d]_p & \widetilde{\mathscr{A}_{a,n}}\ar[d]^p \\
\dconf_n^\times(a) \ar@{-->}[r]_\psi & \mathscr{X}_{a,n}.}}}
\end{equation}
Since the map $\psi$ is birational, its pull-back map $\psi^*$ is an isomorphism between fields of rational functions on $\dconf_n^\times(a)$ and $\mathscr{X}_{a,n}$. Let $F$ be a rational function on $\mathscr{X}_{a,n}$. It suffices to show that
\begin{equation}\label{claim}
\mbox{$F$ is regular on ${\mathscr{X}_{a,n}}$} \quad \Longleftrightarrow \quad \mbox{$\psi^*(F)$ is regular on $\dconf_n^\times(a)$.}
\end{equation}

Let us use the other three maps in the commutative diagram \eqref{diagram tilde} to prove the above statement. First by applying Lemma \ref{tech.lem.545} to the vertical map $p$ on the right, we know that
\[
\mbox{$F$ is regular on ${\mathscr{X}_{a,n}}$} \quad \Longleftrightarrow \quad \mbox{$p^*(F)$ is regular on $\widetilde{\mathscr{A}_{a,n}}$.}
\]
On the other hand, note that the vertical map $p$ on the left is a surjective morphism onto a~smooth affine variety. By applying Lemma \ref{surjective lift} below we can deduce that
\[
\mbox{$\varphi^*(F)$ is regular on $\dconf_n^\times(a)$} \quad \Longleftrightarrow \quad \mbox{$p^*\circ \psi^*(F)$ is regular on $\widetilde{\dconf_n^\times(a)}$.}
\]
By the commutativity of diagram \eqref{diagram tilde}, we know that $p^*\circ\psi^*(F)=\tilde{\psi}^*\circ p^*(F)$. Therefore the claim \eqref{claim} reduces to
\[
\mbox{$p^*(F)$ is regular on $\widetilde{\mathscr{A}_{a,n}}$} \quad \Longleftrightarrow \quad \mbox{$\tilde{\psi}^*\circ p^*(F)$ is regular on $\widetilde{\dconf_n^\times(a)}$,}
\]
which is true by Corollary~\ref{A isomorphism}.
\end{proof}

\begin{lem}\label{surjective lift} Let $f\colon \spec B\rightarrow \spec A$ be a surjective morphism where~$A$ is a normal Noetherian domain. Let $\Fr(A)$ be the field of fractions of~$A$. Then for any $F\in \Fr(A)$, we have $F\in A$ if and only if $f^*(F)\in B$.
\end{lem}
\begin{proof} The implication $F\in A\implies f^*(F)\in B$ is trivial.

Conversely, suppose $f^*(F)\in B$ but $F\notin A$. Since $A$ is a normal Noetherian domain, a~standard result of commutative algebra (e.g., \cite[Corollary~11.4]{Eis}) states that
\[
A=\bigcap_{\codim \mathfrak{p}=1}A_\mathfrak{p}.
\]
If $F\notin A$, then there exists a prime ideal $\mathfrak{p}$ of codimension 1 such that $F\notin A_\mathfrak{p}$. Since $A_\mathfrak{p}$ is a~discrete valuation ring, $F\notin A_\mathfrak{p}$ implies that $\frac{1}{F}\in \mathfrak{p}A_\mathfrak{p}$. Let $\mathfrak{q}\in {\rm Spec}~B$ be a~preimage of $\mathfrak{p}$ under the morphism $f$. Here $\mathfrak{q}$ is a prime ideal in~$B$, and $f^\ast$ is local homomorphism from~$A_\mathfrak{p}$ to~$B_{\mathfrak{q}}$. Hence, $f^*\big(\frac{1}{F}\big)\in \mathfrak{q}B_\mathfrak{q}$. Combining this with the assumption that $f^*(F)\in B$ we deduce that
\[
1=f^*(F)f^*\left(\frac{1}{F}\right)\in \mathfrak{q}B_\mathfrak{q},
\]
which is absurd. Therefore $F$ must be an element of $A$.
\end{proof}

Let us end the section with a disclaimer of notation convention for the rest of the paper.

\begin{conv} Let us summarize the following algebra isomorphisms:
\begin{gather*}
\text{Theorem \ref{Scott}}\colon \ \mathcal{O}\big(\dGr_a^\times(n)\big)\cong \up\left(\mathscr{A}_{a,n}\right),
\\
\text{Corollary \ref{A isomorphism}} \colon \ \mathcal{O}\big(\widetilde{\dconf_n^\times(a)}\big)\cong \up \big(\widetilde{\mathscr{A}_{a,n}} \big),
\\
\text{Theorem \ref{X isomorphism}}\colon \ \mathcal{O}\big(\dconf_n^\times(a)\big)\cong \up (\mathscr{X}_{a,n} ).
\end{gather*}
From now on, we will treat each pair as \textit{identical algebras}, and treat their corresponding fields of rational functions as \textit{identical fields} as well. In other words, we will drop the pull-back maps $\psi^*$ and $\tilde{\psi}^*$ to simplify our notations. We view $ \{A_f \}$ as regular functions on $\dGr_a^\times(n)$ and view $ \{X_f \}$ as rational functions on $\dconf_n^\times(a)$. In particular, using the cluster Poisson coordinates $ \{X_f \}$ we also get a log-canonical Poisson structure (see~\cite{FGensemble} for more details) on $\dconf_n^\times(a)$ where
\begin{equation}\label{dconf poisson}
 \{X_f,X_g \}=2 \varepsilon_{fg}X_fX_g.
\end{equation}

Recall from the proof of Corollary~\ref{A isomorphism} that $\widetilde{\dconf_n^\times (a)}\cong \dGr_a^\times(n)\times \mathrm{H}$. We can pull back any Pl\"{u}cker coordinate $\Delta_I$ on $\dGr_a^\times(n)$ to a regular function on $\widetilde{\dconf_n^\times(a)}$, which we will also denote by $\Delta_I$ as well.
\end{conv}

\section{Cluster nature of decorated configuration space}

This section is devoted to studying the cluster nature of the following three maps on $\dconf_{n}^\times(a)$: the twisted monodromy $P$, the cyclic rotation $R$, and the potential function $\mathcal{W}$, with the goal to express them in terms of the cluster coordinates $\{X_f\}_{f\in I}$ associated to the quiver $Q_{a,n}$.

\subsection{Twisted monodromy and Casimir}\label{mon.pot}

\begin{prop}\label{product} The twisted monodromy
\[
P=\prod_{f\in I} X_f.
\]
The function $P$ is a \emph{Casimir element} with respect to the Poisson bracket~\eqref{dconf poisson}, that is, $\{P,F\}\allowbreak =0$ for any rational function $F$ on $\dconf_n^\times(a)$.
\end{prop}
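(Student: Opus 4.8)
Proof proposal.

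The plan is to prove the two assertions separately: first the product formula $P = \prod_{f \in I} X_f$, and then the Casimir property.

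For the product formula, I would work with a lift $[\phi_1, v_1, \ldots, \phi_n, v_n] \in \widetilde{\dconf_n^\times(a)}$ of a given decorated configuration, so that each scaling factor $\lambda_i$ is defined by \eqref{rescalefa}, and recall that the twisted monodromy satisfies $P = \prod_{i=1}^n \lambda_i$ (composing all the $\phi$'s around the cycle picks up exactly the product of the rescaling factors, with the sign $(-1)^{a-1}$ absorbed into the definition of $P$). On the cluster side, by \eqref{def poisson cluster} we have $\prod_{f \in I} X_f = \prod_{g \in I} \prod_{f \in \tilde I} A_f^{\tilde\varepsilon_{fg}} = \prod_{f \in \tilde I} A_f^{\,\sum_{g \in I}\tilde\varepsilon_{fg}}$. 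So the key combinatorial input is to compute the column sums $\sum_{g \in I}\tilde\varepsilon_{fg}$ of the exchange matrix of $\widetilde{Q_{a,n}}$. For an interior (unfrozen) vertex $f$, the quiver $Q_{a,n}$ is balanced there — every arrow in is matched by an arrow out — so the column sum vanishes and $A_f$ does not appear. For the original frozen vertices and the new frozen vertices $i'$, the surviving contributions should precisely reproduce, via the identifications \eqref{1.tt} of $A_f$ with Plücker ratios and scaling factors, the telescoping product $\prod_i \lambda_i \cdot \prod_i \frac{\Delta_{\{i-a,\ldots,i-1\}}}{\Delta_{\{i-a+1,\ldots,i\}}}$, and the Plücker ratios cancel cyclically leaving exactly $\prod_i \lambda_i = P$. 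Concretely I would either do this bookkeeping directly from the picture of $\widetilde{Q_{a,n}}$, or — cleaner — observe that $P$ is the monodromy and use \eqref{frozen X}: multiplying together all the listed expressions for the frozen $X_i$ and all the unfrozen $X_{i,j}$ and checking that every Plücker coordinate cancels, leaving $\prod_i \lambda_i$.

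For the Casimir property, with the log-canonical bracket \eqref{dconf poisson} it suffices to check $\{P, X_g\} = 0$ for every cluster coordinate $X_g$, $g \in I$, since the $X_g$ generate the field of rational functions. Writing $P = \prod_f X_f$ and using the Leibniz rule with \eqref{dconf poisson},
\begin{equation}
\{P, X_g\} = \sum_{f \in I} \frac{P}{X_f}\{X_f, X_g\} = \sum_{f \in I} \frac{P}{X_f}\, 2\varepsilon_{fg} X_f X_g = 2 P X_g \sum_{f \in I} \varepsilon_{fg}.
\end{equation}
Thus $\{P, X_g\} = 0$ for all $g$ if and only if all column sums $\sum_{f \in I}\varepsilon_{fg}$ of the exchange matrix $\varepsilon$ of $Q_{a,n}$ vanish. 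For $g$ unfrozen this is the balanced-vertex observation already used above; for $g$ frozen there are no arrows among frozen vertices of $Q_{a,n}$ contributing, and the arrows between $g$ and unfrozen vertices again cancel in pairs by inspecting the quiver. (Note the column sums of $\varepsilon$ for $Q_{a,n}$ and of $\tilde\varepsilon$ restricted appropriately differ only by the arrows $i \to i'$, which is why $P$ is a nontrivial monomial rather than identically $1$, yet is still Casimir.)

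The main obstacle is the combinatorial verification that the column sums $\sum_{f}\varepsilon_{fg}$ vanish for every vertex $g$ of $Q_{a,n}$ — equivalently, that $Q_{a,n}$ is a "balanced" quiver in the relevant sense. This is where one must argue carefully from the structure of the reduced plabic graph $\Gamma_{a,n}$ (or of the quiver picture drawn in Section \ref{clustersec1712}): each unfrozen vertex has equal in- and out-degree, and the boundary vertices contribute in a way that telescopes. Everything else is a routine application of the Leibniz rule and of the already-established identifications of $A_f$ and $X_f$ with explicit functions on the decorated spaces.
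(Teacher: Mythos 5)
Your proposal is correct and follows essentially the same route as the paper's proof: lift to $\widetilde{\dconf_n^\times(a)}$, expand $\prod_{f\in I}X_f$ via the $p$-map formula \eqref{def poisson cluster}, use that $Q_{a,n}$ is balanced so the exponents of $A_f$ vanish for $f\in I$, conclude that only the added vertices $i'$ contribute and that $\prod_i A_{i'}=\prod_i\lambda_i=p^*(P)$ after the telescoping of Pl\"ucker ratios, and derive the Casimir property from the same vanishing-column-sum observation together with the Leibniz rule. One small sharpening worth recording: the paper's combinatorial input is that $Q_{a,n}$ is built entirely from oriented cycles, so $\sum_{g\in I}\varepsilon_{gf}=0$ at \emph{every} vertex $f$ of $Q_{a,n}$, frozen as well as unfrozen; you invoke balance only for the unfrozen vertices and then attribute the surviving factors to ``original frozen plus new $i'$'' vertices, whereas in fact the original frozen vertices of $Q_{a,n}$ also drop out and it is exclusively the added $i'$ vertices that survive, each with exponent $1$ from the single arrow $i\to i'$.
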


\begin{proof} Let $[\phi_1, l_1, \dots, \phi_n, l_n]\in \dconf_n^\times(a)$.
Let us lift it against the projection $p\colon \widetilde{\dconf_n^\times(a)}\twoheadrightarrow \dconf_n^\times(a)$ to a point $[\phi_1, v_1, \dots, \phi_n, v_n]$. By~\eqref{def poisson cluster}, we get
\begin{equation}
\label{look22.1246}
p^*\left(\prod_{f\in I}X_f\right) = \prod_{(f, g)\in I \times \tilde{I}} A_{g}^{\tilde{\varepsilon}_{gf}} = \prod_{g\in \tilde{I}} A_{g}^{\sum\limits_{f\in I} \tilde{\varepsilon}_{gf}}.
\end{equation}
Note that the quiver $Q_{a,n}$ is made of cycles. Therefore for every $f\in I$, one has \begin{equation}
\label{look.1217}
 \sum_{g\in I} \varepsilon_{gf} =0.
\end{equation}
Hence the only factors contributing to the product~\eqref{look22.1246} are from the extra frozen vertices that extend $Q_{a,n}$ to $\widetilde{Q_{a,n}}$. Therefore
\[
p^*\left(\prod_{f\in I}X_f\right)=\prod_{i=1}^n A_{i'}=\prod_{i=1}^n\frac{ \lambda_{i-a} \Delta_{\{i-a,i-a+1,\dots, i-1\}}}{\Delta_{\{i-a+1,i-a+2,\dots, i\}}}= \prod_{i=1}^n \lambda_i.
\]
By comparing \eqref{rescalefa} with the definition of~$P$ in~\eqref{def of P}, we see that $\prod\limits_{i=1}^n\lambda_i=p^*(P)$. But $p^*\Big(\prod\limits_{f\in I}X_f\Big)=p^*(P)$ implies $\prod\limits_{f\in I}X_f=P$ automatically since $p$ is a projection.

The statement that $P$ is Casimir is a direct consequence of~\eqref{look.1217} and~\eqref{dconf poisson}.
\end{proof}

\subsection{Cyclic rotation and cluster transformation}\label{cyc.rot}

Recall the twisted cyclic rotation $C_a$ on $\dGr_a^{\times}(n)$.
Gekhtman, Shapiro, and Vainshtein proved that $C_a$ is a cluster $K_2$ automorphism that can be realized by a mutation sequence~$\rho$ (see \cite[p.~90]{GSVcp}). In this section, we briefly recall the definition of~$\rho$.
We show that the cyclic rotation~$R$ on $\dconf_n^\times(a)$ is a cluster Poisson automorphism realized by the same mutation sequence~$\rho$.

The vertices of the quiver $Q_{a,n}$ are indexed by $(i,j)\in I$. Let $\rho$ be a mutation sequence hitting in the unfrozen part of $Q_{a, n}$ along every column from bottom to top, starting at the leftmost unfrozen column and going all the way to the rightmost unfrozen column, that is,
\begin{equation}\label{rho}
\rho:=\sigma_{b-1} \circ \cdots \circ \sigma_2\circ \sigma_{1}, \qquad \mbox{where} \quad \sigma_i:=\mu_{(1,i)}\circ \mu_{(2,i)}\circ \dots \circ \mu_{(a-1,i)}.
\end{equation}
 Equivalently, the sequence $\rho$ may be realized by a sequence of 2-by-2 moves\footnote{An explicit realization of $\rho$ as a sequence of 2-by-2 moves has been included in the appendix.} on the reduced plabic graph $\Gamma_{a,n}$. Its resulting reduced plabic graph $\Gamma_{a,n}'$ is identical to $\Gamma_{a,n}$ with the non-boundary faces remaining in the same places and with all the boundary faces rotated to the neighboring one in the clockwise direction.
\begin{figure}[h!]\centering 
\begin{tikzpicture}[scale=0.6, baseline=10ex]
\foreach \i in {0,1,2,4,5}
 {
 \draw (\i+0.5,3.5) -- (3,4.5);
 }
\draw (3,4.5) -- (6,4.5);
 \draw [fill=white] (3,4.5) circle [radius=0.2];
\foreach \i in {0,1,2,4,5}
 {
 \draw (\i+1,3) -- (\i+0.5,3.5) -- (\i,3) -- (\i+0.5,2.5);
 \draw [fill=white] (\i,3) circle [radius=0.2];
 \draw [fill] (\i+0.5,3.5) circle [radius=0.2];
 }
\node at (3.5,3.25) [] {$\cdots$};
\foreach \i in {0,1,2,4,5}
 {
 \node at (\i+0.5,2.25) [] {$\vdots$};
 }
\foreach \i in {0,1,2,4,5}
 {
 \draw (\i+1,1) -- (\i+0.5,1.5) -- (\i,1) -- (\i+0.5,0.5);
 \draw [fill=white] (\i,1) circle [radius=0.2];
 \draw [fill] (\i+0.5,1.5) circle [radius=0.2];
 }
\node at (3.5,1.25) [] {$\cdots$};
\foreach \i in {0,1,2,4,5}
 {
 \draw (\i+1,0) -- (\i+0.5,0.5) -- (\i,0) -- (\i+0.5,-0.5);
 \draw [fill=white] (\i,0) circle [radius=0.2];
 \draw [fill] (\i+0.5,0.5) circle [radius=0.2];
 }
\node at (3.5,0.25) [] {$\cdots$};
\draw (-0.5,-0.5) rectangle (6,5);
\node at (6,4.5) [] {$\bullet$};
\node [scale=0.5] at (6.3,4.5) [] {$1$};
\node at (6,3) [] {$\bullet$};
\node [scale=0.5] at (6.3,3) [] {$2$};
\node at (6,1) [] {$\bullet$};
\node [scale=0.5] at (6.7,1) [] {$a-1$};
\node at (6,0) [] {$\bullet$};
\node [scale=0.5] at (6.3,0) [] {$a$};
\node at (5.5,-0.5) [] {$\bullet$};
\node [scale=0.5] at (5.5,-0.8) [] {$a+1$};
\node at (4.5,-0.5) [] {$\bullet$};
\node [scale=0.5] at (4.5,-0.8) [] {$a+2$};
\node at (2.5,-0.5) [] {$\bullet$};
\node [scale=0.5] at (2.5,-0.8) [] {$n-2$};
\node at (1.5,-0.5) [] {$\bullet$};
\node [scale=0.5] at (1.5,-0.8) [] {$n-1$};
\node at (0.5,-0.5) [] {$\bullet$};
\node [scale=0.5] at (0.5,-0.8) [] {$n$};
\node at (3,-1.5) [] {$\Gamma_{a,n}$};
\end{tikzpicture}
 \quad \quad \quad
\begin{tikzpicture}[scale=0.6, baseline=10ex]
\foreach \i in {0,1,2,4,5}
 {
 \draw (\i+0.5,3.5) -- (3,4.5);
 }
 \draw (3,4.5) -- (6,4.5);
 \draw [fill=white] (3,4.5) circle [radius=0.2];
\foreach \i in {0,1,2,4,5}
 {
 \draw (\i+1,3) -- (\i+0.5,3.5) -- (\i,3) -- (\i+0.5,2.5);
 \draw [fill=white] (\i,3) circle [radius=0.2];
 \draw [fill] (\i+0.5,3.5) circle [radius=0.2];
 }
\node at (3.5,3.25) [] {$\cdots$};
\foreach \i in {0,1,2,4,5}
 {
 \node at (\i+0.5,2.25) [] {$\vdots$};
 }
\foreach \i in {0,1,2,4,5}
 {
 \draw (\i+1,1) -- (\i+0.5,1.5) -- (\i,1) -- (\i+0.5,0.5);
 \draw [fill=white] (\i,1) circle [radius=0.2];
 \draw [fill] (\i+0.5,1.5) circle [radius=0.2];
 }
\node at (3.5,1.25) [] {$\cdots$};
\foreach \i in {0,1,2,4,5}
 {
 \draw (\i+1,0) -- (\i+0.5,0.5) -- (\i,0) -- (\i+0.5,-0.5);
 \draw [fill=white] (\i,0) circle [radius=0.2];
 \draw [fill] (\i+0.5,0.5) circle [radius=0.2];
 }
\node at (3.5,0.25) [] {$\cdots$};
\draw (-0.5,-0.5) rectangle (6,5);
\node at (6,4.5) [] {$\bullet$};
\node [scale=0.5] at (6.3,4.5) [] {$n$};
\node at (6,3) [] {$\bullet$};
\node [scale=0.5] at (6.3,3) [] {$1$};
\node at (6,1) [] {$\bullet$};
\node [scale=0.5] at (6.7,1) [] {$a-2$};
\node at (6,0) [] {$\bullet$};
\node [scale=0.5] at (6.7,0) [] {$a-1$};
\node at (5.5,-0.5) [] {$\bullet$};
\node [scale=0.5] at (5.5,-0.8) [] {$a$};
\node at (4.5,-0.5) [] {$\bullet$};
\node [scale=0.5] at (4.5,-0.8) [] {$a+1$};
\node at (2.5,-0.5) [] {$\bullet$};
\node [scale=0.5] at (2.5,-0.8) [] {$n-3$};
\node at (1.5,-0.5) [] {$\bullet$};
\node [scale=0.5] at (1.5,-0.8) [] {$n-2$};
\node at (0.5,-0.5) [] {$\bullet$};
\node [scale=0.5] at (0.5,-0.8) [] {$n-1$};
\node at (3,-1.5) [] {$\Gamma'_{a,n}$};
\end{tikzpicture}
\end{figure}

One advantage of using reduced plabic graphs is that one may make use of zig-zag strands to assign an $a$-element subset of $\{1,\dots, n\}$ to each face of the graph (see Definition~\ref{dominating set}). For example, the $a$-element subsets assigned to faces of $\Gamma_{a,n}$ in \eqref{I} arise precisely in this way, which in turn associates Pl\"ucker coordinates to these faces. Moreover, it is not hard to see that a 2-by-2 move combined with the cluster $K_2$ mutation formula yields precisely a Pl\"{u}cker relation (see~\eqref{plucker609} for more details); hence we can conclude that the Pl\"{u}cker coordinates on any bipartite graph obtained from $\Gamma_{a,n}$ via a sequence of 2-by-2 moves can be computed by using zig-zag strands as well.

Let $\{A'_f\}$ be the $K_2$ cluster associated to $\Gamma'_{a,n}$ after applying the mutation sequence $\rho$.
According to the above discussion, the cluster $\{A'_f\}$ is defined by the $a$-element sets~\eqref{I} assigned to faces of the minimal bipartitie graphs $\Gamma'_{a,n}$. Using the zig-zag strands on $\Gamma'_{a,n}$, we find that
\begin{equation}\label{I'}
 I'(i,j) =\{\underbrace{b-j,\dots, b-j+i-1}_\text{consecutive $i$ indices}, \underbrace{b+i, \dots, n-1}_\text{consecutive $a-i$ indices}\},
\end{equation}
which yields
\[
A'_{i,j}=\Delta_{I'(i,j)}=C_a^* (A_{i,j} ).
\]
In other words, the twisted cyclic rotation $C_a$ is equal to the cluster transformation defined by the mutation sequence $\rho$.

If we apply the mutation sequence $\rho$ to the extended quiver $\widetilde{{Q}_{a,n}}$, it is easy to see that the resulting quiver $\rho\widetilde{Q_{a,n}}$ is again the same as $\widetilde{{Q}_{a,n}}$ up to rotations of frozen vertices. For example, if we start with $\widetilde{Q_{3,6}}$ as in~\eqref{expand quiver example}, then $\rho\widetilde{Q_{3,6}}$ is as follows:
\[
\begin{tikzpicture}[scale=0.7]
\node (-1-0) at (-2,5) [blue,label=left:{$6'$}] {$\square$};
\node (0-0) at (0,5) [red, label=right:{5}] {$\circ$};
\foreach \i in {1,2}
{
\foreach \j in {1,2}
{
\node (\i-\j) at (\j,5-\i) [] {$\bullet$};
}
}
\node (1-4) at (5,4) [blue, label=right:{$1'$}] {$\square$};
\node (2-4) at (5,3) [blue, label=right:{$2'$}] {$\square$};
\node (1-3) at (3,4) [red, label=right:{6}] {$\circ$};
\node (2-3) at (3,3) [red, label=right:{1}] {$\circ$};
\node (3-1) at (1, 2) [red, label=below:{4}] {$\circ$};
\node (3-2) at (2, 2) [red, label=below:{3}] {$\circ$};
\node (4-1) at (1, 0) [blue, label=below:{$5'$}] {$\square$};
\node (4-2) at (2, 0) [blue, label=below:{$4'$}] {$\square$};
\node (3-3) at (3,2) [red, label=below:{2}] {$\circ$};
\node (4-4) at (3,0) [blue, label=below:{$3'$}] {$\square$};
\foreach \i in {1,2}
{
\foreach \j in {2,3}
 {
 \pgfmathtruncatemacro{\k}{\j-1};
 \draw [<-] (\i-\j) -- (\i-\k);
 }
}
\foreach \j in {1,2,3}
{
\draw [<-] (2-\j) -- (1-\j);
\draw [<-] (3-\j) -- (2-\j);
}
\foreach \i in {1,2}
 {
 \foreach \j in {1,2}
 {
 \pgfmathtruncatemacro{\k}{\i+1};
 \pgfmathtruncatemacro{\l}{\j+1};
 \draw [<-] (\i-\j) -- (\k-\l);
 }
 }
\draw [<-] (1-1) -- (0-0);
\draw [<-] (0-0) to (3-1);
\draw [->] (1-3) to [bend right] (-1-0);
\draw [->] (0-0) to [bend right] (4-1);
\draw [->] (3-1) -- (4-2);
\draw [->] (3-2) -- (4-4);
\draw [->] (3-3) -- (2-4);
\draw [->] (2-3) -- (1-4);
\end{tikzpicture}
\]
In particular, since the extra frozen vertex $i'$ is only connected to the frozen vertex $i$, any cluster mutation does not change such connectivity. Therefore after the mutation sequence $\rho$, the frozen vertex $i'$ remains connected to $i$. Recall from the commutative diagram~\eqref{ext a to a} that $\widetilde{\dconf_n^\times(a)}$ fibers over $\dconf_n^\times(a)$ the same way $\widetilde{\mathscr{A}_{a,n}}$ fibers over $\mathscr{A}_{a,n}$, with the data of the linear isomorphisms $\phi_i$ captured by the extra frozen cluster~$K_2$ coordinates~$A_{i'}$. By an additional checking on the pull-backs of the frozen cluster $K_2$ coordiantes $A_{i'}$ we see that the twisted cyclic rotation $\tilde{C}_a\colon  [\phi_1,v_1,\dots, \phi_n,v_n ]\mapsto \big[\phi_n,(-1)^{a-1}v_n,\phi_1,v_1,\dots, \phi_{n-1},v_{n-1}\big]$ on $\widetilde{\dconf_n^\times(a)}$ is equal to the cluster transformation defined by the mutation sequence $\rho$. Pushing the twisted cyclic rotation $\tilde{C}_a$ down via the projection $\widetilde{\dconf_n^\times(a)}\twoheadrightarrow \dconf_n^\times(a)$ and using the commutative diagram~\eqref{diagram tilde}, we see that the cyclic rotation~$R$ on~$\dconf_n^\times(a)$ is also equal to the cluster transformation defined by the mutation sequence $\rho$ with
\[
X'_g=R^* (X_g ).
\]
In particular, since the cyclic rotation $R$ is a cluster transformation, it preserves the Poisson structure on $\dconf_n^\times(a)$.

\subsection{Potential function} \label{3.3}

\begin{prop} In terms of the Poisson cluster $\{X_{i,j}\}$ associated to $Q_{a,n}$, the theta functions~$\vartheta_i$ in~\eqref{theta.f} are
\begin{gather}
\label{3.14..1214}
\vartheta_n= X_{0,0}, \qquad \vartheta_a=X_{a,b},
\\
\label{2018.2.11.21.36}
\vartheta_i=\sum_{j=1}^b X_{i,b}X_{i,b-1}\cdots X_{i,j}\qquad \text{for $0<i<a$},
\\
\label{2018.2.11.21.35}
\vartheta_i=\sum_{j=1}^a X_{a,n-i}X_{a-1,n-i}\cdots X_{j,n-i}\qquad \text{for $a<i<n$},
\end{gather}
\end{prop}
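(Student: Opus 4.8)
The plan is to express the scalars $\vartheta_i$ of \eqref{theta.f} directly in terms of Pl\"ucker coordinates on a lift to $\widetilde{\dconf_n^\times(a)}\cong \dGr_a^\times(n)\times\mathrm{H}$, and then to match the resulting rational expression against the product formula \eqref{def poisson cluster} for $p^*(X_g)$, reading off the monomials $X_{i,b}X_{i,b-1}\cdots X_{i,j}$ (resp.\ $X_{a,n-i}\cdots X_{j,n-i}$) as telescoping ratios of Pl\"ucker coordinates. First I would pick nonzero vectors $v_i\in l_i$, set $v_{i-1}=\phi(v_i)$ where the monodromy allows (so that the $\lambda_i$ of \eqref{rescalefa} and the $\vartheta_i$ of \eqref{theta.f} become concrete rational functions), and rewrite $\vartheta_i$ using \eqref{theta.f}: in $W/\mathrm{Span}\{l_{i-a+2},\dots,l_i\}$ one has $\overline{\phi(v_{i-a+1})}=\vartheta_i\,\overline{v_{i-a+1}}$, which after wedging with $v_{i-a+2}\wedge\cdots\wedge v_i$ gives
\[
\vartheta_i=\frac{\phi(v_{i-a+1})\wedge v_{i-a+2}\wedge\cdots\wedge v_i}{v_{i-a+1}\wedge v_{i-a+2}\wedge\cdots\wedge v_i}.
\]
The numerator $\phi(v_{i-a+1})\wedge v_{i-a+2}\wedge\cdots\wedge v_i$ is, up to the scaling factor $\lambda_{i-a}$ from \eqref{rescalefa}, equal to $v_{i-a}\wedge v_{i-a+2}\wedge\cdots\wedge v_i$, so that $\vartheta_i$ is a single ratio $\lambda_{i-a}\Delta_{\{i-a,i-a+2,\dots,i\}}/\Delta_{\{i-a+1,\dots,i\}}$ in the generic ($P=1$-type) range.

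Next I would run the Pl\"ucker straightening / three-term relations to split this single ratio into the claimed sum. The key identity is the short Pl\"ucker relation expanding $\Delta_{\{i-a,i-a+2,\dots,i\}}$ (a set that is "consecutive except for one gap at $i-a+1$") as a signed combination of genuinely consecutive minors; iterating the relation walks the gap from position $i-a+1$ all the way along, producing exactly $b$ terms for $0<i<a$ and $a$ terms for $a<i<n$, each term being a ratio whose numerator and denominator, after substituting the explicit index sets $I(i,j)$ from \eqref{I}, match the product $\prod_{f}A_f^{\tilde\varepsilon_{fg}}$ appearing on the right-hand side of \eqref{def poisson cluster} for the vertices $g=(i,b),(i,b-1),\dots,(i,j)$. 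In other words, I would verify that
\[
X_{i,b}X_{i,b-1}\cdots X_{i,j}=\lambda\cdot\frac{\Delta_{I'}}{\Delta_{I''}}
\]
for the appropriate consecutive sets $I',I''$, using the explicit frozen-$X$ formulas \eqref{frozen X} to handle the boundary terms $j=1$ (where $\lambda_{i-a}$ enters) and then simply telescoping the interior factors. The degenerate cases $\vartheta_n=X_{0,0}$ and $\vartheta_a=X_{a,b}$ are the "one-term" instances of this and should fall out by direct comparison with \eqref{frozen X} in the ranges $i=1$-adjacent and $i=a$; I would check these separately and first, as a warm-up.

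The main obstacle I expect is bookkeeping of indices modulo $n$ and signs: the index set $\{i-a,i-a+2,\dots,i\}$ must be reordered into ascending order to be a legitimate Pl\"ucker label, which introduces a sign $(-1)^{\text{something}}$, and this sign has to conspire with the $(-1)^{a-1}$ twisting in \eqref{rescalefa} and with the choices of branch in the Pl\"ucker straightening so that all terms in \eqref{2018.2.11.21.36}--\eqref{2018.2.11.21.35} come out with a uniform $+$ sign. I would organize this by fixing once and for all the cyclic interval conventions (as in \eqref{I} and \eqref{I'}), checking the sign in one representative term, and then arguing that the straightening step is sign-coherent because at each application of the three-term relation the "new" index is adjacent to an existing one so no extra transpositions accrue. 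A secondary technical point is that everything is a priori only defined on the dense open locus $U$ of \eqref{bullet condition} where all Pl\"ucker coordinates are nonzero; since both sides of \eqref{3.14..1214}--\eqref{2018.2.11.21.35} are rational functions on $\dconf_n^\times(a)$ and agree on the dense open $U$, they agree identically, so no further argument is needed there.
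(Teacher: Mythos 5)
Your proposal is, at its core, the paper's own argument: lift to $\widetilde{\dconf_n^\times(a)}$, write $\vartheta_i$ as the single ratio $\lambda_{i-a}\Delta_{\{i-a,i-a+2,\dots,i\}}/\Delta_{\{i-a+1,\dots,i\}}$, and then repeatedly apply a three-term Pl\"ucker relation to split it into a sum whose pieces telescope against the $p^*(X_{i,j})$ monomials from Lemma \ref{L}. The only organizational difference is direction: the paper packages the iteration as an induction on the nested partial sums $Y_{j,k}=1+X_{j,k}(1+X_{j-1,k}(\cdots(1+X_{1,k})\cdots))$, proving $p^*(Y_{j,k})=\Delta_{I(j,k)}\Delta_{J(j+1,k)}/\bigl(\Delta_{I(j,k-1)}\Delta_{I(j+1,k+1)}\bigr)$ and finishing with $X_{a,k}Y_{a-1,k}$, whereas you would peel a summand off the full ratio at each step. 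Same relations, opposite traversal; the Horner form just makes the bookkeeping a bit cleaner.

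Two small corrections to your sketch, both of which you would discover on carrying out the computation. First, ``walking the gap'' is not quite what happens: the auxiliary set $J(j,k)=\{b-k\}\cup\{b-k+2,\dots,b-k+j\}\cup\{b+j+1,\dots,n\}$ keeps its gap fixed at $b-k+1$ while the second consecutive run shrinks. The relation is best viewed as the rank-two Pl\"ucker relation on the four ``variable'' indices $\{b-k,\ b-k+1,\ b-k+j+1,\ b+j+1\}$ with the remaining $a-2$ indices held fixed in all six labels. Second, the telescoped monomial is not a single ratio $\lambda\Delta_{I'}/\Delta_{I''}$: using Lemma \ref{L}, for $a<i<n$ and $k=n-i$ one finds $p^*\bigl(X_{a,k}X_{a-1,k}\cdots X_{l,k}\bigr)=\lambda_{b-k}\,A_{l-1,k-1}A_{l,k+1}/\bigl(A_{l,k}A_{l-1,k}\bigr)$, a ratio of products of \emph{two} Pl\"ucker coordinates each (degenerating to a two-term ratio only at $l=1$ because of the convention $A_{0,j}=A_{0,0}$). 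Finally, your sign worry dissolves once the Pl\"ucker labels are always written in cyclically increasing order starting from $i-a$: the $(-1)^{a-1}$ in \eqref{rescalefa} exactly cancels the transposition sign incurred when reordering the wedge for $i=a$, and the Pl\"ucker relation used is already in the sign-coherent (subtraction-free) form consistent with the positivity of the cluster structure.
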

\begin{rmk}
For \eqref{2018.2.11.21.36}, the terms in $\vartheta_i$ are in bijection with rectangles of all possible lengths across the $i$th row of the quiver $Q_{a,n}$ that ends at the vertex~$(i,b)$.
For~\eqref{2018.2.11.21.35}, the terms in $\vartheta_{i}$ are in bijection with rectangles of all possible heights across the $(n-i)$th column of $Q_{a_n}$ that rises from the vertex $(a,n-i)$. For instance, the formulas of $\vartheta_2$ and $\vartheta_5$ with $a=3$ and $n=7$ are as follows:
\[
\begin{tikzpicture}[scale=0.8]
\foreach \i in {1,2,3}
 {
 \foreach \j in {1,2}
 {
 \node (\j-\i) at (2*\i, 4.5-1.5*\j) [] {$(\j,\i)$};
 }
 }
\node (1-4) [lightgray] at (8,3) [] {$(1,4)$};
\node (2-4) [lightgray] at (8,1.5) [] {$(2,4)$};
\foreach \i in {1,...,4}
 {
 \node (3-\i) [lightgray] at (2*\i, 0) [] {$(3,\i)$};
 }
\node (0-0) [lightgray] at (0,4.5) [] {$(0,0)$};
\foreach \i in {1,2,3}
 {
 \pgfmathtruncatemacro{\k}{\i+1};
 \foreach \j in {1,2}
 {
 \pgfmathtruncatemacro{\l}{\j+1};
 \draw [->] (\j-\i) -- (\j-\k);
 \draw [->] (\j-\i) -- (\l-\i);
 \draw [->] (\l-\k) -- (\j-\i);
 }
 }
\draw [->] (0-0) -- (1-1);
\draw (7.5,1.3) rectangle (8.5,1.7);
\draw (5.5,1.2) rectangle (8.6,1.8);
\draw (3.5,1.1) rectangle (8.7,1.9);
\draw (1.5,1) rectangle (8.8,2);
\draw (3.5,-0.2) rectangle (4.5,0.2);
\draw (3.4,-0.3) rectangle (4.6,1.8);
\draw (3.3,-0.4) rectangle (4.7,3.3);
\end{tikzpicture}
\]
\begin{gather*}
\vartheta_2=  X_{2,4}+X_{2,3}X_{2,4}+X_{2,2}X_{2,3}X_{2,4}+X_{2,1}X_{2,2}X_{2,3}X_{2,4},\\
\vartheta_5= \vartheta_{7-2}=X_{3,2}+X_{2,2}X_{3,2}+X_{1,2}X_{2,2}X_{3,2}.
\end{gather*}
Therefore the \emph{potential function} $\mathcal{W}$ in \eqref{potential.W} can be expressed as
\begin{equation}\label{W}
\mathcal{W}=X_{0,0}+X_{a,b}+ \sum_{i=1}^{a-1}\sum_{j=1}^b X_{i,b}X_{i,b-1}\cdots X_{i,j} +\sum_{j=1}^{b-1} \sum_{i=1}^a X_{a,j} X_{a-1,j}\cdots X_{i,j}.
\end{equation}
\end{rmk}

\begin{proof} We again lift the computation up to $\widetilde{\dconf_n^\times(a)}$. By the definition of $\vartheta_n$ we have $\phi(v_{b+1})-p^*(\vartheta_n) v_{b+1}\in {\rm Span}(v_{b+2},\dots,v_n)$. Hence
\begin{align*}0= \Delta_{(\phi(v_{b+1})-p^*(\vartheta_n) v_{b+1})\wedge v_{b+2}\wedge \dots\wedge v_n}=\lambda_{b}\Delta_{\{b,b+2, \dots, n\}}-p^*(\vartheta_n) \Delta_{\{b+1,\dots, n\}}.
\end{align*}
Therefore
\begin{equation}\label{helop}
p^*(\vartheta_n) = \lambda_{b}\frac{\Delta_{\{b,b+2,\dots, n\}}}{\Delta_{\{b+1,\dots, n\}}} = p^*(X_{0, 0}),
\end{equation}
where the second equality follows from \eqref{frozen X}.
The above equality then implies $\vartheta_n=X_{0,0}$ due to the surjectivity of $p$. Similarly we prove that $\vartheta_a = X_{a,b}$.

Now let us prove \eqref{2018.2.11.21.35}. The proof of \eqref{2018.2.11.21.36} goes along the same line.

For $a<i<n$, let $k=n-i$. Then $0<k<b=n-a$. For $1\leq j\leq a$, define the $a$-element set
\[
J(j, k):= \{b-k,  \underbrace{b-k+2, \dots,  b-k+j}_\text{consecutive $j-1$ indices}, \underbrace{b+j+1, \dots,  n}_\text{consecutive $a-j$ indices}\}.
\]
Recall the definition of $a$-element sets $I(j,k)$ from \eqref{I}.
One has the Pl\"ucker relation \eqref{Plucker relation}:
\begin{equation*}
\Delta_{I(j, k-1)}\Delta_{I(j+1, k+1)}+ \Delta_{I(j+1, k)}\Delta_{J(j,k)} =\Delta_{I(j,k)}\Delta_{J(j+1,k)}.
\end{equation*}
Dividing by $\Delta_{I(j, k-1)}\Delta_{I(j+1, k+1)}$ on both sides, we get
\begin{equation*}
1+\frac{\Delta_{I(j+1, k)}\Delta_{J(j,k)}}{\Delta_{I(j, k-1)}\Delta_{I(j+1, k+1)}}=\frac{\Delta_{I(j,k)}\Delta_{J(j+1,k)}}{\Delta_{I(j, k-1)}\Delta_{I(j+1, k+1)}}.
\end{equation*}
Let us set
\[
Y_{j,k}:=1+X_{j,k} (1+X_{j-1,k} (\dots  (1+X_{1,k} )\dots ) ).
\]
Let us fix $k$. We will prove by induction on $j$ that
\begin{equation}\label{induc.for}
p^*(Y_{j,k})=\frac{\Delta_{I(j,k)}\Delta_{J(j+1,k)}}{\Delta_{I(j,k-1)}\Delta_{I(j+1,k+1)}}.
\end{equation}
Note that $I(1,k+1)=J(1,k)$. Therefore for $j=1$, by using \eqref{def poisson cluster} for $p^*$ and identifying $A_f=\Delta_{I(f)}$ we have
\begin{align*}
 p^* (Y_{1,k} )= 1+p^* (X_{1,k} )= 1+\frac{\Delta_{I(1,k+1)}\Delta_{I(2,k)}}{\Delta_{I(1,k-1)}\Delta_{I(2,k+1)}}
 = \frac{\Delta_{I(1,k)}\Delta_{J(2,k)}}{\Delta_{I(1,k-1)}\Delta_{I(2,k+1)}}.
\end{align*}
If $1<j<a$ and $\eqref{induc.for}$ is true for $j-1$, then it is true for $j$ because
\[
 p^*(Y_{j,k})=1+ p^* (X_{j,k}Y_{j-1,k} )=
 1+\frac{\Delta_{I(j+1,k)}\Delta_{J(j,k)}}{\Delta_{I(j,k-1)}\Delta_{I(j+1,k+1)}} =\frac{\Delta_{I(j,k)}\Delta_{J(j+1,k)}}{\Delta_{I(j,k-1)}\Delta_{I(j+1,k+1)}},
\]
and the induction is finished.

Recall from \eqref{frozen X} that
\[
p^* (X_{a,k} ) = p^* (X_{n-k} ) = \lambda_{b-k}\frac{\Delta_{\{b-k,\dots, n-k-1\}}\Delta_{\{b-k+2,\dots, n-k,n\}}}{\Delta_{\{b-k+1,\dots, n-k\}}\Delta_{\{b-k+1,\dots, n-k-1,n\}}}.
\]
Now by setting $j=a-1$ in \eqref{induc.for} and recalling \eqref{I} for the definition of $I(j,k)$, we get
\begin{gather*}
  p^* (X_{a,k}Y_{a-1, k} )
 =  p^* (X_{a,k} )p^* (Y_{a-1,k} )\\
\qquad{} =  \lambda_{b-k}\frac{\Delta_{\{b-k,\dots, n-k-1\}}\Delta_{\{b-k+2,\dots, n-k,n\}}}{\Delta_{\{b-k+1,\dots, n-k\}}\Delta_{\{b-k+1,\dots, n-k-1,n\}}}\frac{\Delta_{\{b-k+1,\dots,n-k-1,n\}}\Delta_{\{b-k,b-k+2,\dots,n-k\}}}{\Delta_{\{b-k+2,\dots, n-k,n\}}\Delta_{\{b-k,\dots, n-k-1\}}} \\
\qquad{} = \lambda_{b-k}\frac{\Delta_{\{b-k,b-k+2,\dots,n-k\}}}{\Delta_{\{b-k+1,b-k+2,\dots, n-k\}}}
 =  p^* (\vartheta_{n-k} ),
\end{gather*}
where the last equality is obtained analogously to \eqref{helop}. This shows that
\[
\vartheta_{n-k}=X_{a,k}Y_{a-1, k}= \sum_{j=1}^a X_{a,k}X_{a-1,k}\cdots X_{j,k}. \tag*{\qed}
\]
\renewcommand{\qed}{}
\end{proof}

\subsection{Tropicalization}

Cluster varieties are examples of \emph{positive spaces}, which admit canonical tropicalizations. The sets of integral tropical points of cluster varieties play important roles in the Fock--Goncharov cluster duality. We briefly review the related constructions in this subsection; please see~\cite{FGensemble} for more details.

Let $\mathscr{X}$ be a cluster variety (either $K_2$ type or Poisson type). A {\it positive rational function} on~$\mathscr{X}$ is a nonzero function that can be expressed as a ratio of two polynomials with non-negative integer coefficients in one (and hence all) cluster(s) of~$\mathscr{X}$. The set ${P}(\mathscr{X})$ of positive rational functions is a {\it semifield}, i.e., a set closed under addition, multiplication, and division. The pair $(\mathscr{X}, P(\mathscr{X}))$ is called a positive space.

Let $\mathbb{Z}^t:=\left(\mathbb{Z},\min, +\right)$ be the semifield of tropical integers.\footnote{There is another closely-related semifield $\mathbb{Z}^T:= (\mathbb{Z},\max ,+ )$. See \cite[p.~35]{GHKK} for its connection to $\mathbb{Z}^t$.}
We define the tropicalization of $(\mathscr{X}, P(\mathscr{X}))$ to be the set
\[
\mathscr{X}\big(\mathbb{Z}^t\big):=\Hom_\text{semifield} \big(P(\mathscr{X}), \mathbb{Z}^t \big).
\]

\looseness=-1 Let $ (\mathscr{X}, P(\mathscr{X}) )$ and $ (\mathscr{Y}, P(\mathscr{Y}) )$ be a pair of cluster varieties. A rational map $f\colon \mathscr{X}\dashrightarrow \mathscr{Y}$ is \emph{positive} if the pull back $f^*P(\mathscr{Y})$ is contained in $P(\mathscr{X})$. Every positive rational map $f$ admits a~tro\-picalization $f^t\colon \mathscr{X} (\mathbb{Z}^t )\rightarrow \mathscr{Y} \big(\mathbb{Z}^t \big)$ defined by precomposing with the pull-back map~$f^*$. In particular cluster mutations are subtraction-free. Therefore every cluster automorphism is positive and can be tropicalized. It induces a natural action of the cluster modular group~$\mathcal{G}$ on~$\mathscr{X}\big(\mathbb{Z}^t\big)$.

Let $\chi:=\left\{X_i\,|\, 1\leq i \leq m\right\}$ be a cluster coordinate chart of $\mathscr{X}$. Its tropicalization is a bijection
\[
\chi^t\colon \ \mathscr{X}\big(\mathbb{Z}^t\big)\stackrel{\sim}{\longrightarrow} \mathbb{Z}^m, \qquad l \longmapsto \left(l(X_1),l(X_2),\dots, l(X_m)\right):=(x_1, \dots, x_m).
\]
The $m$-tuple $(x_1, \dots, x_m)$ is called the \emph{tropical coordinates} on $\mathscr{X}\big(\mathbb{Z}^t\big)$ in terms of the cluster $\chi$.

Tautologically, every positive function $f\in P(\mathscr{X})$ gives rise to a $\mathbb{Z}$-valued function
\[
f^t\colon \ \mathscr{X}\big(\mathbb{Z}^t\big) \longrightarrow \mathbb{Z},\qquad l \longmapsto l(f),
\]
which can be expressed as a piecewise linear function in terms of the tropical coordinates $\left(x_1, \dots, x_m\right)$ by the following procedure:
\begin{enumerate}\setlength\itemsep{0pt}
 \item Change every addition to taking minimum.
 \item Change every multiplication into addition and every division to subtraction.
 \item Drop the coefficient of each monomial in the function.
 \item Change the variables $X_i$ to $x_i$.
\end{enumerate}
For example, the tropicalization of the positive rational function $f=\frac{2X_1X_2+X_2^2+1}{(X_1+X_2)^3}$ is
\[
f^t=\min\{x_1+x_2,2x_2,0\}-3\min\{x_1,x_2\}.
\]

Let us tropicalize the cluster variety $\dconf^\times_n(a)$. Recall that the potential $\mathcal{W}$ and the twisted monodromy $P$ are positive functions on $\dconf^\times_n(a)$.
We can hence define subsets

\begin{equation}
\label{trop.conf}
Q(a,b,c):=\left\{ q \in \dconf^\times_n(a)\left(\mathbb{Z}^t\right) \, \Big|\, \begin{matrix} \mathcal{W}^t(q)\geq 0,\\
P^t(q)=c
\end{matrix}
\right\}.
\end{equation}

\begin{rmk} The part of $\dconf_n^\times(a)\big(\mathbb{Z}^t\big)$ that is cut out by the inequality $\mathcal{W}^t\geq 0$ is also known as the \emph{superpotential cone}, and as we will see soon, it coincides with the \emph{Gelfand--Zetlin cone} as well.
\end{rmk}

The rotation $R\colon \dconf^\times_n(a)\rightarrow \dconf^\times_n(a)$ is a cluster transformation and therefore can be tropicalized into $R^t\colon \dconf^\times_n(a)\big(\mathbb{Z}^t\big)\rightarrow \dconf^\times_n(a)\big(\mathbb{Z}^t\big)$. By definition~$\mathcal{W}$ and $P$ are invariant under~$R$, and hence $R^t$ preserves the sets $Q(a,b,c)$. We will introduce another coordinate system on $\dconf^\times_n(a)\big(\mathbb{Z}^t\big)$ in the next section and construct a natural bijection between $Q(a,b,c)$ and the set of plane partitions $P(a,b,c)$ defined in Definition~\ref{P(a,b,c)}.

\subsection{Gelfand--Zetlin coordinates}\label{GZCOO}
Although we can express the potential function $\mathcal{W}$ in terms of the cluster Poisson coordinates~$X_f$ in a subtraction-free manner, the resulting tropicalization does not convey its nice combinatorial connection to plane partitions very well. In this section, we will do a monomial transformation on the cluster Poisson coordinates to define a new set of coordinates, which we call ``Gelfand--Zetlin coordinates'' due to its combinatorial connection to Gelfand--Zetlin patterns, a well-studied object in combinatorial representation theory.

\begin{defn} We define the \emph{Gelfand--Zetlin coordinate} $L_{i,j}$ associated to the vertex $(i,j)$ of the quiver $Q_{a,n}$ to be
\[
L_{i,j}:=\prod_{\substack{k\geq i\\ l\geq j}} X_{k,l}.
\]
\end{defn}

By definition these new functions $ \{L_{i,j} \}$ relate to the cluster Poisson coordinates $ \{X_{i,j}\}$ by a~monomial transformation. With the convention that $L_{i,j}=1$ for $i>a$ or $j>b$, we can express the inverse of this transformation as
\[
X_{0,0}=\frac{L_{0,0}}{L_{1,1}} \qquad \text{and} \qquad X_{i,j}=\frac{L_{i,j}L_{i+1,j+1}}{L_{i+1,j}L_{i,j+1}} \qquad \text{for $(i,j)\neq(0,0)$}.
\]
It follows that these new functions $L_{i,j}$ form a coordinate system on $\dconf_n^\times(a)$, which justifies the term ``coordinate'' in the definition.

The following Lemma is a direct consequence of Proposition~\ref{product} and~\eqref{W}.
\begin{lem}
\label{2018.2.20.3.20}
In terms of the Gelfand--Zetlin coordinates, the twisted monodromy
$P=L_{0,0}$, and the potential
\begin{equation}
\label{2018.2.11.16.01}
\mathcal{W}=\frac{L_{0,0}}{L_{1,1}} + L_{a,b}+ \sum_{i=1}^{a-1}\sum_{j=1}^b\frac{L_{i,j}}{L_{i+1, j}}+ \sum_{j=1}^{b-1}\sum_{i=1}^a \frac{L_{i,j}}{L_{i,j+1}}.
\end{equation}
\end{lem}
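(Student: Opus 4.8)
The plan is to derive Lemma \ref{2018.2.20.3.20} by a direct substitution of the monomial change of coordinates $X_{i,j}\mapsto L_{i,j}$ into the two formulas already established, namely $P=\prod_{f\in I}X_f$ from Proposition \ref{product} and the expression \eqref{W} for $\mathcal{W}$.

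\medskip

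\noindent\textbf{Twisted monodromy.} First I would observe that by definition $L_{0,0}=\prod_{k\geq 0,\, l\geq 0}X_{k,l}=\prod_{f\in I}X_f$, since the index set $I$ of the quiver $Q_{a,n}$ consists precisely of $(0,0)$ together with the $a\times b$ grid $\{(i,j)\mid 1\le i\le a,\ 1\le j\le b\}$, and every such pair satisfies $k\ge 0$ and $l\ge 0$. Hence $P=L_{0,0}$ follows immediately from Proposition \ref{product}. (Equivalently, one checks this from the inverse monomial transformation: the product $\tfrac{L_{0,0}}{L_{1,1}}\cdot\prod_{(i,j)\neq(0,0)}\tfrac{L_{i,j}L_{i+1,j+1}}{L_{i+1,j}L_{i,j+1}}$ telescopes to $L_{0,0}$, using the convention $L_{i,j}=1$ when $i>a$ or $j>b$.)

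\medskip

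\noindent\textbf{Potential function.} Next I would rewrite each summand of \eqref{W} using the inverse transformation. The term $X_{0,0}$ becomes $L_{0,0}/L_{1,1}$ directly. The term $X_{a,b}$ becomes $\tfrac{L_{a,b}L_{a+1,b+1}}{L_{a+1,b}L_{a,b+1}}=L_{a,b}$ since $L_{a+1,b+1}=L_{a+1,b}=L_{a,b+1}=1$ by the boundary convention. For the first double sum, a product $X_{i,b}X_{i,b-1}\cdots X_{i,j}$ is a product of consecutive entries along row $i$, and substituting $X_{i,l}=\tfrac{L_{i,l}L_{i+1,l+1}}{L_{i+1,l}L_{i,l+1}}$ makes the product telescope along $l=j,\dots,b$: the factors $L_{i,l}/L_{i,l+1}$ telescope to $L_{i,j}/L_{i,b+1}=L_{i,j}$ and the factors $L_{i+1,l+1}/L_{i+1,l}$ telescope to $L_{i+1,b+1}/L_{i+1,j}=1/L_{i+1,j}$, yielding $L_{i,j}/L_{i+1,j}$. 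Likewise, a product $X_{a,j}X_{a-1,j}\cdots X_{i,j}$ along column $j$ telescopes to $L_{i,j}/L_{i,j+1}$. Summing these over the same ranges as in \eqref{W} reproduces exactly \eqref{2018.2.11.16.01}.

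\medskip

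\noindent\textbf{Main obstacle.} This is essentially a bookkeeping computation, so there is no deep obstacle; the only point requiring care is handling the boundary conventions $L_{i,j}=1$ for $i>a$ or $j>b$ consistently across all three types of terms (the $(0,0)$ term, the frozen term $X_{a,b}$, and the two families of telescoping products), and making sure the ranges of the double sums match those in \eqref{W} after the substitution. One should also double-check that no additional factor of $L_{0,0}$ leaks into the potential from the substitution of $X_{0,0}$ versus the telescoping of the other row/column products — it does not, because the row- and column-telescoping ranges never involve the index $0$. With these checks in place the lemma follows.
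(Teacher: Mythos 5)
Your proof is correct and takes the same route as the paper, which states the lemma is a direct consequence of Proposition \ref{product} and equation \eqref{W} without further detail; your computation simply fills in the bookkeeping (telescoping row and column products, boundary convention $L_{i,j}=1$ for $i>a$ or $j>b$) that the paper leaves implicit.
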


\begin{rmk}
By setting $L_{0,0}={\rm e}^t$, \eqref{2018.2.11.16.01} coincides with the Lax operator on Grassmannian in {\rm \cite[equation~(B.25)]{EHX}}.
\end{rmk}

The Gelfand--Zetlin coordinates $L_{i,j}$ are clearly positive functions on $\dconf_n^{\times}(a)$. Let us follow our convention and use lower case letter $l_{i,j}$ to denote the tropicalization of $L_{i,j}$. The next proposition shows that plane partitions can be obtained naturally via tropicalization of the Gelfand--Zetlin coordinates.

\begin{prop}\label{PQ bijection} The tropicalized change of coordinates $(x_{i,j})\mapsto (l_{i,j})$ gives a natural bijection between the subset $Q(a,b,c)\subset \dconf^\times_a(n)\big(\mathbb{Z}^t\big)$ and the set of plane partitions $P(a,b,c)$ defined in Definition~{\rm \ref{P(a,b,c)}}.
\end{prop}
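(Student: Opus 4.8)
The plan is to unwind the definition of the tropical set $Q(a,b,c)$ in terms of the Gelfand-Zetlin coordinates $l_{i,j}$ and check that the resulting inequalities are precisely those defining a plane partition in $P(a,b,c)$. First I would tropicalize the formula for $\mathcal{W}$ in Lemma \ref{2018.2.20.3.20}: applying the tropicalization procedure to \eqref{2018.2.11.16.01}, the inequality $\mathcal{W}^t(q)\geq 0$ becomes the single condition
\[
\min\left\{\, l_{0,0}-l_{1,1},\ l_{a,b},\ \min_{\substack{1\leq i\leq a-1\\ 1\leq j\leq b}} \left(l_{i,j}-l_{i+1,j}\right),\ \min_{\substack{1\leq j\leq b-1\\ 1\leq i\leq a}}\left(l_{i,j}-l_{i,j+1}\right)\,\right\}\geq 0,
\]
together with $P^t(q)=l_{0,0}=c$. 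Recalling the convention $l_{i,j}=0$ whenever $i>a$ or $j>b$ (the tropicalization of $L_{i,j}=1$), I would observe that this conjunction is equivalent to: $l_{i,j}\geq l_{i+1,j}$ and $l_{i,j}\geq l_{i,j+1}$ for all $1\leq i\leq a$, $1\leq j\leq b$, together with $l_{1,1}\leq l_{0,0}=c$. Here the boundary terms $l_{a,b}\geq 0$ and $l_{i,b}\geq l_{i+1,b}=0$, $l_{a,j}\geq l_{a,j+1}=0$ get absorbed into "weakly decreasing down to the $0$ padding."

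Next I would set $\pi_{i,j}:=l_{i,j}$ for $1\leq i\leq a$, $1\leq j\leq b$ and check that $\pi$ is exactly a size $a\times b$ plane partition with $\pi_{1,1}\leq c$: the conditions $\pi_{i,j}\geq \pi_{i+1,j}$ and $\pi_{i,j}\geq \pi_{i,j+1}$ are weak decrease along columns and rows, nonnegativity $\pi_{i,j}\geq 0$ follows from $\pi_{i,j}\geq \pi_{a,b}\geq l_{a+1,b}=0$ (chaining the decrease inequalities), and $\pi_{1,1}=l_{1,1}\leq l_{0,0}=c$ is the bound on the largest entry. Conversely, given $\pi\in P(a,b,c)$, I would set $l_{0,0}:=c$, $l_{i,j}:=\pi_{i,j}$ for $1\leq i\leq a,\ 1\leq j\leq b$, and $l_{i,j}:=0$ otherwise, and verify that the point of $\dconf_n^\times(a)(\mathbb{Z}^t)$ with these GZ-coordinates satisfies $\mathcal{W}^t\geq 0$ and $P^t=c$, i.e.\ lies in $Q(a,b,c)$. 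Since $(x_{i,j})\mapsto(l_{i,j})$ is a tropicalized monomial change of coordinates — the tropicalization of an isomorphism of tori, hence a bijection on $\mathbb{Z}$-points — these two assignments are mutually inverse, giving the claimed bijection.

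The only genuinely delicate point is the bookkeeping of the padding conventions and making sure every term of the tropicalized potential is accounted for: I need the full list $\{l_{i,j}-l_{i+1,j}\}_{i\leq a-1}$, $\{l_{i,j}-l_{i,j+1}\}_{j\leq b-1}$, $l_{a,b}$, and $l_{0,0}-l_{1,1}$ to collectively reproduce "$\pi$ weakly decreasing in rows and columns with the boundary convention $\pi_{a+1,j}=\pi_{i,b+1}=0$, $\pi_{0,j}=\pi_{i,0}=c$" from \eqref{toggle} and Definition \ref{P(a,b,c)}, and in particular that no spurious extra inequality is imposed and none is missing. I would handle this by checking the index ranges against the quiver $Q_{a,n}$ picture directly. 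Everything else — positivity of the $L_{i,j}$, bijectivity of the coordinate change — has already been established in Section \ref{GZCOO}, so the proof is essentially this verification.
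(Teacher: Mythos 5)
Your proposal is correct and follows essentially the same route as the paper: tropicalize the potential from Lemma \ref{2018.2.20.3.20}, read off the inequalities $\mathcal{W}^t\geq 0$ and $P^t=c$ in the Gelfand-Zetlin coordinates, and observe that together with the padding convention $l_{i,j}=0$ for $i>a$ or $j>b$ they are exactly the defining conditions of a plane partition in $P(a,b,c)$. Your index bookkeeping is careful (and in fact matches \eqref{2018.2.11.16.01} cleanly, where the paper's displayed formula \eqref{potential} has a small typographical slip in its index ranges), and the closing remark that the monomial change of coordinates tropicalizes to a $\mathbb{Z}$-linear bijection is a fine way to finish off the bijectivity claim.
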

\begin{proof}By Lemma \ref{2018.2.20.3.20}, we get
\begin{gather*}
P^t = l_{0,0},\\
\mathcal{W}^t =\min\left( \{l_{0,0}-l_{1,1},  l_{a,b} \}\cup\left(\bigcup_{\substack{1\leq i\leq a-1 \\ 1\leq j\leq b}} \{l_{i,j}-l_{i+1,j} \}\right)\cup\left( \bigcup_{\substack{1\leq i\leq a\\ 1\leq j\leq b-1}} \{l_{i,j}-l_{i,j+1} \}\right)\right).
\end{gather*}
The condition that $\mathcal{W}^t\geq 0$ is now equivalent to the conditions
\begin{equation}\label{reason for gz coord}
l_{0,0}\geq l_{1,1}, \qquad l_{a, b}\geq 0, \qquad l_{i,j-1}\geq l_{i,j} \geq l_{i+1,j}, \qquad \forall\, (i,j).
\end{equation}
After imposing the additional condition that $P^t=l_{0,0}=c$, the coordinates $(l_{i,j})_{\substack{1\leq i\leq a\\ 1\leq j\leq b}}$ of the elements of $Q(a,b,c)$~\eqref{trop.conf} naturally become plane partitions in~$P(a,b,c)$.
\end{proof}

\begin{rmk} Note the similarity between the inequalities~\eqref{reason for gz coord} and the inequalities in Gelfand--Zetlin patterns. For this reason, we name the coordinates $L_{ij}$ as ``Gelfand--Zetlin'' coordinates.
\end{rmk}

The next lemma is useful for future computation with Gelfand--Zetlin coordinates.

\begin{lem}\label{L}Recall $\lambda_k$ in~\eqref{rescalefa} and $A_{i,j}$ in~\eqref{aij}. Set $A_{i,0}=A_{0,j}=A_{0,0}$.
Then
\begin{equation}
\label{Lexp}
p^*\left(L_{i,j}\right)=\frac{A_{i-1,j-1}}{A_{i,j}}\prod_{k=i-a}^{b-j}\lambda_k, \qquad \forall\, 1\leq i \leq a,\qquad \forall\, 1\leq j \leq b,
\end{equation}
where the indices of $\lambda_k$ are defined modulo $n$ and run as $i-a, i-a+1,\dots, b-j$.
\end{lem}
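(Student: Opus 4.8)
The plan is to establish \eqref{Lexp} by induction on the ``staircase distance'' of the vertex $(i,j)$ from the outer corner, exploiting the product definition $L_{i,j}=\prod_{k\geq i, l\geq j}X_{k,l}$ together with the explicit formulas for $p^*(X_{i,j})$ that were computed in Section \ref{clustersec1712}. First I would record what the formula must give at the two ``seed'' locations: by \eqref{frozen X} and \eqref{helop} one has $p^*(L_{a,b})=p^*(X_{a,b})=\lambda_0\frac{\Delta_{\{a,\dots,n-1\}}}{\Delta_{\{a+1,\dots,n\}}}$, and with $A_{a-1,b-1}=\Delta_{I(a-1,b-1)}$, $A_{a,b}=\Delta_{I(a,b)}$ this should match $\frac{A_{a-1,b-1}}{A_{a,b}}\lambda_0$ after unravelling the sets $I(i,j)$ in \eqref{I}; this is the base case. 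Similarly $p^*(L_{1,b})$, $p^*(L_{a,1})$, etc., are boundary cases handled directly from \eqref{frozen X}.

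Next, for the inductive step I would use one of the two telescoping relations
\[
L_{i,j}=X_{i,j}\cdot L_{i+1,j}\cdot L_{i,j+1}\cdot L_{i+1,j+1}^{-1},
\]
which is immediate from the definition (each factor on the right is a product over a shifted quadrant, and the quadrant for $(i,j)$ is the union of the quadrant for $(i,j+1)$, the quadrant for $(i+1,j)$, the single vertex $(i,j)$, minus the double-counted quadrant for $(i+1,j+1)$). Assuming \eqref{Lexp} for $(i+1,j)$, $(i,j+1)$, $(i+1,j+1)$, one substitutes the known expression for $p^*(X_{i,j})$ — this is where the unfrozen formula $p^*(X_{i,j})=\frac{\Delta_{I(i,j-1)}\Delta_{I(i+1,j+1)}}{\Delta_{I(i+1,j)}\Delta_{I(i,j-1)}}$, er, more precisely the ratio appearing in the proof of the potential proposition, namely $p^*(X_{i,j})=\frac{A_{i,j-1}A_{i-1,j+1}}{\cdots}$ in the $A$-notation — needs to be plugged in. One then checks that the $A$-factors telescope to $\frac{A_{i-1,j-1}}{A_{i,j}}$ and the $\lambda$-factors, each a running product $\prod_{k=(\cdot)-a}^{b-(\cdot)}\lambda_k$, combine to $\prod_{k=i-a}^{b-j}\lambda_k$; the index bookkeeping is the only real content.

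I would organize the $\lambda$-count as the main computational lemma inside the step: the exponent of each $\lambda_k$ on the right-hand side, summed over the four terms with signs $(+,+,-)$ coming from $L_{i+1,j}L_{i,j+1}L_{i+1,j+1}^{-1}$ plus the contribution of $p^*(X_{i,j})$ from \eqref{def poisson cluster} applied to the extra frozen vertices, must be $1$ precisely for $i-a\le k\le b-j$ and $0$ otherwise. This is a finite interval-arithmetic check: the interval $[i+1-a,b-j]$ from $L_{i+1,j}$, the interval $[i-a,b-j-1]$ from $L_{i,j+1}$, the interval $[i+1-a,b-j-1]$ from $L_{i+1,j+1}$ subtracted, and the single new $\lambda$ contributed by the ``$i'$-type'' frozen edges through $p^*(X_{i,j})$; their signed union is $[i-a,b-j]$. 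For the frozen boundary vertices one replaces the appropriate $A$-factors using the convention $A_{i,0}=A_{0,j}=A_{0,0}$ and consults the relevant line of \eqref{frozen X} instead of the unfrozen formula.

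\textbf{Main obstacle.} The conceptual content is light; the hard part will be the bookkeeping of the $\lambda_k$-indices modulo $n$ and of the boundary/corner cases, since $p^*(X_f)$ has four qualitatively different shapes in \eqref{frozen X} (for $i=1$, $1<i<a$, $a\le i\le n-1$, $i=n$) together with the unfrozen shape, and one must verify that the single inductive identity $p^*(L_{i,j})=p^*(X_{i,j})\,p^*(L_{i+1,j})\,p^*(L_{i,j+1})/p^*(L_{i+1,j+1})$ collapses uniformly in every regime to $\frac{A_{i-1,j-1}}{A_{i,j}}\prod_{k=i-a}^{b-j}\lambda_k$. I expect that after fixing the conventions $A_{i,0}=A_{0,j}=A_{0,0}$ and $L_{i,j}=1$ for $i>a$ or $j>b$, all regimes merge and the verification is a routine, if lengthy, telescoping; since the statement only asserts the formula for $1\le i\le a$, $1\le j\le b$, the genuinely new cases beyond the base are few, and I would simply exhibit the step for a generic interior $(i,j)$ and indicate that the boundary rows/columns follow by the same substitution with the degenerate $A$-conventions.
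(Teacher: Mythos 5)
Your plan is essentially identical to the paper's proof: a descending double induction on $(i,j)$ from the corner $(a,b)$, using the telescope $L_{i,j}=X_{i,j}L_{i+1,j}L_{i,j+1}L_{i+1,j+1}^{-1}$ (degenerating to a two-term telescope along the bottom row and right column), substituting the explicit formulas for $p^*(X_{i,j})$, and verifying the $\lambda$-interval bookkeeping. Two slips worth correcting as you write it up: your stated base-case value $\lambda_0\Delta_{\{a,\dots,n-1\}}/\Delta_{\{a+1,\dots,n\}}$ has the wrong index sets --- since $I(a-1,b-1)=\{2,\dots,a,n\}$ and $I(a,b)=\{1,\dots,a\}$, the correct value is $\lambda_n\Delta_{\{2,\dots,a,n\}}/\Delta_{\{1,\dots,a\}}$; and in the interior inductive step $p^*(X_{i,j})$ contributes no $\lambda$ at all (unfrozen vertices have no arrow to the added $i'$ vertices in $\widetilde{Q_{a,n}}$), which is consistent with your own interval count $[i+1-a,b-j]+[i-a,b-j-1]-[i+1-a,b-j-1]=[i-a,b-j]$ already closing up without an extra factor --- the ``new'' $\lambda$ enters only in the boundary cases $(a,j)$ and $(i,b)$ where the telescope loses terms and $p^*(X_{a,j})$, $p^*(X_{i,b})$ carry $\lambda_{b-j}$, $\lambda_{i-a}$ respectively.
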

\begin{proof}We prove \eqref{Lexp} by a double induction on the indices $(i,j)$ in the descending order.
It is certainly true for $(a,b)$ since
\[
p^* (L_{a,b} )=p^* (X_{a,b} )=\lambda_n\frac{\Delta_{\{2, \dots, a,n\}}}{\Delta_{\{1,2,\dots,a\}}}=\lambda_n\frac{A_{a-1,b-1}}{A_{a,b}}.
\]
Take $(a,j)$ with $1\leq j<b$. The cluster coordinate $X_{a,j}$ is associated to a boundary face. By~\eqref{frozen X} and~\eqref{I},
\begin{equation*}
p^* (X_{a,j} )=p^* (X_{n-j} )=\lambda_{b-j}\frac{\Delta_{\{b-j, b-j+1,\dots,n-j-1\}}\Delta_{\{b-j+2,\dots, n-j,n\}}}{\Delta_{\{b-j+1,b-j+2, \dots,n-j\}}\Delta_{\{b-j+1,\dots,n-j-1,n\}}}= \lambda_{b-j} \frac{A_{a,j+1}}{A_{a,j}}\frac{A_{a-1,j-1}}{A_{a-1,j}}.
\end{equation*}
If \eqref{Lexp} is true for $(a, j+1)$, then
\begin{align*}
p^* (L_{a,j} )=p^* (X_{a,j}L_{a,j+1} )& =\left(\lambda_{b-j} \frac{A_{a,j+1}A_{a-1,j-1}}{A_{a,j}A_{a-1,j}}\right)\left(\frac{A_{a-1,j}}{A_{a,j+1}}\prod_{k=i-a}^{b-j-1}\lambda_k\right)\\
& =\frac{A_{a-1,j-1}}{A_{a,j}} \prod_{k=i-a}^{b-j}\lambda_k.
\end{align*}
Similarly, for $(i,b)$ with $1\leq i<a$, by induction we get
\begin{align*}
p^* (L_{i,b} )= p^* (X_{i,b}L_{i+1,b} )
& =\left(\lambda_{i-a}\frac{A_{i+1,b}A_{i-1,b-1}}{A_{i,b}A_{i,b-1}}\right) \left(\frac{A_{i,b-1}}{A_{i+1,b}}\prod_{\substack{k=\\i-a+1}}^{n}\lambda_k\right)\\
& =\frac{A_{i-1,b-1}}{A_{i,b}}\prod_{k=i-a}^{n}\lambda_k.
\end{align*}
Take $(i,j)$ with $i<a$ and $j<b$. Recall from~\eqref{def poisson cluster} that
\[
p^* (X_{i,j} ) = \frac{A_{i-1,j-1}A_{i,j+1}A_{i+1,j}}{A_{i-1,j} A_{i,j-1}A_{i+1,j+1}}.
\]
If \eqref{Lexp} is true for $(i+1,j)$, $(i,j+1)$ and $(i+1,j+1)$, then
\begin{gather*}
 p^* (L_{i,j} )  =p^*\left(\frac{X_{i,j}L_{i+1,j}L_{i,j+1}}{L_{i+1,j+1}}\right)
  = \left(\frac{A_{i-1,j-1}A_{i,j+1}A_{i+1,j}}{A_{i-1,j} A_{i,j-1}A_{i+1,j+1}}\right)\left(\frac{A_{i,j-1}}{A_{i+1,j}}\prod_{k=i-a+1}^{b-j}\lambda_k\right) \\
 \hphantom{p^* (L_{i,j} )  =}{}\times
  \left(\frac{A_{i-1,j}}{A_{i,j+1}}\prod_{k=i-a}^{b-j-1}\lambda_k\right)
 \left(\frac{A_{i,j}}{A_{i+1,j+1}}\prod_{k=i-a+1}^{b-j-1}\lambda_k\right)^{-1}
  =\frac{A_{i-1,j-1}}{A_{i,j}}\prod_{k=i-a}^{b-j}\lambda_k. \tag*{\qed}
\end{gather*}
\renewcommand{\qed}{}
\end{proof}

\section{Cluster duality}
\subsection{Duality conjecture and canonical basis}
\label{donj.con.10.32}
Let $\left(\mathscr{A}_{|Q|}, \mathscr{X}_{|Q|}\right)$ be the cluster ensemble associated to a quiver $Q$.
Recall the algebras ${\bf up}\left(\mathscr{A}_{|Q|}\right)$ and ${\bf up}\left(\mathscr{X}_{|Q|}\right)$ and the cluster modular group $\mathcal{G}_{|Q|}$. The Fock--Goncharov cluster duality conjecture asserts that

\begin{conj}[{\cite[Conjecture 4.1]{FGensemble}}]
\label{dual.conj} The algebra ${\bf up}(\mathscr{A}_{|Q|})$ admits a canonical basis $\mathcal{G}_{|Q|}$-equivariantly parametrized by $\mathscr{X}_{|Q|}\left(\mathbb{Z}^t\right)$. The algebra $\up\left(\mathscr{X}_{|Q|}\right)$ admits a canonical basis $\mathcal{G}_{|Q|}$-equivariantly parametrized by $\mathscr{A}_{|Q|}\left(\mathbb{Z}^t\right)$.
\end{conj}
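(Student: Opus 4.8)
The plan is to produce the two required bases as the \emph{theta functions} of Gross--Hacking--Keel--Kontsevich \cite{GHKK}. Fix the seed determined by $Q$; in the cocharacter lattice of the $\mathscr{A}_{|Q|}$-torus this seed determines a consistent scattering diagram $\mathfrak{D}_{|Q|}$, and for each point $q$ of the relevant tropical set one defines $\vartheta_q$ by summing the monomial attached to every broken line with asymptotic direction $q$. Positivity of the wall functions of $\mathfrak{D}_{|Q|}$, established in \cite{GHKK}, makes each $\vartheta_q$ a (a priori formal) sum of monomials with non-negative integer coefficients; these $\vartheta_q$ are the candidate canonical basis elements on both sides, and the bulk of the argument is (i) showing they actually lie in the upper cluster algebras, (ii) showing they form a basis, and (iii) matching the index sets with the stated tropical sets $\mathcal{G}_{|Q|}$-equivariantly.

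For the $\mathscr{A}$-side I would first invoke the identification, intrinsic to the mirror formalism, between the tropical data indexing broken lines for $\mathfrak{D}_{|Q|}$ and the set $\mathscr{X}_{|Q|}(\mathbb{Z}^t)$ --- the incarnation of the slogan ``the mirror of $\mathscr{A}$ is $\mathscr{X}$.'' Linear independence of $\{\vartheta_q : q\in \mathscr{X}_{|Q|}(\mathbb{Z}^t)\}$ follows from the leading-term argument: in any fixed cluster chart the Laurent expansion of $\vartheta_q$ has a distinguished lowest-degree monomial $z^q$, and distinct $q$ give distinct such monomials. Their $\mathbb{Z}$-span is by construction the middle cluster algebra $\mathrm{mid}(\mathscr{A}_{|Q|})$, which by \cite{GHKK} always satisfies $\mathrm{mid}(\mathscr{A}_{|Q|})\subseteq \up(\mathscr{A}_{|Q|})$. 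Equivariance is then automatic: each mutation acts on $\mathfrak{D}_{|Q|}$ by the associated piecewise-linear transformation, which carries broken lines to broken lines and hence sends $\vartheta_q$ to $\vartheta_{\gamma(q)}$, where $\gamma$ is the tropicalized action of $\mathcal{G}_{|Q|}$ on $\mathscr{X}_{|Q|}(\mathbb{Z}^t)$.

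For the $\mathscr{X}$-side I would pass through principal coefficients. The cluster variety $\mathscr{A}^{\mathrm{prin}}_{|Q|}$ carries a free action of an algebraic torus with quotient $\mathscr{X}_{|Q|}$, and is a trivial torus bundle with fiber $\mathscr{A}_{|Q|}$. The theta basis of $\up(\mathscr{A}^{\mathrm{prin}}_{|Q|})$ is graded by the weights of that torus action, and its weight-zero part descends to a basis of $\up(\mathscr{X}_{|Q|})$ whose surviving indices are exactly the tropical points of the $\mathscr{A}_{|Q|}$-fiber, namely $\mathscr{A}_{|Q|}(\mathbb{Z}^t)$ (using that the Langlands dual of a skew-symmetric datum is itself). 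The $\mathcal{G}_{|Q|}$-equivariance again follows from the induced piecewise-linear action on the scattering diagram of $\mathscr{A}^{\mathrm{prin}}_{|Q|}$.

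The main obstacle --- and the only step not supplied verbatim by the general machinery --- is promoting $\mathrm{mid}(\mathscr{A}_{|Q|})$ to all of $\up(\mathscr{A}_{|Q|})$: without further hypotheses on $Q$ the theta functions span only $\mathrm{mid}$, which can be a proper subalgebra of $\up$, so the conjecture as literally worded is not available for every $Q$. The operative sufficient conditions are the full-rank (injectivity) hypothesis on the exchange matrix of $Q$ together with the ``Enough Global Monomials'' condition of \cite{GHKK}; a proof of the statement as posed must therefore carry these along (or verify them), and this is the one place where the combinatorics of $Q$ genuinely enters. For the quivers $Q_{a,n}$ relevant to this paper both hold --- the full-rank property is Lemma \ref{full.rank.lem}, and the Pl\"ucker coordinates, via Theorems \ref{Scott} and \ref{X isomorphism}, furnish enough global monomials --- which is exactly the route by which Theorem \ref{main.3.1.152} is obtained.
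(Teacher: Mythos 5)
Your proposal follows essentially the same route the paper takes: the statement is a \emph{conjecture} (Fock--Goncharov), the paper does not prove it in general, and like you it only establishes it conditionally by citing \cite{GHKK} (Theorem \ref{okarghkk1124} in the text) and then verifying the hypotheses for the quivers $Q_{a,n}$. Your recognition that the theta functions a priori span only $\mathrm{mid}$ and that extra hypotheses on $Q$ are needed to reach $\up$ is exactly the right reading of the situation.

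One correction on the hypotheses, though. The condition the paper (following \cite[Proposition 0.14]{GHKK}) uses to promote $\mathrm{mid}$ to $\up$ is \emph{not} ``Enough Global Monomials'': it is the existence of a maximal green sequence or a reddening sequence for the unfrozen part of $Q$ (equivalently, that the cluster complex is not contained in a half-space), together with the full-rank condition on $\varepsilon|_{I\times I^{\uf}}$. EGM is the condition \cite{GHKK} uses for the convexity and partial-compactification statements, which in this paper enters only later (Section 4.2, via optimized quivers) when passing from $\mathcal{O}(\dGr_a^\times(n))$ to $\mathcal{O}(\dGr_a(n))$; it is not what underlies Theorem \ref{main.3.1.152}. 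Correspondingly, for $Q_{a,n}$ the paper verifies the first condition by citing Marsh--Scott \cite{MStw} for a maximal green sequence and Weng \cite{Weng} for the cluster Donaldson--Thomas transformation (equivalently a reddening sequence), not by exhibiting global monomials via Pl\"ucker coordinates. With that substitution your outline matches the paper's argument.
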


Conjecture \ref{dual.conj} has been proved in \cite{GHKK} under two combinatorial assumptions, which can be summarized as follows.

\begin{thm}[{\cite[Proposition 0.14]{GHKK}}]
\label{okarghkk1124}
Conjecture~{\rm \ref{dual.conj}} holds if the following conditions are satisfied.
\begin{itemize}\itemsep=0pt
 \item The non-frozen part of the quiver $Q$ possesses a maximal green sequence or a reddening sequence.\footnote{Maximal green sequences and reddening sequences are special sequences of quiver mutations introduced by Keller~\cite{KelDT} in order to study Donaldson--Thomas transformations. In the original statement of Gross--Hacking--Keel--Kontsevich, the theorem was stated with maximal green sequence; but they only need the existence of a~maximal green sequence to ensure that the cluster complex does not lie within a~half space, which is also a~consequence of the existence of a~reddening sequence.}
 \item The exchange matrix $\varepsilon=(\varepsilon_{ij})$ of $Q$, with $i$ running through the non-frozen vertices and $j$ running through all the vertices, is of full rank.
\end{itemize}
\end{thm}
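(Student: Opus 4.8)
The plan is to deduce the statement directly from the work of Gross--Hacking--Keel--Kontsevich, whose main output is a canonical theta-function basis indexed by integral tropical points together with a package of combinatorial hypotheses (their ``Enough Global Monomials'' condition and a non-degeneracy/injectivity requirement) guaranteeing that this basis is an honest basis of the upper cluster algebra rather than of a possibly smaller ``middle'' algebra. Since Conjecture \ref{dual.conj} is phrased in the Fock--Goncharov ensemble language, the work is essentially a translation of \cite[Proposition 0.14]{GHKK} together with verification that the two hypotheses in the statement are exactly the ones GHKK need.

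First I would recall the construction of the theta functions. To the fixed seed attached to $Q$ one associates a consistent scattering diagram in the relevant real vector space, and for each integral point $q$ of the appropriate tropical set one defines a theta function $\vartheta_q$ by counting broken lines. By GHKK the $\vartheta_q$ topologically span an algebra sandwiched between the ordinary cluster algebra and the upper cluster algebra, the $\vartheta_q$ are linearly independent, and the structure constants are non-negative integers; the last fact is what makes $\{\vartheta_q\}$ a \emph{canonical} basis in the required sense. The intrinsic (seed-independent) nature of broken-line counts is what produces the $\mathcal{G}_{|Q|}$-equivariance: a mutation sequence representing an element of the cluster modular group permutes the seeds and carries the scattering diagram to itself, hence permutes the $\vartheta_q$ compatibly with its tropical action on the index set.

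Second, I would check that the two hypotheses do the two jobs GHKK require. The full-rank condition on the non-frozen part of $\varepsilon$ is the injectivity/non-degeneracy hypothesis: it guarantees that the dual variety whose integral tropical points index the $\mathscr{A}$-side theta functions is precisely $\mathscr{X}_{|Q|}$ (and not a proper quotient of its tropicalization), and symmetrically on the other side; it is also what makes the formal theta algebra well-behaved. The existence of a maximal green or reddening sequence forces the cluster complex (the fan of $g$-vectors of cluster monomials) not to lie in any closed half-space, which by GHKK is equivalent to Enough Global Monomials, and this upgrades the theta basis of the middle algebra to a basis of all of $\up(\mathscr{A}_{|Q|})$. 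Combining these, $\{\vartheta_q : q \in \mathscr{X}_{|Q|}(\mathbb{Z}^t)\}$ is a $\mathcal{G}_{|Q|}$-equivariant basis of $\up(\mathscr{A}_{|Q|})$, and the mirror statement follows by running the same argument with the roles of $\mathscr{A}$ and $\mathscr{X}$ exchanged.

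The main obstacle is bookkeeping rather than geometry: reconciling GHKK's conventions --- they work primarily with $\mathscr{A}$ equipped with principal coefficients, $\mathscr{A}_{\mathrm{prin}}$, and phrase duality via $\mathscr{A}_{\mathrm{prin}}$ and its Langlands dual --- with the Fock--Goncharov $(\mathscr{A},\mathscr{X})$ ensemble and the convention (used throughout this paper, e.g.\ $\mathcal{O}(\dGr_a^\times(n))=\up(\mathscr{A}_{a,n})$) that the frozen variables on the $\mathscr{A}$-side are invertible. One must check that localizing at the frozen variables and suppressing the principal-coefficient directions affects neither the validity of the two hypotheses nor the identification of the tropical index sets; this is exactly what it means to extract the statement of Theorem \ref{okarghkk1124} from \cite{GHKK}, and it is where any genuine work lies.
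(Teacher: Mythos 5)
The paper does not give a proof of this statement: Theorem~\ref{okarghkk1124} is a verbatim citation of \cite[Proposition 0.14]{GHKK}, with a footnote added to note that a reddening sequence suffices in place of a maximal green sequence. So there is no ``paper's own proof'' to compare against; what you have written is a sketch of the internal GHKK argument rather than something the paper supplies.

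That said, your account of the GHKK mechanism is broadly accurate: the theta functions come from broken-line counts in a consistent scattering diagram, they a priori span a ``middle'' algebra between the cluster algebra and the upper cluster algebra, the full-rank condition on $\varepsilon|_{I^\uf\times I}$ is the injectivity/nondegeneracy hypothesis ensuring the tropical index sets are the ones stated, the existence of a maximal green (or reddening) sequence implies that the cluster complex is not contained in a closed half-space --- which is GHKK's Enough Global Monomials condition --- and EGM promotes the theta basis of the middle algebra to a basis of the full $\up(\mathscr{A}_{|Q|})$. The $\mathcal{G}_{|Q|}$-equivariance does come from seed-independence of the scattering diagram, and you are right that the substantive remaining work is the convention translation from $\mathscr{A}_{\mathrm{prin}}$ and its Langlands dual to the Fock--Goncharov $(\mathscr{A},\mathscr{X})$ ensemble with invertible frozens. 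The one place you could be sharper is to separate the two statements GHKK actually make: their Proposition~0.14 assumes the half-space condition outright, and a separate observation (their Proposition~0.13 together with the authors' footnote here invoking Keller's work on DT transformations) is what converts ``maximal green or reddening sequence exists'' into that half-space condition. Since none of this constitutes a new proof --- it is a paraphrase of the citation --- your proposal is an acceptable gloss, but it is not doing more than what the paper does by citing.
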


\begin{rmk} For any quiver $Q$ that satisfies the above combinatorial conditions, cluster duality gives rise to canonical bases of ${\bf up}\big(\mathscr{A}_{|Q|}\big)$ and ${\bf up}\big(\mathscr{X}_{|Q|}\big)$, which we denote as follows:
\begin{gather*}
\Theta_{\mathscr{A}}:=\big\{\theta_q \, \big| \, q\in \mathscr{X}_{|Q|}\big(\mathbb{Z}^t\big)\big\}\subset \mathbf{up} \big(\mathscr{A}_{|Q|} \big),
\\
\Theta_{\mathscr{X}}:=\big\{\vartheta_p \, \big| \, p\in \mathscr{A}_{|Q|}\big(\mathbb{Z}^t\big)\big\}\subset \mathbf{up} \big(\mathscr{X}_{|Q|} \big).
\end{gather*}
The basis elements $\theta_q$ and $\vartheta_p$ satisfy many remarkable properties (see~\cite{GHKK} for the comprehensive list).
One of them is that every $\theta_q \in \mathbf{up} \big(\mathscr{A}_{|Q|} \big)$ in terms of the $K_2$ cluster $\{A_j\}$ associated to $Q$ is expressed as
\begin{equation}\label{theta.express}
\theta_q=\prod_jA_j^{x_j}F\left(\left(\prod_jA_j^{\varepsilon_{kj}}\right)_{k\in I_{\uf}}\right),
\end{equation}
where $(x_j)$ is the tropical coordinates of $q\in \mathscr{X}\big(\mathbb{Z}^t\big)$ in terms of the Poisson cluster associated to the same quiver $Q$, and $F$ is a polynomial with constant term 1 and variables of the form $\prod_jA_j^{\varepsilon_{kj}}$ as $k$ ranges through all unfrozen vertices. The elements $\vartheta_{p}$ admit similar formulas.

One may notice that, on the one hand, we define $p^* (X_i )=\prod_jA_j^{\varepsilon_{ji}}$ in the definition of the~$p$ map, and on the other hand, the above polynomial $F$ depends on $\prod_jA_j^{\varepsilon_{ij}}=p^*\big(X_i^{-1}\big)$. The reason this happens is that the cluster Poisson variables used in this paper are inverses of those used by Gross, Hacking, Keel, and Kontsevich in~\cite{GHKK}. Such a switch frees us from considering tropicalization in which the tropical addition is maximum rather than minimum.
\end{rmk}

\begin{proof}[Proof of Theorem \ref{main.3.1.152}]
For the quiver $Q_{a,n}$, the existence of a maximal green sequence was proved by Marsh and Scott in \cite{MStw} and the existence of a cluster Donaldson--Thomas transformation (which is equivalent to a reddening sequence) was proved by Weng in~\cite{Weng}. By Lemma~\ref{full.rank.lem}, the second combinatorial condition holds. Hence Conjecture \ref{dual.conj} holds for the cluster ensemble $(\mathscr{A}_{a,n},\mathscr{X}_{a,n})$. By Theorems~\ref{Scott} and~\ref{X isomorphism} we know that $\up (\mathscr{A}_{a,n})\cong \mathcal{O}\big(\dGr_a^\times(n)\big)$ and $\up (\mathscr{X}_{a,n} )\cong \mathcal{O}\big(\dconf_n^\times(a)\big)$. This concludes the proof of Theorem~\ref{main.3.1.152}.
\end{proof}

\begin{rmk} This $\Theta$-basis was constructed by Gross, Hacking, Keel, and Kontsevich in~\cite{GHKK} as formal power series by counting broken lines in scattering diagrams. To find an explicit procedure to determine all basis elements as regular functions and to compare them with other notable bases of representations (e.g., Lusztig's canonical basis, Mirkovic--Vilonen basis, etc.) are interesting directions for future research.
\end{rmk}

\subsection{Partial compactification, optimized quiver, and potential function}
Let $Q=\big(I^{\uf} \subset I, \varepsilon\big)$ be a quiver satifying the combinatorial conditions in Theorem~\ref{okarghkk1124}. Let $I^0:= I- I^{\uf}$ be the set of frozen vertices.
For $i\in I^0$, let $D_i$ denote the (irreducible) boundary divisor of $\mathscr{A}_{|Q|}$ defined by setting $A_i=0$.
Let us glue $\mathscr{A}_{|Q|}$ with these boundary divisors, obtaining the partially compactified space
\[\overline{\mathscr{A}}_{|Q|} := \mathscr{A}_{|Q|} \cup \left(\bigcup_{i \in I^0} D_i\right).
\]
This section is devoted to studying the ring $\mathcal{O}\big(\overline{\mathscr{A}}_{|Q|}\big)$ of regular functions on $\overline{\mathscr{A}}_{|Q|}$.

Let $f\in \mathbf{up} \big(\mathscr{A}_{|Q|} \big)$. Denote by ${\rm ord}_{D_i}(f)$ the order of $f$ along the boundary divisor $D_i$. Note that $f$ can be extended to a regular function on $D_i$ if and only if ${\rm ord}_{D_i}(f) \geq 0$. Therefore
\[
\mathcal{O}\big(\overline{\mathscr{A}}_{|Q|}\big)= \big\{ f \in \mathbf{up} (\mathscr{A}_{|Q|} ) \,\big|\, {\rm ord}_{D_i} (f)\geq 0, \, \forall\, i \in I^0 \big\}.
\]
Recall the canonical basis $\Theta_{\mathscr{A}}$ of $\mathcal{O}(\mathscr{A}_{|Q|})$. Consider the intersection
\begin{equation*}
 \Theta_{\overline{\mathscr{A}}}:=\Theta_{\mathscr{A}}\cap \mathcal{O}\left(\overline{\mathscr{A}}_{|Q|}\right) =\big\{ \theta_q \in \Theta_{\mathscr{A}} \,\big|\, {\rm ord}_{D_i} (\theta_q)\geq 0, \, \forall i \in I^0 \big\}.
\end{equation*}
Conjecture~9.8 of~\cite{GHKK} implies that the intersection $ \Theta_{\overline{\mathscr{A}}}$ descends to a linear basis of $\mathcal{O}\big(\overline{\mathscr{A}}_{|Q|}\big)$.
The paper {\it loc.\ cit.} provides a sufficient condition under which the aforementioned conjecture holds.

\begin{defn}Let $i\in I^0$. If $\varepsilon_{ki}\geq 0$ for all unfrozen vertices $k$, then we say the quiver $Q$ is \emph{optimized} for $i$. If there exists a mutation sequence $\tau$ such that the mutated quiver $\tau Q$ is optimized for $i$, then we say that $i$ admits an optimized quiver in the equivalence class~$|Q|$.
\end{defn}

\begin{rmk} Because of the different conventions used in this paper vs.\ those in \cite{GHKK}, we define~$Q$ to be optimized for~$i$ if all arrows between $i$ and unfrozen vertices point towards the unfrozen ones, as opposed to the other direction stated in \cite[Definition~9.1]{GHKK}.
\end{rmk}

\begin{prop}\label{general.dual.part1}If every frozen vertex $i$ of $Q$ admits an optimized quiver in~$|Q|$, then the set~$\Theta_{\overline{\mathscr{A}}}$ forms a linear basis of $\mathcal{O}\big(\overline{\mathscr{A}}_{|Q|}\big)$.
\end{prop}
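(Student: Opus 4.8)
The statement is essentially Conjecture~9.8 of \cite{GHKK} in the situation where it is known to be a theorem, and the plan is to assemble its proof from ingredients already recalled above. Since $Q$ satisfies the hypotheses of Theorem~\ref{okarghkk1124}, the set $\Theta_{\mathscr{A}}$ is a linear basis of $\up(\mathscr{A}_{|Q|})=\mathcal{O}(\mathscr{A}_{|Q|})$; hence $\Theta_{\overline{\mathscr{A}}}=\Theta_{\mathscr{A}}\cap\mathcal{O}(\overline{\mathscr{A}}_{|Q|})$ is automatically linearly independent, and by construction it lies in $\mathcal{O}(\overline{\mathscr{A}}_{|Q|})$. So the only thing to prove is that $\Theta_{\overline{\mathscr{A}}}$ \emph{spans} $\mathcal{O}(\overline{\mathscr{A}}_{|Q|})$: given $f\in\mathcal{O}(\overline{\mathscr{A}}_{|Q|})\subseteq\up(\mathscr{A}_{|Q|})$, I would expand it in its unique finite $\Theta_{\mathscr{A}}$-expansion $f=\sum_q c_q\theta_q$ and show that $c_q\neq0$ forces $\ord_{D_i}(\theta_q)\geq0$ for every $i\in I^0$.

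To do this I would fix a frozen vertex $i$, use the hypothesis to pick a mutation sequence $\tau$ with $\tau Q$ optimized for $i$, and work in the $K_2$ cluster chart attached to $\tau Q$, in which $D_i$ is cut out by $A_i=0$ and $\ord_{D_i}$ is the $A_i$-adic order. In this chart $\theta_q=\prod_jA_j^{x_j}\,F\big((\prod_jA_j^{\varepsilon_{kj}})_{k\in I^\uf}\big)$ by \eqref{theta.express}, with $(x_j)$ the tropical coordinates of $q$ in the Poisson cluster of $\tau Q$ and $F$ a polynomial with constant term $1$. Because $\tau Q$ is optimized for $i$, each monomial variable $\prod_jA_j^{\varepsilon_{kj}}$ of $F$ has non-negative $A_i$-exponent $\varepsilon_{ki}\geq0$, so the constant term $1$ of $F$ survives the specialization $A_i=0$ and $\ord_{D_i}(\theta_q)=x_i$; that is, the order of vanishing along $D_i$ is read off directly from the $\mathbf{g}$-vector of $q$ in $\tau Q$, and the leading monomial $\prod_jA_j^{x_j}$ is the lowest-$A_i$-order term of $\theta_q$. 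I would then invoke the $\mathbf{g}$-pointedness of the theta basis (the assignment $q\mapsto(x_j)_j$ is injective and $\prod_jA_j^{x_j}$ is the distinguished minimal monomial of $\theta_q$ in the dominance order of $\tau Q$): setting $m:=\min\{x_i(q):c_q\neq0\}$, the part of $f=\sum c_q\theta_q$ of $A_i$-order $m$ is a nonzero combination of the pairwise distinct leading monomials coming from the $q$ with $x_i(q)=m$, so $\ord_{D_i}(f)=m$. As $f$ is regular along $D_i$ this gives $m\geq0$, hence $\ord_{D_i}(\theta_q)=x_i(q)\geq0$ for all $q$ with $c_q\neq0$. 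Letting $i$ run over $I^0$ shows every such $\theta_q$ lies in $\Theta_{\overline{\mathscr{A}}}$, which proves spanning and hence the proposition.

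The heart of the matter — and the step I expect to be the main obstacle — is the two-part claim about the optimized chart: that $\ord_{D_i}(\theta_q)$ is computed by the $\mathbf{g}$-vector, and that this, combined with $\mathbf{g}$-pointedness, forbids cancellation of lowest-order terms in an arbitrary linear combination $\sum c_q\theta_q$. Both are precisely the content of \S 9 of \cite{GHKK}, proved there through the scattering-diagram/broken-line model of theta functions, and I would cite that rather than reprove it; on our side the only genuinely new point is the convention bookkeeping — our ``optimized'' means all arrows point from the frozen vertex toward the unfrozen vertices, mirroring \cite[Definition~9.1]{GHKK}, consistently with our inversion of the cluster Poisson variables noted above. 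Everything else — irreducibility of each $D_i$, independence of $\ord_{D_i}$ from the chart, and finiteness of the $\Theta_{\mathscr{A}}$-expansions — is formal.
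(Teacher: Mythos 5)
Your proposal is correct and takes essentially the same route as the paper: the paper's proof simply invokes \cite[Proposition~9.7]{GHKK} for the key fact that $\ord_{D_i}(\theta_q)\geq 0$ for every $q$ with nonzero coefficient whenever $f$ is regular along $D_i$ and $i$ admits an optimized quiver, whereas you unpack the $\mathbf{g}$-pointedness/dominance-order mechanism behind that proposition before deferring to \cite{GHKK}~\S 9 for the details. One small imprecision worth fixing if you keep the sketch: the $A_i$-order-$m$ part of $f$ is generally \emph{not} just a combination of the leading monomials $\prod_j A_j^{x_j(q)}$ with $x_i(q)=m$, since terms of $F$ involving only unfrozen $k$ with $\varepsilon_{ki}=0$ also survive at $A_i$-order $m$; the non-cancellation therefore really does need the full dominance-order argument (pick $q_0$ dominance-minimal among those with $x_i(q)=m$, so the single monomial $\prod_j A_j^{x_j(q_0)}$ receives contributions only from $\theta_{q_0}$), exactly as in the GHKK result you are citing.
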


\begin{proof} The linear independence of $\Theta_{\overline{\mathscr{A}}}$ is clear.
Suppose that \[f= \sum_{q\in \mathscr{X}(\mathbb{Z}^t)} \alpha_q \theta_q \in \mathcal{O}\big(\overline{\mathscr{A}}_{|Q|}\big).\]
By definition, ${\rm ord}_{D_i}(f)\geq 0$ for every frozen $i$. By \cite[Proposition~9.7]{GHKK}, if $i$ admits an optimized quiver in $|Q|$, then ${\rm ord}_{D_i}(\theta_q)\geq 0$ for all $q$ with $\alpha_q\neq 0$. Therefore whenever the coefficient $\alpha_q$ is nonzero, the function $\theta_q \in \Theta_{\overline{\mathscr{A}}}$. In other words, the set $\Theta_{\overline{\mathscr{A}}}$ spans $\mathcal{O}\big(\overline{\mathscr{A}}_{|Q|}\big)$.
\end{proof}

Now it is natural to address the following question: for which $q\in \mathscr{X}_{|Q|}\big(\mathbb{Z}^t\big)$ do we have $\theta_q\in \Theta_{\overline{\mathscr{A}}}$? A criterion for recognizing such $q$'s was suggested in \cite[Definition~12.7]{GS1}, and was proved in~\cite{GHKK}.

Let $i\in I^0$. Let $p_i\in \mathscr{A}_{|Q|}\big(\mathbb{Z}^t\big)$ be the tropical point such that its tropical coordinates \mbox{$A_j^t(p_i)=\delta_{ij}$}, where $\delta_{ij}$ is the Kronecker delta symbol. The existence and uniqueness of $p_i$ is a direct consequence of the definition of cluster $K_2$ mutations. Let $\vartheta_{p_i}\in \Theta_{\mathscr{X}}$ be the theta function parametrized by~$p_i$. Define the \emph{GHKK potential}
\begin{equation}\label{GHKK.potential}
W := \sum_{i\in I^0} \vartheta_{p_i}.
\end{equation}

The following Proposition is a paraphrase of \cite[Lemma~9.3]{GHKK}; we include it below for the sake of completeness.

\begin{prop}\label{general.dual.part2}Assume that every frozen vertex of $Q$ admits an optimized quiver in~$|Q|$. Let $q\in \mathscr{X}_{|Q|}\big(\mathbb{Z}^t\big)$. Then $\theta_q \in \Theta_{\overline{\mathscr{A}}}$ if and only if $W^{t}(q)\geq 0$.
\end{prop}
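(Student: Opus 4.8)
The plan is to dualize the defining inequalities of $\Theta_{\overline{\mathscr{A}}}$ one frozen vertex at a time. Tropicalizing the sum $W=\sum_{i\in I^0}\vartheta_{p_i}$ turns it into a minimum, so $W^t(q)=\min_{i\in I^0}\vartheta_{p_i}^t(q)$, whence $W^t(q)\geq 0$ if and only if $\vartheta_{p_i}^t(q)\geq 0$ for every $i\in I^0$. On the other hand, by the description of $\mathcal{O}\left(\overline{\mathscr{A}}_{|Q|}\right)$ and of $\Theta_{\overline{\mathscr{A}}}$ recalled above, $\theta_q\in\Theta_{\overline{\mathscr{A}}}$ if and only if $\ord_{D_i}(\theta_q)\geq 0$ for every $i\in I^0$. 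Thus the Proposition follows once we establish, for each frozen vertex $i$ (which by hypothesis admits an optimized quiver in $|Q|$), the identity
\[
\ord_{D_i}(\theta_q)=\vartheta_{p_i}^t(q).
\]

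Fix such an $i$ and pick a mutation sequence $\tau$ with $\tau Q$ optimized for $i$; since $\tau$ involves only unfrozen vertices, the frozen variable $A_i$, and hence the divisor $D_i$, are unchanged in the $K_2$-cluster of $\tau Q$. Expand $\theta_q$ in this cluster using \eqref{theta.express} applied to the seed $\tau Q$: one gets $\theta_q=\bigl(\prod_jA_j^{x_j}\bigr)\,F$, where $(x_j)$ are the tropical Poisson coordinates of $q$ in the seed $\tau Q$ and $F$ is a polynomial with constant term $1$ in the monomials $\prod_jA_j^{\varepsilon_{kj}}$, $k\in I^{\uf}$. Since $\tau Q$ is optimized for $i$ we have $\varepsilon_{ki}\geq 0$ for every unfrozen $k$, so each of these monomials has order $\geq 0$ along $D_i$; as $F$ has constant term $1$ this forces $\ord_{D_i}(F)=0$. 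Hence $\ord_{D_i}(\theta_q)$ is the exponent of $A_i$ in the single leading monomial, namely $x_i=X_i^t(q)$, the $i$-th tropical Poisson coordinate of $q$ read off in the seed $\tau Q$.

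It remains to see that $\vartheta_{p_i}^t(q)$ equals this same coordinate. For this I would invoke the structure theory of theta functions of \cite{GHKK}: once $\tau Q$ is optimized for $i$, the point $p_i$ is optimal for that seed, so its theta function $\vartheta_{p_i}\in\up\left(\mathscr{X}_{|Q|}\right)$ is a single Laurent monomial in the Poisson cluster of $\tau Q$ (the scattering-diagram correction being trivial), and by the definition of $p_i$ together with the duality of the cluster ensemble this monomial tropicalizes to the linear functional $q\mapsto X_i^t(q)$. Comparing with the previous paragraph gives $\ord_{D_i}(\theta_q)=\vartheta_{p_i}^t(q)$, and intersecting the conditions over $i\in I^0$ completes the proof.

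The step I expect to be the main obstacle is the bookkeeping of conventions. One must track carefully that the cluster Poisson variables of this paper are the inverses of those in \cite{GHKK} (as flagged after \eqref{theta.express}), so that the tropical operations involve $\min$ rather than $\max$ and the lattice pairing comes out with the correct sign; and one must check that ``optimized for $i$'' in the present arrow convention, $\varepsilon_{ki}\geq 0$, is precisely the condition making both the correction factor $F$ in $\theta_q$ and the scattering correction in $\vartheta_{p_i}$ trivial along $D_i$. Once these translations are in place, the statement is exactly \cite[Lemma 9.3]{GHKK}.
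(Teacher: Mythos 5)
Your proof is correct and follows essentially the same strategy as the paper's: reduce to the identity $\ord_{D_i}(\theta_q)=\vartheta_{p_i}^t(q)$ for each frozen vertex, use the expansion \eqref{theta.express} together with the optimization condition $\varepsilon_{ki}\geq 0$ to get $\ord_{D_i}(\theta_q)=X_i^t(q)$, and invoke \cite[Lemma 9.3]{GHKK} for $\vartheta_{p_i}=X_i$ in the optimized seed. Your write-up is in fact a bit more explicit than the paper's on two points — that passing to the seed $\tau Q$ is harmless because $\tau$ is supported on unfrozen vertices so $A_i$ and $D_i$ are untouched, and why $\ord_{D_i}(F)=0$ — but these are elaborations, not a different route.
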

\begin{proof} By the definition of tropicalization, we have
\[
W^t(q)= \min_{i\in I^0}\big\{\vartheta_{p_i}^t (q)\big\}.
\]
Therefore $W^t(q)\geq 0$ if and only if every $\vartheta_{p_i}^t (q)\geq 0$. It suffices to show that if $i\in I^0$ admits an optimized quiver, then
\begin{equation}\label{528}
{\rm ord}_{D_i}(\theta_q)=\vartheta_{p_i}^t (q).
\end{equation}
Without loss of generality, let us assume that $Q$ is optimized for $i$, i.e., $\varepsilon_{ki}\geq 0$ for all unfrozen vertices $k$.
Let $\{A_j\}$ be the cluster of $\mathscr{A}_{|Q|}$ associated to~$Q$. Let~$X_i$ be the cluster variable of~$\mathscr{X}_{|Q|}$ associated to the vertex $i$ in $Q$ and let $x_i:=X_i^t$ be its tropicalization. Since~$\theta_q$ is of the form~\eqref{theta.express} and $\epsilon_{ki}\geq 0$ for all unfrozen vertices $k$, it follows that
\[
{\rm ord}_{D_i}(\theta_q) = x_i(q).
\]
On the other hand, by \cite[Lemma~9.3]{GHKK}, if $Q$ is optimized for $i$, then $\vartheta_{p_i}=X_i$. Therefore $x_i(q)=X_i^t(q)=\vartheta_{p_i}^t(q)$, which concludes the proof of~\eqref{528}.
\end{proof}

Let us apply the above results to the cases of Grassmannians. It boils down to finding optimized quivers for frozen vertices in the quiver $Q_{a,n}$. As observed by L. Williams and stated in \cite[Proposition~9.4]{GHKK}, the quiver $Q_{a,n}$ is optimized for the vertices $(0,0)$ and $(a,b)$; since the mutation sequence $\rho$ in~\eqref{rho} rotates the frozen vertices of~$Q_{a,n}$ clockwise to their neighbors, by applying $\rho$ repeatedly, each frozen vertex $i$ has a chance to be at the position of $(0,0)$ and therefore admits an optimized quiver. Indeed, the quiver $\rho^{n-i}Q_{a,n}$ is optimized for the frozen vertex~$i$.

The next Proposition shows that in the Grassmannian case, the potential function $\mathcal{W}=\sum_{i} \vartheta_i$ in~\eqref{potential.W} coincides with the function~$W$ in~\eqref{GHKK.potential}.

\begin{prop}\label{vartheta_i} Under the algebra isomorphism $\up (\mathscr{X}_{a,n} )\cong \mathcal{O}\big(\dconf_n^\times(a)\big)$, the theta function~$\vartheta_{p_i}$ is identified with the function $\vartheta_i$ defined in~\eqref{theta.f}.
\end{prop}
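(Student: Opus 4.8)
The plan is to show that the theta function $\vartheta_{p_i}$, which is parametrized by the tropical point $p_i \in \mathscr{A}_{a,n}(\mathbb{Z}^t)$ whose tropical coordinates are $\delta_{ij}$, coincides with the explicit function $\vartheta_i$ from \eqref{theta.f}. The key observation is that for each frozen vertex $i$ we have already exhibited, in the discussion preceding this Proposition, an optimized quiver in $|Q_{a,n}|$: namely $\rho^{n-i} Q_{a,n}$ is optimized for $i$ (it places $i$ at the position $(0,0)$). By the paraphrase of \cite[Lemma 9.3]{GHKK} recorded inside the proof of Proposition \ref{general.dual.part2}, whenever $Q$ is optimized for $i$ one has $\vartheta_{p_i} = X_i$, the cluster Poisson variable attached to the vertex $i$ in the \emph{optimized} seed. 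So the first step is to identify, for each $i$, the Poisson cluster variable sitting at vertex $i$ of the optimized quiver $\rho^{n-i} Q_{a,n}$, and then express it back in terms of the original Poisson cluster $\{X_f\}_{f\in I}$ attached to $Q_{a,n}$ by applying the cluster transformation $\rho^{-(n-i)}$.

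The second step is the comparison with the formulas \eqref{3.14..1214}, \eqref{2018.2.11.21.36}, \eqref{2018.2.11.21.35} already proved for $\vartheta_i$. Here I would use the results of Section \ref{cyc.rot}: the rotation $R$ on $\dconf_n^\times(a)$ is realized by the mutation sequence $\rho$, and $R^*(X_g) = X'_g$. Since $\mathcal{W} = \sum_i \vartheta_i$ is manifestly $R$-invariant and $R$ cyclically permutes the $\vartheta_i$ (this follows directly from the geometric definition \eqref{theta.f} — the quotient $W/\mathrm{Span}\{l_{i-a+2},\dots,l_i\}$ construction is rotation-covariant), it suffices to check the identification $\vartheta_{p_i} = \vartheta_i$ for a single well-chosen $i$, say $i = n$, and then transport it around by applying $R$ (equivalently $\rho$) repeatedly. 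For $i=n$ the vertex is $(0,0)$, the quiver $Q_{a,n}$ is \emph{already} optimized (this is the L.~Williams observation cited above), so $\vartheta_{p_n} = X_{0,0}$, which is exactly the formula $\vartheta_n = X_{0,0}$ from \eqref{3.14..1214}. Applying $R^{*-1} = (\rho\text{-transformation})^{-1}$ repeatedly and using that $R$ cyclically shifts both $p_i \mapsto p_{i-1}$ in $\mathscr{A}_{a,n}(\mathbb{Z}^t)$ and $\vartheta_i \mapsto \vartheta_{i-1}$ then gives the identification for all $i$, and in particular matches \eqref{2018.2.11.21.36} and \eqref{2018.2.11.21.35}.

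The main obstacle I anticipate is bookkeeping the action of $R^t$ (equivalently $\rho$) on the tropical points $p_i$: one must verify that $R^t$ sends $p_i$ to $p_{i-1}$, i.e.\ that the mutation sequence $\rho$, acting on $\mathscr{A}_{a,n}(\mathbb{Z}^t)$, permutes the distinguished points $p_i$ by the cyclic rotation of frozen vertices. This is consistent with the fact established in Section \ref{cyc.rot} that $\rho$ sends the $K_2$ cluster $\{A_{i,j}\}$ of $\Gamma_{a,n}$ to the cluster $\{C_a^*(A_{i,j})\}$ of $\Gamma'_{a,n}$ and rotates the frozen variables $A_{i'} \mapsto A_{(i-1)'}$; dualizing/tropicalizing this statement yields $R^t(p_i) = p_{i-1}$, but spelling out the tropical coordinates carefully through the change of cluster is where the care is needed. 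An alternative, slightly more hands-on route that avoids this, is to directly invoke $\vartheta_{p_i} = X_i^{(\rho^{n-i}Q_{a,n})}$ and then recompute $X_i$ in the $Q_{a,n}$-chart by a finite sequence of Poisson mutations, checking the answer against \eqref{2018.2.11.21.36}–\eqref{2018.2.11.21.35} term by term; the ``rectangle'' combinatorial description in the Remark after the Proposition giving $\vartheta_i$ makes clear what the mutated expression should be, so this is a finite verification. Either way, once $\vartheta_{p_i} = \vartheta_i$ is established, summing over $i \in I^0 = \{1,\dots,n\}$ immediately gives $W = \mathcal{W}$ under the isomorphism $\up(\mathscr{X}_{a,n}) \cong \mathcal{O}(\dconf_n^\times(a))$, which is the content of the Proposition.
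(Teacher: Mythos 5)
Your proposal is essentially the paper's proof: both use the L.~Williams observation that $Q_{a,n}$ is optimized for $(0,0)$, invoke \cite[Lemma 9.3]{GHKK} to conclude $\vartheta_{p_n}=X_{0,0}=\vartheta_n$, note that $\rho^{n-i}Q_{a,n}$ is optimized for the frozen vertex $i$, and transport the base case around the cycle via the identification of $\rho$ with the geometric rotation $R$ from Section~\ref{cyc.rot}. The one place where you may be making extra work for yourself is the ``main obstacle'': the paper does not separately verify $R^t(p_i)=p_{i-1}$. Instead it applies \cite[Lemma 9.3]{GHKK} directly in each seed $\rho^{n-i}Q_{a,n}$, giving $\vartheta_{p_i}=X_n^{\rho^{n-i}}$, and then uses the already-established identity $X_n^{\rho^{n-i}}=(R^{n-i})^*X_n=(R^{n-i})^*\vartheta_n$ to land on $\vartheta_i$; no explicit tracking of the tropical points $p_i$ under $R^t$ is needed, only that (i) the mutated seed is optimized for $i$ and (ii) the cluster transformation $\rho$ acts on $\mathscr{X}$ by $R^*$. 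So your instinct is right that there is bookkeeping to be done, but it is absorbed into facts already proved in Section~\ref{cyc.rot}, and the argument closes as cleanly as you expect.
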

\begin{proof}Note that $Q_{a,n}$ is optimized for $(0,0)$. By \eqref{3.14..1214} and \cite[Lemma~9.3]{GHKK}, we have
\[
\vartheta_n= X_{0,0}=\vartheta_{p_n}.\]
For the other frozen vertices, let us apply the rotation mutation sequence $\rho$. Then
\begin{equation}
\vartheta_{p_i}=X_n^{\rho^{n-i}}=\big(R^{n-i}\big)^*X_n=\big(R^{n-i}\big)^*\vartheta_n=\vartheta_i. \tag*{\qed}
\end{equation}
\renewcommand{\qed}{}
\end{proof}

By Theorem \ref{Scott}, or more precisely by the original version \cite[Theorem 3]{Sco}, the coordinate ring $\mathcal{O}(\dGr_a(n))$ is isomorphic to $\mathcal{O}\big(\overline{\mathscr{A}}_{a,n}\big)$. Combining Propositions \ref{general.dual.part1}, \ref{general.dual.part2}, and \ref{vartheta_i}, we get the following theorem.

\begin{thm} Under the isomorphisms $\mathcal{O}(\dGr_a(n))\!\stackrel{\sim}{=}\!\mathcal{O}\big(\overline{\mathscr{A}}_{a,n}\big)$ and $ \mathcal{O}\big(\dconf_n^\times(a)\big)\!\cong\! \up (\mathscr{X}_{a,n} )$,
the coordinate ring $\mathcal{O} (\dGr_a(n) )$ admits a natural basis
\[
\big\{\theta_q \, \big| \, q\in \dconf_n^\times(a)\big(\mathbb{Z}^t\big), \, \mathcal{W}^t(q)\geq 0\big\}.
\]
\end{thm}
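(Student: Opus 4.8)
The plan is to apply the general machinery developed earlier in this section to the specific quiver $Q_{a,n}$, feeding in the identifications already established between the abstract cluster data and the geometric objects of the Grassmannian. Concretely, I would combine four ingredients: (i) Scott's theorem in the form stated as Theorem \ref{Scott}, refined by the original \cite[Theorem 3]{Sco}, which gives $\mathcal{O}(\dGr_a(n))\cong \mathcal{O}(\overline{\mathscr{A}}_{a,n})$ --- this is what lets us regard the sought-after basis of $\mathcal{O}(\dGr_a(n))$ as a basis of the partially compactified cluster $K_2$ variety; (ii) Theorem \ref{X isomorphism}, which identifies $\up(\mathscr{X}_{a,n})$ with $\mathcal{O}(\dconf_n^\times(a))$ and hence $\mathscr{X}_{a,n}(\mathbb{Z}^t)$ with $\dconf_n^\times(a)(\mathbb{Z}^t)$; (iii) the verification, recalled just above the statement, that every frozen vertex of $Q_{a,n}$ admits an optimized quiver in $|Q_{a,n}|$ (namely $\rho^{n-i}Q_{a,n}$ is optimized for the frozen vertex $i$, since $(0,0)$ and $(a,b)$ are optimized and $\rho$ rotates frozen vertices cyclically); and (iv) Proposition \ref{vartheta_i}, which identifies the GHKK theta function $\vartheta_{p_i}$ with the geometric function $\vartheta_i$, so that $W=\sum_{i\in I^0}\vartheta_{p_i}$ is carried to $\mathcal{W}=\sum_i \vartheta_i$ of \eqref{potential.W}.

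With these in hand the argument is essentially a bookkeeping step. First I would invoke Proposition \ref{general.dual.part1}: since every frozen vertex of $Q_{a,n}$ admits an optimized quiver, the set $\Theta_{\overline{\mathscr{A}}} = \Theta_{\mathscr{A}}\cap \mathcal{O}(\overline{\mathscr{A}}_{a,n})$ is a linear basis of $\mathcal{O}(\overline{\mathscr{A}}_{a,n})$. Transporting along the isomorphism $\mathcal{O}(\overline{\mathscr{A}}_{a,n})\cong \mathcal{O}(\dGr_a(n))$ of (i), this becomes a linear basis of $\mathcal{O}(\dGr_a(n))$. Next I would apply Proposition \ref{general.dual.part2}, which characterizes exactly which $\theta_q$ lie in $\Theta_{\overline{\mathscr{A}}}$: namely $\theta_q\in\Theta_{\overline{\mathscr{A}}}$ iff $W^t(q)\geq 0$, with $q$ ranging over $\mathscr{X}_{a,n}(\mathbb{Z}^t)$. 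Finally I would substitute the identifications of (ii) and (iv): under $\mathscr{X}_{a,n}(\mathbb{Z}^t)\cong \dconf_n^\times(a)(\mathbb{Z}^t)$ the parameter $q$ becomes a $\mathbb{Z}$-tropical point of $\dconf_n^\times(a)$, and under $W \leftrightarrow \mathcal{W}$ the inequality $W^t(q)\geq 0$ becomes $\mathcal{W}^t(q)\geq 0$. Putting these together yields precisely the asserted description of the basis, $\{\theta_q \mid q\in \dconf_n^\times(a)(\mathbb{Z}^t),\ \mathcal{W}^t(q)\geq 0\}$.

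The only genuine subtlety --- and the place I would be most careful --- is the compatibility of the two isomorphisms used in tandem: $\mathcal{O}(\dGr_a(n))\cong \mathcal{O}(\overline{\mathscr{A}}_{a,n})$ on the $\mathscr{A}$-side and $\mathcal{O}(\dconf_n^\times(a))\cong \up(\mathscr{X}_{a,n})$ on the $\mathscr{X}$-side must be induced by the cluster charts attached to the \emph{same} quiver $Q_{a,n}$, so that the $\mathscr{A}$-$\mathscr{X}$ duality pairing, the potential $W$, and the tropical point $q$ all refer to one consistent seed. This is guaranteed by the way $\psi$ was set up in Section \ref{clustersec1712} (the $K_2$ chart of $\mathscr{A}_{a,n}$ associated to $Q_{a,n}$ is sent to the Plücker coordinates $A_{i,j}=\Delta_{I(i,j)}$, and the Poisson chart of $\mathscr{X}_{a,n}$ associated to the same $Q_{a,n}$ is sent to the $X_{i,j}$), but it is worth stating explicitly so the reader sees that no reindexing or change of seed intervenes. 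Everything else --- linear independence of $\Theta_{\overline{\mathscr{A}}}$, the order-along-$D_i$ computation, the evaluation $W^t(q)=\min_i \vartheta_{p_i}^t(q)$ --- is already packaged into Propositions \ref{general.dual.part1} and \ref{general.dual.part2} and requires no further work here.

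\begin{proof}
Recall from \cite[Theorem 3]{Sco} (see Theorem \ref{Scott}) that, under the birational map $\psi:\dGr_a^\times(n)\dashrightarrow \mathscr{A}_{a,n}$ associated to the quiver $Q_{a,n}$, the pull-back $\psi^*$ restricts to an algebra isomorphism $\mathcal{O}(\dGr_a(n))\stackrel{\sim}{=}\mathcal{O}(\overline{\mathscr{A}}_{a,n})$; here $\overline{\mathscr{A}}_{a,n}$ is the partial compactification of $\mathscr{A}_{a,n}$ along the frozen divisors $D_i$. As noted above, the quiver $Q_{a,n}$ is optimized for the vertices $(0,0)$ and $(a,b)$, and since the mutation sequence $\rho$ in \eqref{rho} rotates the frozen vertices of $Q_{a,n}$ cyclically, the quiver $\rho^{n-i}Q_{a,n}$ is optimized for the frozen vertex $i$. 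Thus every frozen vertex of $Q_{a,n}$ admits an optimized quiver in the mutation equivalence class $|Q_{a,n}|$.

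Therefore Proposition \ref{general.dual.part1} applies and shows that $\Theta_{\overline{\mathscr{A}}}=\Theta_{\mathscr{A}}\cap \mathcal{O}(\overline{\mathscr{A}}_{a,n})$ is a linear basis of $\mathcal{O}(\overline{\mathscr{A}}_{a,n})$. Transporting this basis along the isomorphism $\mathcal{O}(\overline{\mathscr{A}}_{a,n})\stackrel{\sim}{=}\mathcal{O}(\dGr_a(n))$ yields a linear basis of $\mathcal{O}(\dGr_a(n))$.

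It remains to identify which theta functions $\theta_q$ belong to $\Theta_{\overline{\mathscr{A}}}$. By Proposition \ref{general.dual.part2}, for $q\in \mathscr{X}_{a,n}(\mathbb{Z}^t)$ we have $\theta_q\in \Theta_{\overline{\mathscr{A}}}$ if and only if $W^t(q)\geq 0$, where $W=\sum_{i\in I^0}\vartheta_{p_i}$ is the GHKK potential of \eqref{GHKK.potential}. Now apply the isomorphism $\mathcal{O}(\dconf_n^\times(a))\cong \up(\mathscr{X}_{a,n})$ of Theorem \ref{X isomorphism}: it identifies $\mathscr{X}_{a,n}(\mathbb{Z}^t)$ with $\dconf_n^\times(a)(\mathbb{Z}^t)$, so $q$ may be regarded as a $\mathbb{Z}$-tropical point of $\dconf_n^\times(a)$. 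Moreover, by Proposition \ref{vartheta_i}, under this identification each $\vartheta_{p_i}$ is carried to the function $\vartheta_i$ of \eqref{theta.f}, so $W$ is carried to $\mathcal{W}=\sum_{i=1}^n\vartheta_i$ of \eqref{potential.W}, and hence $W^t(q)\geq 0$ becomes $\mathcal{W}^t(q)\geq 0$. Combining the last three statements, the basis of $\mathcal{O}(\dGr_a(n))$ obtained above is precisely
\[
\left\{\theta_q \ \middle| \ q\in \dconf_n^\times(a)\left(\mathbb{Z}^t\right), ~~ \mathcal{W}^t(q)\geq 0\right\},
\]
as claimed.
\end{proof}
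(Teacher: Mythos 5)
Your proof is correct and follows essentially the same route as the paper: the theorem is stated in the paper immediately after the sentence "By Theorem \ref{Scott}, or more precisely by the original version \cite[Theorem 3]{Sco}, the coordinate ring $\mathcal{O}(\dGr_a(n))$ is isomorphic to $\mathcal{O}(\overline{\mathscr{A}}_{a,n})$. Combining Propositions \ref{general.dual.part1}, \ref{general.dual.part2}, and \ref{vartheta_i}, we get the following theorem," which is exactly the assembly you carry out. Your extra remark on the need to work consistently in the seed attached to $Q_{a,n}$ is a reasonable explicit acknowledgment of a point the paper leaves implicit.
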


\subsection[$\mathbb{G}_m$-action]{$\boldsymbol{\mathbb{G}_m}$-action}\label{gmaction.17.06}

Recall the $\mathbb{G}_m$-action on $\dGr_a(n)$ in \eqref{torus action1044}. Let us restrict the $\mathbb{G}_m$-action to the open subset~$\dGr_a^\times(n)$. It induces a $\mathbb{G}_m$-action on $\mathcal{O}\big(\dGr_a^\times(n)\big)$ extending the one on $\mathcal{O} (\dGr_a(n))$.

Recall the twisted monodromy function $P$ on $\dconf_n^\times(a)$.

\begin{prop} Let $q\in \dconf_n^\times(a)\big(\mathbb{Z}^t\big)$. Its corresponding theta function $\theta_q \in \mathcal{O}\big(\dGr_a^\times(n)\big)$ is an eigenvector of the $\mathbb{G}_m$-action with weight $P^t(q)$:
\[
t.\theta_q = t^{P^t(q)}\theta_q
\]
\end{prop}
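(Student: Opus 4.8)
The plan is to reduce the statement to a computation of the $\mathbb{G}_m$-weight of each cluster $K_2$ variable $A_{i,j} = \Delta_{I(i,j)}$, and then to propagate this through the theta function expansion \eqref{theta.express}. First I would recall from \eqref{torus action} that every Pl\"ucker coordinate $\Delta_I$ is an eigenvector of the $\mathbb{G}_m$-action on $\mathcal{O}\left(\dGr_a(n)\right)$ of weight $1$; hence under the identification $\mathcal{O}\left(\dGr_a^\times(n)\right)\cong \up\left(\mathscr{A}_{a,n}\right)$ every $K_2$ cluster variable $A_{i,j}$ has $\mathbb{G}_m$-weight $1$. Consequently, for any unfrozen vertex $k$, the monomial $\prod_j A_j^{\varepsilon_{kj}} = p^*(X_k^{-1})$ has $\mathbb{G}_m$-weight $\sum_j \varepsilon_{kj}$, which vanishes by \eqref{look.1217} since $Q_{a,n}$ is a union of cycles; so all the variables $\prod_j A_j^{\varepsilon_{kj}}$ appearing in the polynomial $F$ of \eqref{theta.express} are $\mathbb{G}_m$-invariant.

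Given this, the expansion \eqref{theta.express} shows that $\theta_q = \prod_j A_j^{x_j} \cdot F\left(\left(\prod_j A_j^{\varepsilon_{kj}}\right)_{k\in I_\uf}\right)$ is an eigenvector of the $\mathbb{G}_m$-action, since it is a product of eigenvectors, with weight equal to $\sum_j x_j$ where $j$ ranges over all vertices of $Q_{a,n}$ and $(x_j)$ are the tropical coordinates of $q$ in the Poisson cluster attached to $Q_{a,n}$. So it remains only to identify $\sum_{j\in I} x_j$ with $P^t(q)$. But by Proposition \ref{product} we have $P = \prod_{f\in I} X_f$ as rational functions on $\dconf_n^\times(a)$, and $P$ is a positive (in fact monomial) function in the Poisson cluster associated to $Q_{a,n}$; tropicalizing a monomial turns the product into a sum, so $P^t(q) = \sum_{f\in I} x_f$, exactly the weight computed above. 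Therefore $t.\theta_q = t^{P^t(q)} \theta_q$.

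The one point that needs a little care is making precise that the constant $F(0,\dots,0)=1$ term and, more importantly, that all monomials occurring in $F$ genuinely have $\mathbb{G}_m$-weight zero, not merely the generators $\prod_j A_j^{\varepsilon_{kj}}$: this is immediate once one observes $F$ is a polynomial in those generators, so every monomial of $F$ is a product of weight-$0$ elements and is thus weight $0$, and the whole of $\theta_q$ carries the single weight $\sum_j x_j$ coming from the prefactor $\prod_j A_j^{x_j}$. I also want to double-check the sign/inverse convention flagged in the remark after \eqref{theta.express} — the paper's $X_i$ are inverse to those of \cite{GHKK} — but this only affects whether the exponents $\varepsilon_{kj}$ or $-\varepsilon_{kj}$ appear, and since $\sum_j \varepsilon_{kj} = 0$ either way the weight-$0$ conclusion for $F$'s variables is unaffected, and the prefactor exponents $x_j = X_j^t(q)$ are exactly the tropical Poisson coordinates in the paper's own convention, so no sign enters the final weight $P^t(q)$.

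\textbf{Main obstacle.} I expect the only real subtlety to be bookkeeping: confirming that the exponent vector in the prefactor $\prod_j A_j^{x_j}$ of \eqref{theta.express} runs over \emph{all} vertices $j\in I$ (frozen and unfrozen alike) with $x_j = X_j^t(q)$ the tropical Poisson coordinate, so that $\sum_{j\in I} x_j$ literally equals the tropicalization of the monomial $\prod_{f\in I} X_f = P$. Everything else is a direct assembly of \eqref{torus action}, \eqref{look.1217}, Proposition \ref{product}, and the structure theorem \eqref{theta.express} for theta functions.
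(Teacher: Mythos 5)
Your proposal is correct and follows essentially the same argument as the paper: expand $\theta_q$ via \eqref{theta.express}, observe that each $A_{i,j}$ has $\mathbb{G}_m$-weight $1$, use the vanishing of the column (or row, by skew-symmetry) sums of $\varepsilon$ to show the factor $F$ is $\mathbb{G}_m$-invariant, conclude the weight is $\sum_{f\in I} x_f$, and identify this with $P^t(q)$ by tropicalizing Proposition \ref{product}. The extra care you flag about signs and the inverse convention is handled exactly as you suspect, since $\sum_j \varepsilon_{fj}=0$ kills the ambiguity.
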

\begin{proof}Let $\{A_{i,j}\}$ be the $K_2$ cluster of $\dGr_a^\times(n)$ associated to $Q_{a,n}$. By~\eqref{theta.express}, the function $\theta_q$ can be expressed as a Laurent polynomial
\begin{equation}\label{thetaex1228}
\theta_q=\prod_{(i,j)}A_{i,j}^{x_{i,j}}F\left(\left(\prod_gA_g^{\varepsilon_{fg}}\right)_{f\in I_{\uf}}\right),
\end{equation}
where $ (x_{i,j})$ is the tropical coordinates of $q$ with respect to the quiver $Q_{a,n}$, and $F$ is a polynomial with constant term 1 and variables of the form $\prod_gA_g^{\varepsilon_{fg}}$ for $f\in I_{\uf}$.

By definition, every $A_f$ is a Pl\"{u}cker coordinate and therefore is of weight 1 with respect to the $\mathbb{G}_m$-action. Since $\sum_f\varepsilon_{fg}=0$ for all $g\in I_{\uf}$ by construction, the whole factor $F$ is invariant under the $\mathbb{G}_m$-action. It implies that $\theta_q$ is an eigenvector of weight $\sum_{(i,j)}x_{i,j}$. By tropicalizing $P=\prod_{f\in I}X_f$ (Proposition \ref{product}), we have
\[
P^t(q) = \sum_{(i,j)}x_{i,j},
\]
which concludes the proof.
\end{proof}

As a direct consequence, we get the following corollary.

\begin{cor}\label{basis} The representation $\mathcal{O} (\dGr_a(n) )_c=V_{c\omega_a}$ has a canonical basis
\begin{equation}
\label{theta.basis.repv1120}
\Theta(a,b,c):= \{\theta_q \,  | \, q\in Q(a,b,c) \},
\end{equation}
where $Q(a,b,c)$ is defined in~\eqref{trop.conf}. By combining this result with Proposition~{\rm \ref{PQ bijection}} we deduce that there is a natural bijection between $\Theta(a,b,c)$ and the set of plane partitions~$P(a,b,c)$.
\end{cor}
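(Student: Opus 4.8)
The plan is to derive Corollary \ref{basis} directly by specializing the theta basis of $\mathcal{O}(\dGr_a(n))$ to its $\mathbb{G}_m$-weight-$c$ eigenspace and then invoking the identification of that eigenspace with $V_{c\omega_a}$. First I would recall from Proposition \ref{18.3.12.3.35} and the Borel--Weil discussion immediately following it that $\mathcal{O}(\dGr_a(n)) = \bigoplus_{c \ge 0} \mathcal{O}(\dGr_a(n))_c$ with $\mathcal{O}(\dGr_a(n))_c \cong V_{c\omega_a}$, where the grading is exactly the $\mathbb{G}_m$-action from \eqref{torus action1044}. By the theorem just proved (the one stating $\mathcal{O}(\dGr_a(n))$ has basis $\{\theta_q \mid q \in \dconf_n^\times(a)(\mathbb{Z}^t),\ \mathcal{W}^t(q)\ge 0\}$), together with the preceding Proposition asserting $t.\theta_q = t^{P^t(q)}\theta_q$, each $\theta_q$ is a $\mathbb{G}_m$-eigenvector of weight $P^t(q)$. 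Since a graded vector space with a homogeneous basis has, as a basis of each graded piece, precisely those basis elements of the matching degree, the weight-$c$ eigenspace $\mathcal{O}(\dGr_a(n))_c$ has basis $\{\theta_q \mid \mathcal{W}^t(q)\ge 0,\ P^t(q)=c\}$, which is exactly $\Theta(a,b,c)$ by the definition \eqref{trop.conf} of $Q(a,b,c)$ and the definition \eqref{theta.basis.repv1120} of $\Theta(a,b,c)$. This establishes that $\Theta(a,b,c)$ is a basis of $\mathcal{O}(\dGr_a(n))_c = V_{c\omega_a}$.

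For the final sentence of the corollary, the bijection between $\Theta(a,b,c)$ and $P(a,b,c)$ is essentially immediate: by construction $\theta_q \mapsto q$ is a bijection from $\Theta(a,b,c)$ onto $Q(a,b,c)$, and Proposition \ref{PQ bijection} provides a natural bijection between $Q(a,b,c)$ and $P(a,b,c)$ given by the tropicalized change of coordinates $(x_{i,j}) \mapsto (l_{i,j})$. Composing these two bijections yields the asserted natural bijection $\Theta(a,b,c) \leftrightarrow P(a,b,c)$.

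I do not expect any serious obstacle here, since this corollary is a bookkeeping consequence of results already in hand; the only point requiring a word of care is the implicit claim that a graded algebra with a basis consisting of $\mathbb{G}_m$-eigenvectors is ``compatibly'' graded, i.e. that intersecting the global basis with a graded piece gives a basis of that piece. This is true precisely because each $\theta_q$ lies in a single graded component (weight $P^t(q)$), so the span of the weight-$c$ subset already contains, and is contained in, $\mathcal{O}(\dGr_a(n))_c$; linear independence is inherited from the global basis. Thus the proof amounts to assembling the preceding Proposition (weight $=P^t(q)$), the theorem on the $\theta$-basis of $\mathcal{O}(\dGr_a(n))$, the identification $\mathcal{O}(\dGr_a(n))_c \cong V_{c\omega_a}$, and Proposition \ref{PQ bijection}, with no new computation needed.
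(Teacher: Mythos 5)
Your proof is correct and follows the same route the paper takes: combine the theta basis of $\mathcal{O}(\dGr_a(n))$ from the theorem at the end of Section 4.2, the fact from the preceding proposition that each $\theta_q$ is a $\mathbb{G}_m$-eigenvector of weight $P^t(q)$, the identification $\mathcal{O}(\dGr_a(n))_c\cong V_{c\omega_a}$, and finally Proposition \ref{PQ bijection} for the plane-partition bijection. The paper treats the corollary as a ``direct consequence'' and gives no explicit argument, so your filling in the bookkeeping (in particular, noting that a global basis of homogeneous vectors restricts to a basis of each graded piece) is exactly what is meant.
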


\begin{rmk} Rietsch and Williams proved a similar result on parametrization of a basis of~$V_{c\omega_a}$ by plane partitions \cite[Lemma~16.16]{RW}, which also relies on cluster duality. However, they approached the problem from the other direction by realizing the open positroid variety~$\Gr_a^\times(n)$ as a cluster Poisson variety and choosing basis elements from the canonical basis of $\up (\mathscr{X} )$, which is parametrized by $\mathscr{A}\big(\mathbb{Z}^t\big)$. In order to do this, they break the cyclic symmetry of Grassmannian by fixing a particular Pl\"{u}cker coordinate, denoted by $P_{\max}$ in \cite[Section~1.6]{RW}. As a~consequence, the basis in \cite[Lemma~16.16]{RW} does not respect the twisted cyclic rotation on Grassmannian. In contrast, the basis $\Theta(a,b,c)$ in Corollary~\ref{basis} is invariant under twisted cyclic rotation, which is crucial to our proof of a cyclic sieving phenomenon of plane partitions (Theorem~\ref{main1}).

In this paper, the basis elements in~\eqref{theta.basis.repv1120} are realized as functions on the decorated Grassmannian~$\dGr_a(n)$, as opposed to the ordinary Grassmannian in {\it loc.\ cit}. And the space $\dGr_a(n)$ is realized as a cluster $K_2$ variety, as opposed to the space $\Gr_a^\times(n)$ as a cluster Poisson variety in {\it loc.\ cit}. Our basis~$\Theta(a,b,c)$ has an invariance property with respect to the twisted cyclic rotation $C_a$, which plays a crucial role in our proof of cyclic sieving phenomenon.
Moreover, we will prove in the next section that our basis also fits into the weight space decomposition of~$V_{c\omega_a}$, which is a stronger result.
\end{rmk}

\subsection{Torus action and weight space decomposition}

By \eqref{gr isomorphic to conf of column vectors}, $\dGr_a^\times(n)\cong \SL_a \backslash \mat_{a,n}^\times$.
The group ${\rm GL}_n$ acts on the right of $\mat_{a,n}^\times$ by matrix multiplication. The maximal torus
$T= (\mathbb{G}_m )^n\subset {\rm GL}_n$ of diagonal matrices acts by rescaling the column vectors $v_i$ of the matrices in $\mat_{a,n}^\times$
\[
 (v_1,\dots, v_n ).  (t_1,\dots, t_n ) := (t_1v_1,t_2v_2,\dots, t_nv_n ).
\]
Its induced left $\left(\mathbb{G}_m\right)^n$-action on $\mathcal{O}\big(\dGr_a^\times(n)\big)$ gives rise to a weight space decomposition
\[
\mathcal{O}\big(\dGr_a^\times(n)\big)= \bigoplus_\mu\mathcal{O}(\mu),
\]
where $\mu= (\mu_1,\dots, \mu_n )\in \mathbb{Z}^n$ and $\mathcal{O}(\mu)$ consists of the functions $F$ such that
\[
 (t_1,\dots, t_n ).F=t_1^{\mu_1}\cdots t_n^{\mu_n}F.
\]
In particular, if we restrict to the representation $V_{c\omega_a}=\mathcal{O} (\dGr_a(n) )_c$, then we get the weight space decomposition
\[V_{c\omega_a}=\bigoplus_{\mu}V_{c\omega_a}(\mu), \qquad \mbox{where} \quad
V_{c\omega_a}(\mu):=V_{c\omega_a} \cap \mathcal{O}(\mu).
\]
In this section, we show that the theta basis $\Theta(a,b,c)$ is compatible with the weight space decomposition of the representation $V_{c\omega_a}$.

Recall the following dual torus projection defined in \eqref{weightmap1142}
\[
M=(M_1, \dots, M_n)\colon \  {\rm Conf}_n^\times(a)\longrightarrow T^\vee.
\]
Let us tropicalize the map $M$, obtaining
\[
M^t=\big(M_1^t, \dots, M_n^t\big)\colon \ \dconf_n^\times(a)\big(\mathbb{Z}^t\big)\longrightarrow T^\vee\big(\mathbb{Z}^t\big)\stackrel{\sim}{=}\mathbb{Z}^n.
\]

\begin{prop}\label{weightprop1220}Let $q\in \dconf_n^\times(a)$. The theta function $\theta_q$ is an eigenvector of the $T$-action on $\mathcal{O}\big(\dGr_a^\times(n)\big)$ with weight $M^t(q)$, i.e.,
\[
(t_1, \dots, t_n). \theta_q = t_1^{M_1^t(q)}\cdots t_n^{M_n^t(q)} \theta_q.
\]
\end{prop}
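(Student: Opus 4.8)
The plan is to read off the torus weight of $\theta_q$ from its Laurent expansion \eqref{theta.express} in the $K_2$ cluster, and then to identify this weight with $M^t(q)$ by showing that each $M_k$ is a Laurent monomial in the Poisson cluster of $Q_{a,n}$.

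First I would recall that, in the $K_2$ cluster $\{A_{i,j}\}=\{\Delta_{I(i,j)}\}$ of $\dGr_a^\times(n)$ associated to $Q_{a,n}$, the theta function has the shape
\[
\theta_q=\prod_{(i,j)}A_{i,j}^{x_{i,j}}\,F\!\left(\Big(\prod_g A_g^{\varepsilon_{fg}}\Big)_{f\in I^\uf}\right),
\]
where $(x_{i,j})$ are the tropical coordinates of $q$ in the Poisson cluster of $Q_{a,n}$ and $F$ has constant term $1$. The torus $T=(\mathbb{G}_m)^n$ acts on $\dGr_a^\times(n)\cong\SL_a\backslash\mat_{a,n}^\times$ by rescaling columns, so each minor $\Delta_I$ is a $T$-eigenvector of weight $\mathbf{e}_I:=\sum_{k\in I}\mathbf{e}_k$; in particular $A_{i,j}$ has $T$-weight $\mathbf{e}_{I(i,j)}$. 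Each variable of $F$, namely $\prod_g A_g^{\varepsilon_{fg}}$ for $f\in I^\uf$, then has $T$-weight $\sum_g\varepsilon_{fg}\mathbf{e}_{I(g)}$, and this vanishes: it is exactly the statement, used in the discussion following \eqref{X.map.9.26}, that this monomial is a ratio of Pl\"ucker coordinates with the same multiset of indices in the numerator and in the denominator. Hence $F$ is $T$-invariant, $\theta_q$ is a $T$-eigenvector, and
\[
(t_1,\dots,t_n).\theta_q=\prod_{k=1}^n t_k^{\,w_k}\,\theta_q,\qquad w_k:=\sum_{(i,j):\,k\in I(i,j)}x_{i,j}.
\]

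It remains to prove $w_k=M_k^t(q)$ for every $k$. Since $x_{i,j}=X_{i,j}^t(q)$, this follows by tropicalization from the identity of rational functions on $\dconf_n^\times(a)$
\[
M_k=\prod_{(i,j):\,k\in I(i,j)}X_{i,j},
\]
i.e.\ from the claim that $M_k$ is a monomial in the Poisson cluster whose exponent at $X_{i,j}$ is the indicator of $k\in I(i,j)$. To establish this I would pull back along the surjection $p:\widetilde{\dconf_n^\times(a)}\twoheadrightarrow\dconf_n^\times(a)$ and match $p^*$ of both sides (legitimate since $p$ is a projection). On the lift, represent the configuration by vectors $v_i$ with scaling factors $\lambda_i$ as in \eqref{rescalefa}; then \eqref{M} together with $\phi(v_i)=\lambda_{i-1}v_{i-1}$ (indices modulo $n$, with the $(-1)^{a-1}$ twist at $i\equiv 1$) yields
\[
p^*(M_k)=\Big(\prod_{i=k-a}^{k-1}\lambda_i\Big)\frac{\Delta_{\{k-a,\dots,k-1\}}}{\Delta_{\{k-a+1,\dots,k\}}},
\]
the $(-1)^{a-1}$ twists cancelling against the cyclic reordering signs when passing from the wedge $v_{k-a}\wedge\cdots\wedge v_{k-1}$ to the Pl\"ucker coordinate, exactly as in the analysis of the twisted cyclic rotation $C_a$ and in Lemma \ref{L}. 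On the other hand one identifies the staircase region $S_k:=\{(i,j):k\in I(i,j)\}$ directly from \eqref{I}, rewrites $\prod_{(i,j)\in S_k}X_{i,j}$ as a ratio of Gelfand--Zetlin coordinates $L_{i,j}$ via $X_{0,0}=L_{0,0}/L_{1,1}$ and $X_{i,j}=L_{i,j}L_{i+1,j+1}/(L_{i+1,j}L_{i,j+1})$, and then applies Lemma \ref{L} to express $p^*$ of this ratio in the $A$'s and $\lambda$'s; comparison with the formula for $p^*(M_k)$ above completes the identity, and tropicalizing gives $w_k=M_k^t(q)$.

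I expect the last step --- the simultaneous bookkeeping of the region $S_k$, the telescoping of the $L$-coordinates, and the sign conventions in the wraparound range $1\le k\le a$ --- to be the main obstacle, though it runs entirely parallel to the computations already carried out in Lemma \ref{L} and Proposition \ref{vartheta_i}. A shortcut that reduces the casework is to verify the monomial formula for a single convenient $k$ (one for which $S_k$ meets only the frozen vertex $(0,0)$ and one row or column of $Q_{a,n}$, so that the $L$-telescoping is immediate) and then propagate it to all $k$ using the rotation: $R^*M_k=M_{k-1}$, $R$ is the cluster transformation $\rho$ of \eqref{rho}, and the twisted cyclic rotation $C_a$ intertwines the $T$-action with its conjugate by the cyclic coordinate permutation, so the base case forces the general case.
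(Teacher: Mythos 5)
Your proof is correct and follows essentially the same route as the paper: you read off the $T$-weight of $\theta_q$ from the GHKK Laurent expansion \eqref{theta.express} after noting that the polynomial factor $F$ is $T$-invariant, and you reduce the remaining identity $w_k=M_k^t(q)$ to the monomial formula $M_k=\prod_{(i,j)\in F_k}X_{i,j}$, which is precisely Lemma~\ref{1241}, proved in the paper by pulling back through $p$ and telescoping Gelfand--Zetlin coordinates via Lemma~\ref{L}. The rotation shortcut you sketch at the end is a reasonable alternative to the paper's three-case telescoping, but the core argument is the same.
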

The proof of Proposition \ref{weightprop1220} will require a little preparation.
Recall the $a$-element set $I(i,j)$ assigned to each vertex $(i,j)$ of the quiver $Q_{a,n}$ \eqref{I}:
\[
I(i,j)=\{\underbrace{b-j+1,\dots, b-j+i}_{\text{$i$ indices}},\underbrace{b+i+1,\dots, n}_{\text{$a-i$ indices}}\}.
\]
For $k\in \{1, \dots, n\}$, let $F_k$ denote the collection of vertices $(i,j)$ in $Q_{a,n}$ such that $k\in I(i,j)$.
By the definition of $I(i,j)$, it is easy to check that the sets $F_k$ are of two patterns.
When $1\leq k \leq b$, the vertices in $F_k$ are enclosed in a stair-shape diagram, such that the difference between the consecutive steps is 1, until its height becomes $0$ or until it touches the rightmost column. When $b<k\leq n$, we get a smaller stair with another part that consists of the $(0,0)$ vertex and possibly a rectangle:
\[
\begin{tikzpicture}[scale=0.8]
\draw[fill=gray!20] (4,1) -- (4,0) -- (5,0) -- (5, -2) -- (0,-2) -- (0,4) -- (1,4) -- (1,3) -- (2,3) -- (2,2) -- (3,2) -- (3,1);
\node at (3.5,0.75) [] {$\ddots$};
\node at (2, 1) [] {$F_k$};
\node [scale=0.5] at (0.5,3.5) [] {$(1,b-k+1)$};
\node [scale=0.5] at (1.5,2.5) [] {$(2,b-k+2)$};
\node at (2,-3) [] {$1\leq k\leq b$};
\end{tikzpicture} \quad \quad \quad \quad
\begin{tikzpicture}[scale=0.8]
\draw[fill=gray!20] (-2,2) -- (-2,3) -- (-3, 3) -- (-3,0) -- (0,0) -- (0,1) -- (-1,1) -- (-1,2);
\draw[fill=gray!20] (-3,4) -- (4,4) -- (4,6) -- (-3,6);
\node at (-1.5,2) [] {$\ddots$};
\draw (-3,3) -- (-3,6);
\draw[fill=gray!20] (-4,6) rectangle (-3,7);
\node at (1,5) [] {$F_k$};
\node at (-2,1) [] {$F_k$};
\node [scale=0.5] at (-3,3.5) [right] {$(k-b,1)$};
\node [scale=0.5] at (-3.75,6.5) [right] {$(0,0)$};
\node [scale=0.5] at (3.25,5.5) [right] {$(1,b)$};
\node at (2,-1) [] {$b\leq k\leq n$};
\end{tikzpicture}
\]

\begin{exmp}
When $(a,n)=(2,5)$, the sets $F_k$ are depicted as follows:
\[
F_1=\begin{tikzpicture}[baseline=4ex,scale=0.75]
\draw[fill=gray!20] (0,2) -- (0,0) -- (1,0) -- (1,2) -- (0,2);
\draw (0,1) -- (1,1);
\node [scale=0.6] at (0.5,0.5) [] {$(2,3)$};
\node [scale=0.6] at (0.5,1.5) [] {$(1,3)$};
\end{tikzpicture}
\quad
F_2=\begin{tikzpicture}[baseline=4ex,scale=0.75]
\draw[fill=gray!20] (2,1) -- (1,1) -- (1,2) -- (0,2) -- (0,0) -- (2,0) -- (2,1);
\draw (1,0) -- (1,1);
\draw (0,1) -- (2,1);
\node [scale=0.6] at (0.5,0.5) [] {$(2,2)$};
\node [scale=0.6] at (1.5,0.5) [] {$(2,3)$};
\node [scale=0.6] at (0.5,1.5) [] {$(1,2)$};
\end{tikzpicture}
\quad
F_3=\begin{tikzpicture}[baseline=4ex,scale=0.75]
\draw[fill=gray!20] (2,1) -- (1,1) -- (1,2) -- (0,2) -- (0,0) -- (2,0) -- (2,1);
\draw (1,0) -- (1,1);
\draw (0,1) -- (2,1);
\node [scale=0.6] at (0.5,0.5) [] {$(2,1)$};
\node [scale=0.6] at (1.5,0.5) [] {$(2,2)$};
\node [scale=0.6] at (0.5,1.5) [] {$(1,1)$};
\end{tikzpicture}\quad
F_4=\begin{tikzpicture}[baseline=14ex,scale=0.75]
\draw[fill=gray!20] (-1,4) rectangle (0,5);
\draw[fill=gray!20] (0,2) rectangle (1,3);
\node [scale=0.6] at (-0.5,4.5) [] {$(0,0)$};
\node [scale=0.6] at (0.5,2.5) [] {$(2,1)$};
\end{tikzpicture}
\quad
F_5=\begin{tikzpicture}[baseline=14ex,scale=0.75]
\draw[fill=gray!20] (0,3) -- (3,3) -- (3,4) -- (0,4) -- (0,3);
\draw (1,3) -- (1,4);
\draw (2,3) -- (2,4);
\node [scale=0.6] at (2.5,3.5) [] {$(1,3)$};
\node [scale=0.6] at (1.5,3.5) [] {$(1,2)$};
\node [scale=0.6] at (0.5,3.5) [] {$(1,1)$};
\draw[fill=gray!20] (-1,4) rectangle (0,5);
\node [scale=0.6] at (-0.5,4.5) [] {$(0,0)$};
\end{tikzpicture}
\]
\end{exmp}

\begin{lem} \label{1241}
Recall the clusters $\{X_{i,j}\}$ of $\dconf_n^\times(a)$ associated to $Q_{a,n}$ in \eqref{def poisson cluster}. The function
\begin{equation*}
M_k=\prod_{(i,j)\in F_k}X_{i,j}.
\end{equation*}
\end{lem}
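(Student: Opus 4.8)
The plan is to compute $M_k$ by lifting to $\widetilde{\dconf_n^\times(a)}$, just as in the proofs of Propositions \ref{product} and \ref{vartheta_i}, and comparing against the closed formula for $p^*(L_{i,j})$ from Lemma \ref{L}. Recall that $\{X_{i,j}\}$ was defined via the canonical $p$-map $p:\widetilde{\mathscr{A}_{a,n}}\to\mathscr{X}_{a,n}$ in \eqref{def poisson cluster}, so $p^*(X_{i,j})$ is a monomial in the Pl\"ucker coordinates $A_f=\Delta_{I(f)}$ and the frozen variables $A_{i'}=\lambda_{i-a}\Delta_{\{i-a,\dots,i-1\}}/\Delta_{\{i-a+1,\dots,i\}}$. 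Since $p$ is a surjection, it suffices to prove the identity after applying $p^*$, i.e.\ to show
\[
p^*(M_k)=\prod_{(i,j)\in F_k}p^*(X_{i,j}).
\]
Here $p^*(M_k)$ is computed directly from the definition \eqref{M}: picking a lift $[\phi_1,v_1,\dots,\phi_n,v_n]$, one has $p^*(M_k)=\dfrac{\phi(v_{k-a+1})\wedge\cdots\wedge\phi(v_k)}{v_{k-a+1}\wedge\cdots\wedge v_k}$, and using \eqref{rescalefa} together with the shift $\phi(v_i)=v_{i-1}$ (up to the sign on $v_n$) this telescopes to a product of scaling factors $\lambda_\ell$ over a suitable range of $\ell$, namely $p^*(M_k)=\prod_{\ell=k-a+1}^{k}\lambda_\ell$ with indices mod $n$ — the sign contributions $(-1)^{a-1}$ cancel because exactly one of $v_{k-a+1},\dots,v_k$ is $v_n$ after wedging, matching the sign in \eqref{rescalefa}.

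The core of the argument is then the combinatorial identity that, telescoping the right-hand side, $\prod_{(i,j)\in F_k}p^*(X_{i,j})$ collapses to the same product of $\lambda$'s. The cleanest route is to use the Gelfand-Zetlin coordinates: by the definition $L_{i,j}=\prod_{k\ge i,\,l\ge j}X_{k,l}$ one has $X_{i,j}=\dfrac{L_{i,j}L_{i+1,j+1}}{L_{i+1,j}L_{i,j+1}}$ (and $X_{0,0}=L_{0,0}/L_{1,1}$), with the convention $L_{i,j}=1$ for $i>a$ or $j>b$. The stair-shaped (resp.\ stair-plus-rectangle) region $F_k$ described just before the lemma is precisely a "lower-left-closed" order ideal in the poset of vertices, so $\prod_{(i,j)\in F_k}X_{i,j}$ telescopes into a ratio of $L$-coordinates indexed by the inner and outer corners of $F_k$. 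Reading off the corners from the two pictures: for $1\le k\le b$ the corners are exactly the vertices along the "staircase boundary" $(1,b-k+1),(2,b-k+2),\dots$, and the telescoped product equals $\prod_{s}\dfrac{L_{s,b-k+s}}{L_{s+1,b-k+s}}$ or the analogous alternating product; for $b<k\le n$ one picks up the extra $L_{0,0}/L_{1,1}$ factor from the $(0,0)$ vertex and the boundary of the rectangle. Finally one substitutes the formula \eqref{Lexp} for $p^*(L_{i,j})$ from Lemma \ref{L}: every $A_{i,j}$ factor cancels in the telescoping ratio (the inner and outer corners are chosen exactly so that the Pl\"ucker coordinates pair up), leaving precisely $\prod_{\ell=k-a+1}^{k}\lambda_\ell$, which matches $p^*(M_k)$.

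The main obstacle is bookkeeping: correctly identifying the inner and outer corner vertices of the region $F_k$ in each of the two shape-regimes ($1\le k\le b$ versus $b<k\le n$), and verifying that the ranges of the $\lambda$-indices in \eqref{Lexp} telescope to exactly $\{k-a+1,\dots,k\}$ modulo $n$ — including the boundary cases $k=b$ and $k=n$ where the rectangle degenerates, and tracking the $(-1)^{a-1}$ sign through the one $v_n$-wedge. I would handle this by treating the two regimes separately, in each case writing $\prod_{(i,j)\in F_k}X_{i,j}$ as a telescoping product of the $\dfrac{L_{i,j}L_{i+1,j+1}}{L_{i+1,j}L_{i,j+1}}$ tiles, cancelling interior $L$'s, and then invoking Lemma \ref{L}; the example with $(a,n)=(2,5)$ in the excerpt serves as a useful sanity check that the corner identification is right.
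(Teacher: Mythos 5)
The strategy is essentially the paper's strategy — lift along $p$, telescope $\prod_{(i,j)\in F_k}X_{i,j}$ as ratios of Gelfand--Zetlin coordinates, and substitute the closed formula of Lemma~\ref{L} — but two of your intermediate claims are wrong, and the error is not merely notational.

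First, the claim $p^*(M_k)=\prod_{\ell=k-a+1}^{k}\lambda_\ell$ is false. By \eqref{rescalefa} one has $\phi(v_{j})=\lambda_{j-1}v_{j-1}$ (up to the single sign on the wrap-around), so the numerator $\phi(v_{k-a+1})\wedge\cdots\wedge\phi(v_k)$ is proportional to $v_{k-a}\wedge\cdots\wedge v_{k-1}$, \emph{not} to $v_{k-a+1}\wedge\cdots\wedge v_k$. The quotient of these two $a$-forms is a nontrivial ratio of Pl\"ucker coordinates, so the correct formula is
\[
p^*(M_k)=\frac{\Delta_{\{k-a,\dots,k-1\}}}{\Delta_{\{k-a+1,\dots,k\}}}\prod_{\ell=k-a}^{k-1}\lambda_\ell
\]
(note also the $\lambda$-index range is $k-a,\dots,k-1$, not $k-a+1,\dots,k$). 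Second, and correspondingly, your claim that after telescoping and applying Lemma~\ref{L} ``every $A_{i,j}$ factor cancels'' is also wrong: the telescoped Gelfand--Zetlin product leaves precisely the same Pl\"ucker ratio $\Delta_{\{k-a,\dots,k-1\}}/\Delta_{\{k-a+1,\dots,k\}}$ multiplying the $\lambda$'s. The two errors happen to compensate, so the identity you are aiming for is true and your general plan would succeed if the bookkeeping were actually carried through — but as written, both sides of the comparison are miscomputed, and the purported cancellation to a pure $\lambda$-monomial does not occur. To repair the argument, compute $p^*(M_k)$ correctly as above, and when you telescope the $F_k$-product keep track of the boundary $L$-coordinates (equivalently the two ``extreme'' Pl\"ucker coordinates) that do \emph{not} cancel; they are exactly the ones producing the ratio $\Delta_{\{k-a,\dots,k-1\}}/\Delta_{\{k-a+1,\dots,k\}}$.
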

\begin{proof} We will prove the lemma under the assumption that $a\leq b$ (the cases with $a\geq b$ are completely analogous). When $1\leq k\leq a$, we have
\[
\prod_{(i,j)\in F_k}X_{i,j} = \frac{L_{1, b-k+1}}{L_{1, b-k+2}}\frac{L_{2, b-k+2}}{L_{2, b-k+3}}\cdots \frac{L_{k-1, b-1}}{L_{k-1, b}} L_{k, b}=\frac{\prod\limits_{i=1}^k L_{i,b-k+i}}{\prod\limits_{i=1}^{k-1}L_{i,b-k+i+1}}
\]
Then by Lemma \ref{L},
\[
p^*\left(\frac{\prod\limits_{i=1}^k L_{i,b-k+i}}{\prod\limits_{i=1}^{k-1}L_{i,b-k+i+1}}\right)= \frac{\Delta_{\{k-a,\dots,k-1\}}}{\Delta_{\{k-a+1,\dots, k\}}}\prod_{i=k-a}^{k-1} \lambda_i=p^* (M_k ),
\]
where the last equality follows from the definition of $M_k$ in \eqref{M} (please keep in mind that all indices of $\lambda$ and $\Delta$ are taken modulo $n$). When $a< k\leq b$, we get
\[
\prod_{(i,j)\in F_k}X_{i,j}=\frac{L_{1,b-k+1}}{L_{1,b-k+2}}\frac{L_{2,b-k+2}}{L_{2,b-k+3}}\cdots \frac{L_{a,n-k}}{L_{a,n-k+1}}=\frac{\prod\limits_{i=1}^aL_{i,b-k+i}}{\prod\limits_{i=1}^aL_{i,b-k+i+1}}.
\]
Again by Lemma \ref{L},
\[
p^*\left(\frac{\prod\limits_{i=1}^aL_{i,b-k+i}}{\prod\limits_{i=1}^{a}L_{i,b-k+i+1}}\right)=\frac{\Delta_{\{k-a,\dots, k-1\}}}{\Delta_{\{k-a+1,\dots, k\}}}\prod_{i=k-a}^{k-1}\lambda_i
=p^* (M_k ).
\]
Lastly, when $b<k\leq n$, we have
\[
p^*\left(\prod_{(i,j)\in F_k}X_{i,j}\right)=p^*\left(L_{0,0}\frac{\prod\limits_{i=1}^{n-k}L_{k-b+i,i}}{\prod\limits_{i=0}^{n-k}L_{k-b+i,i+1}}\right) =\frac{\Delta_{\{k-a,\dots, k-1\}}}{\Delta_{\{k-a+1,\dots, k\}}}\prod_{i=k-a}^{k-1}\lambda_i=p^*\left(M_k\right).\tag*{\qed}
\]
\renewcommand{\qed}{}
\end{proof}

\begin{proof}[Proof of Proposition \ref{weightprop1220}]
The proof makes use of the expression \eqref{thetaex1228} of $\theta_q$ again. For a~nonfrozen vertex $f$, the product $\prod_gA_g^{\varepsilon_{fg}}=p^*\big(X_f^{-1}\big)$ is independent of the rescaling of the column vectors $v_i$ due to the well-defined-ness of the unfrozen variable $X_f$. Therefore the polynomial $F$ in \eqref{thetaex1228} is invariant under the rescaling $T$-action. For the Pl\"ucker coordinates~$A_{i,j}$, note that it is affected by the $t_k$ component of $T$ if and only if $(i,j)\in F_k$. Therefore
\[
 (t_1,\dots, t_n ).\theta_q=t_1^{\mu_1}t_2^{\mu_2}\cdots t_n^{\mu_n}\theta_q, \qquad \mbox{where} \quad \mu_k = \sum_{(i,j)\in F_k}x_{i,j}.
\]
By Lemma \ref{1241}, we get $\sum\limits_{(i,j)\in F_k}x_{i,j}=M_k^t(q)$.
\end{proof}

Combining Corollary \ref{basis} with Proposition \ref{weightprop1220}, we get the following result.
\begin{cor} \label{weight decomposition}
The weight space $V_{c\omega_a}(\mu)$ has a canonical basis
\begin{equation*}
\Theta(a,b,c)(\mu):= \big\{\theta_q\in \Theta(a,b,c)\, \big| \, M^t(q)=\mu\big\}.
\end{equation*}
\end{cor}

Recall that $\Theta(a,b,c)$ is parametrized by the set $P(a,b,c)$ of plane partitions.
In the rest of this section, we present a concrete decomposition of $P(a,b,c)$ compatible with the above decomposition of $\Theta(a,b,c)$.

Recall that the Gelfand--Zetlin patterns \cite{GZ} for ${\rm GL}_n$ are triangular arrays of integers with non-increasing rows and columns as follows
\[
\Lambda=\left(\begin{matrix} \lambda_{1,1} & \lambda_{1,2} & \lambda_{1,3} & \cdots & \lambda_{1,n} \\
& \lambda_{2,2} & \lambda_{2,3} & \cdots & \lambda_{2,n} \\
& & \lambda_{3,3} & \cdots & \lambda_{3,n}\\
& & & \ddots & \vdots \\
& & & & \lambda_{n,n}
\end{matrix}\right).
\]
Let $\delta_i:=\sum\limits_{k=1}^i \lambda_{k,n-i+k}$ be the sums of entries along diagonals.
Define
\begin{equation}\label{gz formula}
\wt(\Lambda):=(\delta_1, \delta_2-\delta_1, \dots, \delta_{n}-\delta_{n-1}).
\end{equation}

Now let $\pi=(\pi_{ij})\in P(a,b,c)$. Note that $0\leq \pi_{i,j}\leq c$. It uniquely determines a Gelfand--Zetlin pattern as follows
\[
\Lambda_\pi:=\left( \begin{matrix}
c & c & \cdots & c & \pi_{1,1} & \pi_{1,2} & \cdots & \pi_{1,b} \\
& c & \cdots & c & \pi_{2,1} & \pi_{2,2} & \cdots & \pi_{2,b}\\
& & \ddots & \vdots & \vdots & \vdots & \ddots & \vdots \\
& & & c & \pi_{a,1} & \pi_{a,2} & \cdots & \pi_{a,b} \\
& & & & 0 & 0 & \cdots & 0 \\
& & & & & 0 & \cdots & 0\\
& & & & & & \ddots & \vdots\\
& & & & & & & 0
\end{matrix}\right).
\]
Consider the decomposition
\[
P(a,b,c)=\bigsqcup_{\mu} P(a,b,c)(\mu), \qquad \mbox{where}\quad
P(a,b,c)(\mu) := \{\pi \in P(a,b,c) \,|\, \wt(\Lambda_\pi)=\mu \}.
\]
\begin{prop} \label{prof1500}
The basis $\Theta(a,b,c)(\mu)$ is in natural bijection with $P(a,b,c)(\mu)$.
\end{prop}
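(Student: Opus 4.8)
The plan is to reduce the statement to a term-by-term comparison of two explicit piecewise-linear expressions. By Proposition \ref{PQ bijection}, the tropicalized change of coordinates $q\mapsto \bigl(l_{i,j}(q)\bigr)$ identifies $Q(a,b,c)$ with $P(a,b,c)$, the plane partition attached to $q$ being $\pi$ with $\pi_{i,j}=l_{i,j}(q)$ for $1\le i\le a$, $1\le j\le b$, together with $l_{0,0}(q)=P^t(q)=c$ and the convention $l_{i,j}(q)=0$ for $i>a$ or $j>b$. Since $\Theta(a,b,c)=\{\theta_q\mid q\in Q(a,b,c)\}$ and, by definition \eqref{weohj130}, $\Theta(a,b,c)(\mu)$ consists of those $\theta_q$ with $M^t(q)=\mu$, it suffices to prove that under the above identification one has $M^t(q)=\wt(\Lambda_\pi)$. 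Then $\theta_q\in\Theta(a,b,c)(\mu)$ iff $M^t(q)=\mu$ iff $\wt(\Lambda_\pi)=\mu$ iff $\pi\in P(a,b,c)(\mu)$, and Proposition \ref{weightprop1220} confirms that these are exactly the weight-$\mu$ basis vectors.

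First I would express each component $M_k^t(q)$ in the Gelfand--Zetlin coordinates. By Lemma \ref{1241}, $M_k=\prod_{(i,j)\in F_k}X_{i,j}$, and the computations already performed in the proof of that lemma rewrite this product as an explicit telescoping monomial in the $L_{i,j}$: one numerator factor $L_{i,j}$ for each inner corner of the stair-shaped region $F_k$ and one denominator factor for each outer corner, with the understanding that $L_{0,0}$ enters precisely when $(0,0)\in F_k$ and $L_{i,j}=1$ off the $a\times b$ grid. Tropicalizing turns this into a $\pm1$-combination of the numbers $l_{i,j}(q)$, that is, after the identification above, into an alternating sum of the $\pi_{i,j}$ together with at most one copy of $c=l_{0,0}(q)$.

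Next I would compute $\wt(\Lambda_\pi)_k=\delta_k-\delta_{k-1}$ directly from the shape of $\Lambda_\pi$. The $m$-th diagonal of $\Lambda_\pi$ passes through an initial block of $c$'s coming from the triangular array of $c$'s, then an anti-diagonal stretch of entries $\pi_{i,b-m+i}$ inside the $a\times b$ block, and finally a block of $0$'s; writing this out and subtracting $\delta_{m-1}$ makes almost every term cancel, leaving precisely the alternating sum found in the previous step. Matching the two expressions case by case---according to whether $k$ lies in $1\le k\le\min(a,b)$, $\min(a,b)<k\le\max(a,b)$, or $\max(a,b)<k\le n$, equivalently splitting on $a\le b$ versus $a\ge b$ exactly as in the proof of Lemma \ref{1241}---gives $M_k^t(q)=\wt(\Lambda_\pi)_k$ for every $k$, hence $M^t(q)=\wt(\Lambda_\pi)$, which completes the argument.

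The proof involves no conceptual difficulty beyond what is already contained in Lemma \ref{1241}; the one place that genuinely needs care is the bookkeeping of indices: keeping the cyclic conventions straight, correctly reading off which corners of the stair region $F_k$ contribute numerator versus denominator factors of $L$, and handling the boundary conventions $l_{0,0}(q)=c$ and $l_{i,j}(q)=0$ for $i>a$ or $j>b$ so that the telescoping defining $M_k^t(q)$ lines up exactly with the telescoping in $\delta_k-\delta_{k-1}$. I expect this index-matching across the three (or two) regimes to be the main, and essentially the only, obstacle.
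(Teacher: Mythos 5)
Your proposal is correct and follows essentially the same route as the paper: reduce to showing $M^t(q)=\wt(\Lambda_\pi)$, then use Lemma \ref{1241} to rewrite $M_k^t$ as a telescoping expression in the tropical Gelfand--Zetlin coordinates $l_{i,j}$, and match this against the diagonal differences $\delta_k-\delta_{k-1}$ of $\Lambda_\pi$. The only stylistic difference is that the paper avoids the explicit case-by-case bookkeeping by arranging the $l_{i,j}$ (with $l_{0,0}$ in the left block and $0$'s in the bottom block) into a triangular array of exactly the same shape as $\Lambda_\pi$, so that the identity $M_k^t=\delta_k-\delta_{k-1}$ from Lemma \ref{1241} can be read off directly and the comparison with \eqref{gz formula} becomes immediate.
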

\begin{proof}
Recall the tropical Gelfand--Zetlin coordinates $\{l_{i,j}\}$ of $\dconf_{n}^\times(a)\big(\mathbb{Z}^t\big)$. Let us arrange them in the following triangular pattern
\[ \begin{matrix}
l_{0,0} & l_{0,0} & \cdots & l_{0,0} & l_{1,1} & l_{1,2} & \cdots & l_{1,b} \\
& l_{0,0} & \cdots & l_{0,0} & l_{2,1} & l_{2,2} & \cdots & l_{2,b}\\
& & \ddots & \vdots & \vdots & \vdots & \ddots & \vdots \\
& & & l_{0,0} & l_{a,1} & l_{a,2} & \cdots & l_{a,b} \\
& & & & 0 & 0 & \cdots & 0 \\
& & & & & 0 & \cdots & 0\\
& & & & & & \ddots & \vdots\\
& & & & & & & 0
\end{matrix}
\]
Consider the sums of $l_{i,j}$ along each diagonal:
\[
\delta_1= l_{1,b}, \quad \delta_2= l_{1, b-1}+l_{2,b}, \quad \dots, \quad \delta_n= al_{0,0}.
\]
 Following the same argument as in the proof of Lemma \ref{1241}, we get
\begin{equation}\label{Hello1416}
M^t=\big(M_1^t, M_2^t, \dots, M_n^t\big)=  (\delta_1, \delta_2-\delta_1, \dots, \delta_n-\delta_{n-1} ).
\end{equation}
The proposition then follows from a comparison between \eqref{Hello1416} and \eqref{gz formula}.
\end{proof}

\section{Cyclic sieving phenomenon of plane partitions}

As an application of cluster duality for Grassmannians, we use the basis for $V_{c\omega_a}$ obtained in Corollary~\ref{basis} to prove a cyclic sieving phenomenon of plane partitions under the sequence of toggles $\eta$ defined in Section~\ref{intro csp}.

First we recall that the parametrization of basis we obtained from cluster duality is equivariant with respect to the cluster modular group action (Theorem~\ref{main.3.1.152}). Since both the twisted cyclic rotation $C_a$ on $\dGr_a^\times(n)$ and the rotation~$R$ on $\dconf_n^\times(a)$ come from the same cluster modular group element (Section~\ref{cyc.rot}), it follows that
\[
C_a^*\theta_q=\theta_{R^t(q)}.
\]

On the other hand, we discovered a natural bijection between the parametrization set $Q(a,b,c)\!$ of the basis $\{\theta_q\, | \, q\in Q(a,b,c)\}$ of $V_{c\omega_a}$ and the set of plane partitions $P(a,b,c)$ using tropical Gelfand--Zetlin coordinates (Proposition \ref{PQ bijection}). We claimed in Theorem \ref{main9} that the action of~$C_a$ on the basis element~$\theta_q$ is equivalent to the sequence of toggles $\eta$ on plane partitions, so let us first use Gelfand--Zetlin coordinates once more to prove Theorem~\ref{main9}.

We begin by setting $L_{i,0}=L_{0,j}=P$ and $L_{a+1,j}=L_{i,n-a+1}=1$.
Let
\[
L_{i,j}':= R^* L_{i,j},
\]
where $R^*$ denotes the pull-back via the biregular morphism $R$. Note that $R^*P=P$. Therefore $L_{i,0}'=L_{0,j}'=P$ and $L_{a+1,j}'=L_{i,n-a+1}'=1$.

\begin{lem}\label{mutation L} We have
\[L'_{i,j}=\frac{\big(L'_{i,j-1}+L_{i-1,j}\big)L'_{i+1,j}L_{i,j+1}}{L_{i,j}\big(L'_{i+1,j}+L_{i,j+1}\big)}, \qquad \forall\, 1\leq i \leq a,\quad \forall\, 1\leq j \leq b. \]
\end{lem}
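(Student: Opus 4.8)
The plan is to compute $R^*$ of the Gelfand--Zetlin coordinates directly from the cluster-transformation description of the rotation $R$.

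\paragraph{Approach.} Recall from Subsection \ref{cyc.rot} that $R$ is realized by the mutation sequence $\rho=\sigma_{b-1}\circ\cdots\circ\sigma_1$ with $\sigma_j=\mu_{(1,j)}\circ\cdots\circ\mu_{(a-1,j)}$, so that $X'_g=R^*(X_g)$ where the primed Poisson coordinates are those attached to the rotated quiver $\Gamma'_{a,n}$. Since $L_{i,j}=\prod_{k\ge i,\,l\ge j}X_{k,l}$ is a fixed monomial change of coordinates (with the same formula on any quiver in the mutation class, using the vertex labels $(i,j)$), we have $L'_{i,j}=R^*(L_{i,j})=\prod_{k\ge i,\,l\ge j}X'_{k,l}$. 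The strategy is therefore: (1) write out the explicit cluster Poisson mutation formula for each $\mu_{(i,j)}$ occurring in $\rho$, read off the arrows of $Q_{a,n}$ incident to $(i,j)$ from the pictures in Section~\ref{clustersec1712}; (2) use the telescoping identity $L_{i,j}/L_{i+1,j}=\prod_{l\ge j}X_{i,l}$ and similar partial-product identities to rephrase everything in terms of ratios of $L$'s; (3) track which mutations in $\rho$ have already been performed by the time the mutation at $(i,j)$ is applied --- this determines whether a neighbouring coordinate enters primed or unprimed. Because $\rho$ processes columns left-to-right and, within a column, rows bottom-to-top, when we mutate at $(i,j)$ the vertices $(i+1,j)$ (same column, below --- already done) and $(i,j-1)$ (previous column --- already done) contribute primed coordinates, while $(i-1,j)$ (above, not yet done) and $(i,j+1)$ (next column, not yet done) contribute unprimed ones.

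\paragraph{Key steps.} First I would establish, for a single mutation $\mu_{(i,j)}$ at an interior vertex with the local quiver configuration of $Q_{a,n}$ (incoming arrows from $(i,j+1)$ and $(i+1,j)$, outgoing to $(i-1,j)$ and $(i,j-1)$, together with the diagonal arrows), the transformed value of the relevant product $\prod_{l\ge j}X_{i,l}=L_{i,j}/L_{i+1,j}$; the point is that these diagonal-arrow contributions cancel in the products that assemble an $L$, leaving a clean two-term formula. Next I would run the induction over the mutation sequence $\rho$, computing $L'_{i,j}$ step by step; the ``$+1$'' appearing in a cluster Poisson mutation $X_k\mapsto X_k(1+X_j^{-\mathrm{sgn}})^{-\varepsilon_{kj}}$ becomes, after forming the monomial $L$, the additive terms $L'_{i,j-1}+L_{i-1,j}$ in the numerator and $L'_{i+1,j}+L_{i,j+1}$ in the denominator of the asserted formula. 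Finally I would check the boundary cases ($i=a$, $j=b$, or the wrap-around involving $(0,0)$) against the conventions $L_{i,0}=L_{0,j}=P$, $L_{a+1,j}=L_{i,n-a+1}=1$, and $R^*P=P$, which make the stated recursion hold uniformly for all $1\le i\le a$, $1\le j\le b$.

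\paragraph{Main obstacle.} I expect the bookkeeping of the order of mutations in $\rho$ --- precisely, keeping straight which neighbours of $(i,j)$ are primed versus unprimed at the moment $\mu_{(i,j)}$ is applied, and how the ``new'' $X$-values propagate through the later mutations in the same column and in subsequent columns --- to be the delicate part; everything else is a mechanical (if lengthy) cluster-mutation computation. An alternative that sidesteps some of this is to bypass $X$-coordinates entirely: use Lemma~\ref{L} to express $p^*(L_{i,j})$ and $p^*(L'_{i,j})$ in terms of Pl\"ucker coordinates $A_{i,j}=\Delta_{I(i,j)}$ and scaling factors $\lambda_k$, apply $R^*=C_a^*$ (which shifts the index sets $I(i,j)\mapsto I'(i,j)$ as in \eqref{I'}), and then verify the claimed identity as a three-term Pl\"ucker relation after clearing denominators; I would present whichever of the two routes yields the shorter verification, most likely the Pl\"ucker-relation route since those relations are already invoked repeatedly in Section~3.
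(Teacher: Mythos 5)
Your second route---the Pl\"ucker-relation route, which you correctly flag as the one you would present---is exactly what the paper does: it applies Lemma~\ref{L} to write $p^*(L_{i,j})$ and $p^*(L'_{i,j})$ as ratios $A_{i-1,j-1}/A_{i,j}$ and $A'_{i-1,j-1}/A'_{i,j}$ times products of $\lambda_k$'s (using the shift $\lambda'_k=\lambda_{k-1}$ coming from the rotation), and then verifies the identity by taking the ratio of a \emph{pair} of three-term Pl\"ucker relations and multiplying by a suitable monomial in the $\lambda$'s, obtaining $p^*(L'_{i,j}L_{i,j})$ on one side and $p^*$ of the claimed right-hand side on the other. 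So your proposal is correct and matches the paper's argument; the one small detail to adjust is that the verification uses two Pl\"ucker relations rather than a single one, one producing the factor $L'_{i,j-1}+L_{i-1,j}$ and the other producing $L'_{i+1,j}+L_{i,j+1}$.
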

\begin{proof} Throughout the proof we will adopt the convention of using a prime superscript to denote the coordinates after the cyclic rotation action. Let
$
A'_{i,j}:=C_a^* A_{i,j}=\Delta_{I'(i,j)},$ with~$I'(i,j)$ defined as \eqref{I'}:
\[ I'(i,j) =\{\underbrace{b-j,\dots, b-j+i-1}_\text{consecutive $i$ indices}, \underbrace{b+i, \dots, n-1}_\text{consecutive $a-i$ indices}\}
\]
By Lemma \ref{L} we get
\[
p^*\left(L'_{i,j}\right)=\frac{A'_{i-1,j-1}}{A'_{i,j}}\prod_{k=i-a}^{b-j}\lambda'_k=\frac{A'_{i-1,j-1}}{A'_{i,j}}\prod_{k=i-a-1}^{b-j-1}\lambda_k.
\]
The last equality is obtained from the fact that $\lambda'_k=\lambda_{k-1}$ for all $k$, which is due to the cyclic rotation. Using $I(i,j)$ \eqref{I} and $I'(i,j)$ \eqref{I'} one can deduce the following identity from the Pl\"{u}cker relations \eqref{Plucker relation}:
\begin{gather*}
A'_{i-1,j-2}A_{i-1,j}+A_{i-2,j-1}A'_{i,j-1}
= A'_{i-1,j-1}A_{i-1,j-1},\\
A'_{i,j-1}A_{i,j+1}+A_{i-1,j}A'_{i+1,j}
= A'_{i,j}A_{i,j}.
\end{gather*}
Let \looseness=-1 us take the ratio of them and multiply by $\prod\limits_{k=i-a}^{b-j}\lambda_k\lambda_{k-1}$ on both sides. The right hand side is
\[
{\rm r.h.s.} = \left(\frac{A'_{i-1,j-1}}{A'_{i,j}}\prod_{k=i-a-1}^{b-j-1}\lambda_k\right)\left( \frac{A_{i-1,j-1}}{A_{i,j}}\prod_{k=i-a}^{b-j}\lambda_k
\right)= p^*\big(L_{i,j}'L_{i,j}\big).\]
The left hand side becomes
\begin{align*}
{\rm l.h.s.}
&=\frac{\big(A'_{i-1,j-2}A_{i-1,j}+A_{i-2,j-1}A'_{i,j-1}\big)/ \big(A_{i-1,j}A'_{i,j-1}\big)}{\big(A'_{i,j-1}A_{i,j+1}+A_{i-1,j}A'_{i+1,j}\big)/ \big(A_{i-1,j}A'_{i,j-1}\big)}\cdot \prod_{k=i-a}^{b-j}\lambda_k\lambda_{k-1}\\
&= \left(\frac{A'_{i-1,j-2}}{A'_{i,j-1}}+\frac{A_{i-2,j-1}}{A_{i-1,j}}\right) \cdot \left(\frac{A_{i,j+1}}{A_{i-1,j}}+\frac{A'_{i+1,j}}{A'_{i,j-1}}\right)^{-1}\cdot \prod_{k=i-a}^{b-j}\lambda_k\lambda_{k-1}\\
& = \left(\frac{A'_{i-1,j-2}}{A'_{i,j-1}}\prod_{k=i-a-1}^{b-j}\lambda_k+\frac{A_{i-2,j-1}}{A_{i-1,j}} \prod_{k=i-a-1}^{b-j}\lambda_k\right)   \\
& \hphantom{=}{} \times \left(\frac{A_{i,j+1}}{A_{i-1,j}\prod\limits_{k={i-a}}^{b-j-1}\lambda_k}+\frac{A'_{i+1,j}}{A'_{i,j-1} \prod\limits_{k={i-a}}^{b-j-1}\lambda_k}\right)^{-1}\\
&=p^*\left( \big(L'_{i,j-1}+L_{i-1,j}\big)\cdot \big(L_{i,j+1}^{-1}+L_{i+1,j}'^{-1}\big)^{-1}\right)
=p^*\left(\frac{\big(L'_{i,j-1}+L_{i-1,j}\big)L'_{i+1,j}L_{i,j+1}}{\big(L'_{i+1,j}+L_{i,j+1}\big)}\right),
\end{align*}
which is precisely the right hand side.
\end{proof}

Lemma \ref{mutation L} allows us to compute $L'_{i,j}$ recursively. Let $\Pi= (\Pi_{ij} )$ be a matrix such that its rows are numbered $0,\dots, a+1$ and its columns are numbered $0,\dots, b+1$. For $1\leq i\leq a$ and $1\leq j\leq b$, we define a \emph{birational toggling} action~$\tau_{i,j}$ sending~$\Pi$ to the matrix $\tau_{i,j}\Pi$ such that
\[
 (\tau_{i,j}\Pi )_{k,l}:= \begin{cases} \Pi_{k,l} & \text{if $(k,l)\neq (i,j)$}, \\
\displaystyle\frac{ (\Pi_{i,j-1}+\Pi_{i-1,j} )\Pi_{i+1,j}\Pi_{i,j+1}}{\Pi_{i,j} (\Pi_{i+1,j}+\Pi_{i,j+1} )} & \text{if $(k,l)=(i,j)$}.\end{cases}
\]
Recall the toggling sequence $\eta$ in \eqref{toggle}.

\begin{lem}\label{lemma.tog.cyc}
Let us apply the toggling sequence $\eta$ to the initial $(a+2)\times (b+2)$ matrix
\[
\Pi=\begin{pmatrix} P & P & P & \cdots & P & 1 \\
P & L_{1,1} & L_{1,2} & \cdots & L_{1,b} & 1 \\
\vdots & \vdots &\vdots & \ddots & \vdots & \vdots\\
P & L_{a,1} & L_{a,2}& \cdots & L_{a,b} & 1 \\
1 & 1 & 1& \cdots & 1 & 1\end{pmatrix}.
\]
Then the matrix resulting from the application of $\eta$ is
\[
\eta\Pi=\begin{pmatrix}P & P & P & \cdots & P & 1 \\
P & L'_{1,1} & L'_{1,2} & \cdots & L'_{1,n-a} & 1 \\
\vdots & \vdots &\vdots & \ddots & \vdots & \vdots\\
P & L'_{a,1} & L'_{a,2} & \cdots & L'_{a,n-a} & 1 \\
1 & 1 &1 & \cdots & 1 & 1
\end{pmatrix}.
\]
In other words, the pull-back of the Gelfand--Zetlin coordinates via the rotation~$R$ is given by~$\eta$.
\end{lem}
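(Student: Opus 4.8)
The strategy is to run the toggling sequence $\eta=\nu_{b}\circ\cdots\circ\nu_1$ on the matrix $\Pi$ and track, entry by entry, what happens at each column pass $\nu_j=\tau_{1,j}\circ\cdots\circ\tau_{a,j}$. The key observation is that a single toggle $\tau_{i,j}$ has exactly the form appearing in Lemma \ref{mutation L}: it replaces $\Pi_{i,j}$ by $\frac{(\Pi_{i-1,j}+\Pi_{i,j-1})\Pi_{i+1,j}\Pi_{i,j+1}}{\Pi_{i,j}(\Pi_{i+1,j}+\Pi_{i,j+1})}$. So the plan is to show by a suitable induction that, at the moment $\tau_{i,j}$ is applied, the four neighbors of the $(i,j)$ entry already hold exactly the values $L'_{i-1,j}$, $L'_{i,j-1}$, $L_{i+1,j}$, $L_{i,j+1}$ (with the boundary conventions $L_{i,0}=L_{0,j}=P$, $L_{a+1,j}=L_{i,b+1}=1$ and their primed analogues), so that by Lemma \ref{mutation L} the new value of the $(i,j)$ entry is precisely $L'_{i,j}$.

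First I would set up the bookkeeping: since $\eta$ processes columns left to right ($j=1,2,\dots,b$) and within each column processes rows bottom to top ($i=a,a-1,\dots,1$), at the instant $\tau_{i,j}$ fires, the entries in columns $<j$ have all been updated to their primed values, the entries in column $j$ with row index $>i$ have been updated to primed values, and all remaining entries (column $j$, rows $\le i$; and columns $>j$) still hold their original unprimed values. This is a straightforward consequence of the order of toggles in the definition of $\eta$. Then I would verify the neighbor claim case by case against this description: the left neighbor $(i,j-1)$ sits in a fully-processed column, so it is $L'_{i,j-1}$; the bottom neighbor $(i+1,j)$ sits in column $j$ at a larger row index, so it is $L'_{i+1,j}$ — wait, but Lemma \ref{mutation L} uses $L'_{i+1,j}$ for the bottom neighbor and $L_{i-1,j}$ for the top; I would check that my row convention (rows increasing downward, bottom-to-top meaning decreasing $i$) matches: the bottom neighbor is $(i+1,j)$, already toggled, hence primed; the top neighbor is $(i-1,j)$, not yet toggled, hence unprimed $L_{i-1,j}$; the right neighbor $(i,j+1)$ is in an unprocessed column, hence $L_{i,j+1}$. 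This matches Lemma \ref{mutation L} exactly. The boundary rows/columns ($i=0,a+1$ and $j=0,b+1$) are frozen by the toggle convention and carry the constant values $P$ or $1$, which are consistent with the conventions $L_{0,j}=L_{i,0}=P$, $L_{a+1,j}=L_{i,b+1}=1$ and $L'_{0,j}=L'_{i,0}=P$, $L'_{a+1,j}=L'_{i,b+1}=1$ (the latter because $R^*P=P$).

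With the neighbor identification in hand, an induction on the toggle order — ordered first by $j$ ascending, then by $i$ descending within each $j$ — immediately gives that each toggle outputs the correct primed Gelfand-Zetlin coordinate: the base case is the first toggle $\tau_{a,1}$, whose neighbors $(a-1,1)=L_{a-1,1}$, $(a,0)=P$, $(a+1,1)=1$, $(a,2)=L_{a,2}$ are all still in their initial state, and Lemma \ref{mutation L} gives $L'_{a,1}$; the inductive step is exactly the neighbor computation above. After all $ab$ toggles have fired, every interior entry $(i,j)$ with $1\le i\le a$, $1\le j\le b$ holds $L'_{i,j}$ and the border is unchanged, which is the asserted form of $\eta\Pi$. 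The last sentence of the lemma — that $R^*$ acts on Gelfand-Zetlin coordinates by $\eta$ — is then just a restatement, reading off the interior entries. The only real subtlety, and the step I would be most careful about, is getting the row/column orientation conventions to line up between the definition of $\eta$ in Section \ref{intro csp}, the recursion in Lemma \ref{mutation L}, and the border conventions $L_{i,0}=P$ etc.; everything else is formal bookkeeping driven by the order in which toggles are applied.
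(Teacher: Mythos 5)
Your proof is correct and takes essentially the same route as the paper: both reduce the lemma to the observation that, because $\eta$ sweeps columns left to right and each column bottom to top, at the moment $\tau_{i,j}$ fires the left and below neighbors already hold primed values while the above and right neighbors are still unprimed, so the birational toggle formula is literally the recursion of Lemma \ref{mutation L}. Your write-up just makes the induction order and boundary conventions more explicit than the paper does.
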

\begin{proof} Note that the sequence $\eta$ toggles at each (internal) entry exactly once. It suffices to prove that the step $\tau_{i,j}$ within the sequence $\eta$ changes $L_{i,j}$ to $L'_{i,j}$. This follows from the fact that the toggling sequence $\eta$ goes from bottom to top within each column and from left to right through all columns. So when we toggle at the entry $(i,j)$, the matrix entry to the left and the matrix entry below have already been changed to $L'_{i,j-1}$ and $L'_{i+1,j}$ respectively. The rest of the proof is just a straightforward comparison between the toggling formula and Lemma~\ref{mutation L}.
\end{proof}

\begin{proof}[Proof of Theorem~\ref{main9}] Set $l'_{i,j}:=(L'_{i,j})^t=\big(R^t\big)^* (l_{i,j} )$. By Lemma~\ref{lemma.tog.cyc}, with the convention that $l_{0,j}=l_{i,0}=l_{0,0}=c$ and $l_{a+1,j}=l_{i,b+1}=0$, we see that $l'_{i,j}$ can be computed recursively as
$
l'_{0,0}=l_{0,0}=c
$
and for $(i,j)\neq (0,0)$,
\begin{align}
l'_{i,j}&=\min \{l'_{i,j-1},l_{i-1,j} \}+l'_{i+1}+l_{i,j+1}-\min  \{l'_{i+1,j},l_{i,j+1} \}-l_{i,j}\nonumber\\
& =\min \{l'_{i,j-1},l_{i-1,j} \}+\max  \{l'_{i+1,j},l_{i,j+1} \}-l_{i,j}.\label{lusztig coordinate transform}
\end{align}
In the process of computing $l'_{i,j}$, the coordinates below and to the left of $l_{i,j}$ has been toggled in the way the mutation sequence is constructed. Therefore the formula~\eqref{lusztig coordinate transform} recovers the toggling formula~\eqref{toggle}. It concludes the proof of Theorem~\ref{main9}.
\end{proof}

Now let us deduce the cyclic sieving phenomenon of plane partitions claimed in Theorem~\ref{main1} from Theorem~\ref{main9}.

\begin{proof}[Proof of Theorem \ref{main1}]Finite dimensional irreducible representations of $\GL_n$ are classified by their highest weights, which are $n$-tuples of integers $\lambda=\left(\lambda_1,\dots, \lambda_n\right)$ with $\lambda_1\geq \lambda_2\geq \dots \geq \lambda_n$. Denote by $V_\lambda$ the finite dimension irreducible representation with highest weight $\lambda$ (see, e.g., \cite[Lecture 15]{FH}). Let $D(p,q)$ be the diagonal matrix $\diag\big(pq^{n-1},\dots, pq,p\big)$. We compute the trace of the action of $D(p,q)$ on $V_\lambda$ as
\[
\Tr_{V_\lambda}D(p,q)=p^{\inprod{\omega_n}{\lambda}}\sum_\mu\dim V_\lambda(\mu)q^{\inprod{\rho}{\mu}},
\]
where $V_\lambda(\mu)$ is the $\mu$-weight subspace of $V_\lambda$, $\rho$ is the dominant weight $(n-1,\dots,2, 1,0)$, and $\omega_n=(1,\dots, 1)$.

Let $\zeta={\rm e}^{2\pi \sqrt{-1}/n}$. The characteristic polynomial of the matrix $C_a$ in \eqref{cyclic.twist.ration} is
\[
\det\left(t\,\mathrm{Id}_n-C_a\right)=t^n-(-1)^{a-1}.
\]
It has $n$-distinct roots $\zeta^{-\frac{a-1}{2}},\zeta^{-\frac{a-1}{2}}\zeta, \dots, \zeta^{-\frac{a-1}{2}}\zeta^{n-1}$ over $\mathbb{C}$. Therefore $C_a$ is conjugate to $D\big(\zeta^{-\frac{a-1}{2}},\zeta\big)$ and $C_a^k$ is conjugate to $D\big(\zeta^{-\frac{(a-1)k}{2}},\zeta^k\big)$. By Theorem \ref{main9}, the number of plane partitions fixed by $\eta^k$ is equal to the number of basis vectors in $ \{\theta_{\pi} \}_{\pi\in P(a,b,c)}$ fixed by~$C_a^k$. Therefore
\begin{align*}
 \#\left\{\pi\, \Big|\, \begin{matrix}\pi \in P(a,b,c)\\
 \eta^k(\pi)=\pi\end{matrix}\right\}
& =\Tr_{V_{c\omega_a}}C_a^k  =\Tr_{V_{c\omega_a}}D\big(\zeta^{-\frac{(a-1)k}{2}},\zeta^k\big)\\
& =\big(\zeta^k\big)^{-\frac{(a-1)ac}{2}}\sum_\mu \dim V_{c\omega_a}(\mu)\big(\zeta^k\big)^{\inprod{\rho}{\mu}}.
\end{align*}
By Proposition \ref{prof1500}, we have
\[
{\dim}V_{c\omega_a}(\mu)=\# P(a,b,c)(\mu).
\]
Let $\pi\in P(a,b,c)$. By using \eqref{gz formula} and setting $\delta_0=0$ we get
\[
\inprod{\rho}{\wt(\Lambda_\pi)}=\sum_{k=1}^n(n-k) (\delta_k-\delta_{k-1} )=\sum_{k=1}^{n-1} \delta_k=\frac{(a-1)ac}{2}+\sum_{i,j}\pi_{i,j}=\frac{(a-1)ac}{2}+|\pi|.
\]
Therefore
\begin{align*}
\big(\zeta^k\big)^{-\frac{(a-1)ac}{2}}\sum_\mu\dim V_{c\omega_a}(\mu)\big(\zeta^k\big)^{\inprod{\rho}{\mu}}& =
\sum_\mu\sum_{\pi\in P(a,b,c)(\mu)}\big(\zeta^k\big)^{|\pi|}\\
&=\sum_{\pi\in P(a,b,c)}\big(\zeta^k\big)^{|\pi|} =M_{a,b,c}\big(\zeta^k\big). \tag*{\qed}
\end{align*}
\renewcommand{\qed}{}
\end{proof}

\appendix

\section{Generalities on cluster ensembles}\label{appendixA}

We briefly recall the definition of cluster ensembles following \cite{FGensemble}.

{\bf Quiver mutations.}
Let $Q=\big(I^{\uf}\subset I, \varepsilon \big)$ be a quiver without loops or 2-cycles: $I$ is the set of vertices, $I^{\uf}$ is the set of unfrozen vertices, and $\varepsilon$ is an $I\times I$ skew-symmetric matrix called the \emph{exchange matrix} encoding the data of number of arrows between vertices
\[
\varepsilon_{ij}= \# \{j\rightarrow i\} - \# \{i\rightarrow j\}.
\]
For the rest of this appendix we let $m=\#I$ and $l=\#I^{\uf}$.

Given a quiver $Q$, the {\it quiver mutation} $\mu_k$ at a non-frozen vertex $k\in I^{\uf}$ creates a~new quiver~$\mu_k(Q)$ by the following procedure
\begin{enumerate}\setlength\itemsep{0pt}
\item For each pair of arrows $i \longrightarrow k \longrightarrow j$, create a ``composite'' arrow $i \longrightarrow j$.
\item Reverse all arrows incident to $k$.
\item Remove any maximal disjoint collection of oriented 2-cycles.
\end{enumerate}
This quiver mutation is involutive: $\mu_k^2(Q)=Q$. Repeating the process at every non-frozen vertex for each new quiver obtained via quiver mutations, we get an infinite $l$-valent tree
$\mathbb{T}_l$ such that every vertex~$t$ of $\mathbb{T}_l$ is assigned a quiver $Q_t=\big(I^{\uf}\subset I, \varepsilon_t\big)$.

{\bf Cluster ensembles.}
Now assign to each vertex $t$ two coordinate charts: the $K_2$ cluster $\alpha_t=\{A_{i,t} \,|\, i \in I\}$ and the Poisson cluster $\chi_t=\{X_{i,t} \,|\, i \in I\}$. Geometrically, they correspond to a pair of algebraic tori:
\begin{equation}
\label{cluster.tori.4.45}
\mathcal{T}_{t, \alpha} = {\rm Spec}\big(\mathds{k}[A_{1,t}^\pm,\dots, A_{m,t}^\pm] \big), \qquad \mathcal{T}_{t, \chi} = {\rm Spec}\big(\mathds{k}[X_{1,t}^\pm,\dots, X_{m,t}^\pm] \big).
\end{equation}
There is a homomorphism $p$ relating them:
\begin{equation}\label{nat.bb}
p^* X_{i,t} = \prod_{j\in I} A_{j,t} ^{\varepsilon_{ji,t}}.
\end{equation}
The transition maps between the pairs of tori assigned to $Q_t$ and $Q_{t'}=\mu_k(Q_t)$ are as follows:
\begin{gather}\label{mutation.a}
\mu_k^* A_{i,t'} =  \begin{cases}
\displaystyle A_{k,t}^{-1} \left(\prod_{j \in I} A_{j,t} ^{[\varepsilon_{jk,t}]_+}+\prod_{j\in I } A_{j,t} ^{[-\varepsilon_{jk,t}]_+}\right) & \mbox{if $i=k$}, \\
 A_{i,t} & \mbox{if $i\neq k$}, \\
 \end{cases} \\
 \label{mutation.x}
\mu_k^* X_{i,t'} =  \begin{cases}
 X_{k,t}^{-1}   & \mbox{if $i=k$}, \\
 X_{i,t}\big(1+X_{k,t}^{{\rm sgn}(\varepsilon_{ik,t})}\big)^{\varepsilon_{ik,t}} & \mbox{if $i\neq k$}. \\
 \end{cases}
\end{gather}
Here $[\varepsilon]_+=\max\{\varepsilon,0\}$.

Let us glue all the algebraic tori via the transitions \eqref{mutation.a}, \eqref{mutation.x}, obtaining a pair of varieties called a cluster ensemble
\begin{equation}
\label{cluster.ensemble.def.7.19}
\mathscr{A}_{|Q|}=\bigcup_{t} \mathcal{T}_{t, \alpha}, \qquad \mathscr{X}_{|Q|}=\bigcup_{t} \mathcal{T}_{t, \chi}.
\end{equation}
where $t$ runs through all the vertices of the tree $\mathbb{T}_l$. The map $p$ in \eqref{nat.bb} is compatible with the transition maps \eqref{mutation.a}\eqref{mutation.x}. Therefore we get a natural map
\begin{equation}\label{pmap.cluster.ensemble.2.21}
p\colon \ \mathscr{A}_{|Q|} \longrightarrow \mathscr{X}_{|Q|}.
\end{equation}
In general the map $p$ is neither injective nor surjective.

The coordinate rings of these varieties are the algebras of universal Laurent polynomials
\begin{gather*}
\mathcal{O}(\mathscr{A}_{|Q|})= {\bf up}(\mathscr{A}_{|Q|}):=\bigcap_t \mathds{k}\big[A_{1,t}^\pm,\dots, A_{m,t}^\pm\big],
\\
\mathcal{O}(\mathscr{X}_{|Q|})= {\bf up}(\mathscr{X}_{|Q|}):=\bigcap_t \mathds{k}\big[X_{1,t}^\pm,\dots, X_{m,t}^\pm\big].
\end{gather*}

{\bf Cluster modular group.} To each torus $\mathcal{T}_{t, \alpha}$ in~\eqref{cluster.tori.4.45} is associated a differential form
\[
\Omega_{t}:=\sum_{i,j} \varepsilon_{ij,t}\frac{{\rm d} A_{i,t}}{A_{i,t}}\wedge \frac{{\rm d} A_{j,t}}{A_{j,t}}.
\]
This $\Omega_t$ is compatible with the transition \eqref{mutation.a} and therefore can be lifted to a global differential form $\Omega$ on $\mathscr{A}_{|Q|}$. A {\it cluster automorphism} $\tau$ of $\mathscr{A}_{|Q|}$ is a biregular isomorphism of $ \mathscr{A}_{|Q|}$ such that
\begin{itemize}\setlength\itemsep{0pt}
\item It preserves the differential form: $\tau^* \Omega = \Omega$.
\item For every $K_2$-cluster $\alpha_t=\{A_{i,t}\}$, the pullback $\tau^*(\alpha_t):= \{\tau^*A_{i,t}\}$ remains a $K_2$-cluster, with its indices $i$ possibly permuted.
\end{itemize}
Locally, $\tau$ can be realized by a sequence of mutations that sends a quiver $Q$ to itself up to permutations of vertices.
The {\it cluster modular group} $\mathcal{G}_{\mathscr{A}, |Q|}$ consists of cluster automorphisms of~$\mathscr{A}_{|Q|}$.

To each torus $\mathcal{T}_{t, \chi}$ is associated a bi-vector
\[
B_t: =\sum_{i,j} \varepsilon_{ij,t} X_{i,t} \frac{\partial }{\partial X_{i,t}}\wedge X_{j,t}\frac{\partial }{\partial X_{j,t}},
\]
which can be lifted to a global bi-vector $B$ on $\mathscr{X}_{|{Q}|}$. A cluster automorphism of $\mathscr{X}_{|Q|}$ is a~biregular isomorphism of $\mathscr{X}_{|Q|}$ that preserves the bi-vector $B$ and permutes its Poisson clusters. Denote by $\mathcal{G}_{\mathscr{X}, |Q|}$ the group of cluster automorphisms of $\mathscr{X}_{|Q|}$.

From the tropical cluster duality proved by Nakanishi and Zelevinsky \cite{NZ} one can deduce that the cluster modular group $\mathcal{G}_{\mathscr{X}, |Q|}= \mathcal{G}_{\mathscr{A}, |Q|}$. Hence, we will drop the subscripts $\mathscr{A}$ and $\mathscr{X}$ in the notation and denote it by $\mathcal{G}_{|Q|}$.

{\bf Quiver extensions.}
Let $Q=\big(I^{\uf} \subset I, \varepsilon\big)$ be a quiver. Let $\widetilde{Q}=\big({I}^{\uf}\subset \tilde{I},\tilde{\varepsilon}\big)$ be a quiver obtained from $Q$ by adding frozen vertices labelled by $I'=\{1',\dots, f'\}$ and arrows such that $\widetilde{Q}$ contains $Q$ as a full subquiver. In other words, $\tilde{I}= I \sqcup I'$ and $\tilde{\varepsilon}$ contains $\varepsilon$ as a submatrix. Let $\big(\mathscr{A}_{|\widetilde{Q}|}, \mathscr{X}_{|\widetilde{Q}|}\big)$ be the cluster ensemble associated to $\widetilde{Q}$.

Following \cite[equation~(3.5)]{Sh}, we define the following map
\[
k\colon \  \mathscr{A}_{|\widetilde{Q}|}\stackrel{\tilde{p}}{\longrightarrow}\mathscr{X}_{|\widetilde{Q}|}\stackrel{j}{\longrightarrow}\mathscr{X}_{|Q|}
\]
The map $\tilde{p}$ is a natural map as \eqref{pmap.cluster.ensemble.2.21}.
The map $j$ is a surjective map such that $j^*X_{i,t}=X_{i,t}$ for all $i\in I$.
The map $k$ is the composition of $\tilde{p}$ and $j$. It is surjective if and only if the sub\-mat\-rix~$ \tilde{\varepsilon} |_{\tilde{I}\times I}$ of the exchange matrix $\tilde{\varepsilon}$ is of full rank. In this case we get a natural injection
\[
k^*\colon \  {\bf up} \big(\mathscr{X}_{|Q|} \big) \longrightarrow {\bf up}\big( \mathscr{A}_{|\widetilde{Q}|}\big).
\]

The following easy Lemma generalizes one key part of the proof of Theorem~\ref{2018.3.2.15.06}.
\begin{lem}\label{tech.lem.545}
Assume $k$ is surjective. Let $F\in \mathds{k}\big(\mathscr{X}_{|Q|}\big)$ be a rational function on $\mathscr{X}_{|Q|}$. Then $F\in {\bf up}\big(\mathscr{X}_{|Q|}\big)$ if and only if $k^*(F)\in {\bf up}\big( \mathscr{A}_{|\widetilde{Q}|}\big)$.
 \end{lem}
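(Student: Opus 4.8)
The plan is to reduce Lemma~\ref{tech.lem.545} to the universal Laurent property via the commutative square relating the $K_2$ and Poisson sides of the extended quiver. First I would record the key structural fact: since $k$ is surjective, the pull-back $k^*$ is injective, and because $\widetilde Q$ contains $Q$ as a full subquiver, mutations of $\widetilde Q$ at unfrozen vertices restrict to mutations of $Q$; hence for every vertex $t$ of the mutation tree $\mathbb{T}_l$ we get a compatible diagram of seed tori, and $k$ restricts on each seed torus to a dominant (in fact surjective, given the full-rank hypothesis) monomial map $\mathcal{T}_{t,\tilde\alpha}\to\mathcal{T}_{t,\chi}$.

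The ``only if'' direction is immediate: if $F\in {\bf up}(\mathscr{X}_{|Q|})$ then $F$ is a Laurent polynomial in each Poisson cluster $\{X_{i,t}\}_{i\in I}$; applying $k^*$ and using $k^*X_{i,t}=\prod_{g\in\tilde I}A_{g,t}^{\tilde\varepsilon_{gi,t}}$ shows $k^*(F)$ is a Laurent polynomial in each cluster $\{A_{g,t}\}_{g\in\tilde I}$, i.e.\ $k^*(F)\in {\bf up}(\mathscr{A}_{|\widetilde Q|})$. For the ``if'' direction, suppose $k^*(F)\in {\bf up}(\mathscr{A}_{|\widetilde Q|})$; I must show $F$ is regular on every Poisson seed torus $\mathcal{T}_{t,\chi}$ of $\mathscr{X}_{|Q|}$. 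Fix $t$. The monomial map $k\colon\mathcal{T}_{t,\tilde\alpha}\to\mathcal{T}_{t,\chi}$ is a surjective homomorphism of tori, so it admits a section up to an isogeny; concretely, $k^*$ identifies $\mathds{k}[X_{i,t}^{\pm}\colon i\in I]$ with the subring of $\mathds{k}[A_{g,t}^{\pm}\colon g\in\tilde I]$ generated by the monomials $\prod_g A_{g,t}^{\tilde\varepsilon_{gi,t}}$. A rational function $F$ on $\mathscr{X}_{|Q|}$ lies in $\mathds{k}[X_{i,t}^{\pm}]$ if and only if $k^*(F)$ lies in that monomial subring, which holds as soon as $k^*(F)$ is a Laurent polynomial in the $A_{g,t}$ \emph{and} $F$ is already known to be a rational (Laurent) expression in the $X_{i,t}$ --- the point being that membership in the image subring can be tested after restricting along the torus surjection. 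Since this holds for every $t$, we conclude $F\in\bigcap_t\mathds{k}[X_{i,t}^{\pm}]={\bf up}(\mathscr{X}_{|Q|})$.

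The step I expect to be the main obstacle is making the last implication fully rigorous: knowing $k^*(F)$ is a global Laurent polynomial on $\mathscr{A}_{|\widetilde Q|}$ only gives regularity of $k^*(F)$ on each $\mathcal{T}_{t,\tilde\alpha}$, and one must deduce that $F$ itself --- a priori just a rational function --- is regular on $\mathcal{T}_{t,\chi}$. The clean way to handle this is to argue torus-locally with characters: writing the character lattices as $N_t^{\tilde\alpha}$, $N_t^{\chi}$ and the monomial map as a surjection $\phi_t\colon N_t^{\tilde\alpha}\twoheadrightarrow N_t^{\chi}$ of lattices (full rank of $\tilde\varepsilon|_{\tilde I\times I}$), the pull-back $\phi_t^*$ is split injective on character groups, so $k^*$ maps $\mathds{k}[\mathcal{T}_{t,\chi}]$ isomorphically onto a direct summand subring of $\mathds{k}[\mathcal{T}_{t,\tilde\alpha}]$, and a Laurent polynomial lying in that subring has its $F$-preimage a Laurent polynomial. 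This mirrors verbatim the argument used in the proofs of Proposition~\ref{Xuf isomorphism} and Theorem~\ref{X isomorphism} (passing regularity up and down a surjective map of positive spaces), so I would phrase the proof as: the statement is exactly that argument with $\dGr_a^\times(n)$, $\conf_n^\times(a)$ replaced by $\mathscr{A}_{|\widetilde Q|}$, $\mathscr{X}_{|Q|}$ and the map $p$ replaced by $k$, together with the full-rank hypothesis guaranteeing surjectivity of $k$ on seed tori.
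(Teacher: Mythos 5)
Your proposal matches the paper's proof in structure: fix a seed $t$, reduce to the torus-local claim that $F\in \mathds{k}[X_{i,t}^\pm]$ iff $k^*(F)\in \mathds{k}[A_{j,t}^\pm]$, and then take the intersection over all $t$. The paper itself is terse on the ``if'' direction, asserting the equivalence simply because $k^*$ is injective and sends Laurent monomials to Laurent monomials, so in spirit the two arguments coincide.

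One point in your justification needs repair. You claim that full rank of $\tilde\varepsilon|_{\tilde I\times I}$ makes the character-lattice map \emph{split} injective, so that $k^*$ identifies $\mathds{k}[\mathcal{T}_{t,\chi}]$ with a direct summand subring. Full rank only gives injectivity of the lattice map $\mathbb{Z}^I\hookrightarrow\mathbb{Z}^{\tilde I}$; the cokernel can have torsion (equivalently, the Smith normal form of $\tilde\varepsilon|_{\tilde I\times I}$ can have invariant factors greater than $1$), in which case the inclusion does not split and the image is not a direct summand. The conclusion still holds without the splitting: a surjective homomorphism of tori is faithfully flat, and for a faithfully flat extension of integral domains $A\hookrightarrow B$ one has $A=B\cap\mathrm{Frac}(A)$; alternatively, write $F=g/h$ in lowest terms in $\mathds{k}[X_{i,t}^\pm]$ and use unique factorization in the Laurent polynomial ring $\mathds{k}[A_{j,t}^\pm]$ together with injectivity of $k^*$ to conclude $k^*(h)$ must be a monomial, hence $h$ a monomial. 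Either route avoids assuming the split. Since the paper leaves exactly this step implicit, your proposal is essentially the paper's argument, with one over-strong auxiliary claim that should be replaced by one of the above.
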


 \begin{proof} Let $t$ be a vertex of $\mathbb{T}_l$. Let $\chi_t= \{X_{i,t} \,|\, i\in I \}$ and $\widetilde{\alpha}_t:= \big\{A_{j,t} \,|\, j\in \tilde{I}  \big\}$ be its corresponding clusters. Since $k^*$ is injective and it maps Laurent monomials to Laurent monomials, we have
 \[
 F\in \mathds{k}\big[X_{1,t}^\pm, \dots, X_{m,t}^\pm\big]\quad \Longleftrightarrow \quad k^*(F) \in \mathds{k}\big[A_{1,t}^\pm, \dots, A_{m,t}^\pm, A_{1',t}^\pm, \dots, A_{f',t}^\pm\big].
 \]
 By definition ${\bf up} (\mathscr{X}_{|Q|} )$ and ${\bf up}\big( \mathscr{A}_{|\widetilde{Q}|}\big)$ are the intersections of Laurent polynomial rings. The lemma follows directly.
 \end{proof}

\section{Generalities on reduced plabic graphs}\label{appendixB}

Reduced plabic graphs were first introduced by Postnikov in~\cite{Pos}. We briefly recall the definition and basic constructions of reduced plabic graphs here, and we mainly follow the convention used in~\cite{Weng}.

Let $D_n$ be a disk with $n$ marked points on its boundary labeled $1,\dots, n$ in the clockwise direction. Let $\Gamma$ be a bipartite graph embedded into $D_n$ with a single edge connected to every boundary marked point. We draw \emph{zig-zag} strands on $\Gamma$ by drawing the following pattern around each vertex according to the color of the vertex:
\[
\begin{tikzpicture}[scale=0.6]
\foreach \i in {0,...,4}
 {
 \draw (0,0) -- +(90-72*\i:2);
 \coordinate (i\i) at +(85-72*\i:2);
 \coordinate (m\i) at +(54-72*\i:0.7);
 \coordinate (o\i) at +(23-72*\i:2);
 \draw [red,->] plot [smooth, tension=1] coordinates {(i\i)(m\i)(o\i)};
 }
\draw [fill=white] (0,0) circle [radius=0.2];
\end{tikzpicture}
\quad \quad \quad \quad
\begin{tikzpicture}[scale=0.6]
\foreach \i in {0,...,4}
 {
 \draw (0,0) -- +(90-72*\i:2);
 \coordinate (i\i) at +(85-72*\i:2);
 \coordinate (m\i) at +(126-72*\i:0.7);
 \coordinate (o\i) at +(167-72*\i:2);
 \draw [red,->] plot [smooth, tension=1] coordinates {(i\i)(m\i)(o\i)};
 }
\draw [fill] (0,0) circle [radius=0.2];
\end{tikzpicture}
\]

\begin{defn} A rank $a$ \emph{reduced plabic graph} on $D_n$ is a bipartite graph with trivalent black vertices whose zig-zag strands do not self-intersect or form parallel bigons and go from $i$ to $i+a$ for every boundary marked point~$i$.\footnote{This definition is equivalent to Postnikov's original definition, but the bipartite and trivalent conditions help reduce the types of equivalence transformations to two.}
\end{defn}

\begin{defn}\label{dominating set} Let $\Gamma$ be a rank $a$ reduced plabic graph on $D_n$.
Connected components of the complement of $\Gamma$ are called {\it faces}. A \emph{boundary face} is a face that contains part of the boundary of $D_n$.
Let $\zeta_i$ be the zig-zag strand going from the boundary marked point $i$ to $i+a$. A face $f$ is said to be \emph{dominated} by $\zeta_i$ if it lies to the left of $\zeta_i$ with respect to the orientation of $\zeta_i$.
 The \emph{dominating set} $I(f)$ of $f$ is a collection of the indices of zig-zag strands that dominate $f$.
 \end{defn}

It is know that every dominating set is of size $a$. The set $\{\Delta_{I(f)}\}$ of Pl\"{u}cker coordinates with~$f$ runs through faces of $\Gamma$ forms a $K_2$ cluster chart on $\dGr_a^\times(n)$ associated to $Q_{\Gamma}$, where $Q_\Gamma$ is the quiver determined by $\Gamma$ as in Section~\ref{clustersec1712}. For example, the cluster $\{\Delta_{I(i,j)}\}$ associated to~$Q_{a,n}$ in~\eqref{aij} is defined in this way.

There are two types of transformations on reduced plabic graphs $\Gamma$ called \emph{$2$-by-$2$} moves.
\begin{itemize}
 \item {\bf Type I}
\[
 \quad \quad
\begin{tikzpicture}[scale=0.8,baseline=4ex]
\node at (1,1) [] {$f_c$};
\node at (1,0) [] {$f_s$};
\node at (0,1) [] {$f_w$};
\node at (2,1) [] {$f_e$};
\node at (1,2) [] {$f_n$};
\draw (0,0) -- (0.5,0.5) -- (0,2);
\draw (2,0) -- (1.5,1.5) -- (2,2);
\draw (1.5,1.5) -- (0,2);
\draw (0.5,0.5) --(2,0);
\draw [fill=white] (0,0) circle [radius=0.2];
\draw [fill=white] (2,0) circle [radius=0.2];
\draw [fill=white] (2,2) circle [radius=0.2];
\draw [fill=white] (0,2) circle [radius=0.2];
\draw [fill] (0.5,0.5) circle [radius=0.2];
\draw [fill] (1.5,1.5) circle [radius=0.2];
\end{tikzpicture}
\quad \quad \longleftrightarrow \quad \quad
\begin{tikzpicture}[scale=0.8,baseline=4ex]
\node at (1,1) [] {$f_c'$};
\node at (1,0) [] {$f_s$};
\node at (0,1) [] {$f_w$};
\node at (2,1) [] {$f_e$};
\node at (1,2) [] {$f_n$};
\draw (0,0) -- (0.5,1.5) -- (0,2);
\draw (2,0) -- (1.5,0.5) -- (2,2);
\draw (0.5,1.5) -- (2,2);
\draw (1.5,0.5) --(0,0);
\draw [fill=white] (0,0) circle [radius=0.2];
\draw [fill=white] (2,0) circle [radius=0.2];
\draw [fill=white] (2,2) circle [radius=0.2];
\draw [fill=white] (0,2) circle [radius=0.2];
\draw [fill] (1.5,0.5) circle [radius=0.2];
\draw [fill] (0.5,1.5) circle [radius=0.2];
\end{tikzpicture}
\]
 The dominating sets associated to the 5 related faces on picture are of the forms
\begin{gather*}
I(f_c)= J\cup \{i,k\}, \qquad I(f_s) = J \cup \{k,l\}, \qquad I(f_w)= J\cup \{i,l\},\\
 I(f_e)= J\cup \{i,j\}, \qquad I(f_n)= J\cup \{j,k\},
\end{gather*}
where $1\leq i<j<k<l\leq n$ and $J$ is an $(a-2)$-element subset of $\{1,\dots, n\}\setminus\{i,j,k,l\}$.
The type I 2-by-2 move changes the dominating set associated to the central face to
\[
I(f_c')= J\cup\{j,l\},
\]
and keeps the rest intact. Note that one has the Pl\"ucker relation
\begin{equation}\label{plucker609}
\Delta_{I(f_c)}\Delta_{I(f_c')}=\Delta_{I(f_s)}{\Delta_{I(f_n)}+\Delta_{I(f_e)}\Delta_{I(f_w)}}.
\end{equation}
On the quiver level, it corresponds to the quiver mutation on $Q_\Gamma$ at the vertex assigned to the central face, which locally is as follows
\[
\begin{tikzpicture}[baseline=0ex]
\node (0-0) at (0,0) [] {$\bullet$};
\node (0-1) at (0,1) [] {$\bullet$};
\node (1-0) at (1,0) [] {$\bullet$};
\node (0-2) at (0,-1) [] {$\bullet$};
\node (2-0) at (-1,0) [] {$\bullet$};
\draw [<-] (0-0) -- (0-1);
\draw [<-] (2-0) -- (0-0);
\draw [<-] (1-0) -- (0-0);
\draw [<-] (0-0) -- (0-2);
\draw [->] (1-0) -- (0-1);
\draw [->] (2-0) -- (0-2);
\end{tikzpicture}
\quad \quad \longleftrightarrow \quad \quad
\begin{tikzpicture}[baseline=0ex]
\node (0-0) at (0,0) [] {$\bullet$};
\node (0-1) at (0,1) [] {$\bullet$};
\node (1-0) at (1,0) [] {$\bullet$};
\node (0-2) at (0,-1) [] {$\bullet$};
\node (2-0) at (-1,0) [] {$\bullet$};
\draw [->] (0-0) -- (0-1);
\draw [->] (2-0) -- (0-0);
\draw [->] (1-0) -- (0-0);
\draw [->] (0-0) -- (0-2);
\draw [<-] (1-0) -- (0-2);
\draw [<-] (2-0) -- (0-1);
\end{tikzpicture}
\]
Therefore the Pl\"{u}cker relation \eqref{plucker609} is compatible with the cluster mutation \eqref{cluster.tori.4.45}.
\item {\bf Type II}.
\[
\begin{tikzpicture}[scale=0.8,baseline=4ex]
\draw (0,0) -- (0.5,1) -- (1.5,1) -- (2,0);
\draw (0,2) -- (0.5,1);
\draw (1.5,1) -- (2,2);
\draw [fill=white] (0,0) circle [radius=0.2];
\draw [fill=white] (2,0) circle [radius=0.2];
\draw [fill=white] (0,2) circle [radius=0.2];
\draw [fill=white] (2,2) circle [radius=0.2];
\draw [fill=white] (1,1) circle [radius=0.2];
\draw [fill] (0.5,1) circle [radius=0.2];
\draw [fill] (1.5,1) circle [radius=0.2];
\end{tikzpicture} \quad \quad \longleftrightarrow \quad \quad \begin{tikzpicture}[scale=0.8,baseline=4ex]
\draw (0,0) -- (1,0.5) -- (1,1.5) -- (0,2);
\draw (2,0) -- (1,0.5);
\draw (1,1.5) -- (2,2);
\draw [fill=white] (0,0) circle [radius=0.2];
\draw [fill=white] (2,0) circle [radius=0.2];
\draw [fill=white] (0,2) circle [radius=0.2];
\draw [fill=white] (2,2) circle [radius=0.2];
\draw [fill=white] (1,1) circle [radius=0.2];
\draw [fill] (1,0.5) circle [radius=0.2];
\draw [fill] (1,1.5) circle [radius=0.2];
\end{tikzpicture}
\]
A type II 2-by-2 move changes neither the quiver nor the dominating sets of faces.
\end{itemize}

A result of Thurston \cite[Theorem~6]{Thu} can be restated in the reduced plabic graph language as saying that any two rank $a$ reduced plabic graphs on $D_n$ can be transformed into one another via a sequence of 2-by-2 moves of the above two types.
In conclusion, all the $K_2$ cluster structures on $\dGr_a^\times(n)$ defined by rank $a$ reduced plabic graphs on $D_n$ are equivalent.

\section[Example of a reduced plabic graph transforming under $\rho$]{Example of a reduced plabic graph transforming under $\boldsymbol{\rho}$}

Below is an example showing how a reduced plabic graph transforms under the rotation mutation sequence $\rho$. This example can be easily generalized to other standard reduced plabic graphs $\Gamma_{a,n}$ with arbitrary parameters $(a,n)$. The example we choose to do is with parameters $(a=3$, $n=7)$.
\[
\begin{tikzpicture}[scale=0.5, baseline=5ex]
\draw (3,3.5) -- (1.5,2.5);
\draw (3,3.5) -- (2.5,2.5);
\draw (3,3.5) -- (3.5,2.5);
\draw (3,3.5) -- (4.5,2.5);
\draw (1.5,2.5) -- (1,2);
\draw (2.5,2.5) -- (2,2);
\draw (3.5,2.5) -- (3,2);
\draw (4.5,2.5) -- (4,2);
\draw (1.5,2.5) -- (2,2);
\draw (2.5,2.5) -- (3,2);
\draw (3.5,2.5) -- (4,2);
\draw (1,2) -- (1.5,1.5);
\draw (2,2) -- (2.5,1.5);
\draw (3,2) -- (3.5,1.5);
\draw (4,2) -- (4.5,1.5);
\draw (1.5,1.5) -- (1,1);
\draw (2.5,1.5) -- (2,1);
\draw (3.5,1.5) -- (3,1);
\draw (4.5,1.5) -- (4,1);
\draw (1.5,1.5) -- (2,1);
\draw (2.5,1.5) -- (3,1);
\draw (3.5,1.5) -- (4,1);
\draw (3,3.5) -- (5,3.5) node [right] {$1$};
\draw (4.5,2.5) -- (5,2.5) node [right] {$2$};
\draw (4.5,1.5) -- (5,1.5) node [right] {$3$};
\draw (4,1) -- (4,0.5) node [below] {$4$};
\draw (3,1) -- (3,0.5) node [below] {$5$};
\draw (2,1) -- (2,0.5) node [below] {$6$};
\draw (1,1) -- (1,0.5) node [below] {$7$};
\draw [fill=white] (1,1) circle [radius=0.2];
\draw [fill=white] (2,1) circle [radius=0.2];
\draw [fill=white] (3,1) circle [radius=0.2];
\draw [fill=white] (4,1) circle [radius=0.2];
\draw [fill] (1.5,1.5) circle [radius=0.2];
\draw [fill] (2.5,1.5) circle [radius=0.2];
\draw [fill] (3.5,1.5) circle [radius=0.2];
\draw [fill] (4.5,1.5) circle [radius=0.2];
\draw [fill=white] (1,2) circle [radius=0.2];
\draw [fill=white] (2,2) circle [radius=0.2];
\draw [fill=white] (3,2) circle [radius=0.2];
\draw [fill=white] (4,2) circle [radius=0.2];
\draw [fill] (1.5,2.5) circle [radius=0.2];
\draw [fill] (2.5,2.5) circle [radius=0.2];
\draw [fill] (3.5,2.5) circle [radius=0.2];
\draw [fill] (4.5,2.5) circle [radius=0.2];
\draw [fill=white] (3,3.5) circle [radius=0.2];
\end{tikzpicture}
\quad \overset{\text{Type II}}{\longrightarrow} \quad
\begin{tikzpicture}[scale=0.5, baseline=5ex]
\draw (3,3.5) -- (1.5,2.5);
\draw (3,3.5) -- (2.5,2.5);
\draw (3,3.5) -- (3.5,2.5);
\draw (3,3.5) -- (4.5,2.5);
\draw (1.5,2.5) -- (1,2);
\draw (2.5,2.5) -- (2,2);
\draw (3.5,2.5) -- (3,2);
\draw (4.5,2.5) -- (4,2);
\draw (1.5,2.5) to [out=180, in=60] (0.5,2) to [out=-120, in=135] (1,1);
\draw (2.5,2.5) -- (3,2);
\draw (3.5,2.5) -- (4,2);
\draw (1,2) -- (1.5,1.5);
\draw (2,2) -- (2.5,1.5);
\draw (3,2) -- (3.5,1.5);
\draw (4,2) -- (4.5,1.5);
\draw (1.5,1.5) -- (2,2);
\draw (2.5,1.5) -- (2,1);
\draw (3.5,1.5) -- (3,1);
\draw (4.5,1.5) -- (4,1);
\draw (1.5,1.5) -- (2,1);
\draw (2.5,1.5) -- (3,1);
\draw (3.5,1.5) -- (4,1);
\draw (3,3.5) -- (5,3.5) node [right] {$1$};
\draw (4.5,2.5) -- (5,2.5) node [right] {$2$};
\draw (4.5,1.5) -- (5,1.5) node [right] {$3$};
\draw (4,1) -- (4,0.5) node [below] {$4$};
\draw (3,1) -- (3,0.5) node [below] {$5$};
\draw (2,1) -- (2,0.5) node [below] {$6$};
\draw (1,1) -- (1,0.5) node [below] {$7$};
\draw [fill=white] (1,1) circle [radius=0.2];
\draw [fill=white] (2,1) circle [radius=0.2];
\draw [fill=white] (3,1) circle [radius=0.2];
\draw [fill=white] (4,1) circle [radius=0.2];
\draw [fill] (1.5,1.5) circle [radius=0.2];
\draw [fill] (2.5,1.5) circle [radius=0.2];
\draw [fill] (3.5,1.5) circle [radius=0.2];
\draw [fill] (4.5,1.5) circle [radius=0.2];
\draw [fill=white] (1,2) circle [radius=0.2];
\draw [fill=white] (2,2) circle [radius=0.2];
\draw [fill=white] (3,2) circle [radius=0.2];
\draw [fill=white] (4,2) circle [radius=0.2];
\draw [fill] (1.5,2.5) circle [radius=0.2];
\draw [fill] (2.5,2.5) circle [radius=0.2];
\draw [fill] (3.5,2.5) circle [radius=0.2];
\draw [fill] (4.5,2.5) circle [radius=0.2];
\draw [fill=white] (3,3.5) circle [radius=0.2];
\end{tikzpicture}
\quad\!
\overset{\text{Type I, $\mu_{2,1}$}}{\longrightarrow} \!\quad
\begin{tikzpicture}[scale=0.5, baseline=5ex]
\draw (3,3.5) -- (1.5,2.5);
\draw (3,3.5) -- (2.5,2.5);
\draw (3,3.5) -- (3.5,2.5);
\draw (3,3.5) -- (4.5,2.5);
\draw (1.5,2.5) -- (1,2);
\draw (2.5,2.5) -- (2,2);
\draw (3.5,2.5) -- (3,2);
\draw (4.5,2.5) -- (4,2);
\draw (1.5,2.5) to [out=180, in=60] (0.5,2) to [out=-120, in=135] (1,1);
\draw (2.5,2.5) -- (3,2);
\draw (3.5,2.5) -- (4,2);
\draw (1,2) -- (1.5,1.5);
\draw (2,2) -- (2.5,1.5);
\draw (3,2) -- (3.5,1.5);
\draw (4,2) -- (4.5,1.5);
\draw (1.5,1.5) -- (3,1);
\draw (2.5,1.5) -- (1,2);
\draw (3.5,1.5) -- (3,1);
\draw (4.5,1.5) -- (4,1);
\draw (1.5,1.5) -- (2,1);
\draw (2.5,1.5) -- (3,1);
\draw (3.5,1.5) -- (4,1);
\draw (3,3.5) -- (5,3.5) node [right] {$1$};
\draw (4.5,2.5) -- (5,2.5) node [right] {$2$};
\draw (4.5,1.5) -- (5,1.5) node [right] {$3$};
\draw (4,1) -- (4,0.5) node [below] {$4$};
\draw (3,1) -- (3,0.5) node [below] {$5$};
\draw (2,1) -- (2,0.5) node [below] {$6$};
\draw (1,1) -- (1,0.5) node [below] {$7$};
\draw [fill=white] (1,1) circle [radius=0.2];
\draw [fill=white] (2,1) circle [radius=0.2];
\draw [fill=white] (3,1) circle [radius=0.2];
\draw [fill=white] (4,1) circle [radius=0.2];
\draw [fill] (1.5,1.5) circle [radius=0.2];
\draw [fill] (2.5,1.5) circle [radius=0.2];
\draw [fill] (3.5,1.5) circle [radius=0.2];
\draw [fill] (4.5,1.5) circle [radius=0.2];
\draw [fill=white] (1,2) circle [radius=0.2];
\draw [fill=white] (2,2) circle [radius=0.2];
\draw [fill=white] (3,2) circle [radius=0.2];
\draw [fill=white] (4,2) circle [radius=0.2];
\draw [fill] (1.5,2.5) circle [radius=0.2];
\draw [fill] (2.5,2.5) circle [radius=0.2];
\draw [fill] (3.5,2.5) circle [radius=0.2];
\draw [fill] (4.5,2.5) circle [radius=0.2];
\draw [fill=white] (3,3.5) circle [radius=0.2];
\end{tikzpicture}
\]
\[
\overset{\text{Type II}}{\longrightarrow} \quad\!
\begin{tikzpicture}[scale=0.5, baseline=5ex]
\draw (3,3.5) -- (1.5,2.5);
\draw (3,3.5) -- (2.5,2.5);
\draw (3,3.5) -- (3.5,2.5);
\draw (3,3.5) -- (4.5,2.5);
\draw (1.5,2.5) -- (1,2);
\draw (2.5,2.5) -- (2,2);
\draw (3.5,2.5) -- (3,2);
\draw (4.5,2.5) -- (4,2);
\draw (1.5,2.5) to [out=180, in=60] (0.5,2) to [out=-120, in=135] (1,1);
\draw (2.5,2.5) -- (1,2);
\draw (3.5,2.5) -- (4,2);
\draw (1,2) -- (1.5,1.5);
\draw (2,2) -- (2.5,1.5);
\draw (3,2) -- (3.5,1.5);
\draw (4,2) -- (4.5,1.5);
\draw (1.5,1.5) -- (3,1);
\draw (3,2) -- (2.5,1.5);
\draw (3.5,1.5) -- (3,1);
\draw (4.5,1.5) -- (4,1);
\draw (1.5,1.5) -- (2,1);
\draw (2.5,1.5) -- (3,1);
\draw (3.5,1.5) -- (4,1);
\draw (3,3.5) -- (5,3.5) node [right] {$1$};
\draw (4.5,2.5) -- (5,2.5) node [right] {$2$};
\draw (4.5,1.5) -- (5,1.5) node [right] {$3$};
\draw (4,1) -- (4,0.5) node [below] {$4$};
\draw (3,1) -- (3,0.5) node [below] {$5$};
\draw (2,1) -- (2,0.5) node [below] {$6$};
\draw (1,1) -- (1,0.5) node [below] {$7$};
\draw [fill=white] (1,1) circle [radius=0.2];
\draw [fill=white] (2,1) circle [radius=0.2];
\draw [fill=white] (3,1) circle [radius=0.2];
\draw [fill=white] (4,1) circle [radius=0.2];
\draw [fill] (1.5,1.5) circle [radius=0.2];
\draw [fill] (2.5,1.5) circle [radius=0.2];
\draw [fill] (3.5,1.5) circle [radius=0.2];
\draw [fill] (4.5,1.5) circle [radius=0.2];
\draw [fill=white] (1,2) circle [radius=0.2];
\draw [fill=white] (2,2) circle [radius=0.2];
\draw [fill=white] (3,2) circle [radius=0.2];
\draw [fill=white] (4,2) circle [radius=0.2];
\draw [fill] (1.5,2.5) circle [radius=0.2];
\draw [fill] (2.5,2.5) circle [radius=0.2];
\draw [fill] (3.5,2.5) circle [radius=0.2];
\draw [fill] (4.5,2.5) circle [radius=0.2];
\draw [fill=white] (3,3.5) circle [radius=0.2];
\end{tikzpicture}
\quad\!
\overset{\text{Type I, $\mu_{1,1}$}}{\longrightarrow} \quad\!
\begin{tikzpicture}[scale=0.5, baseline=5ex]
\draw (3,3.5) -- (2.5,2.5);
\draw (3,3.5) -- (3.5,2.5);
\draw (3,3.5) -- (4.5,2.5);
\draw (1.5,2.5) -- (1,2);
\draw (2.5,2.5) -- (2,2);
\draw (3.5,2.5) -- (3,2);
\draw (4.5,2.5) -- (4,2);
\draw (1.5,2.5) -- (2,2);
\draw (1.5,2.5) to [out=180, in=60] (0.5,2) to [out=-120, in=120] (1,1);
\draw (2.5,2.5) to [out=150, in=60] (0.25,2.25) to [out=-120, in=140] (1,1);
\draw (3.5,2.5) -- (4,2);
\draw (1,2) -- (1.5,1.5);
\draw (2,2) -- (2.5,1.5);
\draw (3,2) -- (3.5,1.5);
\draw (4,2) -- (4.5,1.5);
\draw (1.5,1.5) -- (3,1);
\draw (3,2) -- (2.5,1.5);
\draw (3.5,1.5) -- (3,1);
\draw (4.5,1.5) -- (4,1);
\draw (1.5,1.5) -- (2,1);
\draw (2.5,1.5) -- (3,1);
\draw (3.5,1.5) -- (4,1);
\draw (3,3.5) -- (5,3.5) node [right] {$1$};
\draw (4.5,2.5) -- (5,2.5) node [right] {$2$};
\draw (4.5,1.5) -- (5,1.5) node [right] {$3$};
\draw (4,1) -- (4,0.5) node [below] {$4$};
\draw (3,1) -- (3,0.5) node [below] {$5$};
\draw (2,1) -- (2,0.5) node [below] {$6$};
\draw (1,1) -- (1,0.5) node [below] {$7$};
\draw [fill=white] (1,1) circle [radius=0.2];
\draw [fill=white] (2,1) circle [radius=0.2];
\draw [fill=white] (3,1) circle [radius=0.2];
\draw [fill=white] (4,1) circle [radius=0.2];
\draw [fill] (1.5,1.5) circle [radius=0.2];
\draw [fill] (2.5,1.5) circle [radius=0.2];
\draw [fill] (3.5,1.5) circle [radius=0.2];
\draw [fill] (4.5,1.5) circle [radius=0.2];
\draw [fill=white] (1,2) circle [radius=0.2];
\draw [fill=white] (2,2) circle [radius=0.2];
\draw [fill=white] (3,2) circle [radius=0.2];
\draw [fill=white] (4,2) circle [radius=0.2];
\draw [fill] (1.5,2.5) circle [radius=0.2];
\draw [fill] (2.5,2.5) circle [radius=0.2];
\draw [fill] (3.5,2.5) circle [radius=0.2];
\draw [fill] (4.5,2.5) circle [radius=0.2];
\draw [fill=white] (3,3.5) circle [radius=0.2];
\end{tikzpicture}
\quad\! \overset{\text{Type I, $\mu_{2,2}$}}{\longrightarrow} \quad\!
\begin{tikzpicture}[scale=0.5, baseline=5ex]
\draw (3,3.5) -- (2.5,2.5);
\draw (3,3.5) -- (3.5,2.5);
\draw (3,3.5) -- (4.5,2.5);
\draw (1.5,2.5) -- (1,2);
\draw (2.5,2.5) -- (2,2);
\draw (3.5,2.5) -- (3,2);
\draw (4.5,2.5) -- (4,2);
\draw (1.5,2.5) -- (2,2);
\draw (1.5,2.5) to [out=180, in=60] (0.5,2) to [out=-120, in=120] (1,1);
\draw (2.5,2.5) to [out=150, in=60] (0.25,2.25) to [out=-120, in=140] (1,1);
\draw (3.5,2.5) -- (4,2);
\draw (1,2) -- (1.5,1.5);
\draw (2,2) -- (2.5,1.5);
\draw (3,2) -- (3.5,1.5);
\draw (4,2) -- (4.5,1.5);
\draw (1.5,1.5) -- (3,1);
\draw (2,2) -- (3.5,1.5);
\draw (2.5,1.5) -- (4,1);
\draw (4.5,1.5) -- (4,1);
\draw (1.5,1.5) -- (2,1);
\draw (2.5,1.5) -- (3,1);
\draw (3.5,1.5) -- (4,1);
\draw (3,3.5) -- (5,3.5) node [right] {$1$};
\draw (4.5,2.5) -- (5,2.5) node [right] {$2$};
\draw (4.5,1.5) -- (5,1.5) node [right] {$3$};
\draw (4,1) -- (4,0.5) node [below] {$4$};
\draw (3,1) -- (3,0.5) node [below] {$5$};
\draw (2,1) -- (2,0.5) node [below] {$6$};
\draw (1,1) -- (1,0.5) node [below] {$7$};
\draw [fill=white] (1,1) circle [radius=0.2];
\draw [fill=white] (2,1) circle [radius=0.2];
\draw [fill=white] (3,1) circle [radius=0.2];
\draw [fill=white] (4,1) circle [radius=0.2];
\draw [fill] (1.5,1.5) circle [radius=0.2];
\draw [fill] (2.5,1.5) circle [radius=0.2];
\draw [fill] (3.5,1.5) circle [radius=0.2];
\draw [fill] (4.5,1.5) circle [radius=0.2];
\draw [fill=white] (1,2) circle [radius=0.2];
\draw [fill=white] (2,2) circle [radius=0.2];
\draw [fill=white] (3,2) circle [radius=0.2];
\draw [fill=white] (4,2) circle [radius=0.2];
\draw [fill] (1.5,2.5) circle [radius=0.2];
\draw [fill] (2.5,2.5) circle [radius=0.2];
\draw [fill] (3.5,2.5) circle [radius=0.2];
\draw [fill] (4.5,2.5) circle [radius=0.2];
\draw [fill=white] (3,3.5) circle [radius=0.2];
\end{tikzpicture}
\]
\[
\overset{\text{Type II}}{\longrightarrow} \quad\!
\begin{tikzpicture}[scale=0.5, baseline=5ex]
\draw (3,3.5) -- (2.5,2.5);
\draw (3,3.5) -- (3.5,2.5);
\draw (3,3.5) -- (4.5,2.5);
\draw (1.5,2.5) -- (1,2);
\draw (2.5,2.5) -- (2,2);
\draw (3.5,2.5) -- (3,2);
\draw (4.5,2.5) -- (4,2);
\draw (1.5,2.5) -- (2,2);
\draw (1.5,2.5) to [out=180, in=60] (0.5,2) to [out=-120, in=120] (1,1);
\draw (2.5,2.5) to [out=150, in=60] (0.25,2.25) to [out=-120, in=140] (1,1);
\draw (3.5,2.5) -- (2,2);
\draw (1,2) -- (1.5,1.5);
\draw (2,2) -- (2.5,1.5);
\draw (3,2) -- (3.5,1.5);
\draw (4,2) -- (4.5,1.5);
\draw (1.5,1.5) -- (3,1);
\draw (4,2) -- (3.5,1.5);
\draw (2.5,1.5) -- (4,1);
\draw (4.5,1.5) -- (4,1);
\draw (1.5,1.5) -- (2,1);
\draw (2.5,1.5) -- (3,1);
\draw (3.5,1.5) -- (4,1);
\draw (3,3.5) -- (5,3.5) node [right] {$1$};
\draw (4.5,2.5) -- (5,2.5) node [right] {$2$};
\draw (4.5,1.5) -- (5,1.5) node [right] {$3$};
\draw (4,1) -- (4,0.5) node [below] {$4$};
\draw (3,1) -- (3,0.5) node [below] {$5$};
\draw (2,1) -- (2,0.5) node [below] {$6$};
\draw (1,1) -- (1,0.5) node [below] {$7$};
\draw [fill=white] (1,1) circle [radius=0.2];
\draw [fill=white] (2,1) circle [radius=0.2];
\draw [fill=white] (3,1) circle [radius=0.2];
\draw [fill=white] (4,1) circle [radius=0.2];
\draw [fill] (1.5,1.5) circle [radius=0.2];
\draw [fill] (2.5,1.5) circle [radius=0.2];
\draw [fill] (3.5,1.5) circle [radius=0.2];
\draw [fill] (4.5,1.5) circle [radius=0.2];
\draw [fill=white] (1,2) circle [radius=0.2];
\draw [fill=white] (2,2) circle [radius=0.2];
\draw [fill=white] (3,2) circle [radius=0.2];
\draw [fill=white] (4,2) circle [radius=0.2];
\draw [fill] (1.5,2.5) circle [radius=0.2];
\draw [fill] (2.5,2.5) circle [radius=0.2];
\draw [fill] (3.5,2.5) circle [radius=0.2];
\draw [fill] (4.5,2.5) circle [radius=0.2];
\draw [fill=white] (3,3.5) circle [radius=0.2];
\end{tikzpicture}
\quad\! \overset{\text{Type I, $\mu_{1,2}$}}{\longrightarrow} \quad\!
\begin{tikzpicture}[scale=0.5, baseline=5ex]
\draw (2.5,2.5) -- (3,2);
\draw (3,3.5) -- (3.5,2.5);
\draw (3,3.5) -- (4.5,2.5);
\draw (1.5,2.5) -- (1,2);
\draw (2.5,2.5) -- (2,2);
\draw (3.5,2.5) -- (3,2);
\draw (4.5,2.5) -- (4,2);
\draw (1.5,2.5) -- (2,2);
\draw (1.5,2.5) to [out=180, in=60] (0.5,2) to [out=-120, in=120] (1,1);
\draw (2.5,2.5) to [out=150, in=60] (0.25,2.25) to [out=-120, in=140] (1,1);
\draw (3.5,2.5) to [out=135, in=60] (0,2.5) to [out=-120, in=160] (1,1);
\draw (1,2) -- (1.5,1.5);
\draw (2,2) -- (2.5,1.5);
\draw (3,2) -- (3.5,1.5);
\draw (4,2) -- (4.5,1.5);
\draw (1.5,1.5) -- (3,1);
\draw (4,2) -- (3.5,1.5);
\draw (2.5,1.5) -- (4,1);
\draw (4.5,1.5) -- (4,1);
\draw (1.5,1.5) -- (2,1);
\draw (2.5,1.5) -- (3,1);
\draw (3.5,1.5) -- (4,1);
\draw (3,3.5) -- (5,3.5) node [right] {$1$};
\draw (4.5,2.5) -- (5,2.5) node [right] {$2$};
\draw (4.5,1.5) -- (5,1.5) node [right] {$3$};
\draw (4,1) -- (4,0.5) node [below] {$4$};
\draw (3,1) -- (3,0.5) node [below] {$5$};
\draw (2,1) -- (2,0.5) node [below] {$6$};
\draw (1,1) -- (1,0.5) node [below] {$7$};
\draw [fill=white] (1,1) circle [radius=0.2];
\draw [fill=white] (2,1) circle [radius=0.2];
\draw [fill=white] (3,1) circle [radius=0.2];
\draw [fill=white] (4,1) circle [radius=0.2];
\draw [fill] (1.5,1.5) circle [radius=0.2];
\draw [fill] (2.5,1.5) circle [radius=0.2];
\draw [fill] (3.5,1.5) circle [radius=0.2];
\draw [fill] (4.5,1.5) circle [radius=0.2];
\draw [fill=white] (1,2) circle [radius=0.2];
\draw [fill=white] (2,2) circle [radius=0.2];
\draw [fill=white] (3,2) circle [radius=0.2];
\draw [fill=white] (4,2) circle [radius=0.2];
\draw [fill] (1.5,2.5) circle [radius=0.2];
\draw [fill] (2.5,2.5) circle [radius=0.2];
\draw [fill] (3.5,2.5) circle [radius=0.2];
\draw [fill] (4.5,2.5) circle [radius=0.2];
\draw [fill=white] (3,3.5) circle [radius=0.2];
\end{tikzpicture}
\quad\!
\overset{\text{Type I, $\mu_{2,3}$}}{\longrightarrow} \quad\!
\begin{tikzpicture}[scale=0.5, baseline=5ex]
\draw (2.5,2.5) -- (3,2);
\draw (3,3.5) -- (3.5,2.5);
\draw (3,3.5) -- (4.5,2.5);
\draw (1.5,2.5) -- (1,2);
\draw (2.5,2.5) -- (2,2);
\draw (3.5,2.5) -- (3,2);
\draw (4.5,2.5) -- (4,2);
\draw (1.5,2.5) -- (2,2);
\draw (1.5,2.5) to [out=180, in=60] (0.5,2) to [out=-120, in=120] (1,1);
\draw (2.5,2.5) to [out=150, in=60] (0.25,2.25) to [out=-120, in=140] (1,1);
\draw (3.5,2.5) to [out=135, in=60] (0,2.5) to [out=-120, in=160] (1,1);
\draw (1,2) -- (1.5,1.5);
\draw (2,2) -- (2.5,1.5);
\draw (3,2) -- (3.5,1.5);
\draw (4,2) -- (4.5,1.5);
\draw (1.5,1.5) -- (3,1);
\draw (3,2) -- (4.5,1.5);
\draw (2.5,1.5) -- (4,1);
\draw (3.5,1.5) -- (5,1);
\draw (1.5,1.5) -- (2,1);
\draw (2.5,1.5) -- (3,1);
\draw (3.5,1.5) -- (4,1);
\draw (4.5,1.5) -- (5,1);
\draw (3,3.5) -- (5,3.5) node [right] {$1$};
\draw (4.5,2.5) -- (5,2.5) node [right] {$2$};
\draw (5,1) -- (5.5,1) node [right] {$3$};
\draw (4,1) -- (4,0.5) node [below] {$4$};
\draw (3,1) -- (3,0.5) node [below] {$5$};
\draw (2,1) -- (2,0.5) node [below] {$6$};
\draw (1,1) -- (1,0.5) node [below] {$7$};
\draw [fill=white] (1,1) circle [radius=0.2];
\draw [fill=white] (2,1) circle [radius=0.2];
\draw [fill=white] (3,1) circle [radius=0.2];
\draw [fill=white] (4,1) circle [radius=0.2];
\draw [fill] (1.5,1.5) circle [radius=0.2];
\draw [fill] (2.5,1.5) circle [radius=0.2];
\draw [fill] (3.5,1.5) circle [radius=0.2];
\draw [fill] (4.5,1.5) circle [radius=0.2];
\draw [fill=white] (1,2) circle [radius=0.2];
\draw [fill=white] (2,2) circle [radius=0.2];
\draw [fill=white] (3,2) circle [radius=0.2];
\draw [fill=white] (4,2) circle [radius=0.2];
\draw [fill] (1.5,2.5) circle [radius=0.2];
\draw [fill] (2.5,2.5) circle [radius=0.2];
\draw [fill] (3.5,2.5) circle [radius=0.2];
\draw [fill] (4.5,2.5) circle [radius=0.2];
\draw [fill=white] (3,3.5) circle [radius=0.2];
\draw [fill=white] (5,1) circle [radius=0.2];
\end{tikzpicture}
\]
\[\overset{\text{Type II}}{\longrightarrow}
\begin{tikzpicture}[scale=0.5, baseline=5ex]
\draw (2.5,2.5) -- (3,2);
\draw (3,3.5) -- (3.5,2.5);
\draw (3,3.5) -- (4.5,2.5);
\draw (1.5,2.5) -- (1,2);
\draw (2.5,2.5) -- (2,2);
\draw (3.5,2.5) -- (3,2);
\draw (4.5,2.5) -- (4,2);
\draw (1.5,2.5) -- (2,2);
\draw (1.5,2.5) to [out=180, in=60] (0.5,2) to [out=-120, in=120] (1,1);
\draw (2.5,2.5) to [out=150, in=60] (0.25,2.25) to [out=-120, in=140] (1,1);
\draw (3.5,2.5) to [out=135, in=60] (0,2.5) to [out=-120, in=160] (1,1);
\draw (1,2) -- (1.5,1.5);
\draw (2,2) -- (2.5,1.5);
\draw (3,2) -- (3.5,1.5);
\draw (4,2) -- (4.5,1.5);
\draw (1.5,1.5) -- (3,1);
\draw (4.5,2.5) -- (3,2);
\draw (2.5,1.5) -- (4,1);
\draw (3.5,1.5) -- (5,1);
\draw (1.5,1.5) -- (2,1);
\draw (2.5,1.5) -- (3,1);
\draw (3.5,1.5) -- (4,1);
\draw (4.5,1.5) -- (5,1);
\draw (3,3.5) -- (5,3.5) node [right] {$1$};
\draw (4.5,1.5) -- (5.5,2) node [right] {$2$};
\draw (5,1) -- (5.5,1) node [right] {$3$};
\draw (4,1) -- (4,0.5) node [below] {$4$};
\draw (3,1) -- (3,0.5) node [below] {$5$};
\draw (2,1) -- (2,0.5) node [below] {$6$};
\draw (1,1) -- (1,0.5) node [below] {$7$};
\draw [fill=white] (1,1) circle [radius=0.2];
\draw [fill=white] (2,1) circle [radius=0.2];
\draw [fill=white] (3,1) circle [radius=0.2];
\draw [fill=white] (4,1) circle [radius=0.2];
\draw [fill] (1.5,1.5) circle [radius=0.2];
\draw [fill] (2.5,1.5) circle [radius=0.2];
\draw [fill] (3.5,1.5) circle [radius=0.2];
\draw [fill] (4.5,1.5) circle [radius=0.2];
\draw [fill=white] (1,2) circle [radius=0.2];
\draw [fill=white] (2,2) circle [radius=0.2];
\draw [fill=white] (3,2) circle [radius=0.2];
\draw [fill=white] (4,2) circle [radius=0.2];
\draw [fill] (1.5,2.5) circle [radius=0.2];
\draw [fill] (2.5,2.5) circle [radius=0.2];
\draw [fill] (3.5,2.5) circle [radius=0.2];
\draw [fill] (4.5,2.5) circle [radius=0.2];
\draw [fill=white] (3,3.5) circle [radius=0.2];
\draw [fill=white] (5,1) circle [radius=0.2];
\end{tikzpicture}
\quad\!
\overset{\text{Type I, $\mu_{1,3}$}}{\longrightarrow} \quad\!
\begin{tikzpicture}[scale=0.5, baseline=5ex]
\draw (2.5,2.5) -- (3,2);
\draw (3.5,2.5) -- (4,2);
\draw (1.5,2.5) -- (1,2);
\draw (2.5,2.5) -- (2,2);
\draw (3.5,2.5) -- (3,2);
\draw (4.5,2.5) -- (4,2);
\draw (1.5,2.5) -- (2,2);
\draw (1.5,2.5) to [out=180, in=60] (0.5,2) to [out=-120, in=120] (1,1);
\draw (2.5,2.5) to [out=150, in=60] (0.25,2.25) to [out=-120, in=140] (1,1);
\draw (3.5,2.5) to [out=135, in=60] (0,2.5) to [out=-120, in=160] (1,1);
\draw (1,2) -- (1.5,1.5);
\draw (2,2) -- (2.5,1.5);
\draw (3,2) -- (3.5,1.5);
\draw (4,2) -- (4.5,1.5);
\draw (1.5,1.5) -- (3,1);
\draw (4.5,2.5) to [out=120, in=60] (-0.25,2.75) to [out=-120, in=180] (1,1);
\draw (2.5,1.5) -- (4,1);
\draw (3.5,1.5) -- (5,1);
\draw (1.5,1.5) -- (2,1);
\draw (2.5,1.5) -- (3,1);
\draw (3.5,1.5) -- (4,1);
\draw (4.5,1.5) -- (5,1);
\draw (4.5,2.5) -- (5.5,3) node [right] {$1$};
\draw (4.5,1.5) -- (5.5,2) node [right] {$2$};
\draw (5,1) -- (5.5,1) node [right] {$3$};
\draw (4,1) -- (4,0.5) node [below] {$4$};
\draw (3,1) -- (3,0.5) node [below] {$5$};
\draw (2,1) -- (2,0.5) node [below] {$6$};
\draw (1,1) -- (1,0.5) node [below] {$7$};
\draw [fill=white] (1,1) circle [radius=0.2];
\draw [fill=white] (2,1) circle [radius=0.2];
\draw [fill=white] (3,1) circle [radius=0.2];
\draw [fill=white] (4,1) circle [radius=0.2];
\draw [fill] (1.5,1.5) circle [radius=0.2];
\draw [fill] (2.5,1.5) circle [radius=0.2];
\draw [fill] (3.5,1.5) circle [radius=0.2];
\draw [fill] (4.5,1.5) circle [radius=0.2];
\draw [fill=white] (1,2) circle [radius=0.2];
\draw [fill=white] (2,2) circle [radius=0.2];
\draw [fill=white] (3,2) circle [radius=0.2];
\draw [fill=white] (4,2) circle [radius=0.2];
\draw [fill] (1.5,2.5) circle [radius=0.2];
\draw [fill] (2.5,2.5) circle [radius=0.2];
\draw [fill] (3.5,2.5) circle [radius=0.2];
\draw [fill] (4.5,2.5) circle [radius=0.2];
\draw [fill=white] (5,1) circle [radius=0.2];
\end{tikzpicture}
\quad\! \overset{=}{\longrightarrow}\quad\!
\begin{tikzpicture}[scale=0.5, baseline=5ex]
\draw (3,3.5) -- (1.5,2.5);
\draw (3,3.5) -- (2.5,2.5);
\draw (3,3.5) -- (3.5,2.5);
\draw (3,3.5) -- (4.5,2.5);
\draw (1.5,2.5) -- (1,2);
\draw (2.5,2.5) -- (2,2);
\draw (3.5,2.5) -- (3,2);
\draw (4.5,2.5) -- (4,2);
\draw (1.5,2.5) -- (2,2);
\draw (2.5,2.5) -- (3,2);
\draw (3.5,2.5) -- (4,2);
\draw (1,2) -- (1.5,1.5);
\draw (2,2) -- (2.5,1.5);
\draw (3,2) -- (3.5,1.5);
\draw (4,2) -- (4.5,1.5);
\draw (1.5,1.5) -- (1,1);
\draw (2.5,1.5) -- (2,1);
\draw (3.5,1.5) -- (3,1);
\draw (4.5,1.5) -- (4,1);
\draw (1.5,1.5) -- (2,1);
\draw (2.5,1.5) -- (3,1);
\draw (3.5,1.5) -- (4,1);
\draw (3,3.5) -- (5,3.5) node [right] {$7$};
\draw (4.5,2.5) -- (5,2.5) node [right] {$1$};
\draw (4.5,1.5) -- (5,1.5) node [right] {$2$};
\draw (4,1) -- (4,0.5) node [below] {$3$};
\draw (3,1) -- (3,0.5) node [below] {$4$};
\draw (2,1) -- (2,0.5) node [below] {$5$};
\draw (1,1) -- (1,0.5) node [below] {$6$};
\draw [fill=white] (1,1) circle [radius=0.2];
\draw [fill=white] (2,1) circle [radius=0.2];
\draw [fill=white] (3,1) circle [radius=0.2];
\draw [fill=white] (4,1) circle [radius=0.2];
\draw [fill] (1.5,1.5) circle [radius=0.2];
\draw [fill] (2.5,1.5) circle [radius=0.2];
\draw [fill] (3.5,1.5) circle [radius=0.2];
\draw [fill] (4.5,1.5) circle [radius=0.2];
\draw [fill=white] (1,2) circle [radius=0.2];
\draw [fill=white] (2,2) circle [radius=0.2];
\draw [fill=white] (3,2) circle [radius=0.2];
\draw [fill=white] (4,2) circle [radius=0.2];
\draw [fill] (1.5,2.5) circle [radius=0.2];
\draw [fill] (2.5,2.5) circle [radius=0.2];
\draw [fill] (3.5,2.5) circle [radius=0.2];
\draw [fill] (4.5,2.5) circle [radius=0.2];
\draw [fill=white] (3,3.5) circle [radius=0.2];
\end{tikzpicture}
\]

\section{Connection to Rietsch--Williams's cluster duality}

In \cite[Theorem~1.1]{RW}, Rietsch and Williams identified the cluster dual of the pair $\big(\dGr_a^\times(n), D\big)$ as $\big(\Gr_a^\times(n)\times \mathbb{G}_m,\mathcal{W}_q\big)$, where $D=\bigcup_iD_i$ is the same boundary divisor as the one considered in this paper. On the other side, besides ratios of Pl\"{u}cker coordinates, their potential function $\mathcal{W}_q$ also carries an auxiliary variable $q$ that is the coordinate of $\mathbb{G}_m$. Below we construct a map to identify the Rietsch--Williams cluster dual with the one considered in this paper, which explains the origin of the auxiliary variable~$q$ from our perspective.

Let $ [\phi_1,l_1,\dots, \phi_n,l_n ]\in \dconf_n^\times(a)$. Pick a non-zero vector $v$ in $l_n$. Define
\begin{align*}
 \Phi\colon \ \dconf_n^\times(a)&\longrightarrow \Gr_a^\times(n)\times \mathbb{G}_m,\\
 [\phi_1,l_1,\dots, \phi_n,l_n ]& \longmapsto \big( \big[v, \phi(v),\dots, \phi^{n-1}(v) \big],P [\phi_1,l_1,\dots, \phi_n,l_n ]\big).
\end{align*}
Here $P$ is the twisted monodromy, and $\big[ \phi^{n-1}(v), \dots, \phi(v),v\big]$ denotes the matrix representative of the Grassmannian point (we drop the subscripts of $\phi$ to simplify the notation).

This map $\Phi$ is a biregular isomorphism. To define the inverse map, first pick an $a\times n$ matrix whose row span is an element in $\Gr_a^\times(n)$; the spans of its column vectors $v_i$ define the lines $l_{2-i}$ (indices taken modulo~$n$), and $v_i\mapsto v_{i+1}$ define isomrophisms $\phi_i\colon l_{2-i}\rightarrow l_{1-i}$ for $1< i\leq n$; the remaining isomorphism~$\phi_1$ is then uniquely defined so that the twisted monodromy $P$ would take the value that is equal to the auxiliary variable $q\in \mathbb{G}_m$. It is not hard to see that the resulting image of the inverse map does not depend on the choice of the matrix representative.

Recall that the potential function constructed by Rietsch and Williams takes the form
\[
\mathcal{W}_q=q\frac{\Delta_{\{b+1,\dots, n-1,1\}}}{\Delta_{\{b+1,\dots, n\}}}+\sum_{i=1}^{n-1} \frac{\Delta_{\{i-a+1,\dots, i-1,i+1\}}}{\Delta_{\{i-a+1,\dots, i\}}}.
\]
Fix a volume form $\omega$ on $\mathds{k}^a$. For $1\leq i\leq n-1$, we see that
\begin{gather*}
 \omega\left(\left(\phi^{i}(v)-\Phi^*\left(\frac{\Delta_{\{i-a+1,\dots, i-1,i+1\}}}{\Delta_{\{i-a+1,\dots, i\}}}\right)\phi^{i-1}(v)\right)\wedge \phi^{i-2}(v)\wedge \dots \wedge \phi^{i-a}(v)\right)\\
 = \omega\left(\left(\phi^{i}(v)-\frac{\omega\left(\phi^{i-a}(v)\wedge \dots \wedge \phi^{i-2}(v)\wedge \phi^{i}(v)\right)}{\omega\left(\phi^{i-a}(v)\wedge \dots \wedge \phi^{i-1}(v)\right)}\phi^{i-1}(v)\right)\wedge \phi^{i-2}(v)\wedge \dots \wedge \phi^{i-a}(v)\right)\\
 = 0;
\end{gather*}
Comparing it with \eqref{theta.f} we conclude that
\[
\Phi^*\left(\frac{\Delta_{\{i-a+1,\dots, i-1,i+1\}}}{\Delta_{\{i-a+1,\dots, i\}}}\right)=\vartheta_{i-a}.
\]
For the remaining term we also see that
\begin{gather*}
 \omega\left(\left(\phi^n(v)-\Phi^*\left(q\frac{\Delta_{\{b+1,\dots, n-1,1\}}}{\Delta_{\{b+1,\dots, n\}}}\right)\phi^{n-1}(v)\right)\wedge \phi^{n-2}(v)\wedge \dots \wedge \phi^{b}(v)\right)\\
 = \omega \left(\left(\phi^n(v)-P\frac{\omega\left(v\wedge \phi^{b}(v)\wedge \dots \wedge \phi^{n-2}(v)\right)}{\omega\left(\phi^{b}(v)\wedge \dots \wedge \phi^{n-1}(v)\right)}\phi^{n-1}(v)\right)\wedge \phi^{n-2}(v)\wedge \dots \wedge \phi^b(v)\right)\\
 = 0,
\end{gather*}
which implies that
\[
\Phi^*\left(q\frac{\Delta_{\{b+1,\dots, n-1,1\}}}{\Delta_{\{b+1,\dots, n\}}}\right)=\vartheta_b.
\]
Therefore we can conclude that our potential function $\mathcal{W}$ is precisely
\[
\mathcal{W}=\Phi^*\left(\mathcal{W}_q\right).
\]
Hence, the map $\Phi\colon \left(\dconf_n^\times(a),\mathcal{W}\right)\rightarrow \left(\Gr_a^\times(n)\times \mathbb{G}_m, \mathcal{W}_q\right)$ is an isomorphism between our version of cluster dual space and Rietsch--Williams' cluster dual space. In particular, the auxiliary parameter $q$ in Rietsch--Williams potential function $\mathcal{W}_q$ now has a geometric interpretation as the twisted monodromy $P$ in our construction of the cluster dual.

\subsection*{Acknowledgement}

We are grateful to Alexander Goncharov for the inspiration on the construction of the cluster dual space, and to Jiuzu Hong for many helpful discussions on the representation theoretical aspects of the cyclic sieving problem. We would also like to thank Michael Gekhtman, Li Li, Tim Magee, Gregg Musiker, Brendon Rhoades, Bruce Sagan, Lauren Williams, Eric Zaslow and Peng Zhou for useful conversations in the process of drafting this paper. Finally, we thank the referees for very careful reading of this paper and for many useful suggestions.

\pdfbookmark[1]{References}{ref}
\LastPageEnding

\end{document}